\numberwithin{equation}{section}
\renewcommand{\mod}[1]{\left\lvert#1\right\rvert}
\newcommand{\norm}[1]{\left\|#1\right\|}
\newcommand{\mb}[1]{\mathbb{#1}}
\newcommand{\mc}[1]{\mathcal{#1}}
\newcommand{\ip}[2]{\left\langle#1,#2\right\rangle}
\newcommand{\vphi}{\varphi} 
\renewcommand{\vector}[2]{\begin{pmatrix}#1\\[4pt]#2\end{pmatrix}}
\newcommand{\cv}[3]{\begin{pmatrix}#1\\[4pt]#2\\[4pt]#3\end{pmatrix}}
\renewcommand{\Re}{\textnormal{Re}}
\renewcommand{\Im}{\textnormal{Im}}
\newcommand{\blue}{\color{black}}
\DeclareMathOperator{\per}{per} 
\pgfplotsset{compat=1.18}
\newtheorem{theorem}{Theorem}[section]
\newtheorem{lemma}[theorem]{Lemma}
\newtheorem{remark}[theorem]{Remark}
\newtheorem{definition}[theorem]{Definition}
\newtheorem{proposition}[theorem]{Proposition}
\title[Linearized compressible Navier-Stokes system (barotropic and non-barotropic)]{Null controllability of one-dimensional {\blue barotropic and non-barotropic} linearized compressible Navier-Stokes system using one boundary control}
\author[J. Kumbhakar]{Jiten Kumbhakar$^{\dagger}$}
\thanks{$^\dagger$Indian Institute of Science Education and Research Kolkata, Campus road, Mohanpur, West Bengal 741246, India; jk17ip021@iiserkol.ac.in.}
\begin{document}
\begin{abstract}
In this article, we study boundary null controllability properties of the linearized compressible Navier-Stokes equations {\blue in the interval $(0,2\pi)$} for both barotropic and non-barotropic fluids using only one boundary control. {\blue We consider all the possible cases of the act of control for both systems (density, velocity and temperature). These controls are acting on the boundary and are given as the difference of the values at $x=0$ and $x=2\pi$. In this setup, using a boundary control acting in density, we first prove null controllability of both the barotropic and non-barotropic systems at large time in the spaces $(\dot{L}^2(0,2\pi))^2$ and $(\dot{L}^2(0,2\pi))^3$ respectively (where the dot represents functions with mean value zero). When the control is acting in the velocity component, we prove null controllability at large time in the spaces $\dot{H}^1_{\per}(0,2\pi)\times\dot{L}^2(0,2\pi)$ and $\dot{H}^1_{\per}(0,2\pi)\times(\dot{L}^2(0,2\pi))^2$ respectively. Further, in both cases, we prove that these null controllability results are} sharp with respect to the regularity of the initial states {\blue in velocity/ temperature case, and time in the density case. Finally, for both barotropic and non-barotropic fluids, we prove that, under some assumptions, the system cannot be approximately controllable at any time, whether there is a control acting in density, velocity or temperature.}
\end{abstract}
\keywords{Linearized compressible Navier-Stokes system, null-controllability, observability, boundary control, Ingham-type inequalities.}
\subjclass[2010]{35M30, 35Q30, 76N25, 93B05, 93B07, 93C20.}
\maketitle
\tableofcontents
\section{Introduction and main results}
\subsection{Linearized compressible Navier-Stokes system in 1d}
Let $I=(0,+\infty)$ be the time interval and $\Omega\subset\mb{R}$ be a spatial domain. For a compressible, isentropic (barotropic) fluid, that is, when the pressure depends only on the density and the temperature is constant, the Navier-Stokes system in $I\times\Omega$ consists of the equation of continuity and the momentum equation
\begin{gather*}
\rho_t(t,x)+(\rho u)_x(t,x)=0,\\
\rho(t,x)[u_t(t,x)+u(t,x)u_x(t,x)]+{\blue p^b_x(t,x)}-(\lambda+2\mu)u_{xx}(t,x)=0,
\end{gather*}
where $\rho$ denotes the density of the fluid, $u$ is the velocity. The constants $\lambda,\mu$ are called the viscosity coefficients that satisfy $\mu>0,\lambda+\mu\geq0$. The pressure $\blue p^b$ satisfies the following constitutive equation in $I\times\Omega$ $${\blue p^b(t,x)}=a\rho^{\gamma}(t,x),\ \ (t,x)\in I\times\Omega,$$ for some constants $a>0$, $\gamma\geq1$. In the case of non-barotropic fluids, that is, when the pressure is a function of both density and temperature of the fluid, the Navier-Stokes system consists of the equation of continuity, the momentum equation, and an additional thermal energy equation
\begin{align*}
c_{\nu}\rho(t,x)&[\theta_t(t,x)+u(t,x)\theta_x(t,x)]+\theta(t,x){\blue p^{nb}_{\theta}(t,x)}u_x(t,x)-\kappa\theta_{xx}(t,x)-(\lambda+2\mu)u_x^2(t,x)=0,
\end{align*}
where $\theta$ is the temperature of the fluid, $c_{\nu}$ is the specific heat constant, and $\kappa$ is the heat conductivity constant. For an ideal gas, Boyles law gives the pressure ${\blue p^{nb}(t,x)}=R\rho(t,x)\theta(t,x)$ in $I\times\Omega$ with $R$ as the universal gas constant. See \cite[Chapter 1]{Feireisl04} for more about compressible flows.

\subsection{The barotropic case}
Let $T>0$ be a finite time. We first consider the Navier-Stokes system for compressible, isentropic (barotropic) fluids linearized around some constant steady state $(\bar{\rho},\bar{u})$ with $\bar{\rho}>0$ and $\bar{u}>0$
\begin{equation}\label{lcnse_b}
\begin{dcases}
\rho_t(t,x)+\bar{u}\rho_x(t,x)+\bar{\rho}u_x(t,x)=0,\ \ &\text{in}\ (0,T)\times(0,2\pi),\\
u_t(t,x)-\frac{\blue\lambda+2\mu}{\bar{\rho}}u_{xx}(t,x)+\bar{u}u_x(t,x)+a\gamma\bar{\rho}^{\gamma-2}\rho_x(t,x)=0,\ \ &\text{in}\ (0,T)\times(0,2\pi).
\end{dcases}
\end{equation}
The initial conditions are
\begin{equation}\label{in_cd_b}
\rho(0,x)=\rho_0(x),\ \ u(0,x)=u_0(x),\ \ x\in(0,2\pi).
\end{equation}
We will consider two different problems, based on the act of control, by imposing any one of the following boundary conditions on the system \eqref{lcnse_b}.
\begin{itemize}
\item \textbf{Control in density:}
\begin{equation}\label{bd_cd_b1}
\rho(t,0)=\rho(t,2\pi)+p(t),\ \ u(t,0)=u(t,2\pi),\ \ u_x(t,0)=u_x(t,2\pi),\ \ t\in (0,T).
\end{equation}
\item \textbf{Control in velocity:}
\begin{equation}\label{bd_cd_b2}
\rho(t,0)=\rho(t,2\pi),\ \ u(t,0)=u(t,2\pi)+q(t),\ \ u_x(t,0)=u_x(t,2\pi),\ \ t\in (0,T),
\end{equation}
\end{itemize}
where $p$ and $q$ are controls {\blue acting on the boundary and are given as the difference of the values at $x=0$ and $x=2\pi$}. 
{\blue
\begin{definition}
Let $H$ be a Hilbert space. We say the system \eqref{lcnse_b}-\eqref{in_cd_b}-\eqref{bd_cd_b1} $($resp. \eqref{lcnse_b}-\eqref{in_cd_b}-\eqref{bd_cd_b2}$)$ is 
\begin{itemize}
\item \textit{null controllable} at time $T$ in the space $H$ if, for any $(\rho_0,u_0)\in H$ , there exists a control $p\in L^2(0,T)$ $($resp. $q\in L^2(0,T))$ such that the associated solution satisfies
\begin{equation*}
(\rho(T),u(T))=(0,0).
\end{equation*}
\item \textit{approximately controllable} at time $T$ in the space $H$ if, for any $(\rho_0,u_0),(\rho_T,u_T)\in H$ and any $\epsilon>0$, there exists a control $p\in L^2(0,T)$ $($resp. $q\in L^2(0,T))$ such that the associated solution satisfies
\begin{equation*}
\norm{(\rho(T),u(T))-(\rho_T,u_T)}_H\leq\epsilon.
\end{equation*}
\end{itemize}
\end{definition}
}
\noindent Our main goal {\blue in this article} is to study null controllability of the system $\eqref{lcnse_b}$ at a given time $T>0$ with the initial condition \eqref{in_cd_b} and one of the boundary conditions \eqref{bd_cd_b1} and \eqref{bd_cd_b2}.

\smallskip

Before stating our main results, we first introduce the Sobolev space for any $s>0$
\begin{equation*}
H^s_{\per}(0,2\pi)=\left\{\vphi \ : \ \vphi=\sum_{n\in\mb{Z}}c_ne^{inx},\ \sum_{n\in\mb{Z}}\mod{n}^{2s}\mod{c_n}^2<\infty\right\},
\end{equation*}
with the norm
\begin{equation*}
\norm{\vphi}_{H^s_{\per}(0,2\pi)}:=\left(\sum_{n\in\mb{Z}}(1+\mod{n}^{2})^s\mod{c_n}^2\right)^{\frac{1}{2}}.
\end{equation*}
For $s>0$, we denote $H^{-s}_{\per}(0,2\pi)$ to be the dual of the Sobolev space $H^s_{\per}(0,2\pi)$ with respect to the pivot space $L^2(0,2\pi)$. We also define the space
\begin{equation*}
\dot{L}^2(0,2\pi):=\left\{\vphi\in L^2(0,2\pi)\ : \ \int_{0}^{2\pi}\vphi(x)dx=0\right\}
\end{equation*}
and
\begin{equation*}
\dot{H}^s_{\per}(0,2\pi):=\left\{\vphi\in H^s_{\per}(0,2\pi)\ : \ \int_{0}^{2\pi}\vphi(x)dx=0\right\}.
\end{equation*}
If the system \eqref{lcnse_b}-\eqref{in_cd_b}-\eqref{bd_cd_b1} is null controllable at time $T$ by using a boundary control $p$, then integrating both equations in \eqref{lcnse_b}, we get a compatibility condition on the initial states
\begin{equation*}
a\gamma\bar{\rho}^{\gamma-2}\int_{0}^{2\pi}\rho_0(x)dx=\bar{u}\int_{0}^{2\pi}u_0(x)dx=-a\gamma\bar{\rho}^{\gamma-2}\bar{u}\int_{0}^{T}p(t)dt.
\end{equation*}
If the system \eqref{lcnse_b}-\eqref{in_cd_b}-\eqref{bd_cd_b2} is null controllable at time $T$ by using a boundary control $q$, then also we will get a similar compatibility condition on the initial states. Since every initial state $(\rho_0,u_0)$ in $(L^2(0,2\pi))^2$ will not satisfy this compatibility condition, we will work on the Hilbert space $(\dot{L}^2(0,2\pi))^2$ to avoid this difficulty.

\smallskip

\noindent When a boundary control $q$ {\blue is acting} in the velocity component, it is known in \cite{Chowdhury15b} that the system \eqref{lcnse_b}-\eqref{in_cd_b}-\eqref{bd_cd_b2} is null controllable at time $T>\frac{2\pi}{\bar{u}}$ provided that the initial state is regular enough, in particular, lies in the space $\dot{H}^{s+1}_{\per}(0,2\pi)\times\dot{H}^s_{\per}(0,2\pi)$ for $s>\frac{9}{2}$. In the first part of our article, we generalize this result (with respect to the regularity of initial states). {\blue In fact, we prove null controllability of \eqref{lcnse_b}-\eqref{in_cd_b}-\eqref{bd_cd_b2} at time $T>\frac{2\pi}{\bar{u}}$ in the optimal space $\dot{H}^1_{\per}(0,2\pi)\times \dot{L}^2(0,2\pi)$ (see Theorem \ref{thm_vel_b}). In addition,} we also prove null controllability of the system \eqref{lcnse_b} {\blue at time $T>\frac{2\pi}{\bar{u}}$ in $(\dot{L}^2(0,2\pi))^2$} when there is a boundary control $p$ {\blue acting} in the density component {\blue and that the null controllability fails when the time is small, in particular, when $0<T<\frac{2\pi}{\bar{u}}$ (see Theorem \ref{thm_den_b}). These results requires certain restrictions on the coefficients appearing in the system \eqref{lcnse_b}; otherwise the system is not even approximately controllable (Proposition \ref{prop_lac_app_b}). To be more precise, if the coefficients $\bar{\rho},\bar{u},\mu_0,b$ (defined by \eqref{diff_coeff_b}) satisfy $\frac{2\sqrt{b\bar{\rho}-\bar{u}^2}}{\mu_0}\in\mb{N}$, then the associated adjoint operator $A^*$ (defined by \eqref{op_A*_b}) admits an eigenvalue with algebraic multiplicity = geometric multiplicity = $2$, failing the unique continuation property (see the proof of Proposition \ref{prop_lac_app_b} for details). However, if $\frac{2\sqrt{b\bar{\rho}-\bar{u}^2}}{\mu_0}\notin\mb{N}$, then all the eigenvalues of $A^*$ have geometric multiplicity $1$ and in this case, we can achieve null controllability of the system \eqref{lcnse_b} by using one boundary control acting either in density or in velocity.

}

{\blue

\bigskip

\noindent The first main results concerning the null controllability of the system \eqref{lcnse_b} are stated below.
\begin{theorem}[Control in density]\label{thm_den_b}
Let us assume that $\frac{2\sqrt{b\bar{\rho}-\bar{u}^2}}{\mu_0}\notin\mb{N}$. Then,
\begin{enumerate}[(i)]
\item the system \eqref{lcnse_b}-\eqref{in_cd_b}-\eqref{bd_cd_b1} is null controllable at any time $T>\frac{2\pi}{\bar{u}}$ in the space $(\dot{L}^2(0,2\pi))^2$.\label{null_dens_b}
\item the system \eqref{lcnse_b}-\eqref{in_cd_b}-\eqref{bd_cd_b1} is not null controllable at small time $0<T<\frac{2\pi}{\bar{u}}$ in $(\dot{L}^2(0,2\pi))^2$.\label{lac_dens_b}
\end{enumerate}
\end{theorem}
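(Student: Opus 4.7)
The plan is to reduce both parts of Theorem~\ref{thm_den_b} to a spectral observability question for the adjoint system and then exploit an Ingham-type inequality. By standard HUM duality, null controllability of \eqref{lcnse_b}-\eqref{in_cd_b}-\eqref{bd_cd_b1} at time $T$ in $(\dot{L}^2(0,2\pi))^2$ is equivalent to an observability inequality of the form
\begin{equation*}
\norm{(\sigma(0),v(0))}_{(\dot{L}^2)^2}^2 \leq C \int_0^T \mod{\sigma(t,0)}^2\, dt,
\end{equation*}
where $(\sigma,v)$ solves the periodic adjoint system associated with \eqref{lcnse_b}, the trace $\sigma(t,0)$ arising from the fact that in \eqref{bd_cd_b1} the control acts through the jump of $\rho$ across the boundary. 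I would then diagonalize the adjoint operator $A^*$ in the Fourier basis $\{e^{inx}\}_{n\in\mb{Z}^*}$: on each two-dimensional invariant subspace $A^*$ restricts to an explicit $2\times 2$ matrix whose eigenvalues $\lambda_n^\pm$ are roots of a quadratic. For $|n|$ large, one branch is of hyperbolic/transport type, with $\Re\lambda_n^+$ bounded and $\Im\lambda_n^+ = -n\bar{u}+O(1)$, while the other is of parabolic type with $\Re\lambda_n^- \sim -\mu_0 n^2$. The non-resonance assumption $\frac{2\sqrt{b\bar{\rho}-\bar{u}^2}}{\mu_0}\notin\mb{N}$ is exactly the condition that rules out a double eigenvalue of $A^*$, so all $\lambda_n^\pm$ are simple and the eigenfunctions $\{\Psi_n^\pm\}$ form a Riesz basis of $(\dot{L}^2(0,2\pi))^2$.

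For part~\eqref{null_dens_b}, I would expand an adjoint solution as
\begin{equation*}
(\sigma(t,x),v(t,x))=\sum_{n\in\mb{Z}^*}\sum_{\epsilon\in\{+,-\}} a_n^\epsilon\, e^{\lambda_n^\epsilon(T-t)} \Psi_n^\epsilon(x),
\end{equation*}
and compute $\sigma(t,0)$ as an exponential sum weighted by the traces $\Psi_n^\epsilon(0)$. The observability inequality then reduces to a uniform lower bound, on $(0,T)$, of this exponential sum. The imaginary parts of the hyperbolic eigenvalues have asymptotic gap $\bar{u}$, so a generalized Ingham inequality (in the spirit of Haraux--Komornik--Mehrenberger) yields the desired lower bound precisely when $T>\frac{2\pi}{\bar{u}}$. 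The parabolic eigenvalues are absorbed automatically thanks to the dissipative factor $e^{-\mu_0 n^2(T-t)}$, once one verifies that the traces $|\Psi_n^\epsilon(0)|$ stay uniformly bounded below along both branches, which follows from the explicit spectral structure and the non-resonance hypothesis. Finitely many low-frequency modes, where the asymptotics are inaccurate, produce only a finite-dimensional correction that is absorbed by a standard unique continuation argument applied to $A^*$.

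For part~\eqref{lac_dens_b}, I would invoke the necessity side of the same spectral picture. Since $\Re\lambda_n^+$ is uniformly bounded and $\Im\lambda_n^+=-n\bar{u}+O(1)$ has consecutive spacing tending to $\bar{u}$, no $L^2$-lower bound of the form above can hold on $(0,T)$ with $T<\frac{2\pi}{\bar{u}}$: concretely, one builds a wave packet $\sum_k c_k \Psi_{n_k}^+$ of unit $L^2$ norm whose boundary trace in $L^2(0,T)$ is made arbitrarily small, contradicting observability. As a consistency check, the density equation in \eqref{lcnse_b} is a pure transport equation of characteristic speed $\bar{u}$, so information from the control boundary needs time at least $2\pi/\bar{u}$ to sweep the whole interval, which matches the threshold.

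The main technical obstacle I anticipate is the precise spectral analysis of $A^*$. Three points require care: (a) verifying that $\{\Psi_n^\pm\}$ is genuinely a Riesz basis (not only a complete family) of $(\dot{L}^2)^2$, via quadratic estimates on the biorthogonal projectors; (b) proving that the boundary traces $\Psi_n^\epsilon(0)$ do not degenerate uniformly in $n$ under the non-resonance assumption; and (c) assembling a single Ingham-type inequality for the union of the hyperbolic and parabolic branches, whose asymptotic structure is very different. Handling the small-frequency modes where $\lambda_n^+$ and $\lambda_n^-$ may be close is where I expect most of the bookkeeping to lie.
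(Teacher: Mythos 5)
Your part (i) follows the same skeleton as the paper (duality, spectral decomposition of $A^*$, Riesz basis, combined parabolic--hyperbolic Ingham inequality), but two concrete steps are wrong as stated. First, the dual observation for the boundary condition \eqref{bd_cd_b1} is not $\sigma(t,0)$ alone: the transposition identity produces the combination $\bar{u}\sigma(t,2\pi)+\bar{\rho}v(t,2\pi)$ (the operator $\mc{B}^*_{\rho}$ of \eqref{obs_op_den_b}). This matters quantitatively: for the parabolic eigenfunctions the density trace behaves like $\bar{\rho}/(\nu_1^n-\bar{u})=O(1/\mod{n})$, so with your functional the claimed uniform lower bound on the traces along \emph{both} branches is false, and the Ingham lower bound you would get carries a factor $\mod{n}^{-2}$ on the parabolic coefficients, which does not dominate $\sum\mod{a_n^p}^2e^{2\Re(\nu_n^p)T}$; the observability inequality in $(\dot{L}^2(0,2\pi))^2$ cannot be closed that way. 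With the correct functional, $\mod{\mc{B}^*_{\rho}\Phi_n^h},\mod{\mc{B}^*_{\rho}\Phi_n^p}\geq C$ (this is the content of Lemma \ref{lem_obs_est_b}) and the argument goes through. Second, your claim that the hypothesis $\frac{2\sqrt{b\bar{\rho}-\bar{u}^2}}{\mu_0}\notin\mb{N}$ rules out all multiple eigenvalues is incorrect: it only excludes the coincidence $\nu_{n_1}^p=\nu_{-n_1}^p$ (which destroys unique continuation). The coincidence $\nu_{n_0}^h=\nu_{n_0}^p$ at $\mod{n_0}=\frac{2\sqrt{b\bar{\rho}}}{\mu_0}\in\mb{N}$ is \emph{not} excluded by the theorem's hypothesis; there $A^*$ has a Jordan block (algebraic multiplicity $2$, geometric multiplicity $1$), the Ingham inequality for simple exponentials does not apply directly, and the paper devotes Section \ref{sec_multi_ev_b} to adding these modes back via a Komornik-type averaging argument together with the finite-dimensional norm equivalence; dismissing this as ``a standard unique continuation correction'' skips a real step.

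For part (ii) your mechanism (transport speed $\bar u$, hyperbolic branch with bounded real parts and gap $\bar u$, hence no observability on intervals shorter than $2\pi/\bar u$) is the right one, but the entire content of the proof is the construction you only assert: a unit-norm packet along the hyperbolic branch whose observation is arbitrarily small, uniformly handling the $\ell_2$ perturbations $e_n$ in $\nu_n^h=-\omega_0+i\bar{u}n+e_n$, the bounded-below observation weights, and the velocity contribution $\bar{\rho}v(t,2\pi)$ that enters the true observation. The paper does not argue via Ingham-necessity; it builds the packet explicitly: take $\tilde{\sigma}_T$ supported in $(\bar{u}T,2\pi)$ so the exact transport solution has zero boundary trace, filter out the low frequencies with the polynomial $P^N$ in \eqref{pn_b}, choose $v_T^N$ so the terminal datum lies in $\Span\{\Phi_n^h\}_{\mod{n}\geq N+1}$, and then show both $\norm{\sigma^N(\cdot,2\pi)-\tilde{\sigma}^N(\cdot,2\pi)}_{L^2(0,T)}$ and $\norm{v^N(\cdot,2\pi)}_{L^2(0,T)}$ are $O(1/N)$ relative to $\norm{\sigma^N(0)}_{\dot{L}^2}$, the latter using the $O(1/\mod{n})$ decay of $\nu_2^n-\bar{u}$. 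Until you supply such a construction (or a rigorous non-minimality argument for the perturbed exponential family), part (ii) remains unproved in your proposal.
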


\begin{theorem}[Control in velocity]\label{thm_vel_b}
Let us assume that $\frac{2\sqrt{b\bar{\rho}-\bar{u}^2}}{\mu_0}\notin\mb{N}$. Then,
\begin{enumerate}[(i)]
\item the system \eqref{lcnse_b}-\eqref{in_cd_b}-\eqref{bd_cd_b2} is null controllable at any time $T>\frac{2\pi}{\bar{u}}$ in $\dot{H}^1_{\per}(0,2\pi)\times\dot{L}^2(0,2\pi)$.\label{null_vel_b}
\item the system \eqref{lcnse_b}-\eqref{in_cd_b}-\eqref{bd_cd_b2} is not null controllable at any time $T>0$ in the space $\dot{H}^s_{\per}(0,2\pi)\times \dot{L}^2(0,2\pi)$ with $0\leq s<1$.\label{lac_vel_b}
\end{enumerate}
\end{theorem}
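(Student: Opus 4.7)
My plan is to combine HUM duality with a spectral decomposition of the adjoint. For part (i), null controllability of \eqref{lcnse_b}-\eqref{in_cd_b}-\eqref{bd_cd_b2} in $\dot H^1_{\per}(0,2\pi)\times\dot L^2(0,2\pi)$ is equivalent by duality to the observability estimate
\begin{equation*}
\norm{\phi(0)}_{\dot H^{-1}_{\per}(0,2\pi)\times\dot L^2(0,2\pi)}^2\le C\int_0^T|B^*\phi(t)|^2\,dt,
\end{equation*}
where $\phi=(\phi^\rho,\phi^u)$ solves the adjoint system backward from $\phi_T$. Integration by parts against the boundary conditions \eqref{bd_cd_b2}, once the adjoint boundary conditions are chosen to kill the remaining surface terms, identifies the observation as
\begin{equation*}
B^*\phi(t)=\bar\rho\,\phi^\rho(t,0)+\bar u\,\phi^u(t,0)+\mu_0\,\phi^u_x(t,0).
\end{equation*}
Constant coefficients allow Fourier diagonalisation mode by mode: for each $n\in\mb Z\setminus\{0\}$ the two eigenvalues of $A^*$ on the $n$-th block are
\begin{equation*}
\bar\lambda_n^\pm=in\bar u+\tfrac12\bigl(-\mu_0 n^2\pm n\sqrt{\mu_0^2n^2-4b\bar\rho}\,\bigr),\qquad b=a\gamma\bar\rho^{\gamma-2},
\end{equation*}
splitting into a hyperbolic branch $\bar\lambda_n^+=in\bar u-b\bar\rho/\mu_0+O(n^{-1})$ and a parabolic branch $\bar\lambda_n^-=in\bar u-\mu_0n^2+O(1)$. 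The hypothesis $\tfrac{2\sqrt{b\bar\rho-\bar u^2}}{\mu_0}\notin\mb N$ forbids the coincidences $\bar\lambda_{n_0}^+=\bar\lambda_{-n_0}^+$ responsible for the lack of approximate controllability in Proposition \ref{prop_lac_app_b}; once it is in force, every eigenvalue is algebraically simple and the eigenvectors $\{\Psi_n^\pm\}$ form a Riesz basis.

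The key algebraic input is the asymmetric scaling of the components of $\Psi_n^\pm$: on the hyperbolic branch $\Psi_n^{+,\rho}\sim e^{inx}$ and $\Psi_n^{+,u}\sim -\tfrac{\bar\rho}{in\mu_0}e^{inx}$, so the three leading contributions of $B^*\Psi_n^+$ cancel pairwise and one is left with $|B^*\Psi_n^+|=O(n^{-1})$; on the parabolic branch the roles of density and velocity are reversed, and $|B^*\Psi_n^-|$ carries a compensating factor of $|n|$ relative to $\norm{\Psi_n^-}$. Writing $\phi(t)=\sum_{n\neq 0}\bigl(c_n^+e^{\bar\lambda_n^+(T-t)}\Psi_n^++c_n^-e^{\bar\lambda_n^-(T-t)}\Psi_n^-\bigr)$ and combining an Ingham-type inequality for the hyperbolic family (whose imaginary spectral gap tends to $\bar u$, forcing $T>2\pi/\bar u$) with the exponential smoothing $e^{-\mu_0n^2(T-t)}$ of the parabolic family yields
\begin{equation*}
\int_0^T|B^*\phi|^2\,dt\gtrsim\sum_{n\neq 0}\frac{|c_n^+|^2}{n^2}+\sum_{n\neq 0}|c_n^-|^2,
\end{equation*}
which after reading the right-hand side back in the original variables is precisely $\norm{\phi(0)}^2_{\dot H^{-1}_{\per}\times\dot L^2}$. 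The main obstacle I expect is constructing the biorthogonal family: although the two branches are asymptotically well separated, they can come arbitrarily close at small $|n|$, so the non-resonance hypothesis is used to rule out exact coincidences and a finite-dimensional perturbation argument must absorb the low-frequency modes.

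For part (ii) the same spectral analysis provides sharpness. Testing the putative observability inequality on single-mode adjoint data $\phi_T=\Psi_n^+$ gives $\norm{\phi(0)}_{\dot H^{-s}_{\per}\times\dot L^2}\sim|n|^{-s}$, driven by the dominant density component, while the cancellation $|B^*\Psi_n^+|=O(n^{-1})$ combined with the boundedness of $\Re\bar\lambda_n^+$ yields $\norm{B^*\phi}_{L^2(0,T)}\sim|n|^{-1}$ with a constant depending only on $T$ and the system parameters. For every $s<1$ and every $T>0$ the ratio $|n|^{-s}/|n|^{-1}=|n|^{1-s}$ blows up as $|n|\to\infty$, contradicting any inequality of the form $\norm{\phi(0)}_{\dot H^{-s}_{\per}\times\dot L^2}\le C\norm{B^*\phi}_{L^2(0,T)}$ and thereby excluding null controllability in $\dot H^s_{\per}(0,2\pi)\times\dot L^2(0,2\pi)$ for $0\le s<1$.
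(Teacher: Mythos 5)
Your overall strategy is the paper's: duality reduces part (i) to the observability inequality \eqref{obs_inq_b_vel}, the adjoint is diagonalised in Fourier modes, the hyperbolic/parabolic observation terms are estimated exactly as in Lemma \ref{lem_obs_est_b}, the combined Ingham inequality of Lemma \ref{lem_ingham} gives the lower bound, and part (ii) is disproved by testing on the hyperbolic eigenfunctions, just as in the paper. (Your observation functional $\bar\rho\,\phi^\rho+\bar u\,\phi^u+\mu_0\phi^u_x$ corresponds to using the unweighted $L^2$ pairing instead of the weighted inner product; that is only a change of normalisation and is harmless.) However, there is a genuine gap in part (i). You claim that the hypothesis $\frac{2\sqrt{b\bar\rho-\bar u^2}}{\mu_0}\notin\mb N$ makes \emph{every} eigenvalue algebraically simple, so that the eigenvectors form a Riesz basis and the two exponential families are disjoint. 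That is not what the hypothesis gives: it only excludes the coincidence $\nu_{n_1}^p=\nu_{-n_1}^p$ (geometric multiplicity $2$, which kills approximate controllability). It does \emph{not} exclude $\frac{2\sqrt{b\bar\rho}}{\mu_0}=n_0\in\mb N$, in which case $\nu_{\pm n_0}^h=\nu_{\pm n_0}^p$ is a double eigenvalue with a Jordan block (Lemma \ref{prop_ev_b}); then the eigenfunctions alone are not complete (Proposition \ref{prop_rb_b} requires generalized eigenfunctions), the families $\{\nu_n^h\}$ and $\{\nu_n^p\}$ are not disjoint, and Lemma \ref{lem_ingham} cannot be applied directly. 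Handling this case is not a routine ``finite-dimensional perturbation'': for the velocity observation one also needs the extra estimate \eqref{well_posed_vel_b_2}, which replaces the missing hidden-regularity bound on $b\sigma_2+\bar u v_2+\mu_0(v_2)_x$ near $t=0$ when the data are only in $\dot H^{-1}_{\per}\times\dot L^2$; this is precisely where boundary control differs from interior control, and your proposal does not address it.

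A second, more local problem is your displayed lower bound $\int_0^T\mod{B^*\phi}^2\,dt\gtrsim\sum_{n\neq0}\mod{c_n^+}^2/n^2+\sum_{n\neq0}\mod{c_n^-}^2$ and the claim that the right-hand side ``is precisely'' $\norm{\phi(0)}^2_{\dot H^{-1}_{\per}\times\dot L^2}$. The parabolic part must carry the weights $e^{2\Re(\nu_n^p)T}$: Lemma \ref{lem_ingham} (and any Fattorini--Russell type biorthogonal estimate) only yields $\sum_n\mod{c_n^-}^2\mod{n}^2e^{2\Re(\nu_n^p)T}$, and the unweighted bound $\sum_n\mod{c_n^-}^2$ is false in general, since the normalised decaying exponentials $\{e^{\nu_n^p(T-t)}\}$ do not form a Riesz sequence in $L^2(0,T)$ (backward parabolic ill-posedness). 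Likewise $\norm{\phi(0)}^2_{\dot H^{-1}_{\per}\times\dot L^2}$ itself contains these exponential factors on the parabolic branch, cf. \eqref{est_norm_b2}. Once you restore the weights on both sides the argument closes exactly as in the paper, so this is repairable, but as written the displayed inequality and its identification with the norm of $\phi(0)$ are incorrect. Part (ii) of your proposal is correct and coincides with the paper's proof.
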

}

\begin{remark}\label{rem_lac_b}
\noindent Following the proof of Theorem \ref{thm_den_b} - Part \eqref{lac_dens_b}, lack of null controllability of the system \eqref{lcnse_b}-\eqref{in_cd_b}-\eqref{bd_cd_b2} cannot be obtained when the time is small, in particular, when $0<T<\frac{2\pi}{\bar{u}}$. {\blue However,} the lack of controllability at small time may be possible to obtain by constructing a Gaussian beam, as mentioned in \cite[Theorem 1.5]{Debayan15} for the interior control case. Further, null controllability of the system \eqref{lcnse_b} at time $T=\frac{2\pi}{\bar{u}}$ is inconclusive in both cases, whether there is a control acting in density or velocity.
\end{remark}
{\blue
The following result shows that the restriction on the coefficients $\Big(\frac{2\sqrt{b\bar{\rho}-\bar{u}^2}}{\mu_0}\notin\mb{N}\Big)$ is required to achieve null controllability of the system \eqref{lcnse_b}.
\begin{proposition}\label{prop_lac_app_b}
If $\frac{2\sqrt{b\bar{\rho}-\bar{u}^2}}{\mu_0}\in\mb{N}$, the system \eqref{lcnse_b}-\eqref{in_cd_b}-\eqref{bd_cd_b1} $($resp. \eqref{lcnse_b}-\eqref{in_cd_b}-\eqref{bd_cd_b2}$)$ is not approximately controllable at any time $T>0$ in the space $(L^2(0,2\pi))^2$.
\end{proposition}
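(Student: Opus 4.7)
The plan is to produce a nontrivial solution $\Phi(t,x)$ of the adjoint Cauchy problem associated with \eqref{lcnse_b} whose scalar boundary observation is identically zero on $(0,T)$; by the standard duality characterization, this rules out approximate controllability in $(L^2(0,2\pi))^2$. The obstruction will be a two-dimensional eigenspace of the adjoint generator $A^*$ arising from a resonance between the Fourier modes $\pm n_0$, where $n_0 := \frac{2\sqrt{b\bar\rho - \bar u^2}}{\mu_0}\in\mb{N}$.

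First I would diagonalize $A^*$ on the Fourier basis $\{e^{inx}\}_{n\in\mb{Z}}$. On each mode, $A^*$ reduces to a $2\times 2$ matrix $A_n^*$, and after the shift $\lambda = \xi - i\bar u n$ the characteristic equation becomes $\xi^2 - \mu_0 n^2 \xi + b\bar\rho n^2 = 0$, which yields, whenever $\mu_0^2 n^2 < 4 b\bar\rho$,
\begin{equation*}
\lambda_n^{\pm} = -i\bar u n + \frac{\mu_0 n^2 \pm i n\sqrt{4b\bar\rho - \mu_0^2 n^2}}{2}.
\end{equation*}
At $n = \pm n_0$ the hypothesis gives $4b\bar\rho - \mu_0^2 n_0^2 = 4\bar u^2$, so the inner square root collapses to $2\bar u$ and the $\pm i\bar u n$ generated inside the bracket cancels the $-i\bar u n$ sitting in front. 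Consequently $\lambda_{n_0}^{+} = \lambda_{-n_0}^{+} = \lambda_0 := \mu_0 n_0^2/2$; a glance at the formula shows that no other mode produces this real value (the imaginary part $-\bar u n$ rules out the overdamped modes, and the remaining matching condition forces $n^2=n_0^2$), while at $n=\pm n_0$ one has $\lambda_n^{+}\neq\lambda_n^{-}$ because $\bar u n_0\neq 0$, so $E_{\lambda_0}:=\ker(A^*-\lambda_0 I)$ is spanned by exactly two linearly independent eigenfunctions $\Phi_1(x) = e^{in_0 x}(\alpha_+,\beta_+)^T$ and $\Phi_2(x) = e^{-in_0 x}(\alpha_-,\beta_-)^T$ taken from the one-dimensional kernels of $A_{\pm n_0}^{*}-\lambda_0 I$. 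Thus the algebraic and geometric multiplicities of $\lambda_0$ are both equal to two.

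Finally I would invoke the duality characterization of approximate controllability: the system is approximately controllable in $(L^2(0,2\pi))^2$ if and only if the scalar observation map $B^*$ dual to the boundary control has the unique continuation property, namely $B^* e^{tA^*}\Phi_*\equiv 0$ on $(0,T)$ forces $\Phi_*=0$. Since $B^*$ is a scalar-valued linear functional and $\dim E_{\lambda_0}=2$, the restriction $B^*|_{E_{\lambda_0}}$ must have a nontrivial kernel; picking any $0\neq\Phi_*\in E_{\lambda_0}\cap\ker B^*$ and setting $\Phi(t,x) = e^{\lambda_0 t}\Phi_*(x)$ gives a nontrivial adjoint trajectory with $B^*\Phi(t) = e^{\lambda_0 t}B^*\Phi_*\equiv 0$ on $(0,T)$, which contradicts unique continuation and establishes the failure of approximate controllability for every $T>0$.

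The main technical step I am glossing over is writing down the adjoint boundary conditions and the explicit form of the observation functional $B^*$ in each of \eqref{bd_cd_b1} and \eqref{bd_cd_b2}, which differ as expected after integration by parts. Once $B^*$ is identified, the dimension counting $\dim B^*(E_{\lambda_0})\leq 1 < 2 = \dim E_{\lambda_0}$ applies uniformly in both cases and yields the proposition, since only the scalar nature of $B^*$ (and not its precise expression) is used.
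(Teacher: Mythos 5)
Your proposal is essentially the paper's own argument: the resonance $\frac{2\sqrt{b\bar\rho-\bar u^2}}{\mu_0}\in\mb{N}$ produces a real eigenvalue of $A^*$ shared by the modes $\pm n_0$ with a two-dimensional eigenspace, and since the boundary observation is a scalar functional it must vanish on some nonzero eigenfunction combination (the paper's explicit choice $C=-\mc{B}^*_{\rho}\Phi_{-n_1}^p$, $D=\mc{B}^*_{\rho}\Phi_{n_1}^p$ is exactly such an element), which kills unique continuation and hence approximate controllability, uniformly in the density and velocity cases. The only blemish is a harmless sign slip: the double eigenvalue is $-\tfrac{\mu_0 n_0^2}{2}$, not $+\tfrac{\mu_0 n_0^2}{2}$, since $A^*$ generates a contraction semigroup and has no spectrum in the open right half-plane.
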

\noindent We note here that, due to the backward uniqueness property of the system \eqref{lcnse_b}, null controllability at time $T$ will give us the approximate controllability at that time $T$ for both the systems \eqref{lcnse_b}-\eqref{in_cd_b}-\eqref{bd_cd_b1} and \eqref{lcnse_b}-\eqref{in_cd_b}-\eqref{bd_cd_b2}, see Section \ref{back_uniq} for more details.
}

\subsection{The non-barotropic case}
We next consider the Navier-Stokes system for compressible non-barotropic fluids linearized around some constant steady state $(\bar{\rho},\bar{u},\bar{\theta})$ with $\bar{\rho},\bar{u},\bar{\theta}>0$
\begin{align}\label{lcnse_nb}
\begin{dcases}
\rho_t(t,x)+\bar{u}\rho_x(t,x)+\bar{\rho}u_x(t,x)=0,\ &\text{in}\ (0,T)\times(0,2\pi),\\
u_t(t,x)-\frac{\lambda+2\mu}{\bar{\rho}}u_{xx}(t,x)+\frac{R\bar{\theta}}{\bar{\rho}}\rho_x(t,x)+\bar{u}u_x(t,x)+R\theta_x(t,x)=0, \ &\text{in}\ (0,T)\times(0,2\pi),\\
\theta_t(t,x)-\frac{\kappa}{\bar{\rho}c_{\nu}}\theta_{xx}(t,x)+\frac{R\bar{\theta}}{c_{\nu}}u_x(t,x)+\bar{u}\theta_x(t,x)=0, \ &\text{in}\ (0,T)\times(0,2\pi).
\end{dcases}
\end{align}
The initial conditions are
\begin{equation}\label{in_cd_nb}
\rho(0,x)=\rho_0(x),\ \ u(0,x)=u_0(x),\ \ \theta(0,x)=\theta_0(x), \ \ x\in(0,2\pi).
\end{equation}
In this case, we will consider three different problems, based on the act of control, by imposing any one of the following boundary conditions on the system \eqref{lcnse_nb}.
\begin{itemize}
\item \textbf{Control in density:}
\begin{equation}\label{bd_cd_nb1}
\rho(t,0)=\rho(t,2\pi)+p(t),\ u(t,0)=u(t,2\pi), \ u_x(t,0)=u_x(t,2\pi),\ \theta(t,0)=\theta(t,2\pi),\ \theta_x(t,0)=\theta_x(t,2\pi).
\end{equation}
\item \textbf{Control in velocity:}
\begin{equation}\label{bd_cd_nb2}
\rho(t,0)=\rho(t,2\pi),\ u(t,0)=u(t,2\pi)+q(t), \ u_x(t,0)=u_x(t,2\pi),\ \theta(t,0)=\theta(t,2\pi),\ \theta_x(t,0)=\theta_x(t,2\pi).
\end{equation}
\item \textbf{Control in temperature:}
\begin{equation}\label{bd_cd_nb3}
\rho(t,0)=\rho(t,2\pi),\ u(t,0)=u(t,2\pi), \ u_x(t,0)=u_x(t,2\pi),\ \theta(t,0)=\theta(t,2\pi)+r(t),\ \theta_x(t,0)=\theta_x(t,2\pi).
\end{equation}
\end{itemize}
for $t\in (0,T)$, where $p,q$ and $r$ are controls {\blue acting on the boundary and are given as the difference of the values at $x=0$ and $x=2\pi$}.

In this case also, we want to prove null controllability of the system \eqref{lcnse_nb} at a given time $T>0$ depending on the act of the control. Similar to the barotropic case, we will work on the Hilbert space $(\dot{L}^2(0,2\pi))^3$ {\blue to avoid the compatibility condition of the initial states.}
{\blue
\begin{definition}
Let $H$ be a Hilbert space. We say the system \eqref{lcnse_nb}-\eqref{in_cd_nb}-\eqref{bd_cd_nb1} $($resp. \eqref{lcnse_nb}-\eqref{in_cd_nb}-\eqref{bd_cd_nb2}, \eqref{lcnse_nb}-\eqref{in_cd_nb}-\eqref{bd_cd_nb3}$)$ is 
\begin{itemize}
\item \textit{null controllable} at time $T$ in the space $H$ if, for any $(\rho_0,u_0,\theta_0)\in H$ , there exists a control $p\in L^2(0,T)$ $($resp. $q,r\in L^2(0,T))$ such that the associated solution satisfies
\begin{equation*}
(\rho(T),u(T),\theta(T))=(0,0,0).
\end{equation*}
\item \textit{approximately controllable} at time $T$ in the space $H$ if, for any given $(\rho_0,u_0,\theta_0)$,\\$(\rho_T,u_T,\theta_T)\in H$ and any $\epsilon>0$, there exists a control $p\in L^2(0,T)$ $($resp. $q,r\in L^2(0,T))$ such that the associated solution satisfies
\begin{equation*}
\norm{(\rho(T),u(T),\theta(T))-(\rho_T,u_T,\theta_T)}_H\leq\epsilon.
\end{equation*}
\end{itemize}
\end{definition}
We next study mainly the null controllability of the system \eqref{lcnse_nb} at a given time $T>0$ starting from the initial condition \eqref{in_cd_nb} and with one of the boundary conditions \eqref{bd_cd_nb1}-\eqref{bd_cd_nb2} and \eqref{bd_cd_nb3}. Since the additional thermal energy equation satisfied by $\theta$ do not have any coupling with the density $\rho$, we can expect similar controllability results like the barotropic case. However, in this case, we have two parabolic equations with diffusion coefficients $\frac{\lambda+2\mu}{\bar{\rho}}$ and $\frac{\kappa}{\bar{\rho}c_{\nu}}$ and therefore by looking at \cite{Cara10,Luca13,Ammar14}, one question arises naturally:

``\textit{Under what conditions on these diffusion coefficients, the system \eqref{lcnse_nb} is null controllable?}"

\noindent In fact, we will prove that there exist diffusion coefficients for which the system \eqref{lcnse_nb} may not even be approximately controllable at any time $T>0$ in $(L^2(0,2\pi))^2$. However, under some stronger assumptions on the diffusion coefficients, we can prove null controllability of \eqref{lcnse_nb} at any given time $T>\frac{2\pi}{\bar{u}}$ in appropriate spaces (see Theorem \ref{thm_null_nb}).

Before going any further, we first denote the (positive) diffusion coefficients for the non-barotropic system}
\begin{equation}\label{diff_coeff_nb}
\lambda_0:=\frac{\lambda+2\mu}{\bar{\rho}},\ \ \kappa_0:=\frac{\kappa}{\bar{\rho}c_{\nu}},
\end{equation}
and define the set
\begin{equation}
\mc{S}:=\left\{(\lambda_0,\kappa_0)\ : \ \sqrt{\frac{\lambda_0}{\kappa_0}}\notin\mb{Q}\right\}.
\end{equation}
{\blue The reason behind introducing such a set $\mc{S}$ is explained at the end of this section. First we will state our next main results which concerns null and approximate controllability of the system \eqref{lcnse_nb}.

\begin{theorem}\label{thm_null_nb}
Let us assume that $(\lambda_0,\kappa_0)\in\mc{S}$ be such that there exists a $M>0$ with the property that 
\begin{equation}\label{cond_coeff_nb}
\mod{\sqrt{\frac{\lambda_0}{\kappa_0}}-\frac{a}{b}}>\frac{1}{b^M}
\end{equation}
holds for all rational numbers $\frac{a}{b}$. We further assume that all the eigenvalues of $A^*$ (defined by \eqref{op_A^*_nb}) have geometric multiplicity equal to $1$. Then,
\begin{enumerate}[(i)]
\item the system \eqref{lcnse_nb}-\eqref{in_cd_nb}-\eqref{bd_cd_nb1} is null controllable at any time $T>\frac{2\pi}{\bar{u}}$ in the space $(\dot{L}^2(0,2\pi))^3$.\label{null_dens_nb}
\item the systems \eqref{lcnse_nb}-\eqref{in_cd_nb}-\eqref{bd_cd_nb2} and \eqref{lcnse_nb}-\eqref{in_cd_nb}-\eqref{bd_cd_nb3} are null controllable at any time $T>\frac{2\pi}{\bar{u}}$ in the space $\dot{H}^1_{\per}(0,2\pi)\times(\dot{L}^2(0,2\pi))^2$.\label{null_vel_temp_nb}
\end{enumerate}
\end{theorem}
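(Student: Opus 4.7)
The plan is to prove null controllability by duality, reducing each of the two statements to an observability inequality for the adjoint system generated by $A^*$ (introduced in \eqref{op_A^*_nb}). Since all coefficients in \eqref{lcnse_nb} are constants and the dual boundary conditions are fully periodic, one can diagonalise $A^*$ by looking for eigenfunctions of the form $e^{inx}\bs{v}_n^{j}$, $n\in\mb{Z}$, which reduces the spectral problem to a family of $3\times 3$ matrices. For each $n$ this gives three eigenvalues $\nu_n^{1},\nu_n^{2},\nu_n^{3}$ whose asymptotics for $\mod{n}$ large are two parabolic branches $\nu_n^{1}\sim-\lambda_0 n^2$, $\nu_n^{2}\sim-\kappa_0 n^2$ (each with lower order corrections), and one hyperbolic branch $\nu_n^{3}\sim i\bar{u}n$ whose real part remains bounded.

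The second step is a careful spectral analysis. Within each parabolic family one has the standard quadratic gap, and the hyperbolic branch is separated from the parabolic ones for large $\mod{n}$ because $\Re(\nu_n^{3})$ stays bounded. The Diophantine hypothesis \eqref{cond_coeff_nb} is precisely what prevents near-resonance between the two parabolic families: it yields a polynomial lower bound of the form $\mod{\nu_n^{1}-\nu_m^{2}}\gtrsim(\mod{n}+\mod{m})^{-M'}$ for some $M'>0$ depending on $M$. Together with the standing assumption that every eigenvalue of $A^*$ has geometric multiplicity $1$, these gap estimates allow the construction of a Riesz basis of eigenfunctions $\{\Phi_n^{j}\}$ and of a biorthogonal family $\{\Psi_n^{j}\}$ in $L^2(0,T)$ whose $L^2$-norms grow at most like $e^{\epsilon(\mod{\Re(\nu_n^{j})}+1)}$ for every $\epsilon>0$, via the Fattorini--Russell moment method as adapted in \cite{Luca13,Ammar14}.

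Next I would expand the adjoint solution as $\sigma(t,x)=\sum_{n,j}c_{n,j}e^{\nu_n^{j}t}\Phi_n^{j}(x)$, express the boundary observation (dual to whichever of $p,q,r$ is active) in terms of the Fourier data, and pair against the biorthogonal family to obtain an estimate of the form
\begin{equation*}
\int_{0}^{T}\mod{\mathrm{obs}(\sigma)(t)}^2\,dt\geq C\sum_{n,j}\mod{c_{n,j}}^2\,e^{2\Re(\nu_n^{j})T}\,\mod{\mathrm{obs}(\Phi_n^{j})}^2,
\end{equation*}
with the Ingham-type threshold $T>\frac{2\pi}{\bar{u}}$ coming from the hyperbolic branch, i.e.\ the round-trip time of the transport at speed $\bar{u}$. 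Translating back through duality then yields null controllability in $(\dot{L}^2(0,2\pi))^3$ when the control acts in the density (Part \eqref{null_dens_nb}) and in $\dot{H}^1_{\per}(0,2\pi)\times(\dot{L}^2(0,2\pi))^2$ when it acts in the velocity or the temperature (Part \eqref{null_vel_temp_nb}); the one-derivative loss in the latter two cases reflects the fact that the observation on $u$ or $\theta$ is essentially a normal trace for the parabolic components, so that $\mod{\mathrm{obs}(\Phi_n^{j})}$ decays one order of $\mod{n}$ faster on those branches than on the hyperbolic density trace.

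The main obstacle will be the Ingham/biorthogonal construction in the presence of two parabolic scales of different diffusivity: the classical single-family Fattorini--Russell argument handles one parabolic sequence via the quadratic gap, but ruling out near-collisions between the sequences $\{\lambda_0 n^2\}$ and $\{\kappa_0 m^2\}$ demands precisely the Diophantine condition \eqref{cond_coeff_nb}, and the exponent $M$ directly controls the loss in the biorthogonal norms and hence the admissible growth of the initial data. A secondary issue, which I expect to be routine once the spectrum is understood, is verifying that $\mathrm{obs}(\Phi_n^{j})\neq 0$ for every $(n,j)$ in each of the three control scenarios: this reduces to checking that a specific component of the $3\times 3$ eigenvector does not vanish, a property that the geometric multiplicity $=1$ hypothesis keeps from failing identically; any finitely many exceptional modes can be absorbed into the observability constant by a standard compactness--uniqueness argument.
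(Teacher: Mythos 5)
Your proposal is correct in outline and follows essentially the same route as the paper: duality to an observability inequality, Fourier reduction of $A^*$ to $3\times3$ matrices giving one hyperbolic and two parabolic eigenvalue branches, a Riesz basis of eigenfunctions, the hyperbolic Ingham inequality (which is where the threshold $T>\frac{2\pi}{\bar{u}}$ enters) combined with a biorthogonal family for the merged parabolic family whose construction uses precisely the Diophantine condition \eqref{cond_coeff_nb} as in \cite{Cara10,Luca13}, and lower bounds on the boundary observations of the eigenfunctions which determine the state spaces. Two small corrections to your mechanism: it is the observation of the \emph{hyperbolic} branch that is of size $1/\mod{n}$ when the control acts in velocity or temperature (the parabolic observations are of order $\mod{n}$ and $1$), which is what forces $\dot{H}^1_{\per}$ regularity in the density component; and the finitely many eigenvalues of algebraic multiplicity larger than one (but geometric multiplicity $1$) are handled in the paper via generalized eigenfunctions and a finite-dimensional norm-equivalence argument rather than by compactness--uniqueness, though your remark plays the same role.
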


\begin{proposition}\label{prop_lac_null_nb}
The following statements hold:
\begin{enumerate}[(i)]
\item The system \eqref{lcnse_nb}-\eqref{in_cd_nb}-\eqref{bd_cd_nb1} is not null controllable at small time $0<T<\frac{2\pi}{\bar{u}}$ in the space $(\dot{L}^2(0,2\pi))^3$.\label{lac_dens_nb}
\item The systems \eqref{lcnse_nb}-\eqref{in_cd_nb}-\eqref{bd_cd_nb2} and \eqref{lcnse_nb}-\eqref{in_cd_nb}-\eqref{bd_cd_nb3} are not null controllable at any time $T>0$ in the space $\dot{H}^s_{\per}(0,2\pi)\times(\dot{L}^2(0,2\pi))^2$ for any $0\leq s<1$.\label{lac_vel_temp_nb}
\end{enumerate}
\end{proposition}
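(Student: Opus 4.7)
My plan is to prove both parts by showing that the boundary observability inequality, which by the Hilbert Uniqueness Method is equivalent to the corresponding null controllability statement, must fail. The approach extends the two-equation barotropic arguments of Theorem \ref{thm_den_b}\eqref{lac_dens_b} and Theorem \ref{thm_vel_b}\eqref{lac_vel_b} to the three-equation non-barotropic setting. The common spectral ingredient is the analysis of $A^*$ on each Fourier mode $e^{inx}$: the restriction is a $3\times 3$ matrix whose eigenvalues, for $|n|$ large, split into one hyperbolic branch $\lambda_n^h = -i\bar{u}n + O(1)$ and two strongly damped parabolic branches with real parts asymptotic to $-\lambda_0 n^2$ and $-\kappa_0 n^2$ respectively. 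A direct computation shows that the hyperbolic eigenvector has the form $V_n^h = (1,\alpha_n,\beta_n)^T$ with $\alpha_n,\beta_n = O(n^{-1})$, while the two parabolic eigenvectors have dominant velocity and temperature components.

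For Part \eqref{lac_dens_nb}, I would work in the density-control setting, where the adjoint boundary observation is essentially the trace of the first component. Restricting the observability inequality to wave packets built from the hyperbolic family $\Phi_n^h = V_n^h\,e^{inx}$ reduces the question to a moment problem for the exponentials $\{e^{\lambda_n^h t}\}$ on $(0,T)$. Since $\Im(\lambda_n^h) \sim -\bar{u}n$ and the asymptotic gap between consecutive imaginary parts is $\bar{u}$, an Ingham-type lower bound for this moment problem is possible only when $T \geq 2\pi/\bar{u}$. For $T < 2\pi/\bar{u}$, a classical Paley--Wiener construction yields a unit $\ell^2$-sequence of coefficients whose exponential sum has vanishing $L^2(0,T)$-norm, violating observability. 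The two parabolic branches, whose real parts tend to $-\infty$ as $n^2$, contribute a negligible perturbation that can be absorbed into the moment-problem argument.

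For Part \eqref{lac_vel_temp_nb}, I would directly adapt the sharpness argument of Theorem \ref{thm_vel_b}\eqref{lac_vel_b}. Testing the observability inequality on adjoint initial data proportional to $\Phi_n^h$ for large $n$ and comparing how the two sides scale in $n$ yields the desired obstruction: the boundary observation associated with a velocity (resp.\ temperature) control isolates the velocity (resp.\ temperature) component of the adjoint at the boundary, whose size on the hyperbolic branch is dictated by $\alpha_n$ (resp.\ $\beta_n$) and hence of order $n^{-1}$ smaller than the dominant density component that controls the dual-space norm of $\Phi_n^h$. A direct scaling comparison then forces the regularity index on density to satisfy $s\geq 1$, so null controllability cannot hold in $\dot{H}^s_{\per}(0,2\pi)\times(\dot{L}^2(0,2\pi))^2$ for $s<1$. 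The velocity and temperature control cases are treated in the same way because the second and third components of $V_n^h$ enter symmetrically in the asymptotic expansion.

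The main obstacle I anticipate is in Part \eqref{lac_dens_nb}: unlike the barotropic case, there are now two parabolic families of eigenvalues, and one must verify that the moment-problem counterexample on the hyperbolic branch is not cancelled or destabilised by parabolic contributions. This should follow from the $O(n^2)$ spectral gap between the hyperbolic and parabolic branches, but it requires a uniform biorthogonal estimate -- in effect, showing that the hyperbolic exponentials $\{e^{\lambda_n^h t}\}$ form a minimal system in $L^2(0,T)$ even after adjoining the two parabolic families, with quantitatively controlled biorthogonal norms. Once this uniformity is established, the non-observability at small time propagates cleanly through the full three-branch decomposition.
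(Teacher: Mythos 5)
Your Part~\eqref{lac_vel_temp_nb} is essentially the paper's argument (test the observability inequality on the hyperbolic eigenfunctions $\Phi_n^h$ and compare the $|n|^{-2s}$ lower bound for the dual norm with an $|n|^{-2}$ upper bound for the observation), but your justification of the key upper bound is not right as stated. For velocity control the observation functional \eqref{obs_op_vel_nb} contains the density trace $R\bar{\theta}\xi(2\pi)$ and the derivative term $\lambda_0\bar{\rho}\eta_x(2\pi)$, both of which are $O(1)$ on $\Phi_n^h$ (since $\eta_n^h=\alpha_2^ne^{inx}$ with $\alpha_2^n=O(|n|^{-1})$, so $(\eta_n^h)_x$ is $O(1)$); the observation does not ``isolate the velocity component''. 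The bound $|\mathcal{B}_u^*\Phi_n^h|\leq C/|n|$ comes from an exact algebraic cancellation, $\mathcal{B}_u^*\Phi_n^h=\bar{\rho}\nu_3^n\alpha_2^n$, obtained by substituting the eigenvector equations (this is Lemma \ref{lem_obs_est_nb}); the same remark applies to the temperature observation, where $\kappa_0 in\,\alpha_3^n=O(1)$ individually. Without invoking this cancellation your scaling comparison would give $O(1)$ for the observation and no contradiction.

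In Part~\eqref{lac_dens_nb} there is a genuine gap, and your anticipated ``main obstacle'' is not the real one. Since the terminal data can be taken entirely in the span of the hyperbolic eigenfunctions, the adjoint solution contains no parabolic modes at all, so no minimality or biorthogonality statement involving the two parabolic families is needed; disproving observability only requires one family of data along the hyperbolic branch. The actual difficulty is that your Paley--Wiener construction produces a sum with vanishing $L^2(0,T)$-norm only for the unperturbed transport exponentials $e^{(i\bar{u}n-\bar{\omega})t}$, whereas the true exponents are $\nu_n^h=i\bar{u}n-\bar{\omega}+O(|n|^{-1})$ and the density observation also carries the nonzero velocity trace, of size $O(|n|^{-1})$ on $\Phi_n^h$. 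Saying these perturbations ``can be absorbed'' is precisely the missing step: a crude term-by-term bound gives an error of fixed size comparable to $\|(a_n)\|_{\ell^2}$, which yields no contradiction. The paper closes this gap constructively: it picks $\tilde{\sigma}_T$ supported in $(\bar{u}T,2\pi)$ (so the transport trace vanishes on $(0,T)$, which is where $T<2\pi/\bar{u}$ enters) and then filters out the low frequencies with $P^N(-i\,d/dx)$, so that both the deviation $\sigma^N-\tilde{\sigma}^N$ at the boundary and the velocity trace $v^N(\cdot,2\pi)$ are $O(1/N)$ relative to $\|\sigma^N(0)\|_{\dot{L}^2}$, forcing $1\leq C/N^2$ and a contradiction. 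You would need this (or an equivalent quantitative perturbation argument) to make your moment-problem plan rigorous.
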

}
\begin{remark}
\noindent {\blue Similar to the barotropic case (Remark \ref{rem_lac_b}),} lack of null controllability of the system \eqref{lcnse_nb}-\eqref{in_cd_nb}-\eqref{bd_cd_nb2} or \eqref{lcnse_nb}-\eqref{in_cd_nb}-\eqref{bd_cd_nb3} {\blue is open} when the time is small, in particular, when $0<T<\frac{2\pi}{\bar{u}}$. {\blue Moreover,} null controllability of the system \eqref{lcnse_nb} at time $T=\frac{2\pi}{\bar{u}}$ is inconclusive in all cases, whether there is a control act in density, velocity or temperature.
\end{remark}
{\blue
Like the barotropic case, null controllability at some time $T$ implies approximate controllability at that time $T$ of the system \eqref{lcnse_nb}, thanks to the backward uniqueness property of \eqref{lcnse_nb} (Section \ref{back_uniq}) and the following result shows that the restriction $(\lambda_0,\kappa_0)\in\mc{S}$ is not sufficient to conclude null controllability of \eqref{lcnse_nb}.
\begin{proposition}\label{prop_lac_null_app_nb}
There exist constants $(\lambda_0,\kappa_0)\in\mc{S}$ and $\bar{\rho},\bar{u},\bar{\theta},R,c_0>0$ for which the systems \eqref{lcnse_nb}-\eqref{in_cd_nb}-\eqref{bd_cd_nb1}, \eqref{lcnse_nb}-\eqref{in_cd_nb}-\eqref{bd_cd_nb2} and \eqref{lcnse_nb}-\eqref{in_cd_nb}-\eqref{bd_cd_nb3} are not approximately controllable at any time $T>0$ in the space $(L^2(0,2\pi))^3$.
\end{proposition}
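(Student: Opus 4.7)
The strategy is to combine HUM duality with the explicit construction of a resonant parameter tuple. By standard duality, each of the three approximate controllability statements in $(L^2(0,2\pi))^3$ at time $T>0$ is equivalent to a unique continuation property for the backward adjoint semigroup: $B^{*} e^{t A^{*}} Z \equiv 0$ on $(0,T)$ implies $Z=0$, where $B^{*}$ is the boundary observation operator associated with the given control. It is therefore enough to exhibit parameters $(\lambda_0,\kappa_0)\in\mc{S}$ and $\bar{\rho},\bar{u},\bar{\theta},R,c_0>0$ for which $A^{*}$ possesses an eigenvalue $\sigma$ of geometric multiplicity at least two, since on the resulting two-dimensional eigenspace each of the three scalar observation functionals has a nontrivial kernel.

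Because the boundary conditions in \eqref{bd_cd_nb1}--\eqref{bd_cd_nb3} are $2\pi$-periodic on eigenfunctions (the control only enters as a boundary jump), the operator $A^{*}$ is unitarily diagonalized by the Fourier basis $\{e^{inx}\}_{n\in\mb{Z}}$, and its restriction to the $n$-th mode is a $3\times 3$ matrix $M_n=M_n(\lambda_0,\kappa_0,R,\bar{\theta},\bar{\rho},\bar{u},c_0)$ with entries polynomial in $n$; thus $\spec(A^{*})=\bigcup_{n\in\mb{Z}}\spec(M_n)$. The plan is to arrange a cross-mode resonance: pick two distinct integers $n_0\neq m_0$ and parameters such that the characteristic cubics $\det(M_{n_0}-\sigma I)$ and $\det(M_{m_0}-\sigma I)$ share a common root $\sigma$. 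Eliminating $\sigma$ through the resultant reduces this to a single polynomial equation in the seven real parameters $(\lambda_0,\kappa_0,R,\bar{\theta},\bar{\rho},\bar{u},c_0)$.

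Since this is one algebraic constraint in seven unknowns, and the irrationality condition $\sqrt{\lambda_0/\kappa_0}\notin\mb{Q}$ is generic in $(\lambda_0,\kappa_0)$, such parameter tuples can be chosen inside $\mc{S}$: first fix an irrational value of $\sqrt{\lambda_0/\kappa_0}$, then view the resultant as a nontrivial polynomial relation on the remaining five positive parameters and solve it, say, for $R\bar{\theta}$. With such parameters in hand, $\sigma$ has two linearly independent eigenfunctions $\Phi^{(n_0)}(x)=e^{in_0 x}v_{n_0}$ and $\Phi^{(m_0)}(x)=e^{im_0 x}v_{m_0}$ in $\ker(A^{*}-\sigma I)$, and for each choice of control (density, velocity, temperature) the boundary observation $B^{*}$ restricts to $\Span(\Phi^{(n_0)},\Phi^{(m_0)})$ as a scalar linear functional. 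Picking a nonzero $\Phi_0$ in its kernel yields $B^{*} e^{t A^{*}}\Phi_0 = e^{t\sigma}B^{*}\Phi_0 \equiv 0$, which by duality rules out approximate controllability at every $T>0$ for that system; applying the same reasoning separately to the three observation maps (with potentially different $\Phi_0$'s inside the same eigenspace) yields the full statement.

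The main obstacle will be verifying that the resonance equation produced by the resultant is genuinely a nontrivial polynomial whose zero set meets $\mc{S}$, rather than tacitly forcing $\sqrt{\lambda_0/\kappa_0}\in\mb{Q}$ as a byproduct. A clean way to handle this is to examine the leading-in-$n$ behaviour of the cubics: the top-order-in-$n$ part of the characteristic polynomial of $M_n$ is dictated by the diffusive-transport block with coefficients $\lambda_0,\kappa_0,\bar{u}$ alone, while the coupling parameters $R,\bar{\theta},\bar{\rho},c_0$ appear only through lower-order corrections. This algebraic separation allows one to fix the irrationality of $\sqrt{\lambda_0/\kappa_0}$ independently of the resonance constraint, yielding explicit admissible parameter tuples and closing the argument.
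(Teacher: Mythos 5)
Your mechanism for disproving approximate controllability is exactly the paper's: reduce by duality to the failure of unique continuation for the adjoint system, produce an eigenvalue of $A^*$ with geometric multiplicity two coming from two different Fourier modes, and annihilate the scalar boundary observation on that two-dimensional eigenspace. The paper's proof in Section \ref{sec_lac_app_nb} does precisely this, choosing the terminal datum $C\Phi_{1}+D\Phi_{-1}$ with $C=-\mc{B}^*_{\rho}\Phi_{-1}$ and $D=\mc{B}^*_{\rho}\Phi_{1}$, which is your ``nonzero element of $\ker B^*$ on the eigenspace'' step, and the same construction is repeated (implicitly) for the velocity and temperature observations.

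The genuine gap is in your existence argument for the resonant parameters. Claiming that the resultant of the two characteristic cubics is ``one algebraic constraint in seven unknowns'' that ``can be solved, say, for $R\bar{\theta}$'' inside the positive cone and inside $\mc{S}$ is exactly what needs proof: you must verify that, after fixing an irrational $\sqrt{\lambda_0/\kappa_0}$, the resultant is not identically zero as a function of the remaining parameters, and that it vanishes at a point with all physical constants positive (note also that $\bar{\rho}$ does not enter \eqref{ch_pol_rn_nb} at all, so the effective parameters are $\lambda_0,\kappa_0,\bar{u},R\bar{\theta},\frac{R^2\bar{\theta}}{c_0}$). Your proposed fix via the leading-order-in-$n$ behaviour does not help here, because the resonance must be produced at two \emph{fixed} small modes, where no asymptotic separation is available. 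The paper closes this step by simply exhibiting a tuple: $\bar{\rho}=\bar{u}=\lambda_0=R\bar{\theta}=\frac{R^2\bar{\theta}}{c_0}=1$, $\kappa_0=2$ (so $\sqrt{\lambda_0/\kappa_0}=1/\sqrt{2}\notin\mb{Q}$), for which $-1$ is an eigenvalue of $A^*$ at both $n=1$ and $n=-1$ (Remark \ref{rem_ev_nb}). A cheap way to repair your argument in the same spirit: take $m_0=-n_0$; since the coefficients are real, the spectrum at mode $-n_0$ is the complex conjugate of that at mode $n_0$, so a cross-mode common eigenvalue is equivalent to the single real condition that the cubic \eqref{ch_pol_rn_nb} at $n=n_0$ possess a real root, which can be enforced by an explicit choice of constants without any resultant or genericity argument.
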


\begin{remark}
Similar to the barotropic case, there exist constants for which the operator $A^*$ (defined by \eqref{op_A^*_nb}) has eigenvalues with geometric multiplicity greater than $1$. However, characterization of these constants is quite difficult due to the complicated cubic characteristic polynomial \eqref{ch_pol_rn_nb}.
\end{remark}

}

\subsection{An Ingham-type inequality}
{\blue
One of the main ingredients to prove the null controllability results for both barotropic and non-barotropic systems (Theorem \ref{thm_den_b} - Theorem \ref{thm_vel_b} - Theorem \ref{thm_null_nb}) is the following Ingham-type inequality:}
\begin{lemma}[{\cite[Proposition 1.6]{Bhandari22}}]\label{lem_ingham}
Let $\{\nu_n^h\}_{n\in\mathbb Z}$ and $\{\nu_n^p\}_{n\in \mathbb Z}$ be two sequences in $\mathbb C$ with the following properties: there exists $N\in \mathbb N$, such that
\begin{itemize}
\item[(H1)] for all $n, l\in \mathbb Z$,  $\nu_n^h\neq\nu_l^h$ unless $n=l$;
\item[(H2)] $\nu_n^h=\beta+\tau ni+e_n$ for all  $|n|\geq N$;
\end{itemize}
where $\tau>0, \beta\in\mathbb C$ and $\{e_n\}_{|n|\geq N}\in\blue\ell_2$.

Also, there exists constants $A_0\geq0$, $B_0\geq\delta$ with $\delta>0$ and some $\epsilon>0, r>1$ for which $\{\nu_n^p\}_{n\in\mathbb Z}$ satisfies
\begin{itemize}
\item[(P1)] for all $n, l\in \mb Z$, $\nu_n^p\neq\nu_l^p$ unless $n=l$;
\item[(P2)] $\frac{-\Re(\nu_n^p)}{|\Im(\nu_n^p)|}\geq \widehat c$ for some $\widehat c>0$ and for all $\mod{n}\geq N$;
\item[(P3)] $\mod{\nu_n^p-\nu_l^p}\geq\delta\mod{n^r-l^r}$ for all $n\neq l$ with $\mod{n},\mod{l}\geq N$ and
\item[(P4)] $\blue \epsilon(A_0+B_0\mod{n}^r)\leq\mod{\nu_n^p}\leq A_0+B_0\mod{n}^r$ for all $\mod{n}\geq N$.
\end{itemize}
We also assume that	the families are disjoint, i.e.,  $$ \left\{\nu_n^h, n\in \mathbb{Z}\right\}\cap\left\{\nu_n^p, n\in \mathbb{Z}\right\}=\emptyset.$$ 

\smallskip

\noindent Then, for any time $T>\frac{2\pi}{\tau}$, there exists a positive constant $C$ depending only on $T$ such that
\begin{align}\label{ingham-ineq}
\int_0^T \left|\sum_{n\in \mathbb Z}a_n e^{\nu_n^pt}+\sum_{n\in\mathbb Z}b_ne^{\nu_n^ht} \right|^2dt\geq C\left(\sum_{n\in\mathbb Z}| a_n|^2e^{2\Re(\nu_n^p)T}+\sum_{n\in\mathbb Z}| b_n|^2\right),
\end{align}
for all sequences $\{a_n\}_{n\in \mathbb Z}$ and $\{b_n\}_{n\in\mathbb Z}$ in $\blue\ell_2$.
\end{lemma}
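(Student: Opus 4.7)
My plan is to prove \eqref{ingham-ineq} by constructing a biorthogonal family $\{\theta_m^p,\theta_m^h\}\subset L^2(0,T)$ to the combined exponential system $\{e^{\nu_n^p t},e^{\nu_n^h t}\}_{n\in\mathbb{Z}}$ with weighted $L^2$-bounds $\|\theta_m^p\|_{L^2(0,T)}\le C e^{-\Re(\nu_m^p)T}$ and $\|\theta_m^h\|_{L^2(0,T)}\le C$, and then to deduce the inequality by a Fattorini--Russell-type duality argument. I would treat the two spectra in isolation first and couple them at the end.

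For the hyperbolic subfamily, hypotheses (H1)--(H2) present $\{\nu_n^h\}$ as an $\ell^2$-perturbation of the arithmetic progression $\beta+\tau ni$. Classical Ingham--Beurling theory gives that the unperturbed exponentials form a Riesz basis of their closed span in $L^2(0,T)$ whenever $T>2\pi/\tau$, and an $\ell^2$-perturbation preserves this property (via Kadec's $\tfrac14$-theorem, or a Hilbert--Schmidt correction of the Gram matrix). This yields
\[
\int_0^T\Bigl|\sum_n b_n e^{\nu_n^h t}\Bigr|^2 dt \asymp \sum_n |b_n|^2.
\]

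For the parabolic subfamily, the summability $\sum_n 1/|\nu_n^p|<\infty$ -- a direct consequence of (P4) with $r>1$ -- permits a Weierstrass--Hadamard construction of an entire function $\Pi(z)$ of exponential type exactly $T$, with simple zeros at $\{\nu_n^p\}$ and with sharp vertical-line bounds that exploit the parabolic profile (P2). Setting
\[
\theta_m^p(t):=\mathcal F^{-1}\!\left[\frac{\Pi(z)}{(z-\nu_m^p)\,\Pi'(\nu_m^p)}\right]\!(t)\,\chi_{(0,T)}(t)
\]
produces a biorthogonal family whose norm estimate reduces to a sharp lower bound on $|\Pi'(\nu_m^p)|$; the gap condition (P3) and the growth (P4) are precisely what is needed to obtain $\|\theta_m^p\|_{L^2(0,T)}\le C e^{-\Re(\nu_m^p)T}$. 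This is the classical Fattorini--Russell/Ammar-Khodja--Benabdallah construction and it delivers the parabolic half of \eqref{ingham-ineq}.

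The coupling of the two estimates is the main obstacle. Disjointness of $\{\nu_n^p\}$ and $\{\nu_n^h\}$ together with the previous steps guarantees minimality of the joint system; the delicate point is to upgrade individual biorthogonals into a \emph{joint} biorthogonal family while preserving the weighted bounds. My approach is to correct each $\theta_m^p$ (respectively $\theta_m^h$) by a linear combination of duals from the opposite family so as to annihilate the cross-pairings, which reduces the problem to inverting a Gram-type operator whose off-diagonal entries $\int_0^T e^{\nu_j^p t}\,\overline{\theta_k^h(t)}\,dt$ are small: the parabolic exponentials decay rapidly on most of $(0,T)$, while the hyperbolic duals remain uniformly bounded. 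Quantifying this smallness (via $r>1$ together with (P2)) shows that the Gram operator is a compact -- indeed arbitrarily small on tails -- perturbation of the identity on the relevant weighted $\ell^2$ space, which closes the argument and yields \eqref{ingham-ineq} for $T>2\pi/\tau$.
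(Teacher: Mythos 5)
Your overall ingredients (an Ingham/Riesz-basis estimate for the hyperbolic family, a Fattorini--Russell biorthogonal construction for the parabolic family) are the same ones behind the cited result (see Remark \ref{rem_ingham} and \eqref{para_ingham}--\eqref{hyper_ingham}), but the step where you couple the two families is where your argument breaks down, and that coupling is precisely the content of the lemma. A biorthogonal family with the bounds you state only yields \emph{termwise} coefficient estimates: from $b_m=\int_0^T f\,\overline{\theta_m^h}$ and $\|\theta_m^h\|_{L^2(0,T)}\leq C$ you get $|b_m|\leq C\|f\|_{L^2(0,T)}$ for each $m$, and since the hyperbolic weight in \eqref{ingham-ineq} is $1$ for infinitely many indices, this can never be summed to $\sum_n|b_n|^2\leq C\|f\|^2$. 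What is needed is a quadratic (Riesz-sequence) estimate for the hyperbolic exponentials \emph{inside the mixed family}; the Kadec-type argument you invoke gives it only for the hyperbolic family alone, in its own closed span, and using it for the mixed function requires controlling the projection of the parabolic part onto that span -- exactly the cross-term estimate your sketch does not supply. (The standard route handles this by exploiting that the parabolic contribution is exponentially small on a translated subinterval of length still larger than $2\pi/\tau$, or by a multiplier vanishing on the parabolic spectrum, and only then feeding the parabolic remainder to the biorthogonal estimate.) Note also a quantitative slip: with $\|\theta_m^p\|\leq Ce^{-\Re(\nu_m^p)T}$ the parabolic half does not sum either, since it gives $|a_m|^2e^{2\Re(\nu_m^p)T}\leq C\|f\|^2$ uniformly in $m$; one needs the usual gain $\|\theta_m^p\|\leq C_\epsilon e^{-\Re(\nu_m^p)T}e^{\epsilon\Re(\nu_m^p)}$ (as in the estimate recalled in Remark \ref{rem_ingham}) so that the resulting series $\sum_m e^{2\epsilon\Re(\nu_m^p)}$ converges.

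The Gram-operator repair you propose does not close this gap as stated. The off-diagonal pairings $\int_0^T e^{\nu_j^p t}\overline{\theta_k^h(t)}\,dt$ are small only as $|j|\to\infty$ (through the decay $\|e^{\nu_j^p\cdot}\|_{L^2(0,T)}\sim|\Re\nu_j^p|^{-1/2}$); for a fixed low parabolic mode they are not small in $k$, so the perturbation of the identity is at best compact, not small. A compact perturbation of the identity is invertible only if it is injective, and injectivity here encodes a quantitative minimality of the \emph{union} of the two families -- disjointness of the spectra gives linear independence but no uniform bound, and minimality of a union of two minimal families is not automatic. In other words, the invertibility you need is essentially equivalent to the inequality you are trying to prove, so the argument is circular at its crucial point. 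To make a proof along your lines work, you would have to first establish the mixed estimate with the hyperbolic family intact (e.g., by the subinterval/absorption argument above) and only then upgrade to a joint biorthogonal family, rather than the other way around.
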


{\blue
	
\smallskip

\begin{remark}\label{rem_ingham}	
In the proof of Lemma \ref{lem_ingham}, we (\cite{Bhandari22}) have used the following parabolic and hyperbolic Ingham inequalities for the families $(e^{\nu_n^pt})_{n\in\mb{Z}}$ and $(e^{\nu_n^ht})_{n\in\mb{Z}}$ respectively:
\begin{align}
\int_{0}^{T}\mod{\sum_{n\in\mb{Z}}a_ne^{\nu_n^pt}}^2dt&\geq C_1\sum_{n\in\mb{Z}}\mod{a_n}^2e^{2\Re(\nu_n^p)T},\ \ \text{for any}\ T>0,\label{para_ingham}\\
C_2\sum_{n\in\mb{Z}}\mod{b_n}^2\leq\int_{0}^{T}\mod{\sum_{n\in\mb{Z}}b_ne^{\nu_n^ht}}^2dt&\leq C_3\sum_{n\in\mb{Z}}\mod{b_n}^2,\ \ \text{for any}\ T>\frac{2\pi}{\tau},\label{hyper_ingham}
\end{align}
for some $C_i>0, i=1,2,3$. If the sequence $(\nu_n^h)_{n\in\mb{Z}}$ satisfy hypotheses (H1)-(H2), then the hyperbolic Ingham inequality \eqref{hyper_ingham} can be deduced from the proof of Ingham \cite{Ingham}; see for instance \cite[Proposition 3.1]{Chowdhury14}. On the other hand, the proof of parabolic Ingham inequality \eqref{para_ingham} requires the existence of a biorthogonal family $(q_k)_{k\in\mb{Z}}\subset L^2(0,T)$ of $(e^{\nu_n^pt})_{n\in\mb{Z}}$ with the estimate $\norm{q_k}_{L^2(0,T)}\leq C_{\epsilon}e^{\epsilon\Re(\nu_n^p)}$ for some $C_{\epsilon}>0$, see for instance \cite[Proposition 3.2]{Lopez02} and \cite[Proposition 3.2-3.3]{Chowdhury14}; see also \cite[Theorem 1.1]{Hansen91} for the existence of a biorthogonal family in this setup. Note that, the hypotheses (P1)--(P4) can be relaxed to the following:
\begin{equation}\label{new_hyp}
\begin{cases}
\Re(-\nu_n^p)\geq\hat{c}\mod{\nu_n^p},\ \ \mod{\nu_n^p-\nu_m^p}\geq\delta\mod{n-m},\ \ \forall n,m\in\mb{Z},\\
\sum_{n\in\mb{Z}}\frac{1}{\mod{\nu_n^p}}<\infty,
\end{cases}
\end{equation}
for some $\hat{c},\delta>0$. In this setup, we refer to \cite[Proposition 3.4]{Cara10} for a proof of the inequality \eqref{para_ingham}; see also \cite[Proposition 3.2]{Lopez02} for a version of \eqref{para_ingham} when the sequence $(\nu_n^p)_{n\in\mb{Z}}\subset\mb{R}$. Moreover, when the eigenvalues $(\nu_n^p)_{n\in\mb{Z}}$ fails to satisfy the gap condition (hypothesis (P3)) but admits a good approximation (by rational numbers), there exists a biorthogonal sequence to the family $(e^{\nu_n^pt})_{n\in\mb{Z}}$ in $L^2(0,T)$ with the required estimate (see for instance \cite[Lemma 2]{Luca13}), giving the inequality \eqref{para_ingham} in this case also. As a consequence, the combined parabolic-hyperbolic Ingham-type inequality \eqref{ingham-ineq} can also be deduced under these new assumptions on the sequence $(\nu_n^p)_{n\in\mb{Z}}$.
\end{remark}
}

\subsection*{Notations} For any vector $\mathbf{v}$, we denote its transpose by $\mathbf{v}^{\dagger}$ (instead of $\mathbf{v}^T$). Throughout the article, $C>0$ denotes a generic constant that may depend on the time $T$. {\blue Further, we denote the complex conjugate of $f(t,x)$ by $\overline{f(t,x)}$ instead to $\bar{f}(t,x)$ to avoid confusion with the real constants $\bar{\rho},\bar{u}$ and $\bar{\theta}$.}

\bigskip

Proving null controllability of the systems \eqref{lcnse_b} and \eqref{lcnse_nb} using a boundary control is equivalent to proving an observability inequality for the corresponding adjoint systems. Spectrum of the associated linearized operators (for the adjoint systems) and the above Ingham-type inequality \eqref{ingham-ineq} plays a crucial role to prove such observability inequalities. For the system \eqref{lcnse_b} (barotropic fluids), spectrum of the associated adjoint operator consists of two branches of complex eigenvalues, namely, the hyperbolic and parabolic branches. The hyperbolic branch has eigenvalues with the real part converging to $-\frac{b\bar{\rho}}{\mu_0}$, whereas real part of the parabolic branch diverges to $-\infty$. We have obtained explicit expressions of eigenvalues and eigenfunctions in terms of a Riesz basis (See Lemma \ref{lemma_eigen_b} for details). For the non-barotropic fluids (that is, system \eqref{lcnse_nb}), we get three branches of complex eigenvalues; one is of the hyperbolic type, and two are parabolic types. Similar to the barotropic case, the real part of the hyperbolic branch converges to $-\frac{R\bar{\theta}}{\lambda_0}$ and real parts of both the parabolic branches diverge to $-\infty$. In this case, we have obtained explicit expressions of eigenfunctions and asymptotic behavior of the eigenvalues (Lemma \ref{lemma_eigen_nb}). We also proved that the eigenfunctions form a Riesz basis in $(\dot{L}^2(0,2\pi))^2$ for the barotropic system ({\blue Proposition \ref{prop_rb_b}}) and {\blue in} $(\dot{L}^2(0,2\pi))^3$ for the non-barotropic system ({\blue Proposition \ref{prop_rb_nb}}). Then, by writing the solutions to the corresponding adjoint systems in terms of the eigenfunctions, the null controllability results have been proved using the combined parabolic-hyperbolic Ingham type inequality \eqref{ingham-ineq}.

\bigskip

A vast amount of literature is available on the controllability of Navier-Stokes equations for incompressible fluids. For instance, one can see the works of Coron \cite{Coron95}, Coron and Fursikov \cite{Coron96}, Fursikov and Imanuvilov \cite{Fursikov96,Fursikov99}, Imanuvilov \cite{Imanuvilov98,Imanuvilov01}, Fern\'andez-Cara et al.  \cite{Cara04,Cara06}, Guerrero \cite{Guerrero06}, Coron and Guerrero \cite{Coron09}, Chapouly \cite{Chapouly09}, Coron and Lissy \cite{Coron14}, Badra, Ervedoza and Guerrero \cite{Badra16}, Coron, Marbach and Sueur \cite{Coron20}. In comparison, for compressible fluids, less works are available on the Navier-Stokes system's controllability. In this context, we first mention the work of Ervedoza et al. \cite{Ervedoza12}, where the authors established local exact controllability of one dimensional {\blue (nonlinear)} compressible Navier-Stokes system at a large time $T$ in the space $H^3(0,L)\times H^3(0,L)$ using two boundary controls. This result has been improved in \cite{Ervedoza18} where the null controllability is achieved in the space $H^1(0,L)\times H^1(0,L)$. {\blue However, studying the controllability of the (nonlinear) compressible Navier-Stokes system using only one boundary control is challenging and an interesting open problem. In this article, we focus only on the linearized system and study controllability properties.}

It is known in \cite{Chowdhury12} that, for barotropic fluids, the one-dimensional compressible Navier-Stokes system linearized around $(\bar{\rho},0)$ (with $\bar{\rho}>0$) cannot be null controllable at any time $T>0$ by using a boundary control or a localized distributed control. For the linearized system around $(\bar{\rho},\bar{u})$ (with $\bar{\rho},\bar{u}>0$), the authors in \cite{Chowdhury14} proved null controllability of the Navier-Stokes equations (with homogeneous periodic boundary conditions) for viscous, compressible isothermal barotropic fluids at time $T$ (large) in the space $\dot{H}^1_{\per}(0,2\pi)\times L^2(0,2\pi)$, when there is an interior control {\blue acting} only in the velocity equation. They also proved that the space $\dot{H}^1_{\per}(0,2\pi)\times L^2(0,2\pi)$ is optimal in the sense that if we choose the initial state from $\dot{H}^s_{\per}(0,2\pi)\times L^2(0,2\pi)$ with $0\leq s<1$, the linearized system cannot be null controllable at any time $T>0$. In the case of linearization around $(\bar{\rho},\bar{u})$ with $\bar{\rho},\bar{u}>0$, the compressible Navier-Stokes system \eqref{lcnse_b} is equivalent (in some sense) to the transformed system in \cite{Martin13}. Using a moving distributed control, the authors in \cite{Martin13} proved the null controllability of a one-dimensional structurally damped wave equation in the space $H^{s+2}\times H^s$ for $s>\frac{15}{2}$. There is a generalization to this result in higher dimensions by Chaves-Silva, Rosier, and Zuazua \cite{Silva14}. Inspired by the work of Martin, Rosier and Rouchon \cite{Martin13}, Chowdhury and Mitra in \cite{Chowdhury15b} proved the null controllability of the same compressible Navier-Stokes system linearized around $(\bar{\rho},\bar{u})$ at time $T$ (large) by using a boundary control acting on the velocity component through periodic conditions, provided the initial states are regular enough, more precisely, in the space $\dot{H}^{1+s}_{\per}(0,2\pi)\times \dot{H}^s_{\per}(0,2\pi)$ with $s>4.5$. However, the question of null controllability at a large time $T$ in the space $\dot{H}^{1+s}_{\per}(0,2\pi)\times \dot{H}^s_{\per}(0,2\pi)$ with $s\leq4.5$ was unaddressed in \cite{Chowdhury15b}, and up to the author's knowledge, there has been no improvement of this result. In this article, we have answered this question (Theorem \ref{thm_vel_b}). In fact, we have proved null controllability of the linearized compressible Navier-Stokes system for barotropic fluids \eqref{lcnse_b} at large time $T$ in the space {\blue $\dot{H}^1_{\per}(0,2\pi)\times \dot{L}^2(0,2\pi)$} by using one boundary control acting in the velocity component. We have also proved that our result is optimal in the sense that the system cannot be null controllable by a boundary control ({\blue acting} in velocity) when the initial states belong to the space $\dot{H}^s_{\per}(0,2\pi)\times \dot{L}^2(0,2\pi)$ with $0\leq s<1$. {\blue On the other hand,} when a control {\blue is acting} only in the density component through periodic boundary conditions, we have established null controllability of the linearized system \eqref{lcnse_b}-\eqref{in_cd_b}-\eqref{bd_cd_b1} at large time $T$ in the space $\blue (\dot{L}^2(0,2\pi))^2$ and that null controllability fails at small time $T$. {\blue In this context, it is worth mentioning that the authors in \cite{Chowdhury15b} could only proved null controllability in the space $\dot{H}^{1+s}_{\per}(0,2\pi)\times\dot{H}^s_{\per}(0,2\pi)$ with $s>4.5$ is because of the biorthogonal estimate (corresponding to the hyperbolic family $(e^{\nu_n^ht})_{n\in\mb{Z}}$) of order $\mod{k}^4$ (see Proposition 3.2 in \cite{Chowdhury15b}), which forces the initial state to be more regular. However, in our case, we have used the Ingham-type inequality \eqref{ingham-ineq} which do not require any biorthogonal estimate of the family $(e^{\nu_n^ht})_{n\in\mb{Z}}$, giving the optimal space for null controllability of \eqref{lcnse_b}. Furthermore, null controllability of the system \eqref{lcnse_b} under the assumption $\frac{2\sqrt{b\bar{\rho}-\bar{u}^2}}{\mu_0}\in\mb{N}$ was not addressed properly in \cite[Remark 3.4]{Chowdhury15b} and we have proved that, under this assumption, the system \eqref{lcnse_b} fails to satisfy the unique continuation property; as a result the system \eqref{lcnse_b} cannot be approximately controllable in $(L^2(0,2\pi))^2$ at any time $T>0$.

}

\smallskip

For the non-barotropic fluids, it is known in \cite{Debayan15} that the compressible Navier-Stokes system linearized around $(\bar{\rho},0,\bar{\theta})$ (with $\bar{\rho},\bar{\theta}>0$) is not null controllable at any time $T>0$ by using a boundary control or a localized distributed control. For the linearization around $(\bar{\rho},\bar{u},\bar{\theta})$ with $\bar{\rho},\bar{u},\bar{\theta}>0$, it is only known that the system is not null controllable at small time by a localized interior control or a boundary control acting on the velocity component (see \cite[Theorem 1.5]{Debayan15} for instance). To the author's knowledge, no controllability result is known for the linearized system around $(\bar{\rho},\bar{u},\bar{\theta})$, that is, the system \eqref{lcnse_nb}, when the time is large, which is studied for the first time in this article. {\blue Like the barotropic case, controllability of the (nonlinear) compressible Navier-Stokes system for non-barotropic fluids using boundary control(s) is intricate to study and is an open problem. In this article, we study null and approximate controllability of only the linearized version, mainly the system \eqref{lcnse_nb}. Since the system \eqref{lcnse_nb} consists of a transport equation coupled with two parabolic equations, it is worth mentioning some results known for the coupled parabolic equations. In \cite{Cara10}, the authors considered a $2$-parabolic system with diffusion coefficients $d_1,d_2>0$ and with zeroth order coupling. They proved that the coupled parabolic system is (boundary) approximately controllable at time $T>0$ if and only if $d_1=d_2$ or $\sqrt{\frac{d_1}{d_2}}\notin\mb{Q}$. Moreover, they also proved that, when $d_1=d_2$, the system is (boundary) null controllable at any time $T>0$. If $\sqrt{\frac{d_1}{d_2}}\notin\mb{Q}$, the authors in \cite{Luca13} provided an example of a system which is approximately controllable but not null controllable at any time $T>0$. This phenomena occurs because eigenvalues of the associated operator condensate; as a consequence, fails to satisfy the gap condition, which is very crucial to obtain $L^2$-estimate of the biorthogonal family. However, they \cite{Luca13} also proved that, if $d_1=1$ and $\sqrt{d_2}\notin\mb{Q}$ is such that we can approximate it as $\mod{\sqrt{d_2}-\frac{a}{b}}>\frac{C}{b^N}$ for some $C,N>0$ and all rational numbers $\frac{a}{b}$, then the system is null controllable at any time $T>0$. Such approximation is referred as "Diophantine approximations". Thus our assumption in Theorem \ref{thm_null_nb} seems appropriate. We refer to \cite{Ammar14} for more insights in this matter, in terms of condensation index of the eigenvalues and minimal time for null controllability of one dimensional coupled parabolic equations. In the context of controllability results for general coupled parabolic equations, we refer to the works of \cite{Guerrero07,Benabdallah20,Benabdallah14,Ammar11a,Ammar05,Ammar11} (and references therein).

}

The main difficulty in the linearized compressible Navier-Stokes system is the presence of transport and parabolic coupling. The thermoelasticity system is also an example involving both transport and parabolic effects. Lebeau and Zuazua\cite{Lebeau98} have studied distributed {\blue controllability} for thermoelasticity systems. Following \cite{Lebeau98}, Beauchard et al. in \cite{Beauchard20} proved null controllability for some coupled transport-parabolic systems when an interior control {\blue is acting on the system}. They proved null controllability at large time $T$ in the space $L^2(0,2\pi)\times \dot{L}^2(0,2\pi)$ by one interior control {\blue acting} in the density equation and in the space $\dot{H}^2(0,2\pi)\times H^2(0,2\pi)$ when only one interior control {\blue is acting} in the velocity equation; see also \cite{Koenig23} {\blue for an improvement of the controllability space to $\dot{H}^1(0,2\pi)\times L^2(0,2\pi)$ in the velocity (internal) control case}.

{\blue
	
In \cite{Bhandari22}, Bhandari, Chowdhury, Dutta and the author considered the linearized compressible Navier-Stokes system \eqref{lcnse_b} with Dirichlet and mixed (Periodic-Dirichlet type) boundary conditions. We proved that the system \eqref{lcnse_b} (with Dirichlet boundary conditions) is null controllable at large time $T$ in the space $L^2(0,1)\times L^2(0,1)$ by using a boundary control acting in the density part. On the other hand, when a boundary control is acting in the velocity component, we proved that the system \eqref{lcnse_b} (with Dirichlet-Periodic boundary conditions) is null controllable at large time $T$ in the space $H^{\frac{1}{2}}(0,1)\times L^2(0,1)$. We have applied the Ingham-type inequality \eqref{ingham-ineq} and the moments method to prove these controllability results.
In contrast to \cite{Bhandari22},} the main contribution of this article is that we prove the null controllability of the one-dimensional linearized compressible Navier-Stokes system for both barotropic and non-barotropic fluids by using only one boundary control {\blue (and with different boundary conditions than \cite{Bhandari22})}. We consider all the possible cases of the act of control for both systems \eqref{lcnse_b} and \eqref{lcnse_nb}. Further, we {\blue obtain} better regularity of the initial states for the controllability of barotropic system \eqref{lcnse_b} compared to \cite{Chowdhury15b}. In the case of non-barotropic fluids, since the transport equation does not affect the temperature equation, it is pretty natural to obtain similar spaces of null controllability of the system \eqref{lcnse_nb}. The combined parabolic-hyperbolic Ingham type inequality (\Cref{lem_ingham}) helps us obtain each case's best possible results (with respect to the state space). Our results cannot be obtained as a consequence of interior control results by the extension method. In addition, when the boundary control acts in the density component, we prove that both systems \eqref{lcnse_b} and \eqref{lcnse_nb} are not null controllable at small time. The proof is inspired from \cite{Beauchard20} and is independent of that in \cite{Debayan15}.

The result stated in \Cref{thm_den_b} is similar to the results in \cite{Beauchard20}, showing that we can achieve the space $(\dot{L}^2(0,2\pi))^2$ in the case of only one boundary control ({\blue acting} in density) also. Likewise the case of interior control \cite{Chowdhury14,Beauchard20,Koenig23}, we also obtain similar results for our boundary control case ({\blue acting} in velocity) (\Cref{thm_vel_b}).

\bigskip

The rest of the article is organized as follows:
\begin{itemize}
\item[--] In Section 2, we prove {\blue all the controllability results for the barotropic system} \eqref{lcnse_b} at a large time $T$ using a boundary control that acts either in density or velocity, that is, \Cref{thm_den_b} and \Cref{thm_vel_b}. {\blue The proof of lack of approximate controllability at any time $T$ under the restriction on the coefficients (\Cref{prop_lac_app_b}) is also included in this section.}
\item[--] In Section 3, {\blue we consider the non-barotropic system \eqref{lcnse_nb} and give all the related controllability results} based on the act of the control, namely the proofs of \Cref{thm_null_nb} and {\blue \Cref{prop_lac_null_nb}. We have also included the proof of lack of approximate controllability result at any time $T$ (\Cref{prop_lac_null_app_nb}).}
\item [--] In section 4, we give few comments and {\blue open questions regarding controllability results under Dirichlet or Neumann boundary conditions and the backward uniqueness property.}
\item[--] For the sake of completeness, we give the proof of well-posedness result (\Cref{lem_ex_sg_nb}) for the non-barotropic system \eqref{lcnse_nb} in Appendix \ref{app_well_p_nb}.
\end{itemize}
\subsection*{Acknowledgments}
The author would like to thank his PhD supervisor Dr. Shirshendu Chowdhury for suggesting this problem and fruitful discussions. The author would also like to thank Dr. Rajib Dutta for careful reading and improvement of the manuscript. {\blue The author would like to express sincere thanks to the anonymous referees for their helpful comments, suggestions and corrections that greatly improved the manuscript.}

This work is supported by the Prime Minister's Research Fellowship (ref. no. 41-1/2018-TS-1/PMRF), Government of India.
\section{Controllability of linearized compressible Navier-Stokes system (barotropic)}
\subsection{Functional setting}
We denote the positive constants
\begin{equation}\label{diff_coeff_b}
\mu_0:=\frac{\blue\lambda+2\mu}{\bar{\rho}},\ \ b:=a\gamma\bar{\rho}^{\gamma-2}.
\end{equation}
{\blue Here we re-denote the diffusion coefficient by $\mu_0$ (instead of $\lambda_0$) to separate it from the non-barotropic case.} We define the inner product in the space $(L^2(0,2\pi))^2$ as follows
\begin{equation*}
\ip{\vector{f_1}{g_1}}{\vector{f_2}{g_2}}_{\blue L^2\times L^2}:=b\int_{0}^{2\pi}f_1(x)\overline{f_2(x)}dx+\bar{\rho}\int_{0}^{2\pi}g_1(x)\overline{g_2(x)}dx,
\end{equation*}
for $f_i,g_i\in L^2(0,2\pi), i=1,2$. {\blue From now on-wards, the notation $\ip{\cdot}{\cdot}_{L^2\times L^2}$ means the above inner product in $L^2\times L^2$.} We write the system \eqref{lcnse_b} in abstract differential equation
\begin{equation}\label{lcnse_abs_b}
U^{\prime}(t)=AU(t),\ \ U(0)=U_0,\ \ t\in (0,T),
\end{equation}
where $U:=(\rho,u)^{\dagger}, U_0:=(\rho_0,u_0)^{\dagger}$ and the operator $A$ is given by
\begin{equation*}
A:=\begin{pmatrix}-\bar{u}\partial_x&-\bar{\rho}\partial_x\\[4pt]-b\partial_x&\mu_0\partial_{xx}-\bar{u}\partial_x\end{pmatrix}
\end{equation*}
with the domain
\begin{equation*}
\mc{D}(A):=H^1_{\per}(0,2\pi)\times H^2_{\per}(0,2\pi).
\end{equation*}
The adjoint of the operator $\blue A$ is given by
\begin{equation}\label{op_A*_b}
A^*:=\begin{pmatrix}\bar{u}\partial_x&\bar{\rho}\partial_x\\[4pt]b\partial_x&\mu_0\partial_{xx}+\bar{u}\partial_x\end{pmatrix}
\end{equation}
with the same domain $\mc{D}(A^*)=\mc{D}(A)$. The adjoint system is then given by
\begin{equation}\label{lcnse_adj_b}
\begin{dcases}
-\sigma_t(t,x)-\bar{u}\sigma_x(t,x)-\bar{\rho}v_x(t,x)=0,\ &\text{in}\ (0,T)\times(0,2\pi),\\
-v_t(t,x)-\mu_0v_{xx}(t,x)-\bar{u}v_x(t,x)-b\sigma_x(t,x)=0,\ &\text{in}\ (0,T)\times(0,2\pi),\\
\sigma(t,0)=\sigma(t,2\pi),\ \ v(t,0)=v(t,2\pi),\ \  v_x(t,0)=v_x(t,2\pi),\ &t\in (0,T),\\
\sigma(T,x)=\sigma_T(x),\ \ v(T,x)=v_T(x),\ \ &x\in(0,2\pi).
\end{dcases}
\end{equation}
We now write the adjoint system with source terms $f$ and $g$.
\begin{equation}\label{lcnse_adj_b_s}
\begin{dcases}
-\sigma_t(t,x)-\bar{u}\sigma_x(t,x)-\bar{\rho}v_x(t,x)=f,\ &\text{in}\ (0,T)\times(0,2\pi),\\
-v_t(t,x)-\mu_0v_{xx}(t,x)-\bar{u}v_x(t,x)-b\sigma_x(t,x)=g,\ &\text{in}\ (0,T)\times(0,2\pi),\\
\sigma(t,0)=\sigma(t,2\pi),\ \ v(t,0)=v(t,2\pi),\ \ v_x(t,0)=v_x(t,2\pi),\ &t\in (0,T),\\
\sigma(T,x)=\sigma_T(x),\ \ v(T,x)=v_T(x),\ \ &x\in(0,2\pi).
\end{dcases}
\end{equation}

\subsection{Well-posedness of the systems}
This section devotes to the well-posedness of the system \eqref{lcnse_b} under the boundary conditions \eqref{bd_cd_b1}, \eqref{bd_cd_b2} and the initial conditions \eqref{in_cd_b}, and the adjoint system \eqref{lcnse_adj_b_s}.

\smallskip

\noindent When there is no control acting on the system, we have the existence of solutions to the system \eqref{lcnse_b} using semigroups.
\begin{lemma}[{\cite[Lemma 2.1]{Chowdhury14}}]
The operator $A$ (resp. $A^*$) generates a $\mc{C}^0$-semigroup of contractions on $(L^2(0,2\pi))^2$. Moreover, for every $U_0\in(L^2(0,2\pi))^2$ the system \eqref{lcnse_abs_b} admits a unique solution $U$ in $\mc{C}^0([0,T];(L^2(0,2\pi))^2)$ and
\begin{equation*}
\norm{U(t)}_{(L^2(0,2\pi))^2}\leq C\norm{U_0}_{(L^2(0,2\pi))^2}
\end{equation*}
for all $t\geq0$.
\end{lemma}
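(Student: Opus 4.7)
The plan is to apply the Lumer--Phillips theorem with respect to the weighted inner product $\ip{\cdot}{\cdot}_{L^2\times L^2}$ (with weights $b$ and $\bar{\rho}$). Since $\mc{D}(A) = H^1_{\per}(0,2\pi) \times H^2_{\per}(0,2\pi)$ is dense in $(L^2(0,2\pi))^2$, it suffices to prove that $A$ is dissipative and that $\lambda I - A$ is surjective for some $\lambda > 0$; the assertion for $A^*$ then follows either by an entirely analogous computation or by invoking the fact that the $\mc{C}^0$-adjoint of a contraction semigroup is itself a contraction semigroup.

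For dissipativity, I would compute $\Re\ip{AU}{U}_{L^2\times L^2}$ for $U = (\rho,u)^{\dagger} \in \mc{D}(A)$ and integrate by parts on $(0,2\pi)$, observing that the periodic boundary conditions kill all boundary terms. The diagonal transport contributions $-b\bar{u}\int_0^{2\pi}\rho_x\overline{\rho}\,dx$ and $-\bar{\rho}\bar{u}\int_0^{2\pi}u_x\overline{u}\,dx$ are purely imaginary since $\int_0^{2\pi}(|f|^2)_x\,dx = 0$. The crucial observation is that the weights $b$ and $\bar{\rho}$ are chosen precisely so that both cross-coupling contributions appear with the common coefficient $b\bar{\rho}$; after a single integration by parts their sum collapses to $-b\bar{\rho}(z - \overline{z})$ with $z := \int_0^{2\pi} u_x\overline{\rho}\,dx$, which is again purely imaginary. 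Only the parabolic term survives, yielding
\begin{equation*}
\Re\ip{AU}{U}_{L^2\times L^2} = \bar{\rho}\mu_0\,\Re\int_0^{2\pi} u_{xx}\overline{u}\,dx = -\bar{\rho}\mu_0\int_0^{2\pi} |u_x|^2\,dx \le 0,
\end{equation*}
so $A$ is dissipative.

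For the range condition, given $(F,G) \in (L^2(0,2\pi))^2$ and $\lambda > 0$, I would solve $(\lambda I - A)(\rho,u)^{\dagger} = (F,G)^{\dagger}$ by expanding in Fourier series on the torus. Each mode $n \in \mb{Z}$ reduces to a $2\times 2$ linear system whose determinant $(\lambda + in\bar{u})(\lambda + \mu_0 n^2 + in\bar{u}) + n^2 b\bar{\rho}$ is bounded below in modulus by a constant multiple of $1 + n^2$ for $\lambda$ sufficiently large. Cramer's rule together with the $\ell^2$-summability of the Fourier coefficients of $F,G$ then delivers a solution with $\rho \in H^1_{\per}(0,2\pi)$ and $u \in H^2_{\per}(0,2\pi)$, proving maximality.

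By Lumer--Phillips, $A$ (and likewise $A^*$) generates a $\mc{C}^0$-semigroup of contractions on $(L^2(0,2\pi))^2$. Hence \eqref{lcnse_abs_b} admits the unique mild solution $U(t) = e^{tA}U_0 \in \mc{C}^0([0,T];(L^2(0,2\pi))^2)$ and the contractivity yields the claimed estimate $\norm{U(t)}_{(L^2(0,2\pi))^2} \le \norm{U_0}_{(L^2(0,2\pi))^2}$. The main technical subtlety is the cancellation of the hyperbolic cross terms in the dissipativity computation, which is precisely what motivates the non-standard weighting of the inner product; everything else reduces to standard semigroup theory.
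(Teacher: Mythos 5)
Your proposal is sound in structure, and it is worth noting that the paper itself does not prove this lemma at all: it is quoted from \cite[Lemma 2.1]{Chowdhury14}, and the only generation proof the paper actually writes out is the non-barotropic analogue (Lemma \ref{lem_ex_sg_nb}) in Appendix \ref{app_well_p_nb}. There the strategy is the same as yours for dissipativity (Lumer--Phillips with the weighted inner product, the weights chosen exactly so that the hyperbolic cross terms cancel after one integration by parts), but the maximality step is handled quite differently: the paper adds an artificial viscosity $-\epsilon\xi_{xx}$ to the transport equation, solves the regularized resolvent problem by Lax--Milgram in $(H^1_{\per})^3$, passes to the limit $\epsilon\to0$ with uniform estimates, and then recovers the periodicity conditions. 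Your Fourier diagonalization of $\lambda I-A$ is a more elementary route that exploits the periodic setting (constant coefficients on the torus), and it avoids the limiting argument entirely; the paper's regularization argument is heavier but does not rely on an explicit mode-by-mode solution and so transfers to settings where Fourier synthesis is unavailable (e.g.\ the Dirichlet problems mentioned in Section 4).

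One quantitative point in your maximality step needs repair. With $D_n=(\lambda+in\bar{u})(\lambda+\mu_0n^2+in\bar{u})+n^2b\bar{\rho}$, the bound $\mod{D_n}\geq c(1+n^2)$ is true for $\lambda$ large, but by itself it does not give the membership $(\rho,u)\in H^1_{\per}\times H^2_{\per}$ from Cramer's rule: the numerator of $\hat{\rho}_n$ contains the factor $\lambda+\mu_0n^2+in\bar{u}$, so this bound only yields $\mod{n\hat{\rho}_n}\lesssim \mod{n}\,\mod{\hat F_n}$, and similarly only $u\in H^1_{\per}$. Two easy fixes: either use the imaginary part, $\Im D_n=\bar{u}n(2\lambda+\mu_0n^2)$, which gives the sharper bound $\mod{D_n}\geq c\mod{n}(1+n^2)$ for $n\neq0$ and then Cramer's rule does deliver $\rho\in H^1_{\per}$, $u\in H^2_{\per}$ directly; or keep the weaker bound, conclude first $\rho\in L^2$, $u\in H^1_{\per}$, and bootstrap through the equations themselves ($\bar{u}\rho_x=F-\lambda\rho-\bar{\rho}u_x\in L^2$, using $\bar{u}>0$, and then $\mu_0u_{xx}\in L^2$ from the second equation). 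With either repair the argument is complete, and the claims about $A^*$ and the contraction estimate follow as you say.
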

\noindent The following lemma shows the existence of a unique solution to the adjoint system \eqref{lcnse_adj_b_s}.
\begin{lemma}\label{well_posed_adj_b}\textbf{}
\begin{enumerate}
\item {\blue For any given source term $(f,g)\in L^2(0,T;(L^2(0,2\pi))^2)$ and given terminal data $(\sigma_T,v_T)\in (L^2(0,2\pi))^2$, the adjoint system \eqref{lcnse_adj_b_s} has a unique solution $(\sigma,v)$ in the space}
\begin{equation*}
\mc{C}^0([0,T];L^2(0,2\pi))\times[\mc{C}^0([0,T];L^2(0,2\pi))\cap L^2(0,T;H^1_{\per}(0,2\pi))].
\end{equation*}
{\blue Furthermore, we have the hidden regularity property $\sigma(\cdot,2\pi)\in L^2(0,T)$.}
\item {\blue For any given $(f,g)\in L^2(0,T;H^1_{\per}(0,2\pi)\times L^2(0,2\pi))$ and $(\sigma_T,v_T)\in H^{-1}_{\per}(0,2\pi)\times L^2(0,2\pi)$, the system \eqref{lcnse_adj_b} admits a unique solution $(\sigma,v)\in\mc{C}^0([0,T];H^{-1}_{\per}(0,2\pi)\times L^2(0,2\pi))$.

In particular, when $(\sigma_T,v_T)=(0,0)$, the solution $(\sigma,v)$ belong to the space $$\mc{C}^0([0,T];H^1_{\per}(0,2\pi))\times[\mc{C}^0([0,T];H^1_{\per}(0,2\pi))\cap L^2(0,T;H^2_{\per}(0,2\pi))].$$}
\end{enumerate}
\end{lemma}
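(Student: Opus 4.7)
The plan is to first convert the backward-in-time adjoint system into a forward one via $\tilde\sigma(t,x):=\sigma(T-t,x)$ and $\tilde v(t,x):=v(T-t,x)$, so that the system reads $\partial_t(\tilde\sigma,\tilde v)^\dagger=A^*(\tilde\sigma,\tilde v)^\dagger+(\tilde f,\tilde g)^\dagger$ with initial data $(\sigma_T,v_T)$. Since it was recorded just before this lemma that $A^*$ generates a $\mc{C}^0$-semigroup of contractions on $(L^2(0,2\pi))^2$, Duhamel's formula delivers a unique mild solution in $\mc{C}^0([0,T];(L^2(0,2\pi))^2)$, which yields the claimed existence and uniqueness for Part (1) after reversing time. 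To upgrade $v$ to $L^2(0,T;H^1_{\per}(0,2\pi))$ I would multiply the parabolic equation by $v$ and integrate over $(0,2\pi)$; the leading term yields $\mu_0\|v_x\|_{L^2}^2$ (boundary terms cancel by periodicity), while the transport and coupling terms are absorbed via Young's inequality together with the $\mc{C}^0([0,T];L^2)$ bound on $\sigma$, the whole estimate being closed by a Gr\"onwall argument.

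For the hidden regularity $\sigma(\cdot,2\pi)\in L^2(0,T)$ I would test the transport equation against the multiplier $x\sigma$. The key identity is
\begin{equation*}
\int_0^{2\pi} x\,\sigma_x\,\sigma\,dx=\pi\,\sigma(t,2\pi)^2-\tfrac{1}{2}\int_0^{2\pi}\sigma^2\,dx,
\end{equation*}
obtained by integrating by parts: the prefactor $x$ kills the boundary contribution at $x=0$, and only the term at $x=2\pi$ survives (periodicity of $\sigma$ is harmless here because the weight vanishes on the left endpoint). Integrating the weighted equation on $(0,T)\times(0,2\pi)$ and isolating the trace term, every remaining piece is controlled by $\|\sigma_T\|_{L^2}$, $\|\sigma\|_{L^\infty(L^2)}$, $\|v_x\|_{L^2(L^2)}$ and $\|f\|_{L^2(L^2)}$, all bounded by the data in view of the previous step.

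For Part (2) I would split the two assertions. When $(\sigma_T,v_T)\in H^{-1}_{\per}\times L^2$ and $(f,g)\in L^2(0,T;H^1_{\per}\times L^2)$, I would employ the transposition method: test against solutions $(\Phi,\Psi)\in\mc{C}^0([0,T];H^1_{\per}\times L^2)$ of the forward problem $\partial_t(\Phi,\Psi)^\dagger=A(\Phi,\Psi)^\dagger$. A trace estimate for $\Phi$ at the boundary, obtained by exactly the same $x\sigma$-type multiplier applied to $A$ instead of $A^*$, together with the $\mc{C}^0$-semigroup theory for $A$, defines $(\sigma,v)$ uniquely in $\mc{C}^0([0,T];H^{-1}_{\per}\times L^2)$. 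For the smooth sub-case $(\sigma_T,v_T)=(0,0)$, the cleanest route is to expand in Fourier series: on each mode $n\in\mb{Z}$ the system reduces to a $2\times 2$ linear ODE that can be solved explicitly, and the $H^1$-regularity of $f$ transfers into $n$-weighted bounds on the coefficients that sum to $\sigma\in\mc{C}^0([0,T];H^1_{\per})$ and $v\in\mc{C}^0([0,T];H^1_{\per})\cap L^2(0,T;H^2_{\per})$. An alternative is a bootstrap: starting from $v\in L^2(H^1)$ of Part (1), the differentiated transport equation for $\sigma_x$ has source $f_x+\bar\rho v_{xx}$; together with parabolic regularity on the $v$-equation (source $b\sigma_x+g\in L^2(L^2)$, zero terminal data) one iterates to the target spaces.

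The main obstacle is the smooth sub-case of Part (2): the transport equation does not smooth in $x$, whereas the parabolic one does, so a bootstrap has to balance the non-gaining $\sigma$-regularity against the gaining $v$-regularity. The Fourier approach side-steps this coupling by diagonalising mode-wise, but then summability of the resulting series has to be checked using the asymptotic spectral information later provided in Lemma~\ref{lemma_eigen_b}. The transposition step for $H^{-1}$ data has an analogous obstacle, and reduces to proving a trace estimate for the forward problem generated by $A$, which is obtained by repeating the $x\sigma$-multiplier calculation with the roles of the transport directions reversed.
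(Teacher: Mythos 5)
Your proposal is correct in substance, but it is worth noting that the paper does not actually prove this lemma: it delegates Part (1) to \cite[Proposition 2.2 and Appendix D]{Bhandari22} and Part (2) to \cite[Proposition 2.5]{Chowdhury14} (see also \cite{Girinon08}), so what you have written is a self-contained argument along essentially the same lines as those references rather than a reproduction of anything in this paper. Your ingredients are the standard ones and they do work: time reversal plus the contraction semigroup generated by $A^*$ and Duhamel give the unique mild solution of \eqref{lcnse_adj_b_s} in $\mc{C}^0([0,T];(L^2(0,2\pi))^2)$; testing the parabolic equation with $v$ (after integrating the coupling term by parts, the boundary contributions cancelling by periodicity) and Gr\"onwall give $v\in L^2(0,T;H^1_{\per}(0,2\pi))$; and the multiplier $x\sigma$ on the transport equation is exactly the right device for the hidden regularity, since the identity $\int_0^{2\pi}x\,\Re(\sigma_x\overline{\sigma})\,dx=\pi\mod{\sigma(t,2\pi)}^2-\tfrac12\norm{\sigma(t)}^2_{L^2}$ isolates the trace with the definite prefactor $\bar u\pi>0$, all other terms being controlled by $\norm{\sigma}_{\mc{C}^0(L^2)}$, $\norm{v_x}_{L^2(L^2)}$, $\norm{f}_{L^2(L^2)}$ and $\norm{\sigma_T}_{L^2}$ (this computation should be performed for data in $\mc{D}(A^*)$ and extended by density, a routine point you leave implicit). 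Two smaller remarks on Part (2). First, in the transposition step the boundary trace estimate for the forward problem is superfluous: since the adjoint problem carries homogeneous periodic conditions, no boundary terms enter the duality, and all you need is that the forward semigroup acts continuously on $H^1_{\per}(0,2\pi)\times L^2(0,2\pi)$ (equivalently, an extrapolation of the semigroup generated by $A^*$ to $H^{-1}_{\per}(0,2\pi)\times L^2(0,2\pi)$). Second, the bootstrap you sketch for the zero-terminal-data regularity does not close as stated, because $\sigma\in L^2(H^1)$ is needed for the parabolic gain on $v$ and $v\in L^2(H^2)$ is needed for the transport gain on $\sigma$; one must either run a combined $H^1$-level energy estimate (testing the differentiated transport equation with $b\sigma_x$ and the velocity equation with $-\bar\rho v_{xx}$, absorbing the cross term $\int v_{xx}\sigma_x$ into $\mu_0\norm{v_{xx}}^2$) or use your Fourier route, which is the one to rely on: mode-wise the solution map is uniformly controlled thanks to the eigenvalue asymptotics (the parabolic branch yields the $1/\mod{n}$ gain that lets the $L^2$ source $g$ feed into $\sigma$ at the $H^1$ level), and using the spectral facts of Lemma \ref{lemma_eigen_b} here creates no circularity since that lemma is independent of the well-posedness statement. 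What your approach buys is self-containedness; what the paper's citation buys is brevity.
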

{\blue Proof of the first part follows from \cite[Proposition 2.2]{Bhandari22}, see also Appendix D in \cite{Bhandari22} for the hidden regularity result. For the second part, we refer to \cite[Proposition 2.5]{Chowdhury14}, see also \cite[Chapter 4]{Girinon08}.}

\smallskip

\noindent Once we have the existence results of the homogeneous system (without any boundary control) associated to the system \eqref{lcnse_b}, we can now guarantee the existence of a unique solution to the system \eqref{lcnse_b} (in the sense of transposition) when there is a boundary control $p$ (resp. $q$) {\blue acting} in density (resp. velocity) in the space $L^2(0,T)$. Before writing the statements, let us first define the notion of a solution in the sense of transposition.
\begin{definition}\label{def_sol_b}
{\blue	We give the following definitions based on the act of the control.}
\begin{enumerate}
\item For any given initial state $(\rho_0,u_0)\in(L^2(0,2\pi))^2$ and boundary control $p\in L^2(0,T)$, a function $(\rho,u)\in L^2(0,T;(L^2(0,2\pi))^2)$ is a solution to the system \eqref{lcnse_b}-\eqref{in_cd_b}-\eqref{bd_cd_b1} if for any given $(f,g)\in L^2(0,T;(L^2(0,2\pi))^2)$, the following identity holds true:
\begin{align*} 
&\blue\int_{0}^{T}\ip{(\rho(t,\cdot),u(t,\cdot))^{\dagger}}{(f(t,\cdot),g(t,\cdot))^{\dagger}}_{L^2\times L^2}dt\\
&\blue=\ip{(\rho_0(\cdot),u_0(\cdot))^{\dagger}}{(\sigma(0,\cdot),v(0,\cdot))^{\dagger}}_{L^2\times L^2}+b\int_{0}^{T}\left[\bar{u}\overline{\sigma(t,2\pi)}+\bar{\rho}\overline{v(t,2\pi)}\right]p(t)dt,
\end{align*}
where $(\sigma,v)$ is the unique weak solution to the adjoint system \eqref{lcnse_adj_b_s} with $\blue(\sigma_T,v_T)=(0,0)$.

\smallskip 
	
\item For any given initial state $(\rho_0,u_0)\in H^1_{\per}(0,2\pi)\times L^2(0,2\pi)$ and boundary control $q\in L^2(0,T)$, a function $(\rho,u)\in L^2(0,T;H^{-1}_{\per}(0,2\pi)\times L^2(0,2\pi))$ is a solution to the system \eqref{lcnse_b}-\eqref{in_cd_b}-\eqref{bd_cd_b2} if for any given $(f,g)\in L^2(0,T;H^1_{\per}(0,2\pi)\times L^2(0,2\pi))$, the following identity holds true: 
\begin{align*}
&\blue\int_{0}^{T}\ip{(\rho(t,\cdot),u(t,\cdot))^{\dagger}}{(f(t,\cdot),g(t,\cdot))^{\dagger}}_{H^{-1}_{\per}\times L^2,H^1_{\per}\times L^2}dt\\
&\blue=\ip{(\rho_0(\cdot),u_0(\cdot))^{\dagger}}{(\sigma(0,\cdot),v(0,\cdot))^{\dagger}}_{L^2\times L^2}+\bar{\rho}\int_{0}^{T}\left[b\overline{\sigma(t,2\pi)}+\bar{u}\overline{v(t,2\pi)}+\mu_0\overline{v_x(t,2\pi)}\right]q(t)dt,
\end{align*}
{\blue where} $(\sigma,v)$ is the unique weak solution to the adjoint system \eqref{lcnse_adj_b_s} with $(\sigma_T,v_T)=(0,0)$.
\end{enumerate}
\end{definition}
\begin{proposition}[{\cite[Theorem 2.4]{Bhandari22}}]
For any given initial state $(\rho_0,u_0)\in (L^2(0,2\pi))^2$ and boundary control $p\in L^2(0,T)$, the system \eqref{lcnse_b}-\eqref{in_cd_b}-\eqref{bd_cd_b1} admits a unique solution $(\rho,u)$ in the space
\begin{equation*}
\mc{C}^0([0,T];L^2(0,2\pi))\times[\mc{C}^0([0,T];L^2(0,2\pi))\cap L^2(0,T;H^1_{\per}(0,2\pi))].
\end{equation*}
\end{proposition}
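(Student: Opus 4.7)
The plan is to construct the solution via the transposition method using the adjoint system established in Lemma~\ref{well_posed_adj_b}, then upgrade the regularity in time through a density/lifting argument. Concretely, for fixed $(\rho_0,u_0)\in(L^2(0,2\pi))^2$ and $p\in L^2(0,T)$, I would introduce on the test-space $L^2(0,T;(L^2(0,2\pi))^2)$ the linear functional
\[
\Lambda(f,g):=\ip{(\rho_0,u_0)^{\dagger}}{(\sigma(0,\cdot),v(0,\cdot))^{\dagger}}_{L^2\times L^2}+b\int_{0}^{T}\bigl[\bar{u}\,\overline{\sigma(t,2\pi)}+\bar{\rho}\,\overline{v(t,2\pi)}\bigr]p(t)\,dt,
\]
where $(\sigma,v)$ is the unique backward solution to \eqref{lcnse_adj_b_s} with terminal data zero and source $(f,g)$. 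Definition~\ref{def_sol_b}(1) is precisely the statement that $(\rho,u)$ represents $\Lambda$ via the $L^2(0,T;(L^2)^2)$ pairing.

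Boundedness of $\Lambda$ is the core estimate. From Lemma~\ref{well_posed_adj_b}(1) I obtain the energy bound $\|(\sigma(0),v(0))\|_{(L^2)^2}\leq C\|(f,g)\|_{L^2(0,T;(L^2)^2)}$, the hidden regularity $\|\sigma(\cdot,2\pi)\|_{L^2(0,T)}\leq C\|(f,g)\|$, and (since $v\in L^2(0,T;H^1_{\per})$ together with its time derivative estimates) a trace bound on $v(\cdot,2\pi)$ of the same order. Combined with $p\in L^2(0,T)$, these yield $|\Lambda(f,g)|\leq C(\|(\rho_0,u_0)\|+\|p\|_{L^2(0,T)})\|(f,g)\|$. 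Riesz representation then produces a unique $(\rho,u)\in L^2(0,T;(L^2(0,2\pi))^2)$ satisfying the transposition identity, and uniqueness is immediate from the density of $(f,g)$ in the test space.

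The remaining and more delicate task is to promote this $L^2$-in-time solution to the claimed class $\mathcal{C}^0([0,T];L^2)\times[\mathcal{C}^0([0,T];L^2)\cap L^2(0,T;H^1_{\per})]$. My approach is by approximation: pick $p_n\in\mathcal{C}^{\infty}_c(0,T)$ with $p_n\to p$ in $L^2(0,T)$, and lift the inhomogeneous boundary term by writing $\rho_n=\tilde\rho_n+\phi_n$ with $\phi_n(t,x):=p_n(t)\chi(x)$ for a fixed smooth $\chi$ with $\chi(0)-\chi(2\pi)=1$, so that $(\tilde\rho_n,u_n)$ solves a forced problem with homogeneous periodic boundary conditions to which the semigroup generated by $A$ applies and gives the asserted regularity. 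Uniform-in-$n$ estimates of the same transposition type, applied to the difference $(\rho_n-\rho_m,u_n-u_m)$, show that $(\rho_n,u_n)$ is Cauchy in the target space, and the limit coincides with the transposition solution by passing to the limit inside the identity of Definition~\ref{def_sol_b}(1). The main obstacle is precisely this last step: the transport structure of the $\rho$-equation with a nonhomogeneous periodic jump is hyperbolic and propagates boundary irregularity into the interior, so the lifting $\chi$ must be chosen with enough smoothness that the source terms produced in the $u$-equation land in $L^2(0,T;L^2)$, allowing parabolic regularity to close the estimates; the coupling term $-b\partial_x\phi_n$ in the velocity equation is harmless because it is an $L^2$-source, but care is needed to check that the traces $\sigma(\cdot,2\pi)$ and $v(\cdot,2\pi)$ entering $\Lambda$ persist in the limit, which is where Lemma~\ref{well_posed_adj_b} and the hidden regularity are used a second time.
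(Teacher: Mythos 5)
Your transposition construction via the adjoint system of Lemma~\ref{well_posed_adj_b} — boundedness of the functional coming from the energy estimate, the hidden regularity of $\sigma(\cdot,2\pi)$ and the $H^1$-trace of $v$, then Riesz representation and a density/regularization step to reach $\mathcal{C}^0([0,T];L^2)\times[\mathcal{C}^0([0,T];L^2)\cap L^2(0,T;H^1_{\per})]$ — is essentially the same argument the paper relies on, since it proves nothing itself and invokes \cite[Theorem 2.4]{Bhandari22}, which is established by exactly this duality scheme. The proposal is correct; the only point to make explicit is that the uniform $L^2(0,T;H^1_{\per})$ Cauchy bound for the velocity component of the approximations comes from a parabolic energy estimate on the $u$-equation (with the boundary terms handled by the forward hidden regularity of the density trace), not from testing against $L^2$ data alone, which is the standard step you already gesture at.
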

\begin{proposition}
For any given initial state $(\rho_0,u_0)\in\blue H^1_{\per}(0,2\pi)\times L^2(0,2\pi)$ and boundary control $q\in L^2(0,T)$, the system \eqref{lcnse_b}-\eqref{in_cd_b}-\eqref{bd_cd_b2} admits a unique solution $(\rho,u)$ in the space
\begin{equation*}
\blue\mc{C}^0([0,T];H^{-1}_{\per}(0,2\pi))\times[\mc{C}^0([0,T];H^{-1}_{\per}(0,2\pi))\cap L^2(0,T;L^2(0,2\pi))].
\end{equation*}
\end{proposition}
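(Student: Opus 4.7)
The plan is to construct the solution by the transposition method prescribed in Definition \ref{def_sol_b}(2). Given $(\rho_0,u_0)\in H^1_{\per}(0,2\pi)\times L^2(0,2\pi)$ and $q\in L^2(0,T)$, I would first define the antilinear functional
\begin{equation*}
\Lambda(f,g):=\ip{(\rho_0,u_0)^{\dagger}}{(\sigma(0),v(0))^{\dagger}}_{L^2\times L^2}+\bar{\rho}\int_{0}^{T}\left[b\overline{\sigma(t,2\pi)}+\bar{u}\overline{v(t,2\pi)}+\mu_0\overline{v_x(t,2\pi)}\right]q(t)\,dt
\end{equation*}
on the Hilbert space $L^2(0,T;H^1_{\per}(0,2\pi)\times L^2(0,2\pi))$, where $(\sigma,v)$ denotes the unique adjoint solution of \eqref{lcnse_adj_b_s} with zero terminal data and source $(f,g)$. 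The candidate solution $(\rho,u)$ will be produced as the Riesz representative of $\Lambda$ relative to the pivot $L^2(0,T;L^2\times L^2)$.

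The core step is the continuity estimate $|\Lambda(f,g)|\leq C(\|(\rho_0,u_0)\|_{H^1_{\per}\times L^2}+\|q\|_{L^2(0,T)})\|(f,g)\|_{L^2(0,T;H^1_{\per}\times L^2)}$. By Lemma \ref{well_posed_adj_b}(2), the vanishing terminal data upgrades the adjoint pair to the parabolic regularity $\sigma\in\mc{C}^0([0,T];H^1_{\per}(0,2\pi))$ and $v\in\mc{C}^0([0,T];H^1_{\per}(0,2\pi))\cap L^2(0,T;H^2_{\per}(0,2\pi))$, with norms bounded by $\|(f,g)\|_{L^2(0,T;H^1_{\per}\times L^2)}$. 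The Sobolev embedding $H^1_{\per}\hookrightarrow\mc{C}^0([0,2\pi])$ provides time-continuous traces of $\sigma(\cdot,2\pi)$ and $v(\cdot,2\pi)$, while $v_x\in L^2(0,T;H^1_{\per})$ yields the crucial hidden-regularity trace $v_x(\cdot,2\pi)\in L^2(0,T)$. Combining these with Cauchy--Schwarz in the boundary integral and bounding the initial inner product via the $H^1_{\per}$--$H^{-1}_{\per}$ duality gives the required estimate, and the Riesz representation theorem delivers a unique $(\rho,u)\in L^2(0,T;H^{-1}_{\per}\times L^2)$ satisfying the transposition identity.

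To upgrade to the claimed time continuity, I would lift the boundary data via $\tilde{u}(t,x):=q(t)\psi(x)$, where $\psi\in\mc{C}^{\infty}([0,2\pi])$ is fixed with $\psi(0)-\psi(2\pi)=1$ and $\psi'(0)=\psi'(2\pi)$, and write $u=u^{\sharp}+\tilde{u}$. The pair $(\rho,u^{\sharp})$ then satisfies \eqref{lcnse_b} with homogeneous periodic boundary conditions and source terms involving $q$ (together with the distributional piece $q'\psi$ interpreted against time-test functions); the semigroup generated by $A$ combined with a Duhamel formula handles these terms. Alternatively, reading the continuity equation $\rho_t=-\bar{u}\rho_x-\bar{\rho}u_x$ distributionally gives $\rho_t\in L^2(0,T;H^{-2}_{\per})$, and the classical embedding for functions with values in $H^{-1}_{\per}$ and time derivatives in $H^{-2}_{\per}$ promotes $\rho$ to $\mc{C}^0([0,T];H^{-1}_{\per})$; the same argument applied to the $u$-equation (using $\rho\in L^2(0,T;H^{-1}_{\per})$ and $u\in L^2(0,T;L^2)$) gives $u\in\mc{C}^0([0,T];H^{-1}_{\per})$. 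Uniqueness is immediate from density of smooth pairs in $L^2(0,T;H^1_{\per}\times L^2)$ and the transposition identity.

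The main obstacle is the hidden-regularity control of the normal trace $v_x(\cdot,2\pi)$, without which the boundary integral in $\Lambda$ cannot be defined for arbitrary $q\in L^2(0,T)$. This is precisely where the parabolic smoothing in the velocity component of the adjoint system becomes essential: vanishing terminal data together with $g\in L^2(0,T;L^2)$ yields $v\in L^2(0,T;H^2_{\per})$, hence $v_x\in L^2(0,T;H^1_{\per})\hookrightarrow L^2(0,T;\mc{C}^0([0,2\pi]))$, and the whole transposition construction closes. A secondary delicate point is that the $L^2$-in-space regularity of $u$ (as opposed to merely $H^{-1}_{\per}$) emerges directly from the pairing with $g\in L^2(0,T;L^2)$ in the transposition, and does not require any additional regularity of $q$.
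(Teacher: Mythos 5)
The paper itself does not prove this proposition: it only remarks that the proof is "standard" via the semigroup of the homogeneous system and parabolic regularity, citing \cite{Chowdhury13,Girinon08}. Your transposition construction is the natural way to fill in that omission, and its core is sound: with zero terminal data, Lemma \ref{well_posed_adj_b}(2) gives $v\in L^2(0,T;H^2_{\per}(0,2\pi))$, hence $v_x(\cdot,2\pi)\in L^2(0,T)$, and together with the continuous traces of $\sigma$ and $v$ (via $H^1_{\per}\hookrightarrow \mc{C}^0$) the functional $\Lambda$ is bounded on $L^2(0,T;H^1_{\per}(0,2\pi)\times L^2(0,2\pi))$; Riesz representation then yields a unique $(\rho,u)\in L^2(0,T;H^{-1}_{\per}(0,2\pi)\times L^2(0,2\pi))$ satisfying the identity of Definition \ref{def_sol_b}(2). (You are implicitly using the continuous dependence of the adjoint solution on $(f,g)$ in the classes of Lemma \ref{well_posed_adj_b}(2); that quantitative estimate is part of the standard transport/parabolic energy argument and should be stated.)

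The genuine gap is in the time-continuity upgrade for $\rho$. The embedding you invoke is off by half a derivative: from $\rho\in L^2(0,T;H^{-1}_{\per})$ and $\rho_t\in L^2(0,T;H^{-2}_{\per})$ the Lions--Magenes type result only gives continuity into the intermediate space, i.e.\ $\rho\in\mc{C}^0([0,T];H^{-3/2}_{\per}(0,2\pi))$ (or weak continuity in $H^{-1}_{\per}$); to conclude $\mc{C}^0([0,T];H^{-1}_{\per})$ by this device you would need $\rho\in L^2(0,T;L^2)$, which the transposition step does not provide. The analogous argument for $u$ is fine, since $u\in L^2(0,T;L^2)$ and $u_t\in L^2(0,T;H^{-2}_{\per})$ with pivot $H^{-1}_{\per}$. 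For $\rho$ you should instead use the transport structure: $\rho$ is periodic and solves $\rho_t+\bar{u}\rho_x=-\bar{\rho}u_x$ with source in $L^2(0,T;H^{-1})$, so the Duhamel formula with the strongly continuous (isometric) transport group on $H^{-1}_{\per}$ gives the claimed continuity; equivalently, regularize $(\rho_0,u_0,q)$, solve classically, and pass to the limit using linearity. Your first route (the lifting $u=u^{\sharp}+q(t)\psi(x)$) also needs more than "Duhamel handles it": with $q$ merely in $L^2(0,T)$ the term $q'\psi$ is not an admissible source, and one must remove it by an integration by parts in time or by approximating $q$ and passing to the limit. With the $\rho$-continuity argument repaired along these lines, the proof is complete and is essentially the standard argument the paper points to.
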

{\blue The proof of this result (velocity case) can be done in a standard fashion using the semigroup property of the homogeneous system and the parabolic regularity, see for instance \cite{Chowdhury13,Girinon08}.}



\subsection{Spectral analysis of $A^*$}
We denote the spectrum of $A^*$ by $\sigma(A^*)$. The following lemma gives behavior of the spectrum of the operator $A^*$.
\begin{lemma}\label{lemma_eigen_b}
The following statements hold.
\begin{enumerate}[(i)]
\item $\ker(A^*)=\text{span}\left\{\vector{1}{1},\vector{1}{-1}\right\}$.
\item $\sup\left\{\Re(\nu)\ :\ \nu\in\sigma(A^*),\ \nu\neq0\right\}<0$.
\item The spectrum of $A^*$ consists of the eigenvalue $\nu_0=0$ and pairs of complex eigenvalues $\{\nu_n^h,\nu_n^p\}_{n\in\mb{Z}^*}$ given as
\begin{align}
\blue\nu_n^h&\blue=-\frac{1}{2}\left(\mu_0n^2-\sqrt{\mu_0^2n^4-4b\bar{\rho}n^2}-2\bar{u}in\right),\label{exp_ev_h_b}\\
\blue\nu_n^{p}&\blue=-\frac{1}{2}\left(\mu_0n^2+\sqrt{\mu_0^2n^4-4b\bar{\rho}n^2}-2\bar{u}in\right),\label{exp_ev_p_b}
\end{align}
for all $n\in\mb{Z}^*$.
\item The eigenvalues satisfy the following properties
\begin{equation*}
\begin{cases}
\lim_{\mod{n}\to\infty}\Re(\nu_n^h)=-\omega_0, \ \ \lim_{\mod{n}\to\infty}\frac{\Re(\nu_n^p)}{n^2}=-\mu_0\\[4pt]
\lim_{\mod{n}\to\infty}\frac{\Im(\nu_n^h)}{n}=\bar{u},\ \ \lim_{\mod{n}\to\infty}\frac{\Im(\nu_n^p)}{n}=\bar{u}
\end{cases}
\end{equation*}
with $\omega_0=\frac{b\bar{\rho}}{\mu_0}$.
\item The eigenfunctions of $A^*$ corresponding to $\nu_n^h$ and $\nu_n^p$ are respectively
\begin{equation}\label{exp_e_fns_b}
\Phi_n^h=\vector{\xi_n^h}{\eta_n^h}=\vector{\bar{\rho}}{\nu_2^n-\bar{u}}e^{inx},\ \ \Phi_n^p=\vector{\xi_n^p}{\eta_n^p}=\vector{\frac{\bar{\rho}}{\nu_1^n-\bar{u}}}{1}e^{inx},
\end{equation}
for $\blue n\in\mb{Z}^*$, where {\blue $\nu_1^n=\frac{1}{in}\nu_n^p$ and $\nu_2^n=\frac{1}{in}\nu_n^h$ for $n\in\mb{Z}^*$.}
\end{enumerate}
\end{lemma}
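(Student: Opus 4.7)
The plan is to reduce the whole spectral problem to a family of $2\times 2$ matrices via Fourier decomposition, since $A^*$ has constant coefficients and periodic boundary conditions. Concretely, one writes any $\Phi\in\mathcal{D}(A^*)$ as $\Phi(x)=\sum_{n\in\mb{Z}}\Phi_n e^{inx}$ with $\Phi_n\in\mb{C}^2$, and observes that $A^*$ acts diagonally on Fourier modes, with the symbol on the $n$-th mode given by
\begin{equation*}
M_n=\begin{pmatrix} i\bar{u}n & i\bar{\rho}n \\[3pt] ibn & -\mu_0 n^2+i\bar{u}n\end{pmatrix}.
\end{equation*}
From this one immediately obtains $\spec(A^*)=\bigcup_{n\in\mb{Z}}\spec(M_n)$, and part (i) is the observation that $M_0=0$, so its null space contributes the two-dimensional $\ker(A^*)$ spanned, say, by the constants $(1,1)^\dagger$ and $(1,-1)^\dagger$.

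For parts (iii)–(v), I would compute $\det(M_n-\nu I)=\nu^2-(2i\bar{u}n-\mu_0 n^2)\nu+(-\bar{u}^2n^2-i\bar{u}\mu_0 n^3+b\bar{\rho}n^2)$ for $n\in\mb{Z}^*$, and check that the discriminant simplifies to the real quantity $\mu_0^2 n^4-4b\bar{\rho}n^2$; the two roots are then exactly the stated $\nu_n^h$ and $\nu_n^p$. The eigenfunctions come from solving the $2\times 2$ linear system $(M_n-\nu_n^{h/p}I)\Phi_n=0$ and choosing the free parameter to match the normalization \eqref{exp_e_fns_b}; the entries $\nu_2^n-\bar{u}$ and $\bar{\rho}/(\nu_1^n-\bar{u})$ appear naturally from the first row equation once one writes $\nu_n^{h/p}=in\nu_{2/1}^n$. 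For part (iv), writing $\sqrt{\mu_0^2n^4-4b\bar{\rho}n^2}=\mu_0 n^2\sqrt{1-4b\bar{\rho}/(\mu_0^2n^2)}$ for $\mod{n}$ large and using the Taylor expansion $\sqrt{1-z}=1-z/2+O(z^2)$ gives the asymptotics $\Re(\nu_n^h)\to-b\bar{\rho}/\mu_0$ and $\Re(\nu_n^p)/n^2\to-\mu_0$, together with $\Im(\nu_n^{h/p})/n\to\bar{u}$.

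The one place requiring genuine care is part (ii), showing a uniform strict bound $\sup\{\Re(\nu):\nu\in\spec(A^*),\nu\neq 0\}<0$. I would argue by splitting on the sign of the discriminant $D_n:=\mu_0^2 n^4-4b\bar{\rho}n^2$: for the finitely many $n\in\mb{Z}^*$ with $D_n<0$ the square root is purely imaginary and $\Re(\nu_n^{h/p})=-\mu_0 n^2/2<0$; for $D_n\geq 0$ one has $0\leq\sqrt{D_n}<\mu_0 n^2$ strictly (since $4b\bar{\rho}n^2>0$), which gives $\Re(\nu_n^h)=-\tfrac{1}{2}(\mu_0 n^2-\sqrt{D_n})<0$ and a fortiori $\Re(\nu_n^p)<0$. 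Combining this pointwise negativity on the finite set with the limit $\Re(\nu_n^h)\to -\omega_0<0$ from part (iv) rules out any accumulation at $0$, yielding the required strict sup. This is the main (and really only) obstacle; everything else is a bookkeeping computation of the characteristic polynomial and its roots.
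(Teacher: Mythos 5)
Your proposal is correct and follows essentially the same route as the paper: reduce $A^*$ by Fourier modes to the $2\times2$ symbols (your $M_n$ is exactly the paper's $R_n$), compute the characteristic polynomial, check the discriminant is the real quantity $\mu_0^2n^4-4b\bar{\rho}n^2$, read off $\nu_n^h,\nu_n^p$ and the eigenvectors from the first-row relation, and get part (iv) from the expansion of the square root (the paper rationalizes instead of Taylor-expanding, which is immaterial).

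The one genuine divergence is part (ii). The paper proves it by an energy identity: pairing $A^*\Phi=\nu\Phi$ with $\Phi$ in the weighted inner product and integrating by parts gives $\Re(\nu)=-\mu_0\bar{\rho}\norm{\eta_x}_{L^2}^2\big/\bigl(b\norm{\xi}_{L^2}^2+\bar{\rho}\norm{\eta}_{L^2}^2\bigr)<0$, with no reference to the explicit formulas. You instead read strict negativity of each $\Re(\nu_n^{h/p})$ directly off the root formulas via the case split on the sign of $D_n$, and then use the asymptotics $\Re(\nu_n^h)\to-\omega_0$, $\Re(\nu_n^p)\to-\infty$ to exclude accumulation of real parts at $0$. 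Your version has the small advantage of making the \emph{uniform} bound $\sup\Re(\nu)<0$ explicit (the paper's energy identity by itself only gives negativity eigenvalue by eigenvalue, and implicitly relies on parts (iii)--(iv) for the sup), while the paper's energy argument is structural and does not need the eigenvalues to be computable. Either way the argument closes, so there is no gap.
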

\begin{proof}

{\blue
We prove each part separately.}

\begin{enumerate}
{\blue
\item[Part-(i).] This part follows immediately from the fact that $A^*(\xi,\eta)^{\dagger}=0$ implies $(\xi,\eta)=$ constant.


\item[Part-(ii).]} Let $\Phi=(\xi,\eta)^{\dagger}\blue\in\mc{D}(A^*)$ be the eigenfunction of $A^*$ corresponding to the eigenvalue $\nu\neq0$. Then, we have
\begin{equation*}
\ip{A^*\vector{\xi}{\eta}}{\vector{\xi}{\eta}}_{\blue L^2\times L^2}=\ip{\nu\vector{\xi}{\eta}}{\vector{\xi}{\eta}}_{\blue L^2\times L^2},
\end{equation*}
that is,
\begin{align*}
\quad b\bar{u}\int_{0}^{2\pi}\overline{\xi(x)}\xi_x(x)dx+b\bar{\rho}\int_{0}^{2\pi}\overline{\xi(x)}\eta_x(x)dx+\mu_0\bar{\rho}\int_{0}^{2\pi}\overline{\eta(x)}\eta_{xx}(x)dx+\bar{\rho}\bar{u}\int_{0}^{2\pi}\overline{\eta(x)}\eta_x(x)dx\\
+b\bar{\rho}\int_{0}^{2\pi}\xi_x(x)\overline{\eta(x)}dx=\blue\nu b\int_{0}^{2\pi}\mod{\xi(x)}^2dx+\nu\bar{\rho}\int_{0}^{2\pi}\mod{\eta(x)}^2dx.
\end{align*}
An integration by parts yields
\begin{equation*}
\blue\Re(\nu)=-\frac{\mu_0\bar{\rho}\norm{\eta_x}_{L^2(0,2\pi)}^2}{b\norm{\xi}_{L^2(0,2\pi)}^2+\bar{\rho}\norm{\eta}_{L^2(0,2\pi)}^2}<0,
\end{equation*}
{\blue which proves part (ii).
}	
\item[{\blue Parts-(iii),(v).}] We denote
\begin{equation*}
\vphi_n(x):=e^{inx},\ \ n\in\mb{Z}.
\end{equation*}
Then the set $\left\{\vector{\vphi_n}{0},\vector{0}{\vphi_n}\ {\blue ; \ n\in\mb{Z}}\right\}$ forms an orthogonal basis of $(L^2(0,2\pi))^2$. Let us define
\begin{equation*}
E_n:=\begin{pmatrix}\vphi_n&0\\[4pt]0&\vphi_n\end{pmatrix},\ \ \text{and}\ \Phi_n:=(\xi_n,\eta_n)^{\dagger},
\end{equation*}
for all $n\in\mb{Z}$. Then, we have the following relation
\begin{equation}
\blue A^*E_n\Phi_n=E_nR_n\Phi_n,\ \ n\in\mb{Z},
\end{equation}
where the matrix $R_n$ for $n\in\mb{Z}$ is given by
\begin{equation}
\blue R_n:=\begin{pmatrix}\bar{u}in&\bar{\rho}in\\[4pt]bin&-\mu_0n^2+\bar{u}in\end{pmatrix},\ \ n\in\mb{Z}.
\end{equation}
Thus, if $(\alpha_n,\nu_n)$ is an eigenpair of $R_n$, then $(E_n\alpha_n,{\blue\nu_n})$ will be an eigenpair of $A^*$. Therefore, it's remains to find the eigenvalues and eigenvectors of the matrix $R_n$ for $n\in\mb{Z}$. The characteristics equation of $R_n$ is
\begin{equation}
\blue\nu^2-(-\mu_0n^2+2\bar{u}in)\nu-\mu_0\bar{u}in^3-\bar{u}^2n^2+b\bar{\rho}n^2=0,
\end{equation}
for all $n\in\mb{Z}$. Therefore, the eigenvalues of the matrix $R_n$ are
\begin{equation*}
\quad\quad\quad\blue\nu_n^h:=\frac{1}{2}\left(-\mu_0n^2+2\bar{u}in+\sqrt{\mu_0^2n^4-4b\bar{\rho}n^2}\right),\ \ \nu_n^p:=\frac{1}{2}\left(-\mu_0n^2+2\bar{u}in-\sqrt{\mu_0^2n^4-4b\bar{\rho}n^2}\right),
\end{equation*}
for all $n\in\mb{Z}$. Note that, {\blue $0$ cannot be an eigenvalue of the matrix $R_n$ for all $n\in\mb{Z}^*$ }and $\bar{u}$ cannot be an eigenvalue of $R_n$ for all $n\in\mb{Z}$, because $b,\bar{\rho},\mu_0,\bar{u}>0$. {\blue Let us denote $\nu_1^n:=\frac{1}{in}\nu_n^p$ and $\nu_2^n:=\frac{1}{in}\nu_n^h$.} To find the eigenvectors of the matrix $R_n$, we first consider the equation
\begin{equation*}
\blue R_n\alpha_n^h=\nu_n^h\alpha_n^h,\ \ n\in\mb{Z},
\end{equation*}
where $\alpha_n^h:=(\alpha_1^n,\alpha_2^n)^{\dagger}$, that is,
\begin{align}
\blue(\bar{u}in-\nu_n^h)\alpha_1^n+\bar{\rho}in\alpha_2^n=0,
\ \ \blue bin\alpha_1^n+(-\mu_0n^2+\bar{u}in-\nu_n^h)\alpha_2^n=0,
\end{align}
for all $n\in\mb{Z}$. One solution is given by
\begin{equation}
\alpha_n^h=\vector{\alpha_1^n}{\alpha_2^n}:=\vector{\bar{\rho}}{\nu_2^n-\bar{u}},\ \ n\in\mb{Z}.
\end{equation}
We next consider the equation
\begin{equation*}
\blue R_n\alpha_n^p=\nu_n^p\alpha_n^p,\ \ n\in\mb{Z},
\end{equation*}
where $\alpha_n^p:=(\beta_1^n,\beta_2^n)^{\dagger}$, that is,
\begin{align}
\blue(\bar{u}in-\nu_n^p)\beta_1^n+\bar{\rho}in\beta_2^n=0,
\blue \ \ bin\beta_1^n+(-\mu_0n^2+\bar{u}in-\nu_n^p)\beta_2^n=0,
\end{align}
for all $n\in\mb{Z}$. One solution is given by
\begin{equation}
\alpha_n^p=\vector{\beta_1^n}{\beta_2^n}:=\vector{\frac{\bar{\rho}}{\nu_1^n-\bar{u}}}{1},\ \ n\in\mb{Z}.
\end{equation}
Thus, the eigenvectors of $R_n$ corresponding to the eigenvalues $\blue\nu_n^h$ and $\blue\nu_n^p$ are respectively
\begin{equation*}
\alpha_n^h=\vector{\alpha_1^n}{\alpha_2^n}=\vector{\bar{\rho}}{\nu_2^n-\bar{u}},\ \ \alpha_n^p=\vector{\beta_1^n}{\beta_2^n}=\vector{\frac{\bar{\rho}}{\nu_1^n-\bar{u}}}{1},\ \ n\in\mb{Z}.
\end{equation*}
Hence, the eigenvalues of the operator $A^*$ are $\blue\nu_0:=0$ and
\begin{equation*}
\quad\quad\quad\blue\nu_n^h:=\frac{1}{2}\left(-\mu_0n^2+2\bar{u}in+\sqrt{\mu_0^2n^4-4b\bar{\rho}n^2}\right),\ \ \nu_n^p:=\frac{1}{2}\left(-\mu_0n^2+2\bar{u}in-\sqrt{\mu_0^2n^4-4b\bar{\rho}n^2}\right),
\end{equation*}
for $n\in\mb{Z}^*$ and the corresponding eigenfunctions are respectively
\begin{equation*}
\Phi_n^h:=\vector{\xi_n^h}{\eta_n^h}=E_n\alpha_n^h=\alpha_n^he^{inx},\ \ \Phi_n^p:=\vector{\xi_n^p}{\eta_n^p}=E_n\alpha_n^p=\alpha_n^pe^{inx},
\end{equation*}
for all $\blue n\in\mb{Z}^*$ and $x\in(0,2\pi)$. {\blue This proves parts (iii) and (v).}

{\blue
\item[Part-(iv).] Follows immediately from the expression of the eigenvalues $\nu_n^h$ and $\nu_n^p$, given by \eqref{exp_ev_h_b}-\eqref{exp_ev_p_b}. Indeed, we can write
\begin{align*}
\nu_n^h=-\frac{2b\bar{\rho}}{\mu_0+\sqrt{\mu_0^2-\frac{4b\bar{\rho}}{n^2}}}+\bar{u}in,\ \text{and}\ \ \nu_n^p=-\frac{n^2}{2}\left(\mu_0+\sqrt{\mu_0^2-\frac{4b\bar{\rho}}{n^2}}\right)+\bar{u}in
\end{align*}
for $n\in\mb{Z}^*$.

}
\end{enumerate}
\end{proof}

{\blue
	
\smallskip

From the expression of the eigenvalues given by \eqref{exp_ev_h_b}-\eqref{exp_ev_p_b}, we can further deduce several important properties, which are given by the following Lemma:
\begin{lemma}[Properties of the eigenvalues]\label{prop_ev_b}
Let $n,l\in\mb{Z}^*$. Then,
\begin{enumerate}[(i)]
\item $\nu_n^h=\nu_l^h$ if and only if $n=l$.
\item If $n_1:=\frac{2\sqrt{b\bar{\rho}-\bar{u}^2}}{\mu_0}\in\mb{N}$, then $\nu_{n_1}^p=\nu_{-n_1}^p$ and $\nu_n^p\neq\nu_l^p$ for remaining $n,l\in\mb{Z}^*$ with $n\neq l$.
\item If $n_0:=\frac{2\sqrt{b\bar{\rho}}}{\mu_0}\in\mb{N}$, then $\nu_{j}^h=\nu_j^p$ for $j=\pm n_0$ and $\nu_n^h\neq\nu_l^p$ for all $n,l\in\mb{Z}^*\setminus\{\pm n_0\}$.
\item If $\frac{2\sqrt{b\bar{\rho}}}{\mu_0},\frac{2\sqrt{b\bar{\rho}-\bar{u}^2}}{\mu_0}\notin\mb{N}$, then all the eigenvalues of $A^*$ are simple.
\end{enumerate}
\end{lemma}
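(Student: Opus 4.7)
The plan is to view $\nu_n^h$ and $\nu_n^p$ as the two roots of the quadratic
\[
p_n(\nu)=\nu^2+(\mu_0n^2-2\bar u in)\nu+(b\bar\rho-\bar u^2)n^2-\mu_0\bar u in^3,
\]
whose discriminant is $\Delta_n=n^2(\mu_0^2n^2-4b\bar\rho)$, and simultaneously to read the same identity as a \emph{cubic} polynomial in $n$ for fixed $\nu$. The cubic reading is the crucial complement: at most three integers can make a given $\nu$ an eigenvalue, and when two of them are known to be $n$ and $l$ with $n\neq l$, Vieta's formulas pin down the third root and deliver rigid algebraic constraints on $(n,l,\nu)$.

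Part (iii) is immediate from the discriminant: $\nu_n^h=\nu_n^p\iff\Delta_n=0\iff |n|=n_0$, so in $\mb{Z}^*$ this requires $n_0\in\mb{N}$ and $n=\pm n_0$. To rule out the remaining crossings $\nu_n^h=\nu_l^p$ with $n\neq l$, I would subtract $p_n(\nu)-p_l(\nu)=0$ and divide by $n-l$ to obtain the linear relation
\[
[\mu_0(n+l)-2\bar u i]\,\nu=\mu_0\bar u i(n^2+nl+l^2)-(b\bar\rho-\bar u^2)(n+l),
\]
which pins $\nu=\nu(n,l)$; substituting back into $p_n(\nu)=0$ and separating real and imaginary parts excludes all remaining $(n,l)\in(\mb{Z}^*)^2$ outside the $(\pm n_0,\pm n_0)$ case already handled.

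For part (ii), the same subtraction in the degenerate case $n+l=0$ (where the linear relation above collapses) forces $\nu=-\mu_0 n^2/2$; inserting this into $p_n(\nu)=0$ yields $\mu_0^2n^2=4(b\bar\rho-\bar u^2)$, that is $|n|=n_1$. Hence when $n_1\in\mb{N}$ the value $-\mu_0n_1^2/2$ is a common root of $p_{n_1}$ and $p_{-n_1}$; a direct verification from \eqref{exp_ev_h_b}-\eqref{exp_ev_p_b}, with the branch of $\sqrt{\mu_0^2n^4-4b\bar\rho n^2}$ chosen so that the label ``$p$'' identifies the root with unbounded real part at large $|n|$, matches this common value with $\nu_{n_1}^p=\nu_{-n_1}^p$. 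The same argument rules out every other coincidence $\nu_n^p=\nu_l^p$.

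Part (i) is the cleanest. If $|n|,|l|\geq n_0$ then $\sqrt{\Delta_n}\in\mb{R}$, so $\Im(\nu_n^h)=\bar u n$, and matching imaginary parts forces $n=l$. If $|n|,|l|<n_0$ then $\sqrt{\Delta_n}\in i\mb{R}$, so $\Re(\nu_n^h)=-\mu_0n^2/2$, and matching real parts gives $l=\pm n$; the case $l=-n$ is excluded because the shared value $-\mu_0n^2/2$ belongs to the parabolic branch (by part (ii)), not the hyperbolic one. The mixed case $|n|<n_0\leq|l|$ produces simultaneous real- and imaginary-part identities whose compatibility reduces to a single polynomial equation in $(n,l)$ with no solution in $\mb{Z}^*$. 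Part (iv) is then an immediate corollary of (i)-(iii). The main anticipated obstacle is the branch bookkeeping in part (ii): correctly identifying which of the two eigenvalues at $n=\pm n_1$ carries the label ``$p$'' is essential for the statement to read exactly $\nu_{n_1}^p=\nu_{-n_1}^p$; the remaining work is the common-root/Vieta manipulation described above.
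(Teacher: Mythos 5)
Your strategy is genuinely different from the paper's proof, and its skeleton is sound. The paper works directly from the explicit formulas \eqref{exp_ev_h_b}--\eqref{exp_ev_p_b}, splitting at the threshold $\mod{n}=n_0$ so that for $\mod{n}\geq n_0$ the imaginary parts are exactly $\bar{u}n$ and for $\mod{n}<n_0$ the real parts are exactly $-\frac{\mu_0}{2}n^2$; each potential coincidence is then killed by a one-line comparison plus a sign argument for the mixed hyperbolic/parabolic case. Your route instead treats a coincidence between indices $n\neq l$ as a common root of the two quadratics $p_n,p_l$, pins $\nu$ by the linear relation obtained from $(p_n-p_l)/(n-l)$ (the coefficient $\mu_0(n+l)-2\bar{u}i$ indeed never vanishes since $\bar{u}>0$), and your computation in the case $l=-n$ is correct: it forces $\nu=-\mu_0n^2/2$ and then $\mu_0^2n^2=4(b\bar{\rho}-\bar{u}^2)$, i.e. $\mod{n}=n_1$, which is a clean derivation of part (ii) without branch case-splitting. (Incidentally, the relation does not "collapse" at $n+l=0$; it merely loses its real part.)

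There are, however, two genuine gaps. First, the decisive exclusions are asserted, not proved: for $n\neq l$ with $n+l\neq0$ you must substitute the pinned $\nu(n,l)$ back into $p_n(\nu)=0$, clear the factor $(\mu_0(n+l)-2\bar{u}i)^2$, and show that the resulting pair of real polynomial equations in $(n,l)$ (degree six) has no solutions in $(\mb{Z}^*)^2$ beyond $\mod{n}=\mod{l}=n_0$; likewise the mixed case $\mod{n}<n_0\leq\mod{l}$ in part (i) is claimed to "reduce to a single polynomial equation with no solution in $\mb{Z}^*$" with no indication of how that equation is disposed of. These computations are exactly where the content of the lemma lies, and they are not routine -- avoiding them is precisely what the paper's threshold decomposition buys. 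Second, your labelling criterion does not settle the branch bookkeeping you yourself flag: "the root with unbounded real part at large $\mod{n}$" distinguishes the two branches only for $\mod{n}\geq n_0$, whereas for the finitely many indices $\mod{n}<n_0$ both roots share the real part $-\frac{\mu_0}{2}n^2$. You need to fix the convention in \eqref{exp_ev_p_b}, e.g. $\sqrt{\mu_0^2n^4-4b\bar{\rho}n^2}=i\,n\sqrt{4b\bar{\rho}-\mu_0^2n^2}$ for $\mod{n}<n_0$ (equivalently $\nu_{-n}^{p}=\overline{\nu_n^{p}}$), to conclude that the real common value at $\mod{n}=n_1$ is the $p$-labelled root at both $n_1$ and $-n_1$; the same convention is what legitimises your exclusion of $l=-n$ in part (i), since otherwise "the shared value belongs to the parabolic branch" is not yet a statement about the labels $\nu_{\pm n}^h$.
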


We give a detailed proof of this Lemma in Appendix \ref{app_prop_ev_b} to simplify the presentation of the article. From this Lemma, we note that when $n_0=\frac{2\sqrt{b\bar{\rho}}}{\mu_0}\in\mb{N}$, the matrix $R_j$ admits an eigenvalue $\nu_j:=-\frac{\mu_0j^2}{2}+i\bar{u}j$ of multiplicity $2$ with the eigenvectors $\alpha_j:=\vector{\bar{\rho}}{\nu_2^{j}-\bar{u}}$ for $j=\pm n_0$. 
Let $\tilde{\alpha}_{j}=(\tilde{\alpha}_1^j,\tilde{\alpha}_2^j)$ be the generalized eigenvector corresponding to $\nu_{j}$ for $j=\pm n_0$, then we have the following set of relations:
\begin{align}\label{gen_ev_rn_b}
\begin{cases}
(i\bar{u}j-\nu_j)\tilde{\alpha}_1^j+i\bar{\rho}j\tilde{\alpha}_2^j=\bar{\rho},\\
ibj\tilde{\alpha}_1^j+(-\mu_0j^2+i\bar{u}j-\nu_j)\tilde{\alpha}_2^j=\nu_2^j-\bar{u},
\end{cases}
\end{align}
for $j=\pm n_0$. Thus, if $\frac{2\sqrt{b\bar{\rho}}}{\mu_0}\in\mb{N}$, the operator $A^*$ admits generalized eigenfunction corresponding to the eigenvalue $\nu_j^h=\nu_j^p=\nu_j$ for $j=\pm n_0$. We denote the generalized eigenfunction corresponding to $\nu_j$ by $\tilde{\Phi}_j:=\tilde{\alpha}_je^{ijx}$ for $j=\pm n_0$. Also, recall that the set of eigenfunctions corresponding to the eigenvalue $\nu_0=0$ is $\left\{\Phi_0:=\vector{1}{1},\ \tilde{\Phi}_0:=\vector{1}{-1}\right\}$. Then, with the above mentioned properties of the eigenvalues, we can prove that the set of (generalized) eigenfunctions of $A^*$ form a Riesz basis of $(L^2(0,2\pi))^2$.
\begin{proposition}\label{prop_rb_b}
If $\frac{2\sqrt{b\bar{\rho}}}{\mu_0}\in\mb{N}$, the set of (generalized) eigenfunctions $$\mc{E}(A^*):=\left\{\Phi_n^h,\ \Phi_n^p\ :\ n\in\mb{Z}^*\setminus\{\pm n_0\};\ \Phi_j,\ \tilde{\Phi}_j\ :\ j=0,\pm n_0\right\}$$ form a Riesz basis in $(L^2(0,2\pi))^2$. In particular, when $\frac{2\sqrt{b\bar{\rho}}}{\mu_0}\notin\mb{N}$, the set of eigenfunctions $$\left\{\Phi_n^h,\ \Phi_n^p\ :\ n\in\mb{Z}^*\right\}$$ of $A^*$ form a Riesz basis in $(\dot{L}^2(0,2\pi))^2$.
\end{proposition}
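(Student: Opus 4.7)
The plan is to exploit the block structure that arises because each eigenfunction and generalized eigenfunction is of the form $(\text{constant vector})\cdot e^{inx}$. Setting $V_n := \mathrm{span}\{e^{inx}e_1,\, e^{inx}e_2\}$ for $n\in\mb{Z}$ (where $e_1=(1,0)^{\dagger}$, $e_2=(0,1)^{\dagger}$), the weighted inner product $\ip{\cdot}{\cdot}_{L^2\times L^2}$ makes $(L^2(0,2\pi))^2 = \bigoplus_{n\in\mb{Z}} V_n$ an orthogonal direct sum, and the elements of $\mc{E}(A^*)$ indexed by $n$ lie entirely in $V_n$. So it suffices to show that, inside each $V_n$, the two basis vectors obtained from $\mc{E}(A^*)$ span $V_n$ with Riesz constants that are uniform in $n$.

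For this, define the $2\times 2$ coordinate matrix $M_n := [\alpha_n^h\mid \alpha_n^p]$ (and $M_j := [\alpha_j\mid \tilde{\alpha}_j]$ at the exceptional indices $j=0,\pm n_0$). From Lemma \ref{lemma_eigen_b}(iv), $\nu_1^n - \bar{u} = \frac{\nu_n^p}{in}-\bar{u} \sim i\mu_0 n$ and $\nu_2^n-\bar{u} = \frac{\nu_n^h}{in}-\bar{u} = O(1/n)$ as $\mod{n}\to\infty$; hence
\begin{equation*}
M_n \;\longrightarrow\; \begin{pmatrix}\bar{\rho} & 0\\[2pt] 0 & 1\end{pmatrix},\qquad \det M_n \,=\, \bar{\rho}\,\frac{\nu_1^n-\nu_2^n}{\nu_1^n-\bar{u}} \;\longrightarrow\; \bar{\rho},
\end{equation*}
so $\|M_n\|$ and $\|M_n^{-1}\|$ are uniformly bounded for all sufficiently large $\mod{n}$. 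The remaining indices are finite in number, so one only needs to check pointwise invertibility: for $n\in\mb{Z}^*\setminus\{\pm n_0\}$, Lemma \ref{prop_ev_b} gives $\nu_n^h\neq\nu_n^p$, which forces $\alpha_n^h,\alpha_n^p$ to be linearly independent. At $n=0$ the two vectors $(1,1)^{\dagger},(1,-1)^{\dagger}$ are obviously independent. The only genuinely delicate point is $j=\pm n_0$ under $\tfrac{2\sqrt{b\bar{\rho}}}{\mu_0}\in\mb{N}$: the matrix $R_j$ is a $2\times2$ Jordan block, and \eqref{gen_ev_rn_b} says $(R_j-\nu_j I)\tilde{\alpha}_j = \alpha_j \neq 0$. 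If $\tilde{\alpha}_j$ were a multiple of $\alpha_j$, the left-hand side would vanish (since $\alpha_j\in\ker(R_j-\nu_j I)$), a contradiction; hence $M_j=[\alpha_j\mid\tilde{\alpha}_j]$ is invertible.

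With $M_n$ and $M_n^{-1}$ uniformly bounded, any element $(f,g)=\sum_{n}(c_n e_1 + d_n e_2)e^{inx}\in(L^2(0,2\pi))^2$ can be uniquely decomposed as $\sum_n M_n\begin{pmatrix}a_n\\ b_n\end{pmatrix}e^{inx}$ with $\binom{a_n}{b_n}=M_n^{-1}\binom{c_n}{d_n}$, and
\begin{equation*}
\sum_{n\in\mb{Z}}\bigl(\mod{a_n}^2+\mod{b_n}^2\bigr) \;\asymp\; \sum_{n\in\mb{Z}}\bigl(\mod{c_n}^2+\mod{d_n}^2\bigr) \;\asymp\; \norm{(f,g)}_{L^2\times L^2}^2,
\end{equation*}
which are exactly the two-sided Riesz frame bounds (with respect to the orthonormal basis $\{(2\pi b)^{-1/2}e^{inx}e_1,\,(2\pi\bar{\rho})^{-1/2}e^{inx}e_2\}_{n\in\mb{Z}}$). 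Completeness follows from completeness of the exponentials together with the surjectivity of each $M_n$ onto $\mb{C}^2$. For the second part of the proposition, the analysis is identical except that the $V_0$ block is discarded, leaving a Riesz basis of $(\dot{L}^2(0,2\pi))^2$. The main technical obstacle throughout is the Jordan-block index under $\tfrac{2\sqrt{b\bar{\rho}}}{\mu_0}\in\mb{N}$, but this is resolved directly by the construction of $\tilde{\alpha}_j$ in \eqref{gen_ev_rn_b}.
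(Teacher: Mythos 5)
Your argument is correct, and it takes a different route from the paper. The paper's proof is shorter on the surface: it observes that the family $\mc{E}(A^*)$ is quadratically close to the orthogonal basis $\left\{\vector{\bar{\rho}}{0}e^{inx},\ \vector{0}{1}e^{inx}\ ;\ n\in\mb{Z}\right\}$, using exactly the asymptotics you use ($\mod{\nu_2^n-\bar{u}}\leq C/\mod{n}$, $\mod{\nu_1^n-\bar{u}}\geq C\mod{n}$), and then invokes the Bari-type theorem of Guo \cite[Theorem 6.3]{Guo01} for generalized eigenvectors of discrete operators, which absorbs both the completeness question and the finitely many exceptional indices (including the Jordan block at $\pm n_0$) into the cited result. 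You instead exploit the fact that every (generalized) eigenfunction is a constant vector times $e^{inx}$, so the weighted space splits as the orthogonal sum $\bigoplus_n V_n$ and the whole question reduces to uniform two-sided bounds on the $2\times2$ coordinate matrices $M_n$; convergence of $M_n$ to $\mathrm{diag}(\bar{\rho},1)$ handles large $\mod{n}$, and pointwise invertibility (distinct eigenvalues of $R_n$ for $n\neq\pm n_0$, the explicit check at $n=0$, and the Jordan-block relation $(R_j-\nu_jI)\tilde{\alpha}_j=\alpha_j\neq0$ at $j=\pm n_0$) handles the rest. What your version buys: it is self-contained (no appeal to Guo), it only needs $M_n$ to converge to an invertible limit rather than $\ell^2$-summability of the differences, and it makes the treatment of the multiple eigenvalue explicit, which the paper leaves implicit in the citation. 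What the paper's version buys: the quadratic-closeness argument does not depend on the exact block-diagonal structure of the eigenfunctions, so it transfers to settings (e.g.\ non-periodic boundary conditions) where eigenfunctions are only asymptotically exponential, whereas your block decomposition is tied to the constant-coefficient periodic case. Both proofs are sound; the small computations you rely on ($\det M_n=\bar{\rho}\,\frac{\nu_1^n-\nu_2^n}{\nu_1^n-\bar{u}}\to\bar{\rho}$, orthogonality of the $V_n$ under the weighted inner product) check out.
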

\begin{proof}
Denote $\Psi_n(x):=\vector{\bar{\rho}}{0}e^{inx}$, $\tilde{\Psi}_n(x):=\vector{0}{1}e^{inx}$ for $n\in\mb{Z}$. Then, the set of generalized eigenfunctions $\left\{\Phi_n^h,\ \Phi_n^p\ :\ n\in\mb{Z}^*\setminus\{\pm n_0\};\ \Phi_j,\ \tilde{\Phi}_j\ :\ j=0,\pm n_0\right\}$ of $A^*$ is quadratically close to the orthogonal basis $\left\{\Psi_n,\ \tilde{\Psi}_n;\ n\in\mb{Z}\right\}$ in $(L^2(0,2\pi))^2$. Indeed, we have for a large $N\in\mb{N}$
\begin{equation*}
\sum_{\mod{n}>N}\left(\norm{\Phi_n^h-\Psi_n}_{(L^2(0,2\pi))^2}^2+\norm{\Phi_n^p-\tilde{\Psi}_n}_{(L^2(0,2\pi))^2}^2\right)\leq C\sum_{\mod{n}>N}\frac{1}{\mod{n}^2}<\infty,
\end{equation*}
thanks to the fact that $\mod{\nu_2^n-\bar{u}}\leq\frac{C}{\mod{n}}$ and $\mod{\nu_1^n-\bar{u}}\geq C\mod{n}$ for large $n$. Since the set $\left\{\Psi_n,\tilde{\Psi}_n\ ;\ n\in\mb{Z}\right\}$ is an orthogonal basis of $(L^2(0,2\pi))^2$, this Proposition is now an immediate consequence of the result of Bao-Zhu Guo \cite[Theorem 6.3]{Guo01}.
\end{proof}
}

\subsection{Observation estimates}\label{sec_obs_est_b}
As mentioned in the introduction, we need to prove certain observability inequalities to achieve null controllability of the system \eqref{lcnse_b} and to do so, we need lower bound estimates of the corresponding observation terms (when control acting in density or velocity). {\blue Looking at the Definition \ref{def_sol_b} of the solution to \eqref{lcnse_b} (in the sense of transposition), let us} first define the observation operators associated to the system \eqref{lcnse_b} as follows:
\begin{itemize}
\item The observation operator $\mc{B}^*_{\rho}:\mc{D}(A^*)\to\mb{C}$ to the system \eqref{lcnse_b}-\eqref{in_cd_b}-\eqref{bd_cd_b1} is defined by
\begin{equation}\label{obs_op_den_b}
\mc{B}^*_{\rho}\Phi:=\bar{u}\xi(2\pi)+\bar{\rho}\eta(2\pi),\ \ \text{for}\ \Phi:=(\xi,\eta)\in\mc{D}(A^*).
\end{equation}
\item The observation operator $\mc{B}^*_u:\mc{D}(A^*)\to\mb{C}$ to the system \eqref{lcnse_b}-\eqref{in_cd_b}-\eqref{bd_cd_b2} is defined by
\begin{equation}\label{obs_op_vel_b}
\mc{B}^*_u\Phi:=b\xi(2\pi)+\bar{u}\eta(2\pi)+\mu_0\eta_x(2\pi),\ \ \text{for}\ \Phi:=(\xi,\eta)\in\mc{D}(A^*).
\end{equation}
\end{itemize}
Recall that $\mc{E}(A^*)$ denotes the set of all (generalized) eigenfunctions of $A^*$. The following result proves that these observation terms are non-zero for all $\Phi\in\mc{E}(A^*)\setminus\{\Phi_0,{\blue\tilde{\Phi}_j},\ j=0,\pm n_0\}$, and have positive lower bounds for all $n\in\mb{Z}^*$.
\begin{lemma}\label{lem_obs_est_b}
For all $\Phi_{\nu}\in\mc{E}(A^*)\setminus\{\Phi_0,{\blue\tilde{\Phi}_j},\ j=0,\pm n_0\}$, the observation operators satisfy $\mc{B}_{\rho}^*\Phi_{\nu}\neq0$ and $\mc{B}_u^*\Phi_{\nu}\neq0$. Moreover, we have the following estimates
\begin{align}
&\mod{\mc{B}_{\rho}^*\Phi_n^h}\geq C,\ \ \mod{\mc{B}_{\rho}^*\Phi_n^p}\geq C,\label{obs_est_d_b}\\
&\mod{\mc{B}_u^*\Phi_n^h}\geq \frac{C}{\mod{n}},\ \ \mod{\mc{B}_u^*\Phi_n^p}\geq C\mod{n},\label{obs_est_v_b}
\end{align}
for some $C>0$ and all $n\in\mb{Z}^*$.
\end{lemma}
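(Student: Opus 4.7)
The plan is to substitute the explicit forms of the eigenfunctions $\Phi_n^h$ and $\Phi_n^p$ from \eqref{exp_e_fns_b} into the observation operators \eqref{obs_op_den_b}-\eqref{obs_op_vel_b}, simplify using the eigenvector relations built into Lemma \ref{lemma_eigen_b}, and then extract the lower bounds from the asymptotic behaviour in part (iv) of that lemma. Since every eigenfunction is of the form $\alpha e^{inx}$ with $e^{2\pi i n}=1$, the point-evaluations at $x=2\pi$ simply return the coefficient vector, so the four observation terms read
\begin{align*}
\mc{B}_\rho^*\Phi_n^h &= \bar{u}\bar{\rho}+\bar{\rho}(\nu_2^n-\bar{u})=\bar{\rho}\nu_2^n=\frac{\bar{\rho}\nu_n^h}{in},\\
\mc{B}_\rho^*\Phi_n^p &= \frac{\bar{u}\bar{\rho}}{\nu_1^n-\bar{u}}+\bar{\rho}=\frac{\bar{\rho}\nu_1^n}{\nu_1^n-\bar{u}}=\frac{\bar{\rho}\nu_n^p}{\nu_n^p-i\bar{u}n},\\
\mc{B}_u^*\Phi_n^h &= b\bar{\rho}+(\bar{u}+i\mu_0 n)(\nu_2^n-\bar{u}),\\
\mc{B}_u^*\Phi_n^p &= \frac{b\bar{\rho}}{\nu_1^n-\bar{u}}+\bar{u}+i\mu_0 n.
\end{align*}

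The first pair is essentially done: since $\nu_n^h\ne 0$ and $\nu_n^p\ne 0$ for $n\in\mb{Z}^*$ by Lemma \ref{lemma_eigen_b}-(ii), and since $\nu_n^p=i\bar{u}n$ would force $\sqrt{\mu_0^2n^4-4b\bar{\rho}n^2}=-\mu_0 n^2$ (impossible for $n\ne 0$), both quantities are nonzero; asymptotically, $\nu_n^h/(in)\to\bar{u}$ gives $|\mc{B}_\rho^*\Phi_n^h|\to\bar{\rho}\bar{u}$, while $\nu_n^p\sim-\mu_0 n^2$ gives $\mc{B}_\rho^*\Phi_n^p\to\bar{\rho}$, and the finite set of remaining indices produces a uniform positive lower bound by direct verification.

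The main obstacle is $\mc{B}_u^*\Phi_n^h$, because its two contributions partially cancel as $|n|\to\infty$: the ansatz yields $b\bar{\rho}-b\bar{\rho}$ at leading order. The clean way to handle this is to use the second eigenvector equation $bin\bar{\rho}+(-\mu_0 n^2+\bar{u}in-\nu_n^h)(\nu_2^n-\bar{u})=0$ to substitute $\nu_2^n-\bar{u}=\frac{bin\bar{\rho}}{\mu_0 n^2+\nu_n^h-i\bar{u}n}$, after which straightforward algebra (the $b\bar{\rho}\mu_0 n^2$ and $-b\bar{\rho}\mu_0 n^2$ terms drop) collapses the expression into
\begin{equation*}
\mc{B}_u^*\Phi_n^h=\frac{b\bar{\rho}\,\nu_n^h}{\mu_0 n^2+\nu_n^h-i\bar{u}n}=\frac{b\bar{\rho}\,\nu_n^h}{i\bar{u}n-\nu_n^p},
\end{equation*}
where in the second equality I use Vieta's relation $\nu_n^h+\nu_n^p=-\mu_0 n^2+2i\bar{u}n$ coming from the characteristic equation. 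A parallel manipulation for $\Phi_n^p$ gives the strikingly simple identity $\mc{B}_u^*\Phi_n^p=\nu_n^p/(in)$.

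Once these closed forms are in hand, the lower bounds follow immediately from Lemma \ref{lemma_eigen_b}-(iv): $|\nu_n^h|\sim \bar{u}|n|$ and $|i\bar{u}n-\nu_n^p|\sim\mu_0 n^2$ yield $|\mc{B}_u^*\Phi_n^h|\sim \bar{u}\omega_0/|n|$, while $|\nu_n^p|\sim\mu_0 n^2$ yields $|\mc{B}_u^*\Phi_n^p|\sim\mu_0|n|$. Non-vanishing for every $n\in\mb{Z}^*$ again reduces to $\nu_n^h,\nu_n^p\neq 0$ together with $i\bar{u}n-\nu_n^p\neq 0$ (same impossibility as above). The finitely many small-index cases (including the degenerate pair $n=\pm n_0$ when $\frac{2\sqrt{b\bar{\rho}}}{\mu_0}\in\mb{N}$, which still produces a genuine eigenfunction $\Phi_{\pm n_0}^h=\Phi_{\pm n_0}^p$ covered by the same formulas) are handled by direct substitution, after which the combination of the uniform asymptotic bound and the positive minimum over the finite exceptional set yields the desired estimates \eqref{obs_est_d_b}-\eqref{obs_est_v_b}.
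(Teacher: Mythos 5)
Your proposal is correct and takes essentially the same route as the paper: substitute the explicit eigenfunctions into the observation operators, simplify using the eigenvector equations of $R_n$, and extract the bounds from the asymptotics of $\nu_1^n$ and $\nu_2^n$. Your closed form $\mc{B}_u^*\Phi_n^h=\dfrac{b\bar{\rho}\,\nu_n^h}{i\bar{u}n-\nu_n^p}$ is just the paper's identity $\mc{B}_u^*\Phi_n^h=\nu_2^n(\nu_2^n-\bar{u})$ rewritten via Vieta's relation, so the apparent cancellation is resolved in the same way, and the non-vanishing and lower-bound arguments coincide with those in the paper.
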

\begin{proof}
{\blue
Recall from the proof of Lemma \ref{lemma_eigen_b} that eigenvectors $(\alpha_1^n,\alpha_2^n)^{\dagger}$ and $(\beta_1^n,\beta_2^n)^{\dagger}$ of the matrix $R_n$ satisfies the following equations:
\begin{align}
(\bar{u}in-\nu_n^h)\alpha_1^n+\bar{\rho}in\alpha_2^n=0,\ \
bin\alpha_1^n+(-\mu_0n^2+\bar{u}in-\nu_n^h)\alpha_2^n=0,\label{eqn_ev_rn_b_1}\\
(\bar{u}in-\nu_n^p)\beta_1^n+\bar{\rho}in\beta_2^n=0,\ \
bin\beta_1^n+(-\mu_0n^2+\bar{u}in-\nu_n^p)\beta_2^n=0,\label{eqn_ev_rn_b_2}
\end{align}
for $n\in\mb{Z}$. Also, recall the expressions of $\nu_1^n=\frac{1}{in}\nu_n^p$ and $\nu_2^n=\frac{1}{in}\nu_n^h$. We will use these equation to conclude the proof of this result. Note that}
\begin{align*}
\mc{B}^*_{\rho}\Phi_n^h=\bar{u}\xi_n^h(2\pi)+\bar{\rho}\eta_n^h(2\pi)=\bar{u}\alpha_1^n+\bar{\rho}\alpha_2^n=\nu_2^n\alpha_1^n\neq0,\\
\mc{B}^*_{\rho}\Phi_n^p=\bar{u}\xi_n^p(2\pi)+\bar{\rho}\eta_n^p(2\pi)=\bar{u}\beta_1^n+\bar{\rho}\beta_2^n=\nu_1^n\beta_1^n\neq0,
\end{align*}
for all $n\in\mb{Z}^*$, {\blue thanks to the first equations of \eqref{eqn_ev_rn_b_1}-\eqref{eqn_ev_rn_b_2}. The estimates on $\mc{B}^*_{\rho}\Phi_n^h$ and $\mc{B}^*_{\rho}\Phi_n^p$ follows directly from the above expressions.}

\smallskip

{\blue
\noindent For the parabolic frequencies, we have}
\begin{align*}
\mc{B}_u^*\Phi_n^h&=b\xi_n^h(2\pi)+\bar{u}\eta_n^h(2\pi)+\mu_0(\eta_n^h)_x(2\pi)=b\alpha_1^n+(\bar{u}+\mu_0in)\alpha_2^n=\nu_2^n\alpha_2^n\neq0,\\
\mc{B}_u^*\Phi_n^p&=b\xi_n^p(2\pi)+\bar{u}\eta_n^p(2\pi)+\mu_0(\eta_n^p)_x(2\pi)=b\beta_1^n+(\bar{u}+\mu_0in)\beta_2^n=\nu_1^n\beta_2^n\neq0,
\end{align*}
for all $n\in\mb{Z}^*$, {\blue thanks to the second equations in \eqref{eqn_ev_rn_b_1}-\eqref{eqn_ev_rn_b_2}. Since $\mod{\alpha_2^n}\geq\frac{C}{\mod{n}}$ and $\nu_2^n$ is bounded (away from zero) for all $n\in\mb{Z}^*$, the estimate on $\mc{B}^*_u\Phi_n^h$ and $\mc{B}^*\Phi_n^p$ follows directly from the above expressions.
}
\end{proof}

{\blue 
\begin{remark}\label{rem_obs_gen_b}
For the generalized eigenfunction $\tilde{\Phi}_j\in\mc{E}(A^*)$ ($j=\pm n_0$), we can choose $\tilde{\alpha}_1^j$ and $\tilde{\alpha}_2^j$ accordingly so that $\mc{B}^*_{\rho}\tilde{\Phi}_j=\bar{u}\tilde{\alpha}_1^j+\bar{\rho}\tilde{\alpha}_2^j\neq0$ and $\mc{B}^*_u\tilde{\Phi}_j=b\tilde{\alpha}_1^j+(\bar{u}+\mu_0ij)\tilde{\alpha}_2^j\neq0$ for $j=\pm n_0$.

\end{remark}
}

\subsection{Observability inequalities}

{\blue In this section, we prove our main null controllability results of the system \eqref{lcnse_b}, namely Theorem \ref{thm_den_b} and Theorem \ref{thm_vel_b}. We first state two results which are equivalent to null controllability of the system \eqref{lcnse_b} using controls acting in density and velocity respectively. The proofs are standard (see for instance \cite[Section 2.3.4]{Micu04},\cite[Section 4.3]{Zuazua07}; see also our paper \cite{Bhandari22}), so we skip the details.
\begin{theorem}\label{equiv_thm_den_b}
Let $T>0$ be given. Then, the system \eqref{lcnse_b}-\eqref{in_cd_b}-\eqref{bd_cd_b1} is null controllable at time $T$ in the space $(\dot{L}^2(0,2\pi))^2$ if and only if the inequality
\begin{equation}\label{obs_inq_b_den}
\norm{(\sigma(0),v(0))^{\dagger}}_{(\dot{L}^2(0,2\pi))^2}^2\leq C\int_{0}^{T}\mod{\bar{u}\sigma(t,2\pi)+\bar{\rho}v(t,2\pi)}^2dt
\end{equation}
holds for all solutions $(\sigma,v)^{\dagger}$ of the adjoint system \eqref{lcnse_adj_b} with terminal data $(\sigma_T,v_T)^{\dagger}\in\mc{D}(A^*)$.
\end{theorem}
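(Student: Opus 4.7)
The plan is to prove this equivalence by the standard HUM (Hilbert Uniqueness Method) duality between boundary controllability and observability of the adjoint system. The key tool is the transposition identity already stated in Definition \ref{def_sol_b}, applied with source $(f,g)=0$ and terminal data $(\sigma_T,v_T)\in\mc{D}(A^*)$ for the adjoint system \eqref{lcnse_adj_b}. This gives, for any admissible solution pair,
\begin{equation*}
\ip{(\rho(T),u(T))^{\dagger}}{(\sigma_T,v_T)^{\dagger}}_{L^2\times L^2}=\ip{(\rho_0,u_0)^{\dagger}}{(\sigma(0),v(0))^{\dagger}}_{L^2\times L^2}+b\int_{0}^{T}\left[\bar{u}\overline{\sigma(t,2\pi)}+\bar{\rho}\overline{v(t,2\pi)}\right]p(t)\,dt.
\end{equation*}
Hence $(\rho(T),u(T))=(0,0)$ in $(\dot L^2)^2$ is equivalent to the integral identity
\begin{equation*}
b\int_{0}^{T}\left[\bar{u}\overline{\sigma(t,2\pi)}+\bar{\rho}\overline{v(t,2\pi)}\right]p(t)\,dt=-\ip{(\rho_0,u_0)^{\dagger}}{(\sigma(0),v(0))^{\dagger}}_{L^2\times L^2}
\end{equation*}
holding for every terminal data $(\sigma_T,v_T)^{\dagger}\in\mc{D}(A^*)\cap(\dot L^2)^2$.

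For the implication observability $\Longrightarrow$ null controllability, I would use HUM: define the quadratic functional
\begin{equation*}
J(\sigma_T,v_T)=\tfrac{b}{2}\int_{0}^{T}\mod{\bar{u}\sigma(t,2\pi)+\bar{\rho}v(t,2\pi)}^{2}dt+\ip{(\rho_0,u_0)^{\dagger}}{(\sigma(0),v(0))^{\dagger}}_{L^2\times L^2}
\end{equation*}
on a suitable completion of $\mc{D}(A^*)\cap(\dot L^2)^2$ with respect to the observation semi-norm. The observability inequality \eqref{obs_inq_b_den}, together with the continuity of the map $(\sigma_T,v_T)\mapsto(\sigma(0),v(0))$ in $(\dot L^2)^2$ (which follows from the well-posedness of \eqref{lcnse_adj_b} and the contraction property of the adjoint semigroup), ensures that $J$ is continuous, strictly convex and coercive on that completion. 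By the direct method of the calculus of variations, $J$ admits a unique minimizer $(\hat\sigma_T,\hat v_T)$, and writing the Euler--Lagrange equation gives the explicit control $p(t)=\bar{u}\hat\sigma(t,2\pi)+\bar{\rho}\hat v(t,2\pi)\in L^2(0,T)$ which, inserted back in the transposition identity, produces $(\rho(T),u(T))=(0,0)$.

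For the reverse implication null controllability $\Longrightarrow$ observability, the plan is a closed-graph / open-mapping argument. Assuming null controllability, the operator
\begin{equation*}
\mathcal{L}:(\rho_0,u_0)\in(\dot L^2)^2\longmapsto p\in L^2(0,T)/\mc{N},
\end{equation*}
where $\mc{N}$ denotes the kernel of the control-to-state map, is well-defined and a standard closed-graph argument (using continuous dependence of $(\rho,u)$ on $p$ via transposition) shows it is bounded. Equivalently, there is a constant $C$ such that for every $(\rho_0,u_0)$ one can select a control $p$ with $\|p\|_{L^2(0,T)}\leq C\|(\rho_0,u_0)\|_{(\dot L^2)^2}$. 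Combining the transposition identity applied to this control and arbitrary adjoint terminal data $(\sigma_T,v_T)\in\mc{D}(A^*)$ with the Cauchy--Schwarz inequality yields
\begin{equation*}
\mod{\ip{(\rho_0,u_0)^{\dagger}}{(\sigma(0),v(0))^{\dagger}}_{L^2\times L^2}}\leq C\|(\rho_0,u_0)\|_{(\dot L^2)^2}\left(\int_{0}^{T}\mod{\bar{u}\sigma(t,2\pi)+\bar{\rho}v(t,2\pi)}^{2}dt\right)^{1/2}.
\end{equation*}
Taking the supremum over $(\rho_0,u_0)$ in the unit ball of $(\dot L^2)^2$ (and using the semigroup property to realize any element of $(\dot L^2)^2$ as $(\sigma(0),v(0))$) gives \eqref{obs_inq_b_den}.

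The main technical obstacle is not the duality itself (which is classical) but the verification that the boundary trace $t\mapsto \bar{u}\sigma(t,2\pi)+\bar{\rho}v(t,2\pi)$ genuinely belongs to $L^2(0,T)$ for every $(\sigma_T,v_T)\in\mc{D}(A^*)$ and depends continuously on the terminal data in the right topology. This is the hidden regularity statement already recorded in Lemma \ref{well_posed_adj_b}, which is what legitimizes treating the observation map as a bounded operator and hence makes the HUM construction rigorous.
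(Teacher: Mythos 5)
Your argument is correct and is precisely the standard HUM/duality route (minimization of the quadratic functional for the sufficiency, closed-graph boundedness of the control map plus the transposition identity for the necessity) that the paper itself invokes: it omits the proof of this theorem as standard, citing Micu--Zuazua, Zuazua and \cite{Bhandari22}. Your closing remark correctly identifies the hidden-regularity statement of Lemma \ref{well_posed_adj_b} as the ingredient that makes the observation map well defined, which is exactly the role it plays in the paper.
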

\begin{theorem}\label{equiv_thm_vel_b}
Let $T>0$ be given. Then, the system \eqref{lcnse_b}-\eqref{in_cd_b}-\eqref{bd_cd_b2} is null controllable at time $T$ in the space $\dot{H}^1_{\per}(0,2\pi)\times\dot{L}^2(0,2\pi)$ if and only if the inequality
\begin{equation}\label{obs_inq_b_vel}
\norm{(\sigma(0),v(0))^{\dagger}}_{\dot{H}^{-1}_{\per}(0,2\pi)\times\dot{L}^2(0,2\pi)}^2\leq C\int_{0}^{T}\mod{b\sigma(t,2\pi)+\bar{u}v(t,2\pi)+\mu_0v_x(t,2\pi)}^2dt
\end{equation}
holds for all solutions $(\sigma,v)^{\dagger}$ of the adjoint system \eqref{lcnse_adj_b} with terminal data $(\sigma_T,v_T)^{\dagger}\in\mc{D}(A^*)$.
\end{theorem}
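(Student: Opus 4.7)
My plan is to apply the classical HUM duality argument, adapted to the boundary setting where the control $q$ enters as the jump $u(t,0) - u(t,2\pi) = q(t)$. The cornerstone is an integration-by-parts identity obtained as follows. Given smooth terminal data $(\sigma_T, v_T)\in\mc{D}(A^*)$, let $(\sigma, v)$ be the corresponding solution of \eqref{lcnse_adj_b}, and let $(\rho, u)$ be the transposition solution of \eqref{lcnse_b}-\eqref{in_cd_b}-\eqref{bd_cd_b2} with initial data $(\rho_0, u_0)\in\dot{H}^1_{\per}\times\dot{L}^2$ and control $q\in L^2(0,T)$. Differentiating $\ip{(\rho,u)}{(\sigma,v)}_{L^2\times L^2}$ in time, substituting the two PDEs, and integrating by parts in $x$, the $\rho$-$\sigma$ contributions cancel by periodicity of both, while the parabolic pair $\mu_0(u_{xx}\bar v - u\,\overline{v_{xx}})$ and the transport-pressure cross-terms pick up boundary contributions proportional to $q(t)$ (the sole source of non-periodicity). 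After cancellations one obtains
\begin{equation*}
\frac{d}{dt}\ip{(\rho,u)}{(\sigma,v)}_{L^2\times L^2} = \bar{\rho}\,q(t)\,\overline{\bigl(b\sigma(t,2\pi) + \bar{u}v(t,2\pi) + \mu_0 v_x(t,2\pi)\bigr)},
\end{equation*}
and integrating over $(0,T)$ yields the pairing formula
\begin{equation*}
\ip{(\rho(T),u(T))}{(\sigma_T,v_T)} - \ip{(\rho_0,u_0)}{(\sigma(0),v(0))} = \bar{\rho}\int_0^T q(t)\,\overline{\mc{B}^*_u(\sigma,v)(t)}\,dt.
\end{equation*}
This identity is the bridge between the two sides of the theorem.

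For the direction observability $\Rightarrow$ null controllability, I would introduce the HUM functional on $\dot{H}^{-1}_{\per}(0,2\pi)\times\dot{L}^2(0,2\pi)$ defined by
\begin{equation*}
J(\sigma_T, v_T) := \frac{1}{2}\int_0^T \bigl|b\sigma(t,2\pi)+\bar{u}v(t,2\pi)+\mu_0 v_x(t,2\pi)\bigr|^2\,dt + \Re\,\ip{(\rho_0,u_0)}{(\sigma(0),v(0))},
\end{equation*}
the last pairing being the duality between $\dot{H}^1_{\per}\times\dot{L}^2$ and $\dot{H}^{-1}_{\per}\times\dot{L}^2$. Continuity and convexity of $J$ follow from the hidden-regularity part of Lemma \ref{well_posed_adj_b} together with the control of $v_x(\cdot,2\pi)$ obtained from the parabolic smoothing of the adjoint system, while coercivity is exactly \eqref{obs_inq_b_vel}. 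A unique minimizer $(\sigma_T^*,v_T^*)$ yields the control $q^*(t) := -\bar{\rho}^{-1}\overline{\mc{B}^*_u(\sigma^*,v^*)(t)}\in L^2(0,T)$; the Euler-Lagrange equation combined with the pairing formula and the density of $\mc{D}(A^*)$ in $\dot{H}^{-1}_{\per}\times\dot{L}^2$ forces $(\rho(T),u(T)) = 0$.

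For the converse direction, I would invoke the Banach closed-range theorem. Reading the pairing formula with $U_0 = 0$, the control-to-final-state map $q\mapsto(\rho(T;0,q),u(T;0,q))$ from $L^2(0,T)$ into $\dot{H}^1_{\per}\times\dot{L}^2$ has for its Hilbert-space adjoint precisely the trace map $(\sigma_T,v_T)\mapsto\bar{\rho}\,\mc{B}^*_u(\sigma,v)$ from $\dot{H}^{-1}_{\per}\times\dot{L}^2$ into $L^2(0,T)$. Null controllability for every $U_0$ amounts to the range of the former covering the semigroup orbit $\{-S(T)U_0:U_0\in\dot{H}^1_{\per}\times\dot{L}^2\}$, which by the closed-range theorem is equivalent to the adjoint being bounded below — that is, precisely \eqref{obs_inq_b_vel}.

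The main technical hurdle I anticipate is the low regularity of the adjoint data $(\sigma_T, v_T)\in\dot{H}^{-1}_{\per}\times\dot{L}^2$: one must make sense of all three traces $\sigma(t,2\pi),\,v(t,2\pi),\,v_x(t,2\pi)$ as elements of $L^2(0,T)$, and verify that the duality identity extends continuously from smooth data in $\mc{D}(A^*)$. This is supplied by Lemma \ref{well_posed_adj_b}, whose hidden-regularity statement handles the $\sigma$ trace and whose parabolic smoothing places $v$ in $L^2(0,T-\varepsilon;H^2_{\per}(0,2\pi))$ (for the homogeneous endpoint), thereby making $v_x(\cdot,2\pi)$ a well-defined element of $L^2(0,T)$; a standard density argument then completes the extension and HUM applies.
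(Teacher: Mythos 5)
Your proposal is correct and follows essentially the route the paper itself intends: the paper omits the proof of Theorem \ref{equiv_thm_vel_b} as standard, citing the classical duality/HUM references, and your argument (transposition duality identity, HUM functional whose coercivity is the observability inequality \eqref{obs_inq_b_vel}, and the range-inclusion/adjoint-domination lemma for the converse) is exactly that standard argument. The only caveat worth noting is that rather than claiming continuity of $J$ on all of $\dot{H}^{-1}_{\per}(0,2\pi)\times\dot{L}^2(0,2\pi)$ (admissibility of the trace $v_x(\cdot,2\pi)$ near $t=T$ for such rough terminal data is delicate, as the paper itself observes around \eqref{well_posed_vel_b_1}), one should, as in the standard references, minimize over data in $\mc{D}(A^*)$ completed with respect to the observation seminorm and use the observability inequality to pass to the limit in $(\sigma(0),v(0))$.
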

The inequalities \eqref{obs_inq_b_den} and \eqref{obs_inq_b_vel} are referred as observability inequalities for the systems \eqref{lcnse_b}-\eqref{in_cd_b}-\eqref{bd_cd_b1} and \eqref{lcnse_b}-\eqref{in_cd_b}-\eqref{bd_cd_b2} respectively. To prove these inequalities, we will use the Ingham-type inequality \eqref{ingham-ineq} to obtain a lower bound of the observation terms (given in the right hand sides of \eqref{obs_inq_b_den} and \eqref{obs_inq_b_vel}) together with the upper bounds of norms of $(\sigma(0),v(0))^{\dagger}$ in the respective spaces. 

\smallskip

Let $\frac{2\sqrt{b\bar{\rho}-\bar{u}^2}}{\mu_0}\notin\mb{N}$. We first assume that $\frac{2\sqrt{b\bar{\rho}}}{\mu_0}\notin\mb{N}$, that is, all the eigenvalues of $A^*$ are simple (Lemma \ref{prop_ev_b}-(iv)) and prove null controllability of the system \eqref{lcnse_b} (Theorem \ref{thm_den_b}-Part(i) and Theorem \ref{thm_vel_b}-Part(i)). In the case of multiple eigenvalues (when $\frac{2\sqrt{b\bar{\rho}}}{\mu_0}\in\mb{N}$), we give detailed proof of Theorem \ref{thm_den_b}-Part(i) at the end of this section. The proof of Theorem \ref{thm_vel_b}-Part(i) in the presence of multiple eigenvalues will be similar to that of Theorem \ref{thm_den_b}-Part(i) and so we give some comments at the end of this section.

\smallskip

\subsubsection{The case of simple eigenvalues}
Let us assume that $\frac{2\sqrt{b\bar{\rho}-\bar{u}^2}}{\mu_0},\frac{2\sqrt{b\bar{\rho}}}{\mu_0}\notin\mb{N}$ and let $(\sigma_T,v_T)^{\dagger}\in(\dot{L}^2(0,2\pi))^2$.} Since the set of eigenfunctions {\blue $\{\Phi_n^h,\Phi_n^p\ ; \ n\in\mb{Z}^*\}$ form a Riesz basis in $(\dot{L}^2(0,2\pi))^2$ (thanks to Proposition \ref{prop_rb_b})}, therefore any $(\sigma_T,v_T)^{\dagger}\in (\dot{L}^2(0,2\pi))^2$ can be written as
\begin{equation*}
(\sigma_T,v_T)^{\dagger}=\sum_{n\in\mb{Z}^*}\left(a_n^h\Phi_n^h+a_n^p\Phi_n^p\right),
\end{equation*}
for some $(a_n^h)_{n\in\mb{Z}^*},(a_n^p)_{n\in\mb{Z}^*}\in\ell_2$.
Then the solution to the adjoint system \eqref{lcnse_adj_b} is
\begin{equation*}
(\sigma(t,x),v(t,x))^{\dagger}=\sum_{n\in\mb{Z}^*}a_n^he^{\nu_n^h(T-t)}\Phi_n^h+\sum_{n\in\mb{Z}^*}a_n^pe^{\nu_n^p(T-t)}\Phi_n^p,
\end{equation*}
for $(t,x)\in (0,T)\times(0,2\pi)$. Thus we get
\begin{equation*}
\sigma(t,x)=\bar{\rho}\sum_{n\in\mb{Z}^*}a_n^he^{\nu_n^h(T-t)}e^{inx}+\sum_{n\in\mb{Z}^*}a_n^pe^{\nu_n^p(T-t)}\frac{\bar{\rho}}{\nu_1^n-\bar{u}}e^{inx},
\end{equation*}
and
\begin{equation*}
v(t,x)=\sum_{n\in\mb{Z}^*}a_n^he^{\nu_n^h(T-t)}(\nu_2^n-\bar{u})e^{inx}+\sum_{n\in\mb{Z}^*}a_n^pe^{\nu_n^p(T-t)}e^{inx},
\end{equation*}
for all $(t,x)\in (0,T)\times(0,2\pi)$. 

\smallskip

\noindent\underline{\blue Estimates on the norms of $(\sigma(0),v(0))^{\dagger}$}:
We have
\begin{align}\label{est_norm_b1}
\norm{(\sigma(0),v(0))^{\dagger}}_{(\dot{L}^2(0,2\pi))^2}^2&\leq C\left[\sum_{n\in\mb{Z}^*}\mod{a_n^h}^2e^{2\Re(\nu_n^h)T}\norm{\Phi_n^h}_{(\dot{L}^2(0,2\pi))^2}^2+\sum_{n\in\mb{Z}^*}\mod{a_n^p}^2e^{2\Re(\nu_n^p)T}\norm{\Phi_n^p}_{(\dot{L}^2(0,2\pi))^2}^2\right]\\
&\leq C\left[\sum_{n\in\mb{Z}^*}\mod{a_n^h}^2\left(1+\mod{\nu_2^n-\bar{u}}^2\right)e^{2\Re(\nu_n^h)T}\norm{e^{inx}}_{\dot{L}^2(0,2\pi)}^2\right.\notag\\
&\left.\quad\quad\quad\quad+\sum_{n\in\mb{Z}^*}\mod{a_n^p}^2\left(\frac{1}{\mod{\nu_1^n-\bar{u}}^2}+1\right)e^{2\Re(\nu_n^p)T}\norm{e^{inx}}_{\dot{L}^2(0,2\pi)}^2\right]\notag\\
&\leq C\left[\sum_{n\in\mb{Z}^*}\mod{a_n^h}^2+\sum_{n\in\mb{Z}^*}\mod{a_n^p}^2e^{2\Re(\nu_n^p)T}\right]\notag,
\end{align}
since the sequences $1+\mod{\nu_2^n-\bar{u}}^2$ and $1+\frac{1}{\mod{\nu_1^n-\bar{u}}^2}$ are bounded for all $n\in\mb{Z}^*$. We also have
\begin{align}\label{est_norm_b2}
&\norm{(\sigma(0),v(0))^{\dagger}}_{{\blue\dot{H}^{-1}_{\per}(0,2\pi)}\times \dot{L}^2(0,2\pi)}^2\\
&\leq C\left[\sum_{n\in\mb{Z}^*}\mod{a_n^h}^2e^{2\Re(\nu_n^h)T}\norm{\Phi_n^h}_{{\blue\dot{H}^{-1}_{\per}(0,2\pi)}\times \dot{L}^2(0,2\pi)}^2+\sum_{n\in\mb{Z}^*}\mod{a_n^p}^2e^{2\Re(\nu_n^p)T}\norm{\Phi_n^p}_{{\blue\dot{H}^{-1}_{\per}(0,2\pi)}\times \dot{L}^2(0,2\pi)}^2\right]\notag\\
&\leq C\left[\sum_{n\in\mb{Z}^*}\mod{a_n^h}^2{\blue\frac{1}{\mod{n}^{2}}}+\sum_{n\in\mb{Z}^*}\mod{a_n^p}^2e^{2\Re(\nu_n^p)T}\right],\notag
\end{align}
since the sequences $\nu_2^n-\bar{u},\frac{1}{\nu_1^n-\bar{u}}\sim_{+\infty}\frac{1}{n}$.
We now find the lower bounds of the respective observation terms and prove our main results for the barotropic case. We use the Ingham-type inequality (Lemma \ref{lem_ingham}) to obtain these bounds. {\blue First, we show that the eigenvalues $(\nu_n^h)_{n\in\mb{Z}^*}$ and $(\nu_n^p)_{n\in\mb{Z}^*}$ satisfy all the hypotheses of Lemma \ref{lem_ingham}. Recall the set of eigenvalues $(\nu_n^h)_{n\in\mb{Z}^*}$ and $(\nu_n^p)_{n\in\mb{Z}^*}$ of the operator $A^*$:
\begin{align*}
\nu_n^h&=-\frac{1}{2}\left(\mu_0n^2-\sqrt{\mu_0^2n^4-4b\bar{\rho}n^2}-2\bar{u}in\right),\\
\nu_n^{p}&=-\frac{1}{2}\left(\mu_0n^2+\sqrt{\mu_0^2n^4-4b\bar{\rho}n^2}-2\bar{u}in\right),
\end{align*}
for $n\in\mb{Z}^*$.

\begin{itemize}
	
\item Due to the assumption on the coefficients ($\frac{2\sqrt{b\bar{\rho}}}{\mu_0},\frac{2\sqrt{b\bar{\rho}-\bar{u}^2}}{\mu_0}\notin\mb{N}$), we have $\nu_n^h\neq\nu_l^h$, $\nu_n^p\neq\nu_l^p$ for all $n,l\in\mb{Z}^*$ with $n\neq l$ and the families are disjoint, that is, $\{\nu_n^h,n\in\mb{Z}^*\}\cap\{\nu_n^p,n\in\mb{Z}^*\}=\emptyset$ (Lemma \ref{prop_ev_b}).

\item We now rewrite $\nu_n^h$ as
\begin{equation*}
\nu_n^h=-\omega_0+\bar{u}in-\omega_0\frac{\mu_0n^2-\sqrt{\mu_0^2n^4-4b\bar{\rho}n^2}}{\mu_0n^2+\sqrt{\mu_0^2n^4-4b\bar{\rho}n^2}},\ \ \mod{n}\geq n_0.
\end{equation*}
This shows that the family $(\nu_n^h)_{n\in\mb{Z}^*}$ satisfies hypothesis (H2) of Lemma \ref{lem_ingham} with $\beta=-\omega_0, \tau=\bar{u}$ and $e_n=-\omega_0\frac{\mu_0n^2-\sqrt{\mu_0^2n^4-4b\bar{\rho}n^2}}{\mu_0n^2+\sqrt{\mu_0^2n^4-4b\bar{\rho}n^2}}$ for $\mod{n}\geq n_0$. Note that $\mod{e_n}\leq\frac{C}{\mod{n}^2}$ and therefore $(e_n)_{\mod{n}\geq n_0}\in\ell_2$. 

\item On the other hand, we have for all $\mod{n}\geq n_0$
\begin{align*}
\frac{-\Re(\nu_n^p)}{\mod{\Im(\nu_n^p)}}=\frac{1}{2}\frac{\mu_0n^2+\sqrt{\mu_0^2n^4-4b\bar{\rho}n^2}}{\bar{u}n}\geq\frac{\mu_0}{2\bar{u}},
\end{align*}
which verifies hypothesis (P2) of Lemma \ref{lem_ingham}. 

\item We now compute for $\mod{n},\mod{l}\geq n_0$ with $n\neq l$
\begin{align*}
\mod{\nu_n^p-\nu_l^p}^2
&\geq\frac{1}{4}\left(\mu_0(n^2-l^2)+\mu_0n^2\sqrt{1-\frac{4b\bar{\rho}}{\mu_0^2n^2}}-\mu_0l^2\sqrt{1-\frac{4b\bar{\rho}}{\mu_0^2l^2}}\right)^2.
\end{align*}
Let $\mod{n}>\mod{l}$. Then, we have $\mu_0n^2\sqrt{1-\frac{4b\bar{\rho}}{\mu_0^2n^2}}>\mu_0l^2\sqrt{1-\frac{4b\bar{\rho}}{\mu_0^2l^2}}$ and this implies
\begin{equation*}
\mod{\nu_n^p-\nu_l^p}^2\geq\frac{\mu_0^2}{4}(n^2-l^2)^2,\ \ \text{implies}\ \ \mod{\nu_n^p-\nu_l^p}\geq\frac{\mu_0}{2}(n^2-l^2).
\end{equation*}
We similarly have for $\mod{n}<\mod{l}$
\begin{equation*}
\mod{\nu_n^p-\nu_l^p}\geq\frac{\mu_0}{2}(l^2-n^2).
\end{equation*}
This proves that $(\nu_n^p)_{\mod{n}\geq n_0}$ satisfies hypothesis (P3) of Lemma \ref{lem_ingham} with $r=2$ and $\delta=\frac{\mu_0}{2}$. 

\item Finally, we have for $\mod{n}\geq n_0$
\begin{align*}
\mod{\nu_n^p}^2
&=\frac{\mu_0^2}{4}n^4\left(1+\sqrt{1-\frac{4b\bar{\rho}}{\mu_0^2n^2}}\right)^2+\bar{u}^2n^2,
\end{align*}
and therefore
\begin{align*}
\frac{\mu_0^2}{4}n^4\leq\mod{\nu_n^p}^2\leq\frac{\mu_0^2}{2}n^4,\ \ \forall\mod{n}\geq n_0.
\end{align*}
This proves that the family $(\nu_n^p)_{\mod{n}\geq n_0}$ satisfies hypothesis (P4) of Lemma \ref{lem_ingham} with $\epsilon=\frac{1}{\sqrt{2}}$, $A_0=0$ and $B_0=\frac{\mu_0}{\sqrt{2}}>\delta$.
\end{itemize}

\bigskip

We are now ready to prove the null controllability results of the system \eqref{lcnse_b} in the case of simple eigenvalues.

}

\bigskip

\noindent\underline{Proof of Theorem \ref{thm_den_b}-Part \eqref{null_dens_b}:}
Let $T>\frac{2\pi}{\bar{u}}$. {\blue Thanks to Theorem \ref{equiv_thm_den_b}, it is enough to prove the observability inequality \eqref{obs_inq_b_den}, that is,}
\begin{equation*}
\int_{0}^{T}\mod{\bar{u}\sigma(t,2\pi)+\bar{\rho}v(t,2\pi)}^2dt\geq C\norm{(\sigma(0),v(0))^{\dagger}}_{(\dot{L}^2(0,2\pi))^2}^2,
\end{equation*}
for all $(\sigma_T,v_T)^{\dagger}\in\mc{D}(A^*)$. {\blue Recall the operator $\mc{B}^*_{\rho}$ given by \eqref{obs_op_den_b}. Then, we can write the observation term as}
\begin{align*}
\blue \int_{0}^{T}\mod{\bar{u}\sigma(t,2\pi)+\bar{\rho}v(t,2\pi)}^2dt&\blue=\int_{0}^{T}\mod{\sum_{n\in\mb{Z^*}}a_n^he^{\nu_n^h(T-t)}\mc{B}_{\rho}^*\Phi_n^h+\sum_{n\in\mb{Z}^*}a_n^pe^{\nu_n^p(T-t)}\mc{B}_{\rho}^*\Phi_n^p}^2dt.
\end{align*}
Using the combined parabolic-hyperbolic Ingham type inequality \eqref{ingham-ineq} (Lemma \ref{lem_ingham}) and the observation estimates \eqref{obs_est_d_b}, we obtain
\begin{align*}
\blue \int_{0}^{T}\mod{\bar{u}\sigma(t,2\pi)+\bar{\rho}v(t,2\pi)}^2dt&\geq C\left[\sum_{n\in\mb{Z^*}}\mod{a_n^h}^2e^{2\Re(\nu_n^h)T}\mod{\mc{B}_{\rho}^*\Phi_n^h}^2+\sum_{n\in\mb{Z}^*}\mod{a_n^p}^2e^{2\Re(\nu_n^p)T}\mod{\mc{B}_{\rho}^*\Phi_n^p}^2\right]\\
&\geq C\left[\sum_{n\in\mb{Z^*}}\mod{a_n^h}^2+\sum_{n\in\mb{Z^*}}\mod{a_n^p}^2e^{2\Re(\nu_n^p)T}\right]
\end{align*}
This estimate together with the norm estimate \eqref{est_norm_b1}, the observability inequality \eqref{obs_inq_b_den} {\blue follows}. This completes the proof {\blue in the case of simple eigenvalues}.\qed

\smallskip

\noindent\underline{Proof of Theorem \ref{thm_vel_b}-Part \eqref{null_vel_b}:}
Let $T>\frac{2\pi}{\bar{u}}$. {\blue Similar to the density case, it is enough to prove the observability inequality \eqref{obs_inq_b_vel}, that is,}
\begin{equation*}
\int_{0}^{T}\mod{b\sigma(t,2\pi)+\bar{u}v(t,2\pi)+\mu_0v_x(t,2\pi)}^2dt\geq C\norm{(\sigma(0),v(0))^{\dagger}}_{{\blue\dot{H}^{-1}_{\per}(0,2\pi)}\times \dot{L}^2(0,2\pi)}^2,
\end{equation*}
for all $(\sigma_T,v_T)^{\dagger}\in\mc{D}(A^*)$. We have
\begin{align*}
&\int_{0}^{T}\mod{b\sigma(t,2\pi)+\bar{u}v(t,2\pi)+\mu_0v_x(t,2\pi)}^2dt=\int_{0}^{T}\mod{\sum_{n\in\mb{Z^*}}a_{{\blue n}}^he^{\nu_n^h(T-t)}\mc{B}_u^*\Phi_n^h+\sum_{n\in\mb{Z^*}}a_n^pe^{\nu_n^p(T-t)}\mc{B}_u^*\Phi_n^p}^2dt,
\end{align*}
{\blue where $\mc{B}^*_u$ is defined in \eqref{obs_op_vel_b}.} Using the combined parabolic-hyperbolic Ingham type inequality \eqref{ingham-ineq} (Lemma \ref{lem_ingham}), we obtain
\begin{align*}
&\int_{0}^{T}\mod{b\sigma(t,2\pi)+\bar{u}v(t,2\pi)+\mu_0v_x(t,2\pi)}^2dt\\
&\geq C\left[\sum_{n\in\mb{Z^*}}\mod{a_n^h}^2e^{2\Re(\nu_n^h)T}\mod{\mc{B}^*_u\Phi_n^h}^2+\sum_{n\in\mb{Z^*}}\mod{a_n^p}^2e^{2\Re(\nu_n^p)T}\mod{\mc{B}^*_u\Phi_n^p}^2\right]\\
&\geq C\left[\sum_{n\in\mb{Z^*}}\mod{a_n^h}^2\frac{1}{\mod{n}^2}+\sum_{n\in\mb{Z^*}}\mod{a_n^p}^2\mod{n}^2e^{2\Re(\nu_n^p)T}\right],
\end{align*}
thanks to the estimate \eqref{obs_est_v_b}. {\blue Combining this estimate and \eqref{est_norm_b2}, we deduce that}
\begin{align*}
\blue\int_{0}^{T}\mod{b\sigma(t,2\pi)+\bar{u}v(t,2\pi)+\mu_0v_x(t,2\pi)}^2dt&\blue\geq C\norm{(\sigma(0),v(0))^{\dagger}}_{\dot{H}^{-1}_{\per}(0,2\pi)\times \dot{L}^2(0,2\pi)}.
\end{align*}
This proves the observability inequality \eqref{obs_inq_b_vel} {\blue and hence the proof is complete for simple eigenvalues.}\qed

{\blue
\subsubsection{The case of multiple eigenvalues}\label{sec_multi_ev_b}
In this section, we prove null controllability of the system \eqref{lcnse_b} in the presence of multiple eigenvalues. The proof will be similar in both cases (control acting in density or velocity), so we present a detailed proof for the density case and give brief details for the velocity case. The proof is inspired from \cite[Section 4.4]{Komornik05} and \cite[Section 4.2]{Chowdhury14} and throughout the proof, we assume the conditions $n_0=\frac{2\sqrt{b\bar{\rho}}}{\mu_0}\in\mb{N}$ and $\frac{2\sqrt{b\bar{\rho}-\bar{u}^2}}{\mu_0}\notin\mb{N}$. Then, we only have two multiple eigenvalues $\nu_{n_0}^h=\nu_{n_0}^p=:\nu_{n_0}$ and $\nu_{-n_0}^h=\nu_{-n_0}^p=:\nu_{-n_0}$ with the generalized eigenfunctions  $\left\{\Phi_{n_0},\tilde{\Phi}_{n_0}\right\}$ and $\left\{\Phi_{-n_0},\tilde{\Phi}_{-n_0}\right\}$ respectively, where $\Phi_j:=(\xi_j,\eta_j)^{\dagger}$ and $\tilde{\Phi}_j:=(\tilde{\xi}_j,\tilde{\eta}_j)^{\dagger}$ for $j=\pm n_0$. 

\bigskip

\noindent\underline{Control in density.} Let $(\sigma_T,v_T)^{\dagger}\in(\dot{L}^2(0,2\pi))^2$. We decompose it as
\begin{equation}
(\sigma_T,v_T)^{\dagger}=(\sigma_{T,1},v_{T,1})^{\dagger}+(\sigma_{T,2},v_{T,2})^{\dagger},
\end{equation}
where
\begin{equation*}
(\sigma_{T,1},v_{T,1})^{\dagger}=\sum_{j=\pm n_0}(a_j\Phi_j+\tilde{a}_j\tilde{\Phi}_j)
\end{equation*}
and
\begin{equation*}
(\sigma_{T,2},v_{T,2})^{\dagger}=\sum_{n\in\mb{Z}^*\setminus\{\pm n_0\}}(a_n^h\Phi_n^h+a_n^p\Phi_n^p).
\end{equation*}
Let $(\sigma_1,v_1)^{\dagger}$ and $(\sigma_2,v_2)^{\dagger}$ be the solutions of the adjoint system \eqref{lcnse_adj_b} with the terminal data $(\sigma_{T,1},v_{T,1})^{\dagger}$ and $(\sigma_{T,2},v_{T,2})^{\dagger}$ respectively. Then, we have
\begin{equation}
(\sigma_1,v_1)^{\dagger}=\sum_{j=\pm n_0}e^{\nu_j(T-t)}\left(a_j\Phi_j+(T-t)\tilde{a}_j\tilde{\Phi}_j\right)
\end{equation}
and
\begin{equation}\label{sigma_2_b}
(\sigma_2,v_2)^{\dagger}=\sum_{n\in\mb{Z}^*\setminus\{\pm n_0\}}\left(a_n^he^{\nu_n^h(T-t)}\Phi_n^h+a_n^pe^{\nu_n^p(T-t)}\Phi_n^p\right)
\end{equation}
In the expression of $(\sigma_2,v_2)^{\dagger}$, all eigenvalues are simple, so we have the following observability inequality
\begin{equation}
\int_{0}^{T}\mod{\bar{u}\sigma_2(t,2\pi)+\bar{\rho}v_2(t,2\pi)}^2dt\geq C\norm{(\sigma_2(0),v_2(0))^{\dagger}}_{(\dot{L}^2(0,2\pi))^2}^2.
\end{equation}
Note that $\bar{u}\sigma_1(t,2\pi)+\bar{\rho}v_1(t,2\pi)=\sum_{j=\pm n_0}e^{\nu_j(T-t)}\left(a_j\mc{B}^*_{\rho}\Phi_j+(T-t)\tilde{a}_j\mc{B}^*_{\rho}\tilde{\Phi}_j\right)$. We first add the term $e^{\nu_{n_0}(T-t)}\left(a_{n_0}\mc{B}^*_{\rho}\Phi_{n_0}+(T-t)\tilde{a}_{n_0}\mc{B}^*_{\rho}\tilde{\Phi}_{n_0}\right)$ in the above inequality.
Denote
\begin{align*}
\mc{Y}(t):=\bar{u}\sigma_2(t,2\pi)+\bar{\rho}v_2(t,2\pi)+e^{\nu_{n_0}(T-t)}\left(a_{n_0}\mc{B}^*_{\rho}\Phi_{n_0}+(T-t)\tilde{a}_{n_0}\mc{B}^*_{\rho}\tilde{\Phi}_{n_0}\right)
\end{align*}
and
\begin{align*}
\mc{Z}(t):=\mc{Y}(t)-\frac{1}{2\delta}\int_{-\delta}^{\delta}e^{\nu_{n_0}s}\mc{Y}(t+s)ds
\end{align*}
for $t\in(\delta,T-\delta)$ with $\delta>0$ (chosen later accordingly). Then, we have the following estimate (see \cite[Section 4.4]{Komornik05} for details).
\begin{equation}\label{inq_1}
\int_{\delta}^{T-\delta}\mod{\mc{Z}(t)}^2dt\leq C\int_{0}^{T}\mod{\mc{Y}(t)}^2dt.
\end{equation}
We now prove that
\begin{equation}
\int_{\delta}^{T-\delta}\mod{\mc{Z}(t)}^2dt\geq C\norm{(\sigma_2(0),v_2(0))^{\dagger}}_{(\dot{L}^2(0,2\pi))^2}^2.
\end{equation}
From the expression of $\mc{Y}(t)$, we can get
\begin{align*}
\mc{Z}(t)&=\sum_{n\in\mb{Z}^*\setminus\{\pm n_0\}}a_n^he^{\nu_n^h(T-t)}\mc{B}_{\rho}^*\Phi_n^h\left(1-\frac{\sinh((\nu_n^h-\nu_{n_0})\delta)}{(\nu_n^h-\nu_{n_0})\delta}\right)\\
&\quad\quad+\sum_{n\in\mb{Z}^*\setminus\{\pm n_0\}}a_n^pe^{\nu_n^p(T-t)}\mc{B}_{\rho}^*\Phi_n^p\left(1-\frac{\sinh((\nu_n^p-\nu_{n_0})\delta)}{(\nu_n^p-\nu_{n_0})\delta}\right)
\end{align*}
Since $\inf_{n\in\mb{Z}^*\setminus\{\pm n_0\}}\mod{\nu_n^h-\nu_{n_0}}>0$ and $\inf_{n\in\mb{Z}^*\setminus\{\pm n_0\}}\mod{\nu_n^p-\nu_{n_0}}>0$, we have (for appropriate $\delta>0$) $$\inf_{n\in\mb{Z}^*\setminus\{\pm n_0\}}\mod{1-\frac{\sinh((\nu_n^h-\nu_{n_0})\delta)}{(\nu_n^h-\nu_{n_0})\delta}}>0,\ \text{and}\ \ \inf_{n\in\mb{Z}^*\setminus\{\pm n_0\}}\mod{1-\frac{\sinh((\nu_n^p-\nu_{n_0})\delta)}{(\nu_n^p-\nu_{n_0})\delta}}>0.$$ Since $T>\frac{2\pi}{\bar{u}}$, we can choose $\delta$ small enough such that $T-2\delta>\frac{2\pi}{\bar{u}}$. Applying Ingham-type inequality \eqref{ingham-ineq} (for simple eigenvalues), we obtain
\begin{align*}
\int_{\delta}^{T-\delta}\mod{\mc{Z}(t)}^2dt&\geq C\left[\sum_{n\in\mb{Z^*}\setminus\{\pm n_0\}}\mod{a_n^h}^2e^{2\Re(\nu_n^h)T}\mod{\mc{B}_{\rho}^*\Phi_n^h}^2+\sum_{n\in\mb{Z}^*\setminus\{\pm n_0\}}\mod{a_n^p}^2e^{2\Re(\nu_n^p)T}\mod{\mc{B}_{\rho}^*\Phi_n^p}^2\right]\\
&\geq C\norm{(\sigma_2(0),v_2(0))^{\dagger}}_{(\dot{L}^2(0,2\pi))^2}^2.
\end{align*}
Therefore, using the estimate \eqref{inq_1}, we obtain
\begin{equation}\label{inq_2}
\int_{0}^{T}\mod{\mc{Y}(t)}^2dt\geq C\norm{(\sigma_2(0),v_2(0))^{\dagger}}_{(\dot{L}^2(0,2\pi))^2}^2.
\end{equation}
Since $T>\frac{2\pi}{\bar{u}}$, we can choose $\epsilon>0$ such that $T-\epsilon>\frac{2\pi}{\bar{u}}$. Therefore we can write
\begin{equation*}
\int_{\epsilon}^{T}\mod{\mc{Y}(t)}^2dt\geq C\norm{(\sigma_2(\epsilon),v_2(\epsilon))^{\dagger}}_{(\dot{L}^2(0,2\pi))^2}^2
\end{equation*}
and thus
\begin{equation}\label{inq_2a}
\int_{0}^{T}\mod{\mc{Y}(t)}^2dt\geq C\int_{\epsilon}^{T}\mod{\mc{Y}(t)}^2dt\geq C\norm{(\sigma_2(\epsilon),v_2(\epsilon))^{\dagger}}_{(\dot{L}^2(0,2\pi))^2}^2.
\end{equation}
Thanks to the well-posedness result (Lemma \ref{well_posed_adj_b}) of the adjoint system \eqref{lcnse_adj_b}, we have
\begin{equation}\label{well_posed_inq_den_b}
\int_{0}^{\epsilon}\mod{\bar{u}\sigma_2(t,2\pi)+\bar{\rho}v_2(t,2\pi)}^2dt\leq C\norm{(\sigma_2(\epsilon),v_2(\epsilon))^{\dagger}}_{(\dot{L}^2(0,2\pi))^2}^2.
\end{equation}
From equations \eqref{inq_2a} and \eqref{well_posed_inq_den_b}, we deduce that
\begin{equation}
\int_{0}^{T}\mod{\mc{Y}(t)}^2dt\geq C\int_{0}^{\epsilon}\mod{\bar{u}\sigma_2(t,2\pi)+\bar{\rho}v_2(t,2\pi)}^2dt
\end{equation}
Using this inequality, we obtain
\begin{align}\label{inq_3}
\int_{0}^{\epsilon}\mod{e^{\nu_{n_0}(T-t)}\left(a_{n_0}\mc{B}^*_{\rho}\Phi_{n_0}+(T-t)\tilde{a}_{n_0}\mc{B}^*_{\rho}\tilde{\Phi}_{n_0}\right)}^2dt&\leq C\int_{0}^{\epsilon}\mod{\mc{Y}(t)}^2dt+C\int_{0}^{\epsilon}\mod{\bar{u}\sigma_2(t,2\pi)+\bar{\rho}v_2(t,2\pi)}^2dt\\
&\leq C\int_{0}^{T}\mod{\mc{Y}(t)}^2dt\notag
\end{align}
We now prove that
\begin{equation}\label{inq_6}
\int_{0}^{\epsilon}\mod{e^{\nu_{n_0}(T-t)}\left(a_{n_0}\mc{B}^*_{\rho}\Phi_{n_0}+(T-t)\tilde{a}_{n_0}\mc{B}^*_{\rho}\tilde{\Phi}_{n_0}\right)}^2dt\geq C\left(\mod{a_{n_0}}^2+\mod{\tilde{a}_{n_0}}^2\right)
\end{equation}
Denote the finite dimensional space
\begin{equation*}
\mc{X}:=\text{span}\left\{\Phi_{n_0},\tilde{\Phi}_{n_0}\right\}
\end{equation*}
and define norms on $\mc{X}$:
\begin{align*}
\norm{(\hat{\sigma}_{T,1},\hat{v}_{T,1})^{\dagger}}_1^2:&=\int_{0}^{\epsilon}\mod{e^{\nu_{n_0}(T-t)}\left(a_{n_0}\mc{B}^*_{\rho}\Phi_{n_0}+(T-t)\tilde{a}_{n_0}\mc{B}^*_{\rho}\tilde{\Phi}_{n_0}\right)}^2dt,\\
\norm{(\hat{\sigma}_{T,1},\hat{v}_{T,1})^{\dagger}}_2^2:&=\norm{(\hat{\sigma}_1(0),\hat{v}_1(0))^{\dagger}}_{(\dot{L}^2(0,2\pi))^2}^2,
\end{align*}
where $(\hat{\sigma}_1,\hat{v}_1)^{\dagger}$ denotes the solution of the adjoint system with terminal data $(\hat{\sigma}_{T,1},\hat{v}_{T,1})^{\dagger}\in\mc{X}$. In fact, $\norm{(\hat{\sigma}_{T,1},\hat{v}_{T,1})^{\dagger}}_1=0$ implies $\mc{B}^*_{\rho}\Phi_{n_0}=\mc{B}^*_{\rho}\tilde{\Phi}_{n_0}=0$. This gives $\Phi_{n_0}=\tilde{\Phi}_{n_0}=0$ (thanks to Lemma \ref{lem_obs_est_b} - Remark \ref{rem_obs_gen_b}) and hence $(\hat{\sigma}_{T,1},\hat{v}_{T,1})=(0,0)$. Also, $(\hat{\sigma}_1(0),\hat{v}_1(0))=(0,0)$ implies $\Phi_{n_0}=\tilde{\Phi}_{n_0}=0$ and consequently $(\hat{\sigma}_1,\hat{v}_1)=(0,0)$.


\smallskip

\noindent Since any two norms in a finite dimensional space are equivalent, we can write
\begin{equation*}
\int_{0}^{\epsilon}\mod{e^{\nu_{n_0}(T-t)}\left(a_{n_0}\mc{B}^*_{\rho}\Phi_{n_0}+(T-t)\tilde{a}_{n_0}\mc{B}^*_{\rho}\tilde{\Phi}_{n_0}\right)}^2dt\geq C\norm{(\hat{\sigma}_1(0),\hat{v}_1(0))^{\dagger}}_{(\dot{L}^2(0,2\pi))^2}^2,
\end{equation*}
proving the inequality \eqref{inq_6}. Hence, using \eqref{inq_3}, we finally obtain
\begin{equation}
\int_{0}^{T}\mod{\mc{Y}(t)}^2dt\geq C\norm{(\hat{\sigma}_1(0),\hat{v}_1(0))^{\dagger}}_{(\dot{L}^2(0,2\pi))^2}^2.
\end{equation}
This inequality, together with \eqref{inq_2} implies
\begin{align*}
\int_{0}^{T}\mod{\mc{Y}(t)}^2dt&\geq C\left[\norm{(\hat{\sigma}_1(0),\hat{v}_1(0))^{\dagger}}_{(\dot{L}^2(0,2\pi))^2}^2+\norm{(\sigma_2(0),v_2(0))^{\dagger}}_{(\dot{L}^2(0,2\pi))^2}^2\right]\\
&\geq C\norm{(\sigma_2(0)+\hat{\sigma}_1(0),v_2(0)+\hat{v}_1(0))^{\dagger}}_{(\dot{L}^2(0,2\pi))^2}^2.
\end{align*}
Proceeding in a similar way, we can add the term $e^{\nu_{-n_0}(T-t)}\left(a_{-n_0}\mc{B}^*_{\rho}\Phi_{-n_0}+(T-t)\tilde{a}_{-n_0}\mc{B}^*_{\rho}\tilde{\Phi}_{-n_0}\right)$ and obtain the desired observability inequality
\begin{equation*}
\int_{0}^{T}\mod{\bar{u}\sigma(t,2\pi)+\bar{\rho}v(t,2\pi)}^2dt\geq C\norm{(\sigma(0),v(0))^{\dagger}}_{(\dot{L}^2(0,2\pi))^2}^2.
\end{equation*}
This completes the proof of Theorem \ref{thm_den_b}-Part \eqref{null_dens_b} in the case of multiple eigenvalues.\qed

\bigskip

\noindent\underline{Control in velocity.} The proof of Theorem \ref{thm_vel_b}-Part \eqref{null_vel_b} (control acting in the velocity component) in the case of multiple eigenvalues can be done in a similar way as above. The only missing part is the following well-posedness result (see the inequality \eqref{well_posed_inq_den_b})
\begin{equation}\label{well_posed_vel_b_1}
\int_{0}^{\epsilon}\mod{b\sigma_2(t,2\pi)+\bar{u}v_2(t,2\pi)+\mu_0(v_2)_x(t,2\pi)}^2dt\leq C\norm{(\sigma_2(\epsilon),v_2(\epsilon))^{\dagger}}_{\dot{H}^{-1}_{\per}(0,2\pi)\times\dot{L}^2(0,2\pi)}.
\end{equation}
The terminal data $(\sigma_2,v_2)\in\dot{H}^{-1}_{\per}(0,2\pi)\times\dot{L}^2(0,2\pi)$ is less regular and so one cannot expect that the observation term $b\sigma_2(t,2\pi)+\bar{u}v_2(t,2\pi)+\mu_0(v_2)_x(t,2\pi)\in L^2(0,\epsilon)$ for some $\epsilon>0$. This is the main difficulty of boundary controllability in comparison with the distributed controllability. In this context, we refer to \cite[Equation (4.43)]{Chowdhury14}, where one can have the well-posedness result due to the internal control. However, in our setup, we can obtain a slightly modified estimate to \eqref{well_posed_vel_b_1} as follows:
\begin{equation}\label{well_posed_vel_b_2}
\int_{0}^{\frac{\epsilon}{2}}\mod{b\sigma_2(t,2\pi)+\bar{u}v_2(t,2\pi)+\mu_0(v_2)_x(t,2\pi)}^2dt\leq C\norm{(\sigma_2(\epsilon),v_2(\epsilon))^{\dagger}}_{\dot{H}^{-1}_{\per}(0,2\pi)\times\dot{L}^2(0,2\pi)}^2.
\end{equation}
Using this inequality \eqref{well_posed_vel_b_2} and proceeding similarly as before, we can obtain the observability inequality \eqref{obs_inq_b_vel} in the presence of multiple eigenvalues. Thus, the only technical part is to prove the inequality \eqref{well_posed_vel_b_2}, which we prove below:


\smallskip

\noindent Recall the expression of $(\sigma_2,v_2)^{\dagger}$ given by \eqref{sigma_2_b}. We compute
\begin{align*}
&\int_{0}^{\frac{\epsilon}{2}}\mod{b\sigma_2(t,2\pi)+\bar{u}v_2(t,2\pi)+\mu_0(v_2)_x(t,2\pi)}^2dt\\
&\leq\int_{0}^{\frac{\epsilon}{2}}\mod{\sum_{n\in\mb{Z}^*\setminus\{\pm n_0\}}\left(a_n^he^{\nu_n^h(T-t)}\mc{B}^*_u\Phi_n^h+a_n^pe^{\nu_n^p(T-t)}\mc{B}^*_u\Phi_n^p\right)}^2dt\\
&\leq\int_{0}^{\frac{\epsilon}{2}}\mod{\sum_{n\in\mb{Z}^*\setminus\{\pm n_0\}}a_n^he^{\nu_n^h(T-t)}\mc{B}^*_u\Phi_n^h}^2dt+\int_{0}^{\frac{\epsilon}{2}}\mod{\sum_{n\in\mb{Z}^*\setminus\{\pm n_0\}}a_n^pe^{\nu_n^p(T-t)}\mc{B}^*_u\Phi_n^p}^2dt
\end{align*}
Note that
\begin{equation}\label{inq_4a}
\int_{0}^{\frac{\epsilon}{2}}\mod{\sum_{n\in\mb{Z}^*\setminus\{\pm n_0\}}a_n^he^{\nu_n^h(T-t)}\mc{B}^*_u\Phi_n^h}^2dt\leq C\sum_{n\in\mb{Z}^*\setminus\{\pm n_0\}}\mod{a_n^h\mc{B}^*_u\Phi_n^h}^2\leq C\sum_{n\in\mb{Z}^*\setminus\{\pm n_0\}}\frac{\mod{a_n^h}^2}{\mod{n}^2}.
\end{equation}
Note that the estimate $\mod{\mc{B}^*_u\Phi_n^h}\leq\frac{C}{\mod{n}}$ follows due to the fact that $\mc{B}^*_u\Phi_n^h=\nu_2^n\alpha_2^n$ for all $n\in\mb{Z}^*$ (see the proof of Lemma \ref{lem_obs_est_b}). For the parabolic part, we write
\begin{align*}
&\int_{0}^{\frac{\epsilon}{2}}\mod{\sum_{n\in\mb{Z}^*\setminus\{\pm n_0\}}a_n^pe^{\nu_n^p(T-t)}\mc{B}^*_u\Phi_n^p}^2dt\\
&\leq\sum_{n\in\mb{Z}^*\setminus\{\pm n_0\}}\mod{a_n^p}^2e^{2\Re(\nu_n^p)(T-\epsilon)}\sum_{n\in\mb{Z}^*\setminus\{\pm n_0\}}\mod{\mc{B}^*_u\Phi_n^p}^2e^{-2\Re(\nu_n^p)(T-\epsilon)}\int_{0}^{\frac{\epsilon}{2}}e^{2\Re(\nu_n^p)(T-t)}dt\\
&\leq\sum_{n\in\mb{Z}^*\setminus\{\pm n_0\}}\mod{a_n^p}^2e^{2\Re(\nu_n^p)(T-\epsilon)}\sum_{n\in\mb{Z}^*\setminus\{\pm n_0\}}\mod{\mc{B}^*_u\Phi_n^p}^2e^{\Re(\nu_n^p)\epsilon}\\
&\leq C\sum_{n\in\mb{Z}^*\setminus\{\pm n_0\}}\mod{a_n^p}^2e^{2\Re(\nu_n^p)(T-\epsilon)},
\end{align*}
as we have $\Re(\nu_n^p)<0$ for all $n\in\mb{Z}^*$. Combining these two estimates, we obtain
\begin{align}\label{inq_4}
\int_{0}^{\frac{\epsilon}{2}}&\mod{b\sigma_2(t,2\pi)+\bar{u}v_2(t,2\pi)+\mu_0(v_2)_x(t,2\pi)}^2dt\\
&\hspace{4cm}\leq C\sum_{n\in\mb{Z}^*\setminus\{\pm n_0\}}\frac{\mod{a_n^h}^2}{\mod{n}^2}+C\sum_{n\in\mb{Z}^*\setminus\{\pm n_0\}}\mod{a_n^p}^2e^{2\Re(\nu_n^p)(T-\epsilon)}\notag
\end{align}
On the other hand (recall the expression given by \eqref{sigma_2_b} and \eqref{exp_e_fns_b}), we have
\begin{align*}
&\norm{(\sigma_2(\epsilon),v_2(\epsilon))^{\dagger}}_{\dot{H}^{-1}_{\per}(0,2\pi)\times\dot{L}^2(0,2\pi)}^2\\
&=\sum_{n\in\mb{Z}^*\setminus\{\pm n_0\}}\left(\frac{b}{\mod{n}^2}\mod{a_n^he^{\nu_n^h(T-\epsilon)}\bar{\rho}+a_n^pe^{\nu_n^p(T-\epsilon)}\frac{\bar{\rho}}{\nu_1^n-\bar{u}}}^2+\bar{\rho}\mod{a_n^he^{\nu_n^h(T-\epsilon)}(\nu_2^n-\bar{u})+a_n^pe^{\nu_n^p(T-\epsilon)}}^2\right)
\end{align*}
Since $\nu_1^n-\bar{u}\sim_{+\infty}n$ and $\nu_2^n-\bar{u}\sim_{+\infty}\frac{1}{n}$, we deduce that for $N$ large enough
\begin{align}\label{inq_5}
\sum_{\mod{n}>N}\left(\frac{b}{\mod{n}^2}\mod{a_n^he^{\nu_n^h(T-\epsilon)}\bar{\rho}+a_n^pe^{\nu_n^p(T-\epsilon)}\frac{\bar{\rho}}{\nu_1^n-\bar{u}}}^2+\bar{\rho}\mod{a_n^he^{\nu_n^h(T-\epsilon)}(\nu_2^n-\bar{u})+a_n^pe^{\nu_n^p(T-\epsilon)}}^2\right)\\
\geq C\sum_{\mod{n}>N}\frac{\mod{a_n^h}^2}{\mod{n}^2}+C\sum_{\mod{n}>N}\mod{a_n^p}^2e^{2\Re(\nu_n^p)(T-\epsilon)}.\notag
\end{align}

Now, adding finitely many terms in the estimate \eqref{inq_5} (or, one can include these finitely many terms in $(\sigma_1,v_1)^{\dagger}$ part), we get that
\begin{align}
\norm{(\sigma_2(\epsilon),v_2(\epsilon))^{\dagger}}_{\dot{H}^{-1}_{\per}(0,2\pi)\times\dot{L}^2(0,2\pi)}^2\geq C\left(\sum_{n\in\mb{Z}^*\setminus\{\pm n_0\}}\frac{\mod{a_n^h}^2}{\mod{n}^2}+\sum_{n\in\mb{Z}^*\setminus\{\pm n_0\}}\mod{a_n^p}^2e^{2\Re(\nu_n^p)(T-\epsilon)}\right).
\end{align}
With this, the inequality \eqref{well_posed_vel_b_2} follows.\qed


}

\bigskip

\subsection{Lack of null controllability for less regular initial states}
{\blue
We first write the following result, the proof of which is standard and so we skip the details (see Theorem \ref{equiv_thm_vel_b}).
\begin{proposition}
Let $0\leq s<1$ and $T>0$ be given. Then, the system \eqref{lcnse_b}-\eqref{in_cd_b}-\eqref{bd_cd_b2} is null controllable at time $T$ in the space $\dot{H}^s_{\per}(0,2\pi)\times\dot{L}^2(0,2\pi)$ if and only if the inequality
\begin{equation}\label{obs_inq_b_vel_s}
\norm{(\sigma(0),v(0))^{\dagger}}_{\dot{H}^{-s}_{\per}(0,2\pi)\times\dot{L}^2(0,2\pi)}^2\leq C\int_{0}^{T}\mod{b\sigma(t,2\pi)+\bar{u}v(t,2\pi)+\mu_0v_x(t,2\pi)}^2dt
\end{equation}
holds for all solutions $(\sigma,v)^{\dagger}$ of the adjoint system \eqref{lcnse_adj_b} with terminal data $(\sigma_T,v_T)^{\dagger}\in\mc{D}(A^*)$.
\end{proposition}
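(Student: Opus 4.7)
The plan is to establish this equivalence via the standard HUM-type duality argument, in parallel with the template of Theorem \ref{equiv_thm_vel_b} (which corresponds to the case $s=1$); the only new ingredient is keeping track of the regularity exponent $s$ in the underlying dualities.

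First, I would extend Definition \ref{def_sol_b}-(2) by replacing $\dot{H}^1_{\per}(0,2\pi)$ with $\dot{H}^s_{\per}(0,2\pi)$ at the level of both initial datum and test datum. For this to make sense I need the ``direct'' hidden-regularity estimate for the adjoint system \eqref{lcnse_adj_b} with terminal data $(\sigma_T, v_T)^{\dagger} \in \dot{H}^s_{\per}(0,2\pi) \times \dot{L}^2(0,2\pi)$, namely
\begin{equation*}
\norm{b\sigma(\cdot,2\pi) + \bar{u}v(\cdot,2\pi) + \mu_0 v_x(\cdot,2\pi)}_{L^2(0,T)} \leq C \norm{(\sigma_T, v_T)^{\dagger}}_{\dot{H}^s_{\per}(0,2\pi) \times \dot{L}^2(0,2\pi)}.
\end{equation*}
This is obtained mode-by-mode from the Riesz basis expansion of Proposition \ref{prop_rb_b} combined with the observation estimates \eqref{obs_est_v_b}: on the parabolic side the $\mod{n}$ growth of $\mc{B}_u^* \Phi_n^p$ is absorbed by the exponential dissipation $e^{\Re(\nu_n^p)T}$, while on the hyperbolic side the $\mod{n}^{-1}$ decay of $\mc{B}_u^* \Phi_n^h$ is compatible with the weight $(1+\mod{n}^2)^{s/2}$ for any $s<1$.

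Next, taking $(f,g)=(0,0)$ in the extended transposition identity and integrating by parts yields
\begin{equation*}
\ip{(\rho(T), u(T))^{\dagger}}{(\sigma_T, v_T)^{\dagger}} = \ip{(\rho_0, u_0)^{\dagger}}{(\sigma(0), v(0))^{\dagger}} + \bar{\rho}\int_0^T \bigl[b\overline{\sigma(t,2\pi)} + \bar{u}\overline{v(t,2\pi)} + \mu_0 \overline{v_x(t,2\pi)}\bigr]\, q(t)\, dt,
\end{equation*}
where the pairings denote the natural $\dot{H}^{-s}_{\per}\times\dot{L}^2$ versus $\dot{H}^s_{\per}\times\dot{L}^2$ dualities with pivot $\dot{L}^2$. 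Hence requiring $(\rho(T), u(T)) = (0,0)$ for all $(\sigma_T,v_T)^{\dagger}\in\mc{D}(A^*)$ is equivalent to the existence of $q \in L^2(0,T)$ that represents the antilinear functional $(\sigma_T, v_T)^{\dagger} \mapsto -\ip{(\rho_0, u_0)^{\dagger}}{(\sigma(0), v(0))^{\dagger}}$ through the observation map $\Theta:(\sigma_T,v_T)^{\dagger}\mapsto b\sigma(\cdot,2\pi)+\bar{u}v(\cdot,2\pi)+\mu_0 v_x(\cdot,2\pi)$.

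Finally, by the classical Banach closed-range lemma (solvability of $\Theta^* q = \ell$ for every continuous antilinear $\ell$ on $\dot{H}^{-s}_{\per}\times\dot{L}^2$ is equivalent to a coercivity estimate for $\Theta$), solvability for every $(\rho_0, u_0) \in \dot{H}^s_{\per}(0,2\pi) \times \dot{L}^2(0,2\pi)$ is equivalent to the observability estimate \eqref{obs_inq_b_vel_s}. The only step requiring care is the matching of the two dualities across the pivot space $\dot{L}^2(0,2\pi)$ so that the $\dot{H}^{-s}_{\per}\times\dot{L}^2$ norm on the left-hand side of \eqref{obs_inq_b_vel_s} comes out correctly; this is routine since the Fourier eigenbasis decouples the problem mode-by-mode and the factors $(1+\mod{n}^2)^{-s}$ simply replace the $s=1$ weights used in the proof of Theorem \ref{equiv_thm_vel_b}, with no new analytic obstacle.
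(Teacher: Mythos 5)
Your overall skeleton --- transposition solutions, duality through the observation map $\Theta$, and a closed-range/HUM argument quantified over terminal data in $\mc{D}(A^*)$ --- is exactly the standard route this statement relies on (the paper skips the proof as standard, pointing to the same argument as Theorem \ref{equiv_thm_vel_b}). The genuine problem is your ``direct hidden-regularity estimate''
$\norm{b\sigma(\cdot,2\pi)+\bar u v(\cdot,2\pi)+\mu_0 v_x(\cdot,2\pi)}_{L^2(0,T)}\leq C\norm{(\sigma_T,v_T)^{\dagger}}_{\dot H^s_{\per}\times\dot L^2}$: it is false, and the justification you give is where the error sits. The dissipation factor available at time $t$ is $e^{\Re(\nu_n^p)(T-t)}$, not $e^{\Re(\nu_n^p)T}$, and it is of order $1$ near $t=T$, so the $\mod{n}$ growth of $\mc{B}_u^*\Phi_n^p$ is not absorbed there. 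Concretely, take $(\sigma_T,v_T)^{\dagger}=\sum_{N<n\leq 2N}\Phi_n^p$ (purely parabolic data, cf. \eqref{exp_e_fns_b}); then $\norm{(\sigma_T,v_T)^{\dagger}}^2_{\dot H^s_{\per}\times\dot L^2}\sim N$, while on the window $T-t\in(0,cN^{-2})$ essentially no smoothing has occurred, the phases $e^{i\bar u n(T-t)}$ are nearly aligned, and the observation $\sum_{N<n\leq2N}\mc{B}^*_u\Phi_n^p\,e^{\nu_n^p(T-t)}$ has modulus $\gtrsim N^2$, so its $L^2(0,T)$-norm squared is $\gtrsim N^2$; no constant $C$ can work as $N\to\infty$. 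This is precisely the boundary-control difficulty the paper itself flags around \eqref{well_posed_vel_b_1}--\eqref{well_posed_vel_b_2}, where the trace of the adjoint solution with rough data is only estimated on $(0,\frac{\epsilon}{2})$, keeping a positive distance from the terminal time to generate the decay $e^{2\Re(\nu_n^p)(T-\epsilon)}$.

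Fortunately, the false estimate is not needed, so the proof is repairable by simply deleting it. The inequality \eqref{obs_inq_b_vel_s} is quantified only over $(\sigma_T,v_T)^{\dagger}\in\mc{D}(A^*)$, for which the boundary traces are classical; and the transposition identity of Definition \ref{def_sol_b} involves only adjoint solutions with zero terminal data and $L^2$-in-time sources, which Lemma \ref{well_posed_adj_b}(2) makes regular enough for the boundary term, while the pairing $\ip{(\rho_0,u_0)^{\dagger}}{(\sigma(0),v(0))^{\dagger}}$ makes sense for $(\rho_0,u_0)\in\dot H^s_{\per}\times\dot L^2$ with any $0\leq s<1$ because $(\sigma(0),v(0))$ is smooth --- so no extension of the solution concept requiring admissibility of $\mc{B}^*_u$ on $\dot H^s_{\per}\times\dot L^2$ is needed. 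With that removed, the two directions are the standard ones: observability over $\mc{D}(A^*)$ plus a HUM/minimization (or your closed-range) argument yields a control, and the identity tested against $\mc{D}(A^*)$, which is dense in the dual, gives $(\rho(T),u(T))=(0,0)$; conversely, null controllability together with a closed-graph argument gives the uniform bound $\norm{q}_{L^2(0,T)}\leq C\norm{(\rho_0,u_0)}_{\dot H^s_{\per}\times\dot L^2}$, and dualizing the transposition identity yields \eqref{obs_inq_b_vel_s} with the $\dot H^{-s}_{\per}\times\dot L^2$ norm on the left.
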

To prove Theorem \ref{thm_vel_b}-Part \eqref{lac_vel_b}, it is enough to find a sequence of terminal data $(\sigma_T^n,v_T^n)_{n\in\mb{Z}^*}\in\mc{D}(A^*)$ for which the observability inequality \eqref{obs_inq_b_vel_s} fails. We will show below that the eigenfunctions corresponding to the hyperbolic branch of eigenvalues helps us disprove this observability inequality.
}
\subsubsection{Proof of Theorem \ref{thm_vel_b}-Part \eqref{lac_vel_b}}
For $\blue(\sigma_T^n,v_T^n)^{\dagger}=\Phi_n^h$, the solution to the adjoint system \eqref{lcnse_adj_b} is
\begin{equation*}
{\blue(\sigma^n(t,x),v^n(t,x))^{\dagger}}=e^{\nu_n^h(T-t)}\Phi_n^h(x),
\end{equation*}
for $(t,x)\in (0,T)\times(0,2\pi)$ {\blue and $n\in\mb{Z}^*$}. Recall the expression of $\Phi_n^h$ from \eqref{exp_e_fns_b}. For all $n\in\mb{Z}^*$, we have the following estimate
\begin{equation*}
\norm{\Phi_n^h}_{\dot{H}^{-s}_{\per}(0,2\pi)\times \dot{L}^2(0,2\pi)}\geq\frac{C}{\mod{n}^s},
\end{equation*}
and therefore
\begin{equation*}
{\blue\norm{(\sigma^n(0),v^n(0))^{\dagger}}_{\dot{H}^{-s}_{\per}(0,2\pi)\times \dot{L}^2(0,2\pi)}^2}\geq \frac{C}{\mod{n}^{2s}}
\end{equation*}
for all $n\in\mb{Z}^*$, since $\Re(\nu_n^h)$ is bounded. On the other hand, we have the upper bound of the observation term
\begin{equation*}
\blue\int_{0}^{T}\mod{b\sigma^n(t,2\pi)+\bar{u}v^n(t,2\pi)+\mu_0v_x^n(t,2\pi)}^2dt\leq\frac{C}{\mod{n}^2},
\end{equation*}
for all $n\in\mb{Z}^*$ {\blue(see \eqref{inq_4a} for instance)}. Thus, if the observability inequality \eqref{obs_inq_b_vel_s} holds, then one must have
\begin{equation*}
\frac{C}{\mod{n}^{2s}}\leq\frac{C}{\mod{n}^2}\implies\mod{n}^{2-2s}\leq C,
\end{equation*}
which is not possible due to our assumption $0\leq s<1$. This completes the proof.\qed

\subsection{Lack of controllability at small time}\label{sec_lac_null_b}
We prove that the system \eqref{lcnse_nb} is not null controllable {\blue in $\dot{L}^2(0,2\pi)$} when the time is small, that is, Theorem \ref{thm_den_b}-Part \eqref{lac_dens_b}. We construct an approximate solution for the corresponding transport equation. The idea of constructing an approximate solution for the transport equation was addressed in \cite{Beauchard20}, where the authors proved a lack of null controllability result at a small time in the case of an interior control (acts in the transport equation). Very recently, in \cite[Section 6]{Chowdhury22}, this approach has been applied to a coupled transport-elliptic system in the case of a boundary control (acts in density). We will follow mainly the proof given in \cite{Chowdhury22} to prove our lack of null controllability result when the time is small.

\subsubsection{Proof of Theorem \ref{thm_den_b}-Part \eqref{lac_dens_b}}
Let $0<T<\frac{2\pi}{\bar{u}}$. {\blue We first consider the transport equation}
\begin{equation}\label{trnsprt_b1}
\begin{dcases}
\tilde{\sigma}_t(t,x)+\bar{u}\tilde{\sigma}_x(t,x)-\frac{b\bar{\rho}}{\mu_0}\tilde{\sigma}(t,x)=0,\ \ (t,x)\in (0,T)\times(0,2\pi),\\
\tilde{\sigma}(t,0)=\tilde{\sigma}(t,2\pi),\ \ t\in (0,T),\\
\tilde{\sigma}(T,x)=\tilde{\sigma}_T(x),\ \ x\in(0,2\pi)
\end{dcases}
\end{equation}
{\blue with $\tilde{\sigma}_T\in \dot{L}^2(0,2\pi)$ .} Since $\bar{u}T<2\pi$, there exists a nontrivial function $\tilde{\sigma}_T\in C^{\infty}(0,2\pi)$ with $\text{supp}(\tilde{\sigma}_T)\subset(\bar{u}T,2\pi)$ such that {\blue the solution $\tilde{\sigma}$ of \eqref{trnsprt_b1} satisfies 
{$\tilde{\sigma}(t,0)=\tilde{\sigma}(t,2\pi)=0$ but $\tilde{\sigma}$ is not identically zero in $(0,T)\times(0,2\pi)$.}} Let $N>0$ be a fixed integer. We define the polynomial
\begin{equation}\label{pn_b}
P^N(x):=\prod_{\substack{l=-N\\\blue l\neq0}}^{N}(x-l),\ \ x\in(0,2\pi)
\end{equation}
and the function
\begin{equation}\label{new_term_b}
\tilde{\sigma}_T^N:=P^N\left(-i\frac{d}{dx}\right)\tilde{\sigma}_T.
\end{equation}
{\blue Since $\tilde{\sigma}_T\in \dot{L}^2(0,2\pi)$, we can write}
\begin{equation*}
\tilde{\sigma}_T(x):=\sum_{n\in\mb{Z}^*}a_ne^{inx},\ \ x\in(0,2\pi),
\end{equation*}
{\blue where $(a_n)_{n\in\mb{Z}}\in\ell_2$. Using the definition of $P^N$ given by \eqref{pn_b}, we get from \eqref{new_term_b} that}
\begin{equation*}
\tilde{\sigma}_T^N(x)=\sum_{n\in\mb{Z}^*}a_n\prod_{\substack{l=-N\\\blue l\neq0}}^{N}\left(-i\frac{d}{dx}-l\right)e^{inx}=\sum_{n\in\mb{Z}^*}a_n\prod_{\substack{l=-N\\\blue l\neq0}}^{N}\left(n-l\right)e^{inx}=\sum_{n\in\mb{Z}^*}a_nP^N(n)e^{inx},
\end{equation*}
for $(t,x)\in (0,T)\times(0,2\pi)$. Note that $P^N(n)=0$ for all $0<\mod{n}\leq N$ and therefore
\begin{equation*}
\tilde{\sigma}_T^N(x)=\sum_{\mod{n}\geq N+1}a_nP^N(n)e^{inx}.
\end{equation*}
With this $\tilde{\sigma}_T^N$, let us now consider the following system
\begin{equation}\label{trnsprt_b2}
\begin{dcases}
\tilde{\sigma}_t(t,x)+\bar{u}\tilde{\sigma}_x(t,x)-\frac{b\bar{\rho}}{\mu_0}\tilde{\sigma}(t,x)=0,\ \ (t,x)\in (0,T)\times(0,2\pi),\\
\tilde{\sigma}(t,0)=\tilde{\sigma}(t,2\pi),\ \ t\in (0,T),\\
\tilde{\sigma}(T,x)=\tilde{\sigma}_T^N(x),\ \ x\in(0,2\pi).
\end{dcases}
\end{equation}
Since $\text{supp}(\tilde{\sigma}_T^N)\subset\text{supp}(\tilde{\sigma}_T)\subset\blue(\bar{u}T,2\pi)$, the solution {\blue$\tilde{\sigma}^N$ of \eqref{trnsprt_b2}} satisfies $\tilde{\sigma}^N(t,0)=\tilde{\sigma}^N(t,2\pi)=0$. We now consider the following adjoint system
\begin{equation}\label{lcnse_adj_b1}
\begin{dcases}
\sigma_t(t,x)+\bar{u}\sigma_x(t,x)+\bar{\rho}v_x(t,x)=0,\ \ (t,x)\in (0,T)\times(0,2\pi),\\
v_t(t,x)-\mu_0v_{xx}(t,x)+\bar{u}v_x(t,x)+b\sigma_x(t,x)=0,\ \ (t,x)\in (0,T)\times(0,2\pi),\\
\sigma(t,0)=\sigma(t,2\pi),\ \ t\in (0,T),\\
v(t,0)=v(t,2\pi),\ \ v_x(t,0)=v_x(t,2\pi),\ \ t\in (0,T),\\
\sigma(T,x)=\tilde{\sigma}_T^N(x),\ \ v(T,x)=v_T^N(x),\ \ x\in(0,2\pi),
\end{dcases}
\end{equation}
where we choose $v_T^N$ such that
\begin{equation*}
(\tilde{\sigma}_T^N,v_T^N)^{\dagger}=\sum_{\mod{n}\geq N+1}\tilde{a}_n^h\Phi_n^h
\end{equation*}
with $\tilde{a}_n^h\bar{\rho}:=a_nP^N(n)$ for all $\mod{n}\geq N+1$. We write the solutions to the systems \eqref{trnsprt_b2} and \eqref{lcnse_adj_b1} respectively as
\begin{align}
\tilde{\sigma}^N(t,x)&=\sum_{\mod{n}\geq N+1}a_nP^N(n)e^{(\bar{u}in-\frac{b\bar{\rho}}{\mu_0})(T-t)}e^{inx},\\
\sigma^N(t,x)&=\sum_{\mod{n}\geq N+1}a_nP^N(n)e^{\nu_n^h(T-t)}e^{inx},\\
v^N(t,x)&=\sum_{\mod{n}\geq N+1}a_nP^N(n)\frac{\nu_2^n-\bar{u}}{\bar{\rho}}e^{\nu_n^h(T-t)}e^{inx},
\end{align}
for $(t,x)\in[0,T]\times[0,2\pi]$. We prove that the solution component $\sigma^N$ of \eqref{lcnse_adj_b1} approximates the solution $\tilde{\sigma}^N$ of \eqref{trnsprt_b2}. Indeed,
\begin{align*}
&\norm{\sigma^N(\cdot,x)-\tilde{\sigma}^N(\cdot,x)}_{L^2(0,T)}^2\\
&\leq\sum_{\mod{n}\geq N+1}\mod{a_n}^2\mod{P^N(n)}^2\norm{e^{\nu_n^h(T-t)}-e^{\left(\bar{u}in-\frac{b\bar{\rho}}{\mu_0}\right)(T-t)}}_{L^2(0,T)}^2\\
&\leq\sum_{\mod{n}\geq N+1}\mod{a_n}^2\mod{P^N(n)}^2\norm{e^{-\frac{\mu_0n}{2}\left(n-\sqrt{n^2-\frac{4b\bar{\rho}}{\mu_0^2}}\right)(T-t)}-e^{-\frac{b\bar{\rho}}{\mu_0}(T-t)}}_{L^2(0,T)}^2\\
&\leq\sum_{\mod{n}\geq N+1}\frac{1}{\mod{n}^2}\mod{a_n}^2\mod{P^N(n)}^2,
\end{align*}
for all $x\in[0,2\pi]$ and therefore
\begin{equation*}
\norm{\sigma^N(\cdot,x)-\tilde{\sigma}^N(\cdot,x)}_{L^2(0,T)}^2\leq\frac{C}{\mod{N}^2}\sum_{\mod{n}\geq N+1}\mod{a_n}^2\mod{P^N(n)}^2,
\end{equation*}
for all $x\in[0,2\pi]$. We also find $L^2$- estimate of the solution component $v^N$. We have for all $x\in[0,2\pi]$
\begin{align*}
\norm{v^N(\cdot,x)}_{L^2(0,T)}^2&\leq\sum_{\mod{n}\geq N+1}\mod{a_n}^2\mod{P^N(n)}^2\frac{\mod{\nu_2^n-\bar{u}}^2}{\bar{\rho}^2}\norm{e^{\nu_n^h(T-t)}}_{\dot{L}^2(0,T)}^2\\
&\leq C\sum_{\mod{n}\geq N+1}\mod{a_n}^2\mod{P^N(n)}^2\frac{1}{\mod{n}^2}\\
&\leq\frac{C}{\mod{N}^2}\sum_{\mod{n}\geq N+1}\mod{a_n}^2\mod{P^N(n)}^2.
\end{align*}
Let us now suppose that the following observability inequality holds
\begin{equation*}
\int_{0}^{T}\mod{\bar{u}\sigma^N(t,2\pi)+\bar{\rho}v^N(t,2\pi)}^2dt\geq C\norm{(\sigma^N(0),v^N(0))}_{{\blue(\dot{L}^2(0,2\pi))^2}}^2.
\end{equation*}
Then, we have
\begin{align*}
&\norm{(\sigma^N(0),v^N(0))}_{{\blue(\dot{L}^2(0,2\pi))^2}}^2\\
&\leq C\int_{0}^{T}\mod{\bar{u}\sigma^N(t,2\pi)+\bar{\rho}v^N(t,2\pi)}^2dt\\
&\leq C\int_{0}^{T}\left(\bar{u}^2\mod{(\sigma^N(t,2\pi)-\tilde{\sigma}^N(t,2\pi))}^2+\bar{u}^2\mod{\tilde{\sigma}^N(t,2\pi)}^2+\bar{\rho}^2\mod{v^N(t,2\pi)}^2\right)dt\\
&\leq\frac{C}{N^2}\sum_{\mod{n}\geq N+1}\mod{a_n}^2\mod{P^N(n)}^2,
\end{align*}
as we have $\tilde{\sigma}^N(t,0)=0=\tilde{\sigma}^N(t,2\pi)$ for all $t\in (0,T)$. Thus we get
\begin{equation*}
\norm{\sigma^N(0)}_{{\blue\dot{L}^2(0,2\pi)}}^2\leq \norm{(\sigma^N(0),v^N(0))^{\dagger}}_{{\blue(\dot{L}^2(0,2\pi))^2}}^2\leq\frac{C}{N^2}\sum_{\mod{n}\geq N+1}\mod{a_n}^2\mod{P^N(n)}^2\leq\frac{C}{N^2}\norm{\sigma^N(0)}_{{\blue\dot{L}^2(0,2\pi)}}^2,
\end{equation*}
since $\Re(\nu_n^h)$ is bounded. Therefore, $1\leq\frac{C}{N^2}$ for all $N$ and hence the above inequality cannot hold. This is a contradiction and the proof is complete.\qed

{\blue
\subsection{Lack of approximate controllability}
In this section, we prove that the system \eqref{lcnse_b} is not approximately controllable at any time $T>0$ in $(L^2(0,2\pi))^2$ when we have the restriction on the coefficients $\frac{2\sqrt{b\bar{\rho}-\bar{u}^2}}{\mu_0}\in\mb{N}$ (that is, Proposition \ref{prop_lac_app_b}). We present the proof of Proposition \ref{prop_lac_app_b} in the case when there is a boundary control acting in density component. The proof will be similar for the velocity control case and so we omit the details.

\subsubsection{Proof of Proposition \ref{prop_lac_app_b}}
Let $T>0$ be given and $\frac{2\sqrt{b\bar{\rho}-\bar{u}^2}}{\mu_0}\in\mb{N}$. To prove this result (in the density case), it is enough to find a terminal data $(\sigma_T,v_T)\in\mc{D}(A^*)$ such that the associated solution $(\sigma,v)$ of \eqref{lcnse_adj_b} fails to satisfy the following unique continuation property:
\begin{equation*}
\bar{u}\sigma(t,2\pi)+\bar{\rho}v(t,2\pi)=0\ \ \text{implies}\ \ (\sigma,v)=(0,0),
\end{equation*}
see for instance \cite[Theorem 2.43]{Coron07}. Let us denote $n_1:=\frac{2\sqrt{b\bar{\rho}-\bar{u}^2}}{\mu_0}$ and the eigenvalue $\nu_{n_1}^p=\nu_{-n_1}^p=:\nu_{n_1}$. The eigenfunctions of $A^*$ corresponding to this multiple eigenvalue $\nu_{n_1}$ are $\Phi_{n_1}^p=\vector{\frac{\bar{\rho}}{\nu_1^{n_1}-\bar{u}}}{1}e^{in_1x}$ and $\Phi_{-n_1}^p=\vector{\frac{\bar{\rho}}{\nu_1^{-n_1}-\bar{u}}}{1}e^{-in_1x}$ (see \eqref{exp_e_fns_b} in Lemma \ref{lemma_eigen_b}). We now choose the terminal data as
\begin{equation*}
(\sigma_T,v_T)^{\dagger}=C\Phi_{n_1}^p+D\Phi_{-n_1}^p,
\end{equation*}
where $C,D$ are (complex) constants that will be chosen later. The solution of \eqref{lcnse_adj_b} is then given by
\begin{equation*}
(\sigma(t),v(t))^{\dagger}=e^{\nu_{n_1}(T-t)}\left(C\Phi_{n_1}^p+D\Phi_{-n_1}^p\right).
\end{equation*}
Therefore
\begin{equation*}
\bar{u}\sigma(t,2\pi)+\bar{\rho}v(t,2\pi)=e^{\nu_{n_1}(T-t)}\left(C\mc{B}^*_{\rho}\Phi_{n_1}^p+D\mc{B}^*_{\rho}\Phi_{-n_1}^p\right).
\end{equation*}
If we take $C=-\mc{B}^*_{\rho}\Phi_{-n_1}^p$ and $D=\mc{B}^*_{\rho}\Phi_{n_1}^p$, then $C,D\neq0$ (thanks to Lemma \ref{lem_obs_est_b}) and for these choice of $C,D$, we have $\bar{u}\sigma(t,2\pi)+\bar{\rho}v(t,2\pi)=0$ but $(\sigma,v)\neq(0,0)$. This completes the proof.\qed
}
\section{Controllability of linearized compressible Navier-Stokes system (non-barotropic)}
\subsection{Functional setting}
{\blue Recall} the positive constants (equation \eqref{diff_coeff_nb})
\begin{equation*}
\lambda_0:=\frac{\lambda+2\mu}{\bar{\rho}},\ \ \kappa_0:=\frac{\kappa}{\bar{\rho}c_{\nu}},
\end{equation*}
and from now on-wards, we re-denote $c_{\nu}$ by $c_0$ to distinguish it from the eigenvalue $\nu$.

We define the inner product in the space $(L^2(0,2\pi))^3$ as follows
\begin{equation*}
\ip{\cv{f_1}{g_1}{h_1}}{\cv{f_2}{g_2}{h_2}}_{\blue L^2\times L^2\times L^2}:=R\bar{\theta}\int_{0}^{2\pi}f_1(x)\overline{f_2(x)}dx+\bar{\rho}^2\int_{0}^{2\pi}g_1(x)\overline{g_2(x)}dx+\frac{\bar{\rho}^2c_0}{\bar{\theta}}\int_{0}^{2\pi}h_1(x)\overline{h_2(x)}dx,
\end{equation*}
for $f_i,g_i,h_i\in L^2(0,2\pi), i=1,2,3$. {\blue From now on-wards, the notation $\ip{\cdot}{\cdot}_{L^2\times L^2\times L^2}$ means the above inner product in $L^2\times L^2\times L^2$.} We write the system \eqref{lcnse_nb} in abstract differential equation
\begin{equation}\label{lcnse_abs_nb}
U^{\prime}(t)=AU(t),\ \ U(0)=U_0,\ \ t\in (0,T),
\end{equation}
where $U:=(\rho,u,\theta)^{\dagger}, U_0:=(\rho_0,u_0,\theta_0)^{\dagger}$ and the operator $A$ is given by
\begin{equation*}
A:=\begin{pmatrix}-\bar{u}\partial_x&-\bar{\rho}\partial_x&0\\[4pt]-\frac{R\bar{\theta}}{\bar{\rho}}\partial_x&\lambda_0\partial_{xx}-\bar{u}\partial_x&-R\partial_x\\[4pt]0&-\frac{R\bar{\theta}}{c_0}\partial_x&\kappa_0\partial_{xx}-\bar{u}\partial_x\end{pmatrix}
\end{equation*}
with the domain
\begin{equation}
\mc{D}(A):=H^1_{\per}(0,2\pi)\times(H^2_{\per}(0,2\pi))^2.
\end{equation}
The adjoint of the operator $\blue A$ is given by
\begin{equation}\label{op_A^*_nb}
A^*:=\begin{pmatrix}\bar{u}\partial_x&\bar{\rho}\partial_x&0\\[4pt]\frac{R\bar{\theta}}{\bar{\rho}}\partial_x&\lambda_0\partial_{xx}+\bar{u}\partial_x&R\partial_x\\[4pt]0&\frac{R\bar{\theta}}{c_0}\partial_x&\kappa_0\partial_{xx}+\bar{u}\partial_x\end{pmatrix}
\end{equation}
with the same domain $\mc{D}(A^*)=\mc{D}(A)$. The adjoint system is given by
\begin{equation}\label{lcnse_adj_nb}
\begin{dcases}
-\sigma_t-\bar{u}\sigma_x-\bar{\rho}v_x=0,\ &\text{in}\ (0,T)\times(0,2\pi),\\
-v_t-\lambda_0v_{xx}-\frac{R\bar{\theta}}{\bar{\rho}}\sigma_x-\bar{u}v_x-R\vphi_x=0,\ &\text{in}\ (0,T)\times(0,2\pi),\\
-\vphi_t-\kappa_0\vphi_{xx}-\frac{R\bar{\theta}}{c_0}v_x-\bar{u}\vphi_x=0,\ &\text{in}\ (0,T)\times(0,2\pi),\\
\sigma(t,0)=\sigma(t,2\pi),\ \ v(t,0)=v(t,2\pi),\ \ v_x(t,0)=v_x(t,2\pi),\ \ &t\in (0,T),\\
\vphi(t,0)=\vphi(t,2\pi),\ \ \vphi_x(t,0)=\vphi_x(t,2\pi),\ \ &t\in (0,T),\\
\sigma(T,x)=\sigma_T(x),\ \ v(T,x)=v_T(x),\ \ \vphi(T,x)=\vphi_T(x),\ \ &x\in(0,2\pi),
\end{dcases}
\end{equation}
{\blue where} $(\sigma_T,v_T,\vphi_T)$ is a terminal state. We also write the following system with source terms $f,g$, and $h$.
\begin{equation}\label{lcnse_adj_nb_s}
\begin{dcases}
-\sigma_t-\bar{u}\sigma_x-\bar{\rho}v_x=f,\ &\text{in}\ (0,T)\times(0,2\pi),\\
-v_t-\lambda_0v_{xx}-\frac{R\bar{\theta}}{\bar{\rho}}\sigma_x-\bar{u}v_x-R\vphi_x=g,\ &\text{in}\ (0,T)\times(0,2\pi),\\
-\vphi_t-\kappa_0\vphi_{xx}-\frac{R\bar{\theta}}{c_0}v_x-\bar{u}\vphi_x=h,\ &\text{in}\ (0,T)\times(0,2\pi),\\
\sigma(t,0)=\sigma(t,2\pi),\ \ v(t,0)=v(t,2\pi),\ \ v_x(t,0)=v_x(t,2\pi),\ \ &t\in (0,T),\\
\vphi(t,0)=\vphi(t,2\pi),\ \ \vphi_x(t,0)=\vphi_x(t,2\pi),\ \ &t\in (0,T),\\
\sigma(T,x)=\sigma_T(x),\ \ v(T,x)=v_T(x),\ \ \vphi(T,x)=\vphi_T(x),\ \ &x\in(0,2\pi).
\end{dcases}
\end{equation}

\subsection{Well-posedness of the systems}
{\blue We first state the following well-posedness result of the system \eqref{lcnse_nb} when there is no control input.}
\begin{lemma}\label{lem_ex_sg_nb}
The operator $A$ (resp. $A^*$) generates a $\mc{C}^0$-semigroup of contractions on $(L^2(0,2\pi))^3$. Moreover, for every $U_0\in(L^2(0,2\pi))^3$ the system \eqref{lcnse_abs_b} admits a unique solution $U$ in $\mc{C}^0([0,T];(L^2(0,2\pi))^3)$ and
\begin{equation*}
\norm{U(t)}_{(L^2(0,2\pi))^3}\leq C\norm{U_0}_{(L^2(0,2\pi))^3}
\end{equation*}
for all $t\geq0$.
\end{lemma}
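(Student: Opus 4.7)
The plan is to invoke the Lumer--Phillips theorem in the Hilbert space $(L^2(0,2\pi))^3$ equipped with the weighted inner product $\ip{\cdot}{\cdot}_{L^2\times L^2\times L^2}$. The density of $\mc{D}(A)=H^1_{\per}(0,2\pi)\times(H^2_{\per}(0,2\pi))^2$ is immediate, so the two points to establish are the dissipativity of $A$ (and of $A^*$) and the range condition for $\lambda I-A$ with some $\lambda>0$.

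First I would verify dissipativity by a direct computation. For $U=(\rho,u,\theta)^{\dagger}\in\mc{D}(A)$, I would expand $\ip{AU}{U}_{L^2\times L^2\times L^2}$, integrate by parts using the periodic boundary conditions, and take the real part. The three pure transport contributions $\Re\int_0^{2\pi}\bar u\,\rho_x\overline{\rho}\,dx$, $\Re\int_0^{2\pi}\bar u\,u_x\overline{u}\,dx$, $\Re\int_0^{2\pi}\bar u\,\theta_x\overline{\theta}\,dx$ all vanish since the integrands are total derivatives of $|\rho|^2,|u|^2,|\theta|^2$ up to a factor. The whole point of the specific weights $R\bar\theta$, $\bar\rho^{\,2}$, $\bar\rho^{\,2}c_0/\bar\theta$ is that the skew couplings pair up: the $\rho\text{-}u$ cross terms combine into $\Re\int_0^{2\pi}(u\overline{\rho})_x\,dx=0$, and the $u\text{-}\theta$ cross terms combine into $\Re\int_0^{2\pi}(u\overline{\theta})_x\,dx=0$. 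The diffusive terms yield
\begin{equation*}
\Re\ip{AU}{U}_{L^2\times L^2\times L^2}=-\bar\rho^{\,2}\lambda_0\int_0^{2\pi}|u_x|^2\,dx-\frac{\bar\rho^{\,2}c_0\kappa_0}{\bar\theta}\int_0^{2\pi}|\theta_x|^2\,dx\le 0,
\end{equation*}
which is the desired dissipativity. The computation for $A^*$ is essentially the same, with signs reversed on the transport and cross terms, and the same diffusive quadratic form appears.

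Next I would establish the range condition, i.e.\ show that for some $\lambda>0$ and every $(f,g,h)\in(L^2(0,2\pi))^3$ the system
\begin{align*}
\lambda\rho+\bar u\rho_x+\bar\rho u_x&=f,\\
\lambda u-\lambda_0 u_{xx}+\bar u u_x+\tfrac{R\bar\theta}{\bar\rho}\rho_x+R\theta_x&=g,\\
\lambda\theta-\kappa_0\theta_{xx}+\bar u\theta_x+\tfrac{R\bar\theta}{c_0}u_x&=h
\end{align*}
admits a solution $(\rho,u,\theta)\in\mc{D}(A)$. The cleanest approach here is Fourier diagonalization: writing each unknown as $\sum_{n\in\mb Z}c_n e^{inx}$ reduces the problem to an $n$-indexed family of $3\times 3$ linear systems, whose determinant $D_n(\lambda)$ grows like $\lambda_0\kappa_0 n^4$ for $|n|$ large and is nonzero for all $n$ once $\lambda>0$ is chosen large enough, thanks to the dissipative estimate above. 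The Fourier coefficients of the solution then decay with the right rates to place $\rho$ in $H^1_{\per}$ and $u,\theta$ in $H^2_{\per}$, giving the range condition.

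With dissipativity of $A$ and $A^*$ together with the range condition, the Lumer--Phillips theorem (in its symmetric form for reflexive spaces) yields that $A$ generates a $\mc C^0$-semigroup of contractions $(e^{tA})_{t\ge 0}$ on $(L^2(0,2\pi))^3$; the same conclusion applies to $A^*$. The well-posedness of \eqref{lcnse_abs_nb} and the norm bound $\|U(t)\|\le\|U_0\|$ are then standard semigroup consequences. The main technical obstacle is the range condition, since the parabolic-hyperbolic coupling prevents a straightforward Lax--Milgram argument on a single space; the Fourier-mode reduction circumvents this by exploiting the periodic geometry.
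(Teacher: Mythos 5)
Your proposal is correct, and your dissipativity computation is exactly the paper's (same weighted inner product, same cancellation of the transport and cross terms, same negative diffusive quadratic form), but you handle the maximality/range condition by a genuinely different route. The paper does not diagonalize: it perturbs the resolvent system by an artificial viscosity $-\epsilon\xi_{xx}$ in the density equation (see \eqref{reg_prb}), solves the regularized problem in $(H^1_{\per}(0,2\pi))^3$ by Lax--Milgram, derives $\epsilon$-uniform energy bounds, passes to the weak limit $\epsilon\to0$, and then recovers the regularity $\xi\in H^1_{\per}$, $\eta,\zeta\in H^2_{\per}$ and the periodic boundary conditions from the limiting distributional equations. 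Your Fourier-mode reduction is legitimate here precisely because the boundary conditions are periodic, and it is arguably more economical: each mode gives a $3\times3$ system whose matrix is dissipative for the weighted Hermitian product on $\mb{C}^3$, so $\norm{(\lambda I-A_n)^{-1}}\leq 1/\lambda$ for \emph{every} $\lambda>0$ (no need to take $\lambda$ large, and no need to track the determinant, whose actual growth is of order $\mod{n}^5$ rather than $\lambda_0\kappa_0 n^4$ because of the hyperbolic factor); the mode-wise energy identity then gives $\sum n^2(\mod{u_n}^2+\mod{\theta_n}^2)<\infty$, and bootstrapping through the three equations yields $n\rho_n,\,n^2u_n,\,n^2\theta_n\in\ell_2$, i.e.\ membership in $\mc{D}(A)$. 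This also ties in naturally with the explicit symbol matrices $R_n$ used in the paper's spectral analysis. What the paper's vanishing-viscosity/Lax--Milgram argument buys in exchange is robustness: it does not use the periodic geometry in any essential way and adapts to Dirichlet or mixed boundary conditions, where your diagonalization would fail. Either way, once dissipativity of $A$ and $A^*$ and the range condition are in hand, Lumer--Phillips gives the contraction semigroup and the stated well-posedness bound, as you say.
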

\noindent{\blue For the sake of completeness, we give a proof of this result in Appendix \ref{app_well_p_nb}.}
\begin{lemma}\label{well_posedness_adj_nb}
{\blue	The following statements hold:}
\begin{enumerate}
\item {\blue For any given $(f,g,h)\in L^2(0,T;(L^2(0,2\pi))^3)$ and $(\sigma_T,v_T,\vphi_T)\in(L^2(0,2\pi))^3$, the adjoint system \eqref{lcnse_adj_nb_s} has a unique solution $(\sigma,v,\vphi)$ in the space}
\begin{equation*}
\mc{C}^0([0,T];L^2(0,2\pi))\times[\mc{C}^0([0,T];L^2(0,2\pi))\cap L^2(0,T;H^1_{\per}(0,2\pi))]^2.
\end{equation*}
{\blue Moreover, we have the hidden regularity property $\sigma(\cdot,2\pi)\in L^2(0,T)$.}
\item {\blue For any given $(f,g,h)\in L^2(0,T;H^1_{\per}(0,2\pi)\times(L^2(0,2\pi))^2)$ and $(\sigma_T,v_T,\vphi_T)\in H^{-1}_{\per}(0,2\pi)\times(L^2(0,2\pi))^2$, the adjoint system \eqref{lcnse_adj_nb_s} has a unique solution $(\sigma,v,\vphi)$ in
\begin{equation*}
\mc{C}^0([0,T];H^{-1}_{\per}(0,2\pi)\times(L^2(0,2\pi))^2).
\end{equation*}

In particular, when $(\sigma_T,v_T,\vphi_T)=(0,0,0)$, the solution $(\sigma,v,\vphi)$ belong to the space $$\mc{C}^0([0,T];H^1_{\per}(0,2\pi))\times[\mc{C}^0([0,T];H^1_{\per}(0,2\pi))\cap L^2(0,T;H^2_{\per}(0,2\pi))]^2.$$
}
\end{enumerate}
\end{lemma}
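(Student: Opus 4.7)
The plan closely mirrors the barotropic analog Lemma 2.4 (for which details are recorded in \cite[Proposition 2.2]{Bhandari22}), adapted to the three-equation structure of \eqref{lcnse_adj_nb_s}.

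For Part 1, I would invoke Lemma 3.1: rewriting time as $\tau = T - t$ turns \eqref{lcnse_adj_nb_s} into a forward Cauchy problem with generator $A^*$, which by Lemma 3.1 yields a $\mc{C}^0$-semigroup of contractions on $(L^2(0,2\pi))^3$ endowed with the weighted inner product defined just above \eqref{lcnse_abs_nb}. Duhamel's formula then produces a mild solution in $\mc{C}^0([0,T]; (L^2(0,2\pi))^3)$. To upgrade the parabolic components $v$ and $\vphi$ to $L^2(0,T; H^1_{\per}(0,2\pi))$, I would take the weighted inner product of \eqref{lcnse_adj_nb_s} against $(\sigma, v, \vphi)^{\dagger}$. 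The weights $R\bar{\theta}$, $\bar{\rho}^2$, $\frac{\bar{\rho}^2 c_0}{\bar{\theta}}$ are precisely those under which the cross-coupling contributions $R\bar{\theta}\bar{\rho}\int(\sigma_x \overline{v} + v_x \overline{\sigma})\,dx$ and $\bar{\rho}^2 R\int(\vphi_x \overline{v} + v_x \overline{\vphi})\,dx$ have vanishing real part after integration by parts under periodic boundary conditions, as do the transport terms $\bar{u}\partial_x$. Only diffusion survives, yielding
\begin{equation*}
\norm{(\sigma,v,\vphi)(t)}^2_{(L^2(0,2\pi))^3} + C\int_{t}^{T}\left(\norm{v_x}^2_{L^2} + \norm{\vphi_x}^2_{L^2}\right) ds \leq C\left( \norm{(\sigma_T, v_T, \vphi_T)}^2_{(L^2(0,2\pi))^3} + \norm{(f,g,h)}^2_{L^2(0,T;(L^2(0,2\pi))^3)} \right)
\end{equation*}
via Gr\"onwall, giving the claimed regularity of $v$ and $\vphi$. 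The hidden regularity $\sigma(\cdot, 2\pi) \in L^2(0,T)$ would be obtained by the multiplier method: testing the first equation of \eqref{lcnse_adj_nb_s} against $x\sigma$ on $(0,T)\times(0,2\pi)$, the integration by parts in $x$ of $\bar{u}\sigma_x$ produces a boundary contribution $\pi \bar{u}\int_0^T |\sigma(t,2\pi)|^2 dt$ (using periodicity $\sigma(t,0) = \sigma(t,2\pi)$ together with $m(0) = 0$, $m(2\pi) = 2\pi$), which can be isolated and dominated by $\norm{\sigma}_{L^2((0,T)\times(0,2\pi))}$, the terminal/initial data, $\norm{f}_{L^2(0,T;L^2)}$, and $\norm{\bar{\rho}v_x}_{L^2(0,T;L^2)}$, the last being finite by the parabolic regularity just established.

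For Part 2, I would handle the less regular data by transposition: define $(\sigma, v, \vphi)(t)$ as a continuous linear form on $H^1_{\per}(0,2\pi) \times (L^2(0,2\pi))^2$ by pairing with the solution of the dual forward problem (with generator $A$) initialized with regular data, the hidden regularity at $x = 2\pi$ from Part 1 providing exactly the continuity required. Uniqueness and continuity in $t$ then give $(\sigma, v, \vphi) \in \mc{C}^0([0,T]; H^{-1}_{\per}(0,2\pi) \times (L^2(0,2\pi))^2)$. For the improved regularity when $(\sigma_T, v_T, \vphi_T) = (0,0,0)$, standard parabolic regularity applied to the $v$- and $\vphi$-equations with zero terminal data and $L^2$ sources gives $v, \vphi \in \mc{C}^0([0,T]; H^1_{\per}(0,2\pi)) \cap L^2(0,T; H^2_{\per}(0,2\pi))$. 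Substituting back, the $\sigma$-equation becomes a periodic transport equation with right-hand side $-\bar{\rho}v_x - f \in L^2(0,T; H^1_{\per}(0,2\pi))$ and zero terminal data, whose solution lies in $\mc{C}^0([0,T]; H^1_{\per}(0,2\pi))$.

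The main obstacle is verifying the algebraic cancellation of the cross-coupling $R\vphi_x$ in the $v$-equation and $\frac{R\bar{\theta}}{c_0}v_x$ in the $\vphi$-equation within the energy estimate with source terms. This is precisely the dissipativity mechanism that underlies Lemma 3.1, but one must check it transfers verbatim to the time-dependent estimate so that Gr\"onwall closes cleanly. A secondary technical point is extracting from Part 1 the continuity of the hidden-regularity trace $\sigma(\cdot,2\pi)$ with respect to the regular data, which is what legitimises the transposition definition in Part 2.
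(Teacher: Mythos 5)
Your Part 1 is essentially the route the paper itself relies on (it defers to the barotropic Lemma \ref{well_posed_adj_b} and the references \cite{Bhandari22,Chowdhury14,Debayan15}): time reversal plus the contraction semigroup of Lemma \ref{lem_ex_sg_nb}, the weighted energy identity in which the cross terms $\int_0^{2\pi}(\sigma_x\overline{v}+v_x\overline{\sigma})\,dx$ and $\int_0^{2\pi}(\vphi_x\overline{v}+v_x\overline{\vphi})\,dx$ are purely imaginary after integration by parts under periodic conditions, and the multiplier $x\overline{\sigma}$ for the hidden regularity of $\sigma(\cdot,2\pi)$. That part is correct.

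The gap is in the final regularity claim of Part 2. With zero terminal data you propose to first apply parabolic regularity to the $v$- and $\vphi$-equations and only afterwards treat the $\sigma$-equation as a transport equation with $H^1$ right-hand side. But the source of the $v$-equation contains $\frac{R\bar{\theta}}{\bar{\rho}}\sigma_x$, and at that stage you only know $\sigma\in\mc{C}^0([0,T];L^2(0,2\pi))$, so $\sigma_x\notin L^2(0,T;L^2(0,2\pi))$; conversely, to get $\sigma\in\mc{C}^0([0,T];H^1_{\per}(0,2\pi))$ you need $v\in L^2(0,T;H^2_{\per}(0,2\pi))$. The argument is circular (the $\vphi$-equation is harmless, since its source only involves $v_x\in L^2(L^2)$ from Part 1, but the $\sigma$--$v$ loop is not). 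The way out, which is what the cited proofs amount to, is a simultaneous $H^1$-level estimate for the coupled system: differentiate the equations in $x$, pair with $(\sigma_x,v_x,\vphi_x)$ using the same weights, note that the cross terms again have vanishing real part, use $f\in L^2(0,T;H^1_{\per}(0,2\pi))$ for the transport source, and handle the underdifferentiated sources weakly via $\Re\int g_x\overline{v_x}=-\Re\int g\,\overline{v_{xx}}\le \epsilon\norm{v_{xx}}_{L^2}^2+C_\epsilon\norm{g}_{L^2}^2$ (similarly for $h$), absorbing these into the dissipation $\lambda_0\norm{v_{xx}}_{L^2}^2$, $\kappa_0\norm{\vphi_{xx}}_{L^2}^2$; alternatively one argues mode by mode in the Riesz basis of eigenfunctions. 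A related minor point: hidden regularity is not what legitimises your transposition in the first half of Part 2 — both systems are periodic, so no boundary terms arise in that duality; what is actually needed there is that the forward problem propagates $H^1_{\per}\times(L^2)^2$ data, i.e. again the coupled $H^1$ estimate above (the hidden regularity is used later, for Definition \ref{def_sol_nb}).
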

{\blue The proof of this lemma can be proved similarly as in the barotropic case (Lemma \ref{well_posed_adj_b}), see for instance \cite{Debayan15}.}
\begin{definition}\label{def_sol_nb}
We give the following definitions of solutions based on the act of the controls.
\begin{itemize}
\item For any given initial state $(\rho_0,u_0,\theta_0)\in(L^2(0,2\pi))^3$ and boundary control $p\in L^2(0,T)$, we say $(\rho,u,\theta)\in(L^2(0,2\pi))^3$ is a solution to the system \eqref{lcnse_nb}-\eqref{in_cd_nb}-\eqref{bd_cd_nb1} if for any $(f,g,h)\in L^2(0,T;(L^2(0,2\pi))^3)$, the following identity holds.
\begin{align*}
\blue \int_{0}^{T}\ip{(\rho(t,\cdot),u(t,\cdot),\theta(t,\cdot))^{\dagger}}{(f(t,\cdot),g(t,\cdot),h(t,\cdot))^{\dagger}}_{L^2\times L^2\times L^2}\\
=\ip{(\rho_0(\cdot),u_0(\cdot),\theta_0(\cdot))^{\dagger}}{(\sigma(0,\cdot),v(0,\cdot),\vphi(0,\cdot))^{\dagger}}_{L^2\times L^2\times L^2}+R\bar{\theta}\int_{0}^{T}\left[\bar{u}\overline{\sigma(t,2\pi)}+\bar{\rho}\overline{v(t,2\pi)}\right]p(t)dt,
\end{align*}
where $(\sigma,v,\vphi)$ is the solution to the adjoint system \eqref{lcnse_adj_nb_s} with $(\sigma_T,v_T,\vphi_T)=(0,0,0)$.
\item For any given initial state $(\rho_0,u_0,\theta_0)\in H^1_{\per}(0,2\pi)\times(L^2(0,2\pi))^2$ and boundary control $q\in L^2(0,T)$, we say $(\rho,u,\theta)\in L^2(0,T;H^{-1}_{\blue\per}(0,2\pi))\times L^2(0,T;(L^2(0,2\pi))^2)$ is a solution to the system \eqref{lcnse_nb}-\eqref{in_cd_nb}-\eqref{bd_cd_nb2} if for any $(f,g,h)\in L^2(0,T;H^1_{\per}(0,2\pi))\times L^2(0,T;(L^2(0,2\pi))^2)$, the following identity holds.
\begin{align*}
\blue\int_{0}^{T}\ip{(\rho(t,\cdot),u(t,\cdot),\theta(t,\cdot))^{\dagger}}{(f(t,\cdot),g(t,\cdot),h(t,\cdot))^{\dagger}}_{H^{-1}_{\per}\times L^2\times L^2,H^1_{\per}\times L^2\times L^2}dt\\
=\ip{(\rho_0(\cdot),u_0(\cdot),\theta_0(\cdot))^{\dagger}}{(\sigma(0,\cdot),v(0,\cdot),\vphi(0,\cdot))^{\dagger}}_{L^2\times L^2\times L^2}\\
\blue+\bar{\rho}\int_{0}^{T}\left[R\bar{\theta}\overline{\sigma(t,2\pi)}+\lambda_0\bar{\rho}\overline{v_x(t,2\pi)}+\bar{\rho}\bar{u}\overline{v(t,2\pi)}+R\bar{\rho}\overline{\vphi(t,2\pi)}\right]q(t)dt,
\end{align*}
where $(\sigma,v,\vphi)$ is the solution to the adjoint system \eqref{lcnse_adj_nb_s} with $(\sigma_T,v_T,\vphi_T)=(0,0,0)$.
\item For any given initial state $(\rho_0,u_0,\theta_0)\in H^1_{\per}(0,2\pi)\times(L^2(0,2\pi))^2$ and boundary control $r\in L^2(0,T)$, we say $(\rho,u,\theta)\in L^2(0,T;H^{-1}_{\per}(0,2\pi))\times L^2(0,T;(L^2(0,2\pi))^2)$ is a solution to the system \eqref{lcnse_nb}-\eqref{in_cd_nb}-\eqref{bd_cd_nb2} if for any $(f,g,h)\in L^2(0,T;H^1_{\per}(0,2\pi))\times L^2(0,T;(L^2(0,2\pi))^2)$, the following identity holds.
\begin{align*}
\blue\int_{0}^{T}\ip{(\rho(t,\cdot),u(t,\cdot),\theta(t,\cdot))^{\dagger}}{(f(t,\cdot),g(t,\cdot),h(t,\cdot))^{\dagger}}_{H^{-1}_{\per}\times L^2\times L^2,H^1_{\per}\times L^2\times L^2}dt\\
=\ip{(\rho_0(\cdot),u_0(\cdot),\theta_0(\cdot))^{\dagger}}{(\sigma(0,\cdot),v(0,\cdot),\vphi(0,\cdot))^{\dagger}}_{L^2\times L^2\times L^2}\\
\blue+\bar{\rho}^2\int_{0}^{T}\left[R\overline{v(t,2\pi)}+\frac{c_0\bar{u}}{\bar{\theta}}\overline{\vphi(t,2\pi)}+\frac{c_0\kappa_0}{\bar{\theta}}\overline{\vphi_x(t,2\pi)}\right]r(t)dt,
\end{align*}
where $(\sigma,v,\vphi)$ is the solution to the adjoint system \eqref{lcnse_adj_nb_s} with $(\sigma_T,v_T,\vphi_T)=(0,0,0)$.
\end{itemize}
\end{definition}
\begin{proposition}\label{prop_ex_sol_nb1}
For any given initial state $(\rho_0,u_0,\theta_0)\in (L^2(0,2\pi))^3$ and boundary control $p\in L^2(0,T)$, the system \eqref{lcnse_nb}-\eqref{in_cd_nb}-\eqref{bd_cd_nb1} admits a unique solution $(\rho,u,\theta)$ in the space
\begin{equation*}
\mc{C}^0([0,T];L^2(0,2\pi))\times[\mc{C}^0([0,T];L^2(0,2\pi))\cap L^2(0,T;H^1_{\per}(0,2\pi))]^2.
\end{equation*}
\end{proposition}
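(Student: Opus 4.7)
The plan is to prove well-posedness by the transposition method, mirroring what was done for the barotropic case (see the proposition following Definition \ref{def_sol_b}, which is quoted from \cite[Theorem 2.4]{Bhandari22}). First, I would fix $(\rho_0,u_0,\theta_0)\in(L^2(0,2\pi))^3$ and $p\in L^2(0,T)$ and define, for arbitrary $(f,g,h)\in L^2(0,T;(L^2(0,2\pi))^3)$, the linear functional
\begin{equation*}
L(f,g,h):=\ip{(\rho_0,u_0,\theta_0)^{\dagger}}{(\sigma(0),v(0),\vphi(0))^{\dagger}}_{L^2\times L^2\times L^2}+R\bar\theta\int_0^T\big[\bar u\,\overline{\sigma(t,2\pi)}+\bar\rho\,\overline{v(t,2\pi)}\big]p(t)\,dt,
\end{equation*}
where $(\sigma,v,\vphi)$ is the unique solution of the adjoint system \eqref{lcnse_adj_nb_s} with source $(f,g,h)$ and zero terminal data, furnished by Lemma \ref{well_posedness_adj_nb}(1). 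By that lemma, $(\sigma(0),v(0),\vphi(0))$ is controlled in $(L^2(0,2\pi))^3$ by $\|(f,g,h)\|_{L^2(0,T;(L^2)^3)}$, and the hidden regularity part yields $\sigma(\cdot,2\pi),\,v(\cdot,2\pi)\in L^2(0,T)$ with the same bound (the estimate on $v(\cdot,2\pi)$ follows from the parabolic regularity $v\in L^2(0,T;H^1_{\per})$ together with a trace inequality). Hence $L$ is a bounded antilinear functional on $L^2(0,T;(L^2(0,2\pi))^3)$, and the Riesz representation theorem produces a unique $(\rho,u,\theta)\in L^2(0,T;(L^2(0,2\pi))^3)$ satisfying the identity in Definition \ref{def_sol_nb}; this gives existence and uniqueness at the $L^2$-in-time level.

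Next I would upgrade the time regularity. Decompose $(\rho,u,\theta)=(\rho^1,u^1,\theta^1)+(\rho^2,u^2,\theta^2)$, where the first component has initial data $(\rho_0,u_0,\theta_0)$ and zero boundary control, and the second component has zero initial data and boundary control $p$. For the first piece, Lemma \ref{lem_ex_sg_nb} gives $(\rho^1,u^1,\theta^1)\in\mc{C}^0([0,T];(L^2(0,2\pi))^3)$, and standard parabolic regularity for the $(u^1,\theta^1)$ subsystem (which is a coupled parabolic system with $H^{-1}_{\per}$-type source $\rho^1_x$ and $u^1_x$) gives additionally $(u^1,\theta^1)\in L^2(0,T;(H^1_{\per}(0,2\pi))^2)$. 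For the second piece, the standard trick is to introduce a lifting of the boundary control: pick a smooth cutoff $\chi\in C^\infty(\overline{(0,2\pi)})$ with $\chi(0)=1$, $\chi(2\pi)=0$ and write $\tilde\rho(t,x):=p(t)\chi(x)$, so that $(\rho^2-\tilde\rho,u^2,\theta^2)$ satisfies a system with homogeneous periodic boundary conditions and source terms generated by $\tilde\rho$. Applying Lemma \ref{lem_ex_sg_nb} together with Duhamel's formula yields $(\rho^2-\tilde\rho,u^2,\theta^2)\in\mc{C}^0([0,T];(L^2(0,2\pi))^3)$, and again parabolic regularity upgrades the last two components to $L^2(0,T;H^1_{\per})$; since $\tilde\rho\in\mc{C}^0([0,T];L^2(0,2\pi))$, the same holds for $\rho^2$.

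Finally, one must verify that the regular object constructed by the lifting actually coincides with the transposition solution obtained by Riesz representation. This is done by multiplying the lifted system by the adjoint solution, integrating by parts in space-time, and checking that all boundary terms at $x=0,2\pi$ reduce to the prescribed boundary contribution involving $p(t)$; uniqueness then forces the identification. The main technical obstacle, in my view, is precisely this last verification together with ensuring that the trace $\sigma(\cdot,2\pi)$ of the adjoint is well-defined in $L^2(0,T)$ despite $\sigma$ solving only a transport-type equation with $L^2$ data — this is the hidden regularity issue, and one reduces it to an energy estimate after multiplying the $\sigma$-equation by a suitable test function supported near $x=2\pi$, exactly as in \cite[Appendix D]{Bhandari22}. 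Everything else is bookkeeping along the lines of \cite[Theorem 2.4]{Bhandari22} and \cite[Chapter 4]{Girinon08}, adapted from two to three unknowns.
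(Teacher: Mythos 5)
Your transposition argument (Riesz representation against the adjoint system with source terms and zero terminal data, hidden regularity of $\sigma(\cdot,2\pi)$, then a lifting/semigroup decomposition to recover the stated continuity and parabolic regularity) is exactly the route the paper intends: it omits the proof and refers to the analogous barotropic result \cite[Theorem 2.4]{Bhandari22} and \cite{Chowdhury13,Girinon08}, which proceed in precisely this way. So your proposal is correct and essentially coincides with the paper's (cited) approach, the only caveat being the routine density-in-$p$ step needed to make the lifting $p(t)\chi(x)$ rigorous for $p\in L^2(0,T)$.
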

\begin{proposition}\label{prop_ex_sol_nb2}
For any given initial state $(\rho_0,u_0,\theta_0)\in\blue H^1_{\per}(0,2\pi)\times(L^2(0,2\pi))^2$ and boundary control $q\in L^2(0,T)$, the system \eqref{lcnse_nb}-\eqref{in_cd_nb}-\eqref{bd_cd_nb2} admits a unique solution $(\rho,u,\theta)$ in the space
\begin{equation*}
\blue\mc{C}^0([0,T];H^{-1}_{\per}(0,2\pi))\times[\mc{C}^0([0,T];H^{-1}_{\per}(0,2\pi))\cap L^2(0,T;(L^2(0,2\pi)))]^2.
\end{equation*}
\end{proposition}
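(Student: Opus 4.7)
The plan is to construct the solution by the transposition method using the duality pairing in Definition \ref{def_sol_nb} for the velocity-control case. Let $(\rho_0,u_0,\theta_0)\in H^1_{\per}(0,2\pi)\times(L^2(0,2\pi))^2$ and $q\in L^2(0,T)$ be given. For any fixed $(f,g,h)\in L^2(0,T;H^1_{\per}(0,2\pi)\times(L^2(0,2\pi))^2)$, I would first solve the adjoint system \eqref{lcnse_adj_nb_s} with zero terminal data $(\sigma_T,v_T,\vphi_T)=(0,0,0)$; by Lemma \ref{well_posedness_adj_nb}-(2), this yields a unique strong solution
\begin{equation*}
(\sigma,v,\vphi)\in\mc{C}^0([0,T];H^1_{\per}(0,2\pi))\times\bigl[\mc{C}^0([0,T];H^1_{\per}(0,2\pi))\cap L^2(0,T;H^2_{\per}(0,2\pi))\bigr]^2,
\end{equation*}
with a continuous dependence estimate on the data.

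\textbf{Boundary trace regularity.} The next step is to show that every boundary trace appearing in the right-hand side of the identity in Definition \ref{def_sol_nb} (velocity-control case) lies in $L^2(0,T)$ with a bound in terms of $\|(f,g,h)\|_{L^2(0,T;H^1_{\per}\times L^2\times L^2)}$. For the parabolic components, $v,\vphi\in L^2(0,T;H^2_{\per})\cap\mc{C}^0([0,T];H^1_{\per})$ immediately gives that $v(\cdot,2\pi),v_x(\cdot,2\pi),\vphi(\cdot,2\pi)\in L^2(0,T)$ by the standard trace theorem on the intermediate space $H^{3/2+\varepsilon}$. For the density component $\sigma$, the hidden regularity $\sigma(\cdot,2\pi)\in L^2(0,T)$ has already been recorded in Lemma \ref{well_posedness_adj_nb}-(1) (and can be reproved here by multiplying the first adjoint equation by $x\overline{\sigma}$, integrating by parts on $(0,2\pi)$ and $(0,T)$, and using $\bar{u}>0$ together with the periodic conditions).

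\textbf{Existence and uniqueness.} With these trace bounds, the map
\begin{equation*}
\Lambda\colon(f,g,h)\;\longmapsto\;\ip{(\rho_0,u_0,\theta_0)^{\dagger}}{(\sigma(0),v(0),\vphi(0))^{\dagger}}_{L^2\times L^2\times L^2}+\bar{\rho}\!\int_{0}^{T}\!\!\bigl[R\bar{\theta}\,\overline{\sigma(t,2\pi)}+\lambda_0\bar{\rho}\,\overline{v_x(t,2\pi)}+\bar{\rho}\bar{u}\,\overline{v(t,2\pi)}+R\bar{\rho}\,\overline{\vphi(t,2\pi)}\bigr]q(t)\,dt
\end{equation*}
is a continuous antilinear functional on $L^2(0,T;H^1_{\per}(0,2\pi)\times(L^2(0,2\pi))^2)$, since the first inner product is controlled by $\|(\sigma(0),v(0),\vphi(0))\|_{H^1_{\per}\times(L^2)^2}$ via duality and the boundary integral is controlled by the $L^2(0,T)$-trace estimates multiplied by $\|q\|_{L^2(0,T)}$. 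Hence by the Riesz representation theorem there is a unique $(\rho,u,\theta)\in L^2(0,T;H^{-1}_{\per}(0,2\pi)\times(L^2(0,2\pi))^2)$ representing $\Lambda$; this is by definition the transposition solution, and uniqueness comes from arbitrariness of $(f,g,h)$.

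\textbf{Continuity in time and main obstacle.} Finally, to upgrade the regularity to $\mc{C}^0([0,T];H^{-1}_{\per})\times[\mc{C}^0([0,T];H^{-1}_{\per})\cap L^2(0,T;L^2)]^2$, I would use a density argument: approximate $(\rho_0,u_0,\theta_0,q)$ by a sequence of smooth data/controls for which Lemma \ref{lem_ex_sg_nb} and a standard lifting of the boundary datum (write $u=\tilde u+\chi(x)q(t)$ with a cut-off $\chi$ supported near $x=0$ and satisfying the required jump in periodic values) yield classical solutions, derive uniform estimates in the target space from the transposition identity, and pass to the limit. The main technical obstacle is the hidden regularity of $\sigma(\cdot,2\pi)$ together with showing that the parabolic $L^2(0,T;L^2)$-gain for $u$ and $\theta$ survives at the level of transposition solutions; the latter is done by testing against source terms of the form $(f,g,h)=(0,\psi,0)$ and $(0,0,\psi)$ with $\psi\in L^2(0,T;L^2)$ and exploiting the smoothing of the backward parabolic components in the adjoint.
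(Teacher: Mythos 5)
Your proposal is correct and follows essentially the route the paper itself indicates: the paper skips this proof, referring to the barotropic case and standard references, i.e.\ exactly the transposition/duality argument you carry out (solve the adjoint system with zero terminal data via Lemma \ref{well_posedness_adj_nb}, bound the boundary traces, represent the solution by Riesz in $L^2(0,T;H^{-1}_{\per}\times(L^2)^2)$, and upgrade to continuity in time by density). No substantive deviation or gap beyond the level of detail the paper itself omits.
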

\begin{proposition}\label{prop_ex_sol_nb3}
For any given initial state $(\rho_0,u_0,\theta_0)\in\blue H^1_{\per}(0,2\pi)\times(L^2(0,2\pi))^2$ and boundary control $r\in L^2(0,T)$, the system \eqref{lcnse_nb}-\eqref{in_cd_nb}-\eqref{bd_cd_nb3} admits a unique solution $(\rho,u,\theta)$ in the space
\begin{equation*}
\blue\mc{C}^0([0,T];H^{-1}_{\per}(0,2\pi))\times[\mc{C}^0([0,T];H^{-1}_{\per}(0,2\pi))\cap L^2(0,T;(L^2(0,2\pi)))]^2.
\end{equation*}
\end{proposition}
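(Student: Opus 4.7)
The plan is to establish existence and uniqueness via the transposition method, following exactly the template used for Propositions \ref{prop_ex_sol_nb1}--\ref{prop_ex_sol_nb2}. Fix $(\rho_0,u_0,\theta_0)\in H^1_{\per}(0,2\pi)\times(L^2(0,2\pi))^2$ and $r\in L^2(0,T)$. For any test source $(f,g,h)\in L^2(0,T;H^1_{\per}(0,2\pi)\times(L^2(0,2\pi))^2)$, Lemma \ref{well_posedness_adj_nb} Part (2) applied to the adjoint system \eqref{lcnse_adj_nb_s} with terminal state $(0,0,0)$ produces a unique solution $(\sigma,v,\vphi)$ in $\mc{C}^0([0,T];H^1_{\per})\times[\mc{C}^0([0,T];H^1_{\per})\cap L^2(0,T;H^2_{\per})]^2$. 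In particular, the trace $\vphi_x(\cdot,2\pi)$ belongs to $L^2(0,T)$, and all three traces $v(\cdot,2\pi),\vphi(\cdot,2\pi),\vphi_x(\cdot,2\pi)$ are bounded in $L^2(0,T)$ in terms of $\|(f,g,h)\|_{L^2(0,T;H^1_{\per}\times (L^2)^2)}$.

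Next, I would introduce the linear form
\begin{equation*}
\Lambda(f,g,h):=\ip{(\rho_0,u_0,\theta_0)^{\dagger}}{(\sigma(0),v(0),\vphi(0))^{\dagger}}_{L^2\times L^2\times L^2}+\bar{\rho}^2\int_{0}^{T}\Bigl[R\overline{v(t,2\pi)}+\tfrac{c_0\bar{u}}{\bar{\theta}}\overline{\vphi(t,2\pi)}+\tfrac{c_0\kappa_0}{\bar{\theta}}\overline{\vphi_x(t,2\pi)}\Bigr]r(t)\,dt,
\end{equation*}
prescribed by the third item of Definition \ref{def_sol_nb}. The first term is controlled by $\|(\rho_0,u_0,\theta_0)\|_{H^1_{\per}\times (L^2)^2}\,\|(\sigma(0),v(0),\vphi(0))\|_{H^{-1}_{\per}\times(L^2)^2}$ (using that $(\rho_0,u_0,\theta_0)\in H^1_{\per}\times(L^2)^2$ pairs with the adjoint solution at $t=0$), and the three boundary integrals are handled by the $L^2(0,T)$ estimate of the traces noted above. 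Thus $\Lambda$ is a bounded antilinear functional on $L^2(0,T;H^1_{\per}(0,2\pi)\times(L^2(0,2\pi))^2)$, and the Riesz representation theorem yields a unique $(\rho,u,\theta)\in L^2(0,T;H^{-1}_{\per}(0,2\pi)\times(L^2(0,2\pi))^2)$ satisfying the transposition identity. Uniqueness is immediate from the density of such test sources.

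To promote this $L^2$-in-time solution to the stated time-continuous space $\mc{C}^0([0,T];H^{-1}_{\per})\times[\mc{C}^0([0,T];H^{-1}_{\per})\cap L^2(0,T;L^2)]^2$, I would use a standard lifting/regularization argument. Choose a smooth cut-off $\chi\in \mc{C}^{\infty}(\mb{R})$ that is $2\pi$-periodic in a suitable sense and encodes the jump condition \eqref{bd_cd_nb3} so that $\tilde{\theta}:=\theta-\chi\, r$ satisfies homogeneous periodic conditions; the system \eqref{lcnse_nb} for $(\rho,u,\tilde{\theta})$ becomes a periodic system with right hand side involving $r,r'$ (distributionally) and $\chi$-multiples. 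Approximate $(\rho_0,u_0,\theta_0)$ and $r$ by smooth data, solve via the $\mc{C}^0$-semigroup given by Lemma \ref{lem_ex_sg_nb}, and exploit the parabolic smoothing in the $v$- and $\vphi$-equations (as in Lemma \ref{well_posedness_adj_nb}) to upgrade the regularity of $u$ and $\theta$ from $\mc{C}^0([0,T];H^{-1}_{\per})$ to $L^2(0,T;L^2(0,2\pi))$, then pass to the limit; the resulting limit coincides with the transposition solution by uniqueness.

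The main obstacle, as in Proposition \ref{prop_ex_sol_nb2}, is the boundary-versus-distributed tension: a boundary datum $r\in L^2(0,T)$ produces source terms of low regularity after lifting, so one cannot directly apply a smooth-semigroup argument without care. The key technical point is to justify that the parabolic smoothing for the $u$- and $\theta$-equations does compensate the low regularity of the lifted terms, giving the $L^2(0,T;L^2)$ bound for $u$ and $\theta$; this mirrors the analogous step performed for \eqref{bd_cd_nb2} and relies on the fact that the transport coupling through $\rho$ only degrades regularity by one spatial derivative, which is exactly matched by the $H^{-1}_{\per}$ component of the target space.
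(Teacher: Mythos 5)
Your proposal is correct and follows essentially the same route the paper takes (and defers to its references for): the transposition identity of Definition \ref{def_sol_nb} combined with the adjoint regularity of Lemma \ref{well_posedness_adj_nb}, a Riesz-representation argument to get the $L^2$-in-time solution, and then the standard lifting/semigroup-plus-parabolic-regularity argument to land in $\mc{C}^0([0,T];H^{-1}_{\per}(0,2\pi))\times[\mc{C}^0([0,T];H^{-1}_{\per}(0,2\pi))\cap L^2(0,T;L^2(0,2\pi))]^2$. The paper itself skips these details, citing \cite{Bhandari22,Chowdhury13,Girinon08}, and your sketch of the final regularity upgrade is exactly the argument those references carry out.
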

The proofs of \Cref{prop_ex_sol_nb1}, \Cref{prop_ex_sol_nb2} and \Cref{prop_ex_sol_nb3} can be done in a similar way (\cite[Theorem 2.4]{Bhandari22} and {\cite{Chowdhury13,Girinon08}}) like the barotropic case and so we skip the proofs.

\subsection{Spectral Analysis of $A^*$}
{\blue Let $\sigma(A^*)$ denotes the spectrum of the operator $A^*$.} We first write the following lemma.
\begin{lemma}\label{lemma_eigen_nb}
The following statements hold.
\begin{enumerate}[(i)]
\item $\ker(A^*)=\text{span}\left\{\cv{-1}{1}{1},\cv{1}{-1}{1},\cv{1}{1}{-1}\right\}$.
\item $\sup\left\{\Re(\nu)\ :\ \nu\in\sigma(A^*),\ \nu\neq0\right\}<0$.
\item The spectrum of $A^*$ consists of the eigenvalue $0$ and three branches of complex eigenvalues $$\blue\{\nu_n^h,\nu_n^{p_1},\nu_n^{p_2}\}_{n\in\mb{Z}^*}$$ with the asymptotic expressions given as
\begin{align}
\nu_n^h&=\bar{u}in-\bar{\omega}+O(\mod{n}^{-2}),\\
\nu_n^{p_1}&=-\lambda_0n^2+\bar{u}in+O(1),\\
\nu_n^{p_2}&=-\kappa_0n^2+\bar{u}in+O(1),
\end{align}
for all $\mod{n}$ large, where $\bar{\omega}=\frac{R\bar{\theta}}{\lambda_0}$.
\item The eigenfunctions of $A^*$ corresponding to $\nu_n^h$ and $\nu_n^{p_1},\nu_n^{p_2}$ are respectively
\begin{equation}
\Phi_n^h=\cv{\xi_n^h}{\eta_n^h}{\zeta_n^h}=\cv{\alpha_1^n}{\alpha_2^n}{\alpha_3^n}e^{inx},\quad \Phi_n^{p_1}=\cv{\xi_n^{p_1}}{\eta_n^{p_1}}{\zeta_n^{p_1}}=\cv{\beta_1^n}{\beta_2^n}{\beta_3^n}e^{inx},\quad \Phi_n^{p_2}=\cv{\xi_n^{p_2}}{\eta_n^{p_2}}{\zeta_n^{p_2}}=\cv{\gamma_1^n}{\gamma_2^n}{\gamma_3^n}e^{inx},
\end{equation}
for all $n\in\mb{Z}^*$, with the constants $\alpha_i^n$, $\beta_i^n$ and $\gamma_i^n$ ($i=1,2,3$) given as
\begin{equation}
\begin{cases}
\alpha_1^n=R\bar{\rho},\ \ \alpha_2^n=-R(\bar{u}-\nu_3^n),\ \ \alpha_3^n=(\lambda_0in+\bar{u}-\nu_3^n)(\bar{u}-\nu_3^n)-R\bar{\theta}\\[4pt]
\beta_1^n=-\frac{R\bar{\rho}}{\bar{u}-\nu_1^n},\ \ \beta_2^n=R,\ \ \beta_3^n=\frac{1}{\bar{u}-\nu_1^n}[R\bar{\theta}-(\lambda_0in+\bar{u}-\nu_1^n)(\bar{u}-\nu_1^n)]\\[4pt]
\gamma_1^n=(\lambda_0in+\bar{u}-\nu_2^n)(\kappa_0in+\bar{u}-\nu_2^n)-\frac{R^2\bar{\theta}}{c_0},\ \ \gamma_2^n=-\frac{R\bar{\theta}}{\bar{\rho}}(\kappa_0in+\bar{u}-\nu_2^n),\ \ \gamma_3^n=\frac{R^2\bar{\theta}^2}{\bar{\rho}c_0},
\end{cases}
\end{equation}
for all $n\in\mb{Z}^*$, where $\nu_1^n,\nu_2^n$ and $\nu_3^n$ are roots of the cubic polynomial
\begin{align}
\nu^3-[(\lambda_0+\kappa_0)in+3\bar{u}]\nu^2-[\lambda_0\kappa_0n^2-2(\lambda_0+\kappa_0)\bar{u}in-3\bar{u}^2+\frac{R^2\bar{\theta}}{c_0}+R\bar{\theta}]\nu\\
+\lambda_0\kappa_0\bar{u}n^2-(\lambda_0+\kappa_0)\bar{u}^2in-\bar{u}^3+\frac{R^2\bar{\theta}}{c_0}\bar{u}+R\bar{\theta}\kappa_0in+R\bar{\theta}\bar{u}=0,\notag
\end{align}
for all $n\in\mb{Z}^*$.
\end{enumerate}
\end{lemma}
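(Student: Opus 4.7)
The plan is to mirror the barotropic argument (Lemma \ref{lemma_eigen_b}) in the Fourier basis. Part (i) is immediate: if $A^*(\xi,\eta,\zeta)^\dagger=0$, then $\bar u\xi_x+\bar\rho\eta_x=0$, $\lambda_0\eta_{xx}+\bar u\eta_x+\frac{R\bar\theta}{\bar\rho}\xi_x+R\zeta_x=0$, and $\kappa_0\zeta_{xx}+\bar u\zeta_x+\frac{R\bar\theta}{c_0}\eta_x=0$ in $H^1_{\per}\times(H^2_{\per})^2$; periodicity forces the triple to be constant, giving a three-dimensional kernel, from which the three independent vectors listed are a basis. For part (ii), I would test $\ip{A^*\Phi}{\Phi}_{L^2\times L^2\times L^2}=\nu\norm{\Phi}^2$ with the weighted inner product defined on $(L^2(0,2\pi))^3$; integrating by parts under periodic boundary conditions the first-order transport-type terms produce purely imaginary contributions (this is precisely why the weights $R\bar\theta,\bar\rho^2,\bar\rho^2c_0/\bar\theta$ are chosen), and the viscous/heat terms give $-\lambda_0\bar\rho^2\norm{\eta_x}_{L^2}^2-\kappa_0\bar\rho^2c_0/\bar\theta\,\norm{\zeta_x}_{L^2}^2$. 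Hence
\begin{equation*}
\Re(\nu)=-\frac{\lambda_0\bar\rho^2\norm{\eta_x}_{L^2}^2+\kappa_0\bar\rho^2c_0/\bar\theta\,\norm{\zeta_x}_{L^2}^2}{\norm{\Phi}^2_{L^2\times L^2\times L^2}}\le 0,
\end{equation*}
with equality only when $\eta_x=\zeta_x=0$, forcing $\eta,\zeta$ constant; then the first equation forces $\xi$ constant, i.e.\ $\Phi\in\ker A^*$, so $\Re(\nu)<0$ whenever $\nu\ne 0$.

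For parts (iii) and (iv), I would decompose on the orthogonal family $\{e^{inx}\}_{n\in\mb{Z}}$: the invariant subspace corresponding to frequency $n$ is spanned by $(e^{inx},0,0)^\dagger,(0,e^{inx},0)^\dagger,(0,0,e^{inx})^\dagger$, and on this subspace $A^*$ acts by the matrix
\begin{equation*}
R_n=\begin{pmatrix}\bar u in&\bar\rho in&0\\[2pt]\frac{R\bar\theta}{\bar\rho}in&-\lambda_0 n^2+\bar u in&Rin\\[2pt]0&\frac{R\bar\theta}{c_0}in&-\kappa_0 n^2+\bar u in\end{pmatrix}.
\end{equation*}
Setting $\nu_j^n:=\nu_n^{\bullet}/(in)$ for the three roots $\nu_n^h,\nu_n^{p_1},\nu_n^{p_2}$ of $\det(R_n-\nu I)=0$, an expansion of this determinant along the first row yields the cubic \eqref{ch_pol_rn_nb} displayed in the statement. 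The eigenvector formulas follow by writing $(R_n-\nu I)(a_1,a_2,a_3)^\dagger=0$ and solving: for the hyperbolic branch one uses the first and third rows to express $a_1,a_3$ in terms of $a_2$ via $\nu_3^n$, while the two parabolic branches use the analogous substitutions with $\nu_1^n$ and $\nu_2^n$ respectively.

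The analytical work is all in the asymptotics of the three roots. I would first observe that the leading-order cubic (divide by $n^2$, let $\mu=\nu/n$) behaves like $\mu(\lambda_0+\mu i)(\kappa_0+\mu i)=0$, so one root is $o(n^2)$ (the hyperbolic one) and the other two are $\sim -\lambda_0 n^2$ and $\sim -\kappa_0 n^2$ respectively (the parabolic ones). Plugging the ansatz $\nu_n^{p_1}=-\lambda_0 n^2+\bar u in+c+o(1)$ into the cubic and collecting the $O(1)$ terms determines $c$ and, more importantly, shows the error is $O(1)$, giving the stated expansion; similarly for $\nu_n^{p_2}$ with $\lambda_0$ replaced by $\kappa_0$. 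The hyperbolic root is the subtle one: substituting $\nu_n^h=\bar u in+r_n$ with $r_n$ bounded, the cubic reduces after cancellation to $\lambda_0\kappa_0 n^2 r_n+R\bar\theta\kappa_0 in\cdot in+O(nr_n)+O(1)=0$, whence $r_n=-R\bar\theta/\lambda_0+O(n^{-2})=-\bar\omega+O(n^{-2})$, which yields the claimed asymptotic. The main obstacle is this last step—bookkeeping all contributions of order $1$ and $n^{-1}$ in the cubic to justify the $O(n^{-2})$ remainder; this uses $(\lambda_0,\kappa_0)\in\mc S$ is \emph{not} needed here (that hypothesis enters only later, for separation of parabolic eigenvalues), and once the three asymptotic profiles are established, simpleness and mutual distinctness of the three branches for all $|n|$ sufficiently large follow automatically from the different growth rates.
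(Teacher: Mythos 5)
Parts (i) and (ii), the reduction of $A^*$ to the $3\times3$ matrices acting on each Fourier mode, and the derivation of the cubic \eqref{ch_pol_rn_nb} and of the eigenvector formulas are exactly the paper's argument (your matrix is just $in$ times the paper's $R_n$, which is a harmless change of normalization). The issue is the step you yourself call the heart of the proof: the asymptotics of the three roots. Your leading-order rescaling $\mu=\nu/n$ only shows that, after rescaling, the roots converge to $0$, $i\lambda_0$, $i\kappa_0$; translated back, this localizes the two parabolic eigenvalues only up to errors $o(n^2)$ and says of the hyperbolic one merely that it is $o(n^2)$. From that information you are not entitled to substitute the ansatz $\nu_n^{p_1}=-\lambda_0n^2+\bar{u}in+c+o(1)$, nor $\nu_n^h=\bar{u}in+r_n$ with $r_n$ bounded: matching coefficients in an assumed expansion determines the coefficients \emph{if} a root of that form exists, but it does not prove that it does, and the boundedness of $r_n$ (equivalently the $O(1)$ remainder for the parabolic branches) is precisely what must be established. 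So "bookkeeping all contributions" is not enough; a quantitative localization argument is the missing ingredient.

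The paper supplies it as follows: the substitutions $\nu=\bar{u}+\epsilon_n$ and $\epsilon_n=in\tilde{\epsilon}_n$ turn \eqref{ch_pol_rn_nb} into the cubic \eqref{tr_cubic_pol}, whose coefficients are $O(n^{-2})$ perturbations of those of $z(z-\lambda_0)(z-\kappa_0)$; Rouch\'e's theorem then places exactly one root in a fixed ball around each of $0,\lambda_0,\kappa_0$ for $\mod{n}$ large, and only after this localization are the refined expansions $z_1^n=\bar{\omega}/n^2+O(n^{-4})$, $z_2^n=\lambda_0+O(n^{-2})$, $z_3^n=\kappa_0+O(n^{-2})$ extracted, which yield the stated remainders $O(\mod{n}^{-2})$ and $O(1)$. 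Any equivalent quantitative perturbation argument would do, but some such step must precede your ansatz substitutions. A minor additional point: your displayed balance for the hyperbolic branch, $\lambda_0\kappa_0n^2r_n+R\bar{\theta}\kappa_0\,in\cdot in+\dots=0$, has the wrong relative sign between the two leading terms (it would give $r_n\approx+\bar{\omega}$); the correct balance, obtained e.g.\ from \eqref{tr_cubic_pol} with $\epsilon_n=r_n/(in)$, is $\lambda_0\kappa_0r_n+R\bar{\theta}\kappa_0\approx0$, i.e.\ $r_n\approx-\bar{\omega}$, which is the answer you state and the one in the paper.
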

\begin{remark}\label{rem_exp_coef}
We have the asymptotic expressions of $\alpha_i^n,\beta_i^n,\gamma_i^n$, $i=1,2,3$ as follows.
\begin{equation}\label{asy_exp_coef}
\begin{cases}
\alpha_1^n\sim_{+\infty}1,\ \ \alpha_2^n\sim_{+\infty}\frac{1}{\mod{n}},\ \ \alpha_3^n\sim_{+\infty}\frac{1}{\mod{n}},\\[4pt]
\beta_1^n\sim_{+\infty}\frac{1}{\mod{n}},\ \ \beta_2^n\sim_{+\infty}1,\ \ \beta_3^n\sim_{+\infty}\frac{1}{\mod{n}},\\[4pt]
\gamma_1^n\sim_{+\infty}\frac{1}{\mod{n}},\ \ \gamma_2^n\sim_{+\infty}\frac{1}{\mod{n}},\ \ \gamma_3^n\sim_{+\infty}1.
\end{cases}
\end{equation}
\end{remark}
\begin{proof}
{\blue We will prove each parts separately.}

\smallskip

\begin{enumerate}
\item[\blue Part-(i).] Follows immediately from the fact that $A^*(\xi,\eta,\zeta)^{\dagger}=0$ implies $(\xi,\eta,\zeta)=$ constant.
\item[\blue Part-(ii).] Let $\Phi=(\xi,\eta,\zeta)^{\dagger}\in\mc{D}(A^*)$ be the eigenfunction of $A^*$ corresponding to the eigenvalue $\nu\neq0$. Then, we have
\begin{equation*}
\ip{A^*\cv{\xi}{\eta}{\zeta}}{\cv{\xi}{\eta}{\zeta}}_{\blue L^2\times L^2\times L^2}=\ip{\nu\cv{\xi}{\eta}{\zeta}}{\cv{\xi}{\eta}{\zeta}}_{\blue L^2\times L^2\times L^2},
\end{equation*}
that is,
\begin{align*}
&R\bar{\theta}\bar{u}\int_{0}^{2\pi}\overline{\xi(x)}\xi_x(x)dx+R\bar{\theta}\bar{\rho}\int_{0}^{2\pi}\overline{\xi(x)}\eta_x(x)dx+\lambda_0\bar{\rho}^2\int_{0}^{2\pi}\overline{\eta(x)}\eta_{xx}(x)dx\\
&\quad\quad+\bar{\rho}^2\bar{u}\int_{0}^{2\pi}\overline{\eta(x)}\eta_x(x)dx+R\bar{\theta}\bar{\rho}\int_{0}^{2\pi}\xi_x(x)\overline{\eta(x)}dx+R\bar{\rho}^2\int_{0}^{2\pi}\overline{\eta(x)}\zeta_x(x)dx\\
&\quad\quad+\frac{\bar{\rho}^2c_0}{\bar{\theta}}\kappa_0\int_{0}^{2\pi}\overline{\zeta(x)}\zeta_{xx}(x)dx+\frac{\bar{\rho}^2c_0}{\bar{\theta}}\bar{u}\int_{0}^{2\pi}\overline{\zeta(x)}\zeta_x(x)dx+R\bar{\rho}^2\int_{0}^{2\pi}\eta_x(x)\overline{\zeta(x)}dx\\
&\quad\quad\quad\quad=\blue\nu R\bar{\theta}\int_{0}^{2\pi}\mod{\xi(x)}^2dx+\nu\bar{\rho}^2\int_{0}^{2\pi}\mod{\eta(x)}^2dx+\nu\frac{\bar{\rho}^2c_0}{\bar{\theta}}\int_{0}^{2\pi}\mod{\zeta(x)}^2dx.
\end{align*}
An integration by parts yields
\begin{equation*}
\blue\Re(\nu)=-\frac{\lambda_0\bar{\rho}^2\norm{\eta_x}_{L^2(0,2\pi)}^2+\frac{\bar{\rho}^2c_0}{\bar{\theta}}\kappa_0\norm{\zeta_x}_{L^2(0,2\pi)}^2}{R\bar{\theta}\norm{\xi}_{L^2(0,2\pi)}^2+\bar{\rho}^2\norm{\eta}_{L^2(0,2\pi)}^2+\frac{\bar{\rho}^2c_0}{\bar{\theta}}\norm{\zeta}_{L^2(0,2\pi)}^2}<0,
\end{equation*}
which proves part (ii).

\item[\blue Parts (iii)-(iv).] We denote
\begin{equation*}
\vphi_n(x):=e^{inx},\ \ n\in\mb{Z}.
\end{equation*}
Then the set $\left\{\cv{\vphi_n}{0}{0},\cv{0}{\vphi_n}{0},\cv{0}{0}{\vphi_n}\right\}$ forms an orthogonal basis of $(L^2(0,2\pi))^3$. Let us define
\begin{equation*}
E_n:=\begin{pmatrix}\vphi_n&0&0\\[4pt]0&\vphi_n&0\\0&0&\vphi_n\end{pmatrix},\ \ \text{and}\ \Phi_n:=(\xi_n,\eta_n,\zeta_n)^{\dagger},
\end{equation*}
for all $n\in\mb{Z}$. Then, we have the following relation
\begin{equation}
A^*E_n\Phi_n=inE_nR_n\Phi_n,\ \ n\in\mb{Z},
\end{equation}
where
\begin{equation}
R_n:=\begin{pmatrix}\bar{u}&\bar{\rho}&0\\[4pt]\frac{R\bar{\theta}}{\bar{\rho}}&\lambda_0in+\bar{u}&R\\[4pt]0&\frac{R\bar{\theta}}{c_0}&\kappa_0in+\bar{u}\end{pmatrix},\ \ n\in\mb{Z}.
\end{equation}
Thus, if $(\alpha_n,\nu_n)$ is an eigenpair of $R_n$, then $(E_n\alpha_n,in\nu_n)$ will be an eigenpair of $A^*$. Therefore, it's remains to find the eigenvalues and eigenvectors of the matrix $R_n$ for $n\in\mb{Z}$. The characteristics equation of $R_n$ is
\begin{align}\label{ch_pol_rn_nb}
\nu^3-[(\lambda_0+\kappa_0)in+3\bar{u}]\nu^2-[\lambda_0\kappa_0n^2-2(\lambda_0+\kappa_0)\bar{u}in-3\bar{u}^2+\frac{R^2\bar{\theta}}{c_0}+R\bar{\theta}]\nu\\
+\lambda_0\kappa_0\bar{u}n^2-(\lambda_0+\kappa_0)\bar{u}^2in-\bar{u}^3+\frac{R^2\bar{\theta}}{c_0}\bar{u}+R\bar{\theta}\kappa_0in+R\bar{\theta}\bar{u}=0,\notag
\end{align}
for all $n\in\mb{Z}$. 

\smallskip

\noindent\textit{Claim 1.} $0$ cannot be a root of the polynomial \eqref{ch_pol_rn_nb} for any $n\in\mb{Z}$.

\smallskip

\noindent\textit{Proof of Claim 1.} Let $\nu=0$ be a root of \eqref{ch_pol_rn_nb}. Then, there exists some $n\in\mb{Z}$ such that
\begin{equation*}
\lambda_0\kappa_0\bar{u}n^2-(\lambda_0+\kappa_0)\bar{u}^2in-\bar{u}^3+\frac{R^2\bar{\theta}}{c_0}\bar{u}+R\bar{\theta}\kappa_0in+R\bar{\theta}\bar{u}=0,
\end{equation*}
which implies
\begin{equation*}
\lambda_0\kappa_0n^2-\bar{u}^2+\frac{R^2\bar{\theta}}{c_0}+R\bar{\theta}=0,\ \text{and}\ (\lambda_0+\kappa_0)\bar{u}^2=R\bar{\theta}\kappa_0.
\end{equation*}
We then have
\begin{equation*}
\lambda_0\kappa_0n^2=\bar{u}^2-\frac{R^2\bar{\theta}}{c_0}-R\bar{\theta}=\bar{u}^2-\frac{R^2\bar{\theta}}{c_0}-\left(\frac{\lambda_0}{\kappa_0}+1\right)\bar{u}^2=-\frac{R^2\bar{\theta}}{c_0}-\frac{\lambda_0}{\kappa_0}\bar{u}^2<0,
\end{equation*}
a contradiction. This proves our first claim.

\smallskip

\noindent\textit{Claim 2.} $\bar{u}$ cannot be a root of the polynomial \eqref{ch_pol_rn_nb} for any $n\in\mb{Z}^*$.

\smallskip

\noindent\textit{Proof of Claim 2.} Observe that $\bar{u}$ is a root of \eqref{ch_pol_rn_nb} if and only if $R\bar{\theta}\kappa_0in=0$. Thus, for all $n\in\mb{Z}^*$, $\bar{u}$ cannot be a root of \eqref{ch_pol_rn_nb}, which proves our second claim.

\smallskip

\noindent For fixed $n\in\mb{Z}^*$, let $\nu_1^n,\nu_2^n$ and $\nu_3^n$ be the roots of this cubic polynomial. The relation between roots and coefficients are
\begin{equation*}
\begin{cases}
\nu_1^n+\nu_2^n+\nu_3^n=(\lambda_0+\kappa_0)in+3\bar{u}\\[4pt]
\nu_1^n\nu_2^n+\nu_2^n\nu_3^n+\nu_3^n\nu_1^n=-[\lambda_0\kappa_0n^2-2(\lambda_0+\kappa_0)\bar{u}in-3\bar{u}^2+\frac{R^2\bar{\theta}}{c_0}+R\bar{\theta}]\\[4pt]
\nu_1^n\nu_2^n\nu_3^n=-[\lambda_0\kappa_0\bar{u}n^2-(\lambda_0+\kappa_0)\bar{u}^2in-\bar{u}^3+\frac{R^2\bar{\theta}}{c_0}\bar{u}+R\bar{\theta}\kappa_0in+R\bar{\theta}\bar{u}].
\end{cases}
\end{equation*}
We will find the asymptotic expressions of roots of the cubic polynomial \eqref{ch_pol_rn_nb} for large values of $\mod{n}$. The first relation between roots and coefficients tells us that $\bar{u}$ is present in at least one of the roots of the cubic polynomial \eqref{ch_pol_rn_nb}. Thus, using the transformation
\begin{equation}
\nu=\bar{u}+\epsilon_n,
\end{equation}
it is enough to find the roots of the transformed cubic equation in $\epsilon_n$
\begin{equation}\label{tr_cubic_pol}
\epsilon_n^3-(\lambda_0+\kappa_0)in\epsilon_n^2-\left(\lambda_0\kappa_0n^2+\frac{R^2\bar{\theta}}{c_0}+R\bar{\theta}\right)\epsilon_n+R\bar{\theta}\kappa_0in=0
\end{equation}
for all $n\in\mb{Z}^*$. We use the transformation $\epsilon_n=in\tilde{\epsilon}_n$, for $n\in\mb{Z}^*$, to simplify the above equation and we get
\begin{equation}
\tilde{\epsilon}_n^3-(\lambda_0+\kappa_0)\tilde{\epsilon}_n^2+\left(\lambda_0k_0+\frac{1}{n^2}\left(\frac{R^2\bar{\theta}}{c_0}+R\bar{\theta}\right)\right)\tilde{\epsilon}_n-\frac{R\bar{\theta}\kappa_0}{n^2}=0
\end{equation}
for all $n\in\mb{Z}^*$. We now use Rouche's Theorem to find the roots of this polynomial. Let us first state the Rouch\'e's Theorem, the proof of which can be found in \cite{Conway78}.
\begin{theorem}[Rouch\'e's Theorem]
Let $\Omega\subset\mb{C}$ be an open connected set and $f,g:\Omega\to\mb{C}$ be holomorphic on $\Omega$. Suppose there exists $a\in\Omega$ and $R>0$ such that $\overline{B(a,R)}\subset\Omega$ and $$\mod{g(z)-f(z)}<\mod{g(z)}\ \text{for all}\ z\in\partial B(a,R),$$ then $f$ and $g$ have the same number of zeros inside $B(a,R)$.
\end{theorem}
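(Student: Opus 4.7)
The plan is to establish Rouch\'e's Theorem by combining the argument principle with a continuous deformation (homotopy) argument. First I would verify that both $f$ and $g$ are nonvanishing on the boundary circle $\partial B(a,R)$: the hypothesis $|g(z)-f(z)|<|g(z)|$ on $\partial B(a,R)$ immediately forces $|g(z)|>0$, and the reverse triangle inequality gives
\[
|f(z)| \;\geq\; |g(z)| - |g(z)-f(z)| \;>\; 0
\]
as well. This ensures that the zero-counting contour integrals below are well defined and that the argument principle applies on the disk $B(a,R)$.

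Next I would introduce the homotopy $h_t(z):=g(z)+t\bigl(f(z)-g(z)\bigr)$ for $t\in[0,1]$, connecting $h_0=g$ to $h_1=f$ through a family of functions holomorphic on $\Omega$. On $\partial B(a,R)$ the estimate $|h_t(z)| \geq |g(z)| - t\,|f(z)-g(z)| > 0$ holds uniformly in $t$, so every $h_t$ is zero-free on the boundary circle. By the argument principle, the number of zeros of $h_t$ inside $B(a,R)$ (counted with multiplicity) equals
\[
N(t) \;=\; \frac{1}{2\pi i}\oint_{\partial B(a,R)} \frac{h_t'(z)}{h_t(z)}\,dz.
\]
Since $h_t$ and $h_t'$ depend jointly continuously on $(t,z)$ and $|h_t|$ is bounded below by a positive constant on the boundary circle, the integrand is continuous in $t$ uniformly in $z$. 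Thus $N(t)$ is a continuous, integer-valued function on $[0,1]$, hence constant, and in particular $N(0)=N(1)$, which is the desired conclusion.

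An essentially equivalent route, which I would mention for completeness, is to work with the meromorphic ratio $F:=f/g$ on a neighborhood of $\partial B(a,R)$: on the boundary circle $|F(z)-1|=|f(z)-g(z)|/|g(z)|<1$, so the image curve $F(\partial B(a,R))$ lies inside the open disk of radius $1$ about $1$, which excludes the origin. Its winding number about $0$ is therefore zero, and by the argument principle this winding number equals the difference of zero counts $N_f-N_g$, giving the result once more.

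The step I expect to be the main obstacle is not any deep inequality but rather the bookkeeping involved in justifying continuity of $N(t)$: one must be careful that $h_t$ has no zeros on $\partial B(a,R)$ for every $t\in[0,1]$ (not merely at the endpoints), and that the uniform lower bound on $|h_t|$ there upgrades pointwise continuity of the integrand to continuity of the integral. Both are routine once the reverse triangle inequality is invoked uniformly in $t$, and the full details are standard complex analysis, see for instance \cite{Conway78}.
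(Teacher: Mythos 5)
Your proof is correct. The paper does not actually prove Rouch\'e's Theorem — it only states it and refers to \cite{Conway78} — and your argument (nonvanishing of $f$ and $g$ on $\partial B(a,R)$ via the reverse triangle inequality, the homotopy $h_t=g+t(f-g)$ with the uniform boundary estimate $\mod{h_t(z)}\geq\mod{g(z)}-t\mod{f(z)-g(z)}>0$, and constancy of the integer-valued zero-counting integral $N(t)$ furnished by the argument principle) is precisely the standard textbook proof, as is the $f/g$ winding-number variant you sketch. The only delicate points, zero-freeness of every $h_t$ on the circle for all $t\in[0,1]$ and the continuity of $N(t)$, are exactly the ones you handle via the uniform lower bound on the compact boundary circle, so there is no gap.
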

\noindent Let $n\in\mb{Z}^*$. We define the functions $f,g:\mb{C}\to\mb{C}$ by
\begin{equation*}
f(z):=z^3-(\lambda_0+\kappa_0)z^2+\left(\lambda_0k_0+\frac{1}{n^2}\left(\frac{R^2\bar{\theta}}{c_0}+R\bar{\theta}\right)\right)z-\frac{R\bar{\theta}\kappa_0}{n^2}
\end{equation*}
and
\begin{equation*}
g(z):=z^3-(\lambda_0+\kappa_0)z^2+\lambda_0k_0z
\end{equation*}
for all $z\in\mb{C}$. 
The roots of $g$ are $0,\lambda_0$ and $\kappa_0$. We choose ${\blue R_0}:=\frac{1}{2}\min\{\lambda_0,\kappa_0,\mod{\lambda_0-\kappa_0}\}$. Then, we have the following estimates
\begin{align*}
\mod{g(z)-f(z)}&=\mod{\frac{1}{n^2}\left(\frac{R^2\bar{\theta}}{c_0}+R\bar{\theta}\right)z-\frac{R\bar{\theta}\kappa_0}{n^2}}\\
&\leq\frac{C}{n^2}(\mod{z}+1)\begin{cases}=\frac{C}{n^2}({\blue R_0}+1),\ \ \text{for all}\ z\in\partial B(0,{\blue R_0}),\\[4pt]\leq\frac{C}{n^2}(\lambda_0+{\blue R_0}+1),\ \ \text{for all}\ z\in\partial B(\lambda_0,{\blue R_0}),\\[4pt]\leq\frac{C}{n^2}(\kappa_0+{\blue R_0}+1),\ \ \text{for all}\ z\in\partial B(\kappa_0,{\blue R_0}),\end{cases}
\end{align*}
for all $n\in\mb{Z}^*$. On the other hand, the choice of $\blue R_0$ tells us that the function $g$ does not have any root on the sets $\partial B(0,{\blue R_0})$, $\partial B(\lambda_0,{\blue R_0})$ and $\partial B(\kappa_0,{\blue R_0})$. Therefore, $\inf_{\mod{z}={\blue R_0}}\mod{g(z)}>0$,$\inf_{\mod{z-\lambda_0}={\blue R_0}}\mod{g(z)}>0$ and $\inf_{\mod{z-\kappa_0}={\blue R_0}}\mod{g(z)}>0$. This implies, for $\mod{n}$ large enough, we have
\begin{align*}
\mod{g(z)-f(z)}&<\mod{g(z)}\ \text{for all}\ z\in\partial B(0,{\blue R_0})\cup\partial B(\lambda_0,{\blue R_0})\cup\partial B(\kappa_0,{\blue R_0}).
\end{align*}
Thus, the function $f$ has a unique root inside each of the sets $B(0,{\blue R_0})$, $B(\lambda_0,{\blue R_0})$ and $B(\kappa_0,{\blue R_0})$. We denote these roots by $z_1^n$, $z_2^n$ and $z_3^n$ respectively. We now find asymptotic expressions of these roots.

\smallskip

\noindent\underline{\textit{Asymptotic expression of $z_1^n$.}} Since $z_1^n\in B(0,{\blue R_0})$, we have
\begin{equation*}
z_1^n=\frac{1}{(z_1^n-\lambda_0)(z_1^n-\kappa_0)}\left(\frac{R\bar{\theta}\kappa_0}{n^2}-\frac{1}{n^2}\left(\frac{R^2\bar{\theta}}{c_0}+R\bar{\theta}\right)z_1^n\right)
\end{equation*}
and therefore
\begin{equation*}
\mod{z_1^n}\leq\frac{1}{\mod{z_1^n-\lambda_0}\mod{z_1^n-\kappa_0}}\left(\mod{\frac{R\bar{\theta}\kappa_0}{n^2}}+\mod{\frac{1}{n^2}\left(\frac{R^2\bar{\theta}}{c_0}+R\bar{\theta}\right)z_1^n}\right)\leq\frac{C}{\mod{n}^2}
\end{equation*}
for $\mod{n}$ large enough. To find the asymptotic expression of $z_1^n$, we write $f(z_1^n)=0$ in the following way
\begin{align*}
z_1^n&=\frac{R\bar{\theta}\kappa_0}{n^2}\left(\lambda_0\kappa_0-(\lambda_0+\kappa_0)z_1^n+(z_1^n)^2+\frac{1}{n^2}\left(\frac{R\bar{\theta}}{c_0}+R\bar{\theta}\right)\right)^{-1}\\
&=\frac{R\bar{\theta}\kappa_0}{n^2}\frac{1}{\lambda_0\kappa_0}\left(1-\frac{(\lambda_0+\kappa_0)}{\lambda_0\kappa_0}z_1^n+\frac{1}{\lambda_0\kappa_0n^2}\left(\frac{R\bar{\theta}}{c_0}+R\bar{\theta}\right)+O(\mod{n}^{-4})\right)^{-1}\\
&=\frac{\bar{\omega}}{n^2}\left(1+\frac{(\lambda_0+\kappa_0)}{\lambda_0\kappa_0}z_1^n-\frac{1}{\lambda_0\kappa_0n^2}\left(\frac{R\bar{\theta}}{c_0}+R\bar{\theta}\right)+O(\mod{n}^{-4})\right)\\
&=\frac{\bar{\omega}}{n^2}+O(\mod{n}^{-4}),
\end{align*}
since $\mod{z_1^n}\leq\frac{C}{n^2}$ for all $\mod{n}$ large.

\smallskip

\noindent\underline{\textit{Asymptotic expression of $z_2^n$.}} Since $z_2^n\in B(\lambda_0,{\blue R_0})$, we have
\begin{equation*}
z_2^n-\lambda_0=\frac{1}{z_2^n(z_2^n-\kappa_0)}\left(\frac{R\bar{\theta}\kappa_0}{n^2}-\frac{1}{n^2}\left(\frac{R^2\bar{\theta}}{c_0}+R\bar{\theta}\right)z_1^n\right)
\end{equation*}
and therefore
\begin{equation*}
\mod{z_2^n-\lambda_0}\leq\frac{1}{\mod{z_2^n}\mod{z_2^n-\kappa_0}}\left(\mod{\frac{R\bar{\theta}\kappa_0}{n^2}}+\mod{\frac{1}{n^2}\left(\frac{R^2\bar{\theta}}{c_0}+R\bar{\theta}\right)z_1^n}\right)\leq\frac{C}{\mod{n}^2}
\end{equation*}
for $\mod{n}$ large enough. Thus, we can write
\begin{equation*}
z_2^n=\lambda_0+O(\mod{n}^{-2})
\end{equation*}
for all $\mod{n}$ large. 

\smallskip

\noindent\underline{\textit{Asymptotic expression of $z_3^n$.}} Following the similar approach as mentioned above, we can get
\begin{equation*}
z_3^n=\kappa_0+O(\mod{n}^{-2})
\end{equation*}
for all $\mod{n}$ large. 

\smallskip 

\noindent Combining all of the above, we obtain the asymptotic expressions of the roots of \eqref{tr_cubic_pol} as
\begin{align*}
\epsilon_1^n:=\lambda_0in+O(\mod{n}^{-1}),\\
\epsilon_2^n:=\kappa_0in+O(\mod{n}^{-1}),\\
\epsilon_3^n:=-\frac{\bar{\omega}}{in}+O(\mod{n}^{-3})
\end{align*}
for all $\mod{n}$ large. Therefore, eigenvalues of the matrix $R_n$ are $\nu_1^n,\nu_2^n$ and $\nu_3^n$ with the asymptotic expressions
\begin{align}
\nu_1^n&=\lambda_0in+\bar{u}+O(\mod{n}^{-1}),\label{exp_nu1n}\\
\nu_2^n&=\kappa_0in+\bar{u}+O(\mod{n}^{-1}),\label{exp_nu2n}\\
\nu_3^n&=\bar{u}-\frac{\bar{\omega}}{in}+O(\mod{n}^{-3}),\label{exp_nu3n}
\end{align}
for all $\mod{n}$ large.

\smallskip

\noindent To find the eigenvectors of the matrix $R_n$, we now consider the equation
\begin{equation*}
R_n\alpha_n=\nu_3^n\alpha_n,\ \ n\in\mb{Z}^*,
\end{equation*}
where $\alpha_n=(\alpha_1^n,\alpha_2^n,\alpha_3^n)^{\dagger}$, that is,
\begin{align}
(\bar{u}-\nu_3^n)\alpha_1^n+\bar{\rho}\alpha_2^n&=0,\\
\frac{R\bar{\theta}}{\bar{\rho}}\alpha_1^n+(\lambda_0in+\bar{u}-\nu_3^n)\alpha_2^n+R\alpha_3^n&=0,\\
\frac{R\bar{\theta}}{c_0}\alpha_2^n+(\kappa_0in+\bar{u}-\nu_3^n)\alpha_3^n&=0,
\end{align}
for all $n\in\mb{Z}^*$. One solution is given by
\begin{equation*}
\alpha_1^n=R\bar{\rho},\ \ \alpha_2^n=-R(\bar{u}-\nu_3^n),\ \ \alpha_3^n=(\lambda_0in+\bar{u}-\nu_3^n)(\bar{u}-\nu_3^n)-R\bar{\theta},\ \ n\in\mb{Z}^*.
\end{equation*}
We next consider the equation
\begin{equation*}
R_n\beta_n=\nu_1^n\beta_n,\ \ n\in\mb{Z}^*,
\end{equation*}
where $\beta_n=(\beta_1^n,\beta_2^n,\beta_3^n)^{\dagger}$, that is,
\begin{align}
(\bar{u}-\nu_1^n)\beta_1^n+\bar{\rho}\beta_2^n&=0,\\
\frac{R\bar{\theta}}{\bar{\rho}}\beta_1^n+(\lambda_0in+\bar{u}-\nu_1^n)\beta_2^n+R\beta_3^n&=0,\\
\frac{R\bar{\theta}}{c_0}\beta_2^n+(\kappa_0in+\bar{u}-\nu_1^n)\beta_3^n&=0,
\end{align}
for all $n\in\mb{Z}^*$. One solution is given by
\begin{equation*}
\beta_1^n=-\frac{R\bar{\rho}}{\bar{u}-\nu_1^n},\ \ \beta_2^n=R,\ \ \beta_3^n=\frac{1}{\bar{u}-\nu_1^n}[R\bar{\theta}-(\lambda_0in+\bar{u}-\nu_1^n)(\bar{u}-\nu_1^n)],\ \ n\in\mb{Z}^*.
\end{equation*}
We finally consider the equation
\begin{equation*}
R_n\gamma_n=\nu_2^n\gamma_n,\ \ n\in\mb{Z}^*,
\end{equation*}
where $\gamma_n=(\gamma_1^n,\gamma_2^n,\gamma_3^n)^{\dagger}$, that is,
\begin{align}
(\bar{u}-\nu_2^n)\gamma_1^n+\bar{\rho}\gamma_2^n&=0,\\
\frac{R\bar{\theta}}{\bar{\rho}}\gamma_1^n+(\lambda_0in+\bar{u}-\nu_2^n)\gamma_2^n+R\gamma_3^n&=0,\\
\frac{R\bar{\theta}}{c_0}\gamma_2^n+(\kappa_0in+\bar{u}-\nu_2^n)\gamma_3^n&=0,
\end{align}
for all $n\in\mb{Z}^*$. One solution is given by
\begin{equation*}
\quad\gamma_1^n=(\lambda_0in+\bar{u}-\nu_2^n)(\kappa_0in+\bar{u}-\nu_2^n)-\frac{R^2\bar{\theta}}{c_0},\ \ \gamma_2^n=-\frac{R\bar{\theta}}{\bar{\rho}}(\kappa_0in+\bar{u}-\nu_2^n),\ \ \gamma_3^n=\frac{R^2\bar{\theta}^2}{\bar{\rho}c_0},
\end{equation*}
for $n\in\mb{Z}^*$. Therefore, the eigenvectors of $R_n$ corresponding to the eigenvalues $\nu_3^n$, $\nu_1^n$ and $\nu_2^n$ are respectively $\alpha_n,\beta_n$ and $\gamma_n$, where
\begin{equation*}
\alpha_n=\cv{\alpha_1^n}{\alpha_2^n}{\alpha_3^n},\ \ \beta_n=\cv{\beta_1^n}{\beta_2^n}{\beta_3^n},\ \ \gamma_n=\cv{\gamma_1^n}{\gamma_2^n}{\gamma_3^n},
\end{equation*}
for all $n\in\mb{Z}^*$. Hence, the eigenvalues of the operator $A^*$ are $\nu_n^h:=in\nu_n^3,\nu_n^{p_1}:=in\nu_n^2$ and $\nu_n^{p_2}:=in\nu_n^1$ for all $n\in\mb{Z}^*$ with the asymptotic expressions
\begin{align*}
\nu_n^h&=\bar{u}in-\bar{\omega}+O(\mod{n}^{-1}),\\
\nu_n^{p_1}&=-\lambda_0n^2+\bar{u}in+O(1),\\
\nu_n^{p_2}&=-\kappa_0n^2+\bar{u}in+O(1),
\end{align*}
for $\mod{n}$ large enough and the corresponding eigenfunctions are
\begin{equation*}
\quad\quad\Phi_n^h(x):=E_n(x)\alpha_n=\alpha_ne^{inx},\ \ \Phi_n^{p_1}(x):=E_n(x)\beta_n=\beta_ne^{inx},\ \ \Phi_n^{p_2}(x):=E_n(x)\gamma_n=\gamma_ne^{inx},
\end{equation*}
for all $n\in\mb{Z}^*$ and $x\in(0,2\pi)$. 
\end{enumerate}
This completes the proof.
\end{proof}
\begin{remark}\label{rem_ev_nb}
Note that, all the eigenvalues of $A^*$ are simple at least for $\mod{n}$ large enough. Depending on the constants $\bar{\rho},\bar{u},\bar{\theta},\lambda_0,\kappa_0, R$ and $c_0$, there may be multiple eigenvalues, but that would be only finitely many of them. {\blue For example, if we take $\bar{\rho}=\bar{u}=\lambda_0=1$ and $R\bar{\theta}=\frac{R^2\bar{\theta}}{c_0}=\frac{1}{2},\kappa_0=2$, then the characteristics equation \eqref{ch_pol_rn_nb} of $R_n$ (with $n=1$) becomes
\begin{equation*}
\nu^3-(3i+3)\nu^2+6in\nu+2-2i=0
\end{equation*}
and therefore $\nu=1+i$ is a root of multiplicity $3$, and consequently $-1+i$ is an eigenvalue of $A^*$ with algebraic multiplicity $3$. In this case, the proof of null controllability of the system \eqref{lcnse_nb} will be similar to the barotropic case (Section \ref{sec_multi_ev_b}) and for the sake of completeness, we will give a brief proof in this (non-barotropic) case also.

Furthermore, there can exist (finitely many) multiple eigenvalues for different values of $n$. For example, if we take $\bar{\rho}=\bar{u}=\lambda_0=1=R\bar{\theta}=\frac{R^2\bar{\theta}}{c_0}=1$ and $\kappa_0=2$, then $\nu=-1$ is an eigenvalue of $A^*$ with $n=1$ and $n=-1$, that is, $\nu_1=\nu_{-1}=-1$. Indeed, the polynomial equation \eqref{ch_pol_rn_nb} for $n=1$ and $n=-1$ becomes
\begin{align*}
&\nu^3-(3i+3)\nu^2-(1-6i)\nu+3-i=0,\\
&\nu^3-(-3i+3)\nu^2-(1+6i)\nu+3+i=0,
\end{align*}
and the root of which are $i$ and $-i$ respectively. In this case, as mentioned in the barotropic case, we have two independent eigenfunctions corresponding to this eigenvalue; as a consequence, the adjoint system \eqref{lcnse_adj_nb} fails to satisfy the unique continuation property; see Section \ref{sec_lac_app_nb} for more details.
}

\end{remark}
{\blue
We denote the set of generalized eigenfunctions of $A^*$ by $\left\{\Phi_j,\ \tilde{\Phi}_j\right\}$ corresponding to the eigenvalue $\nu_j$. Then, one can have the following result:
\begin{proposition}\label{prop_rb_nb}
There exists a positive integer $N$ (large enough) such that the set of generalized eigenfunctions
$$\mc{E}(A^*):=\left\{\Phi_n^h,\ \Phi_n^{p_1},\ \Phi_n^{p_2}\ :\ \mod{n}>N;\ \Phi_j,\tilde{\Phi}_j\ :\ 0\leq\mod{j}\leq N\right\}$$
forms a Riesz basis in $(L^2(0,2\pi))^3$. In particular, if all the eigenvalues of $A^*$ are simple, then the set of eigenfunctions
$$\left\{\Phi_n^h,\ \Phi_n^{p_1},\ \Phi_n^{p_2}\ :\ n\in\mb{Z}^*\right\}$$
forms a Riesz basis in $(\dot{L}^2(0,2\pi))^3$.
\end{proposition}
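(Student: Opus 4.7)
The plan is to adapt the strategy used for the barotropic case (Proposition \ref{prop_rb_b}) to this three-component setting by exhibiting an explicit orthogonal basis of $(L^2(0,2\pi))^3$ to which the (normalized) eigenfunctions of $A^*$ are quadratically close for $|n|$ large, and then invoking Bao-Zhu Guo's result \cite[Theorem 6.3]{Guo01}. The finitely many low-frequency indices where multiple eigenvalues may occur (see Remark \ref{rem_ev_nb}) will be absorbed into the quadratic closeness argument by keeping the corresponding generalized eigenfunctions outside the asymptotic comparison.

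First, let $\mathbf{e}_1,\mathbf{e}_2,\mathbf{e}_3$ denote the standard basis of $\mathbb{C}^3$ and set
\begin{equation*}
\Psi_n^i(x):=\mathbf{e}_i\,e^{inx},\quad n\in\mathbb{Z},\ i=1,2,3.
\end{equation*}
Then $\{\Psi_n^i\}_{n\in\mathbb{Z},\,i=1,2,3}$ is (up to a trivial normalization) an orthogonal basis of $(L^2(0,2\pi))^3$. Using the explicit formulas for the eigenvectors in Lemma \ref{lemma_eigen_nb}(iv) together with the asymptotics \eqref{asy_exp_coef} of Remark \ref{rem_exp_coef}, I would normalize the three branches as
\begin{equation*}
\widetilde{\Phi}_n^h:=\frac{1}{\alpha_1^n}\Phi_n^h,\quad \widetilde{\Phi}_n^{p_1}:=\frac{1}{\beta_2^n}\Phi_n^{p_1},\quad \widetilde{\Phi}_n^{p_2}:=\frac{1}{\gamma_3^n}\Phi_n^{p_2},
\end{equation*}
so that the leading component of each normalized eigenfunction matches $\Psi_n^1,\Psi_n^2,\Psi_n^3$ respectively, while the other two components decay like $1/|n|$ by \eqref{asy_exp_coef}. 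A direct estimate then gives, for $N$ large enough,
\begin{equation*}
\sum_{|n|>N}\!\left(\|\widetilde{\Phi}_n^h-\Psi_n^1\|_{(L^2)^3}^2+\|\widetilde{\Phi}_n^{p_1}-\Psi_n^2\|_{(L^2)^3}^2+\|\widetilde{\Phi}_n^{p_2}-\Psi_n^3\|_{(L^2)^3}^2\right)\leq C\sum_{|n|>N}\frac{1}{|n|^2}<\infty,
\end{equation*}
which is exactly the quadratic closeness needed to compare the two families.

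Next, I would fix $N$ so large that all multiple eigenvalues of $A^*$ are contained among the low frequencies $|n|\leq N$ (permitted by Remark \ref{rem_ev_nb}, since there are only finitely many multiple eigenvalues), and replace the eigenfunctions indexed by $|j|\leq N$ by the chain $\{\Phi_j,\widetilde{\Phi}_j\}$ of (generalized) eigenfunctions corresponding to each such eigenvalue; this yields the finite set that enters the definition of $\mathcal{E}(A^*)$. Since this replacement concerns only finitely many vectors, it does not affect the quadratic closeness estimate displayed above. Applying \cite[Theorem 6.3]{Guo01} to the family $\mathcal{E}(A^*)$ and the orthogonal basis $\{\Psi_n^i\}$ then yields the Riesz basis property in $(L^2(0,2\pi))^3$, provided one verifies that $\mathcal{E}(A^*)$ is $\omega$-linearly independent and complete.

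For the completeness and $\omega$-independence, I would argue as follows: the eigenvalues $\nu_n^h,\nu_n^{p_1},\nu_n^{p_2}$ lie on three asymptotically disjoint branches (hyperbolic, and the two parabolic branches accumulating on curves with $-\Re\nu\sim\lambda_0 n^2$ and $\kappa_0 n^2$ respectively), so the associated (generalized) eigenspaces are linearly independent and their algebraic sum is dense (this density follows once quadratic closeness is established, because the orthogonal basis $\{\Psi_n^i\}$ is itself contained in the closed linear span). The ``in particular'' statement is then immediate: if all eigenvalues of $A^*$ are simple, there is no need to introduce generalized eigenfunctions, and because $0\in\sigma(A^*)$ with a three-dimensional kernel spanned by constants (Lemma \ref{lemma_eigen_nb}(i)), removing the constants (that is, working in $(\dot L^2(0,2\pi))^3$) leaves exactly $\{\Phi_n^h,\Phi_n^{p_1},\Phi_n^{p_2}\}_{n\in\mathbb{Z}^*}$, which therefore forms a Riesz basis of $(\dot L^2(0,2\pi))^3$.

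The main obstacle I anticipate is handling the finite set of multiple eigenvalues cleanly: unlike the barotropic case (where the multiplicity structure is explicit through the condition $\frac{2\sqrt{b\bar{\rho}}}{\mu_0}\in\mathbb{N}$), here the cubic characteristic polynomial \eqref{ch_pol_rn_nb} makes it hard to say a priori what the chain lengths of the generalized eigenfunctions are, and one must ensure the generalized eigenfunctions can be chosen so that together with the high-frequency eigenfunctions they form a linearly independent family. Once this is arranged for the (finitely many) exceptional indices, the quadratic closeness for $|n|>N$ and Bao-Zhu Guo's theorem close the argument.
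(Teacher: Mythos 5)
Your proposal is correct and follows essentially the same route as the paper: quadratic closeness of the high-frequency eigenfunctions to an orthogonal basis of vector exponentials (your normalization by $\alpha_1^n=R\bar{\rho}$, $\beta_2^n=R$, $\gamma_3^n=\frac{R^2\bar{\theta}^2}{\bar{\rho}c_0}$ is just a constant rescaling of the paper's choice $\Psi_n,\tilde{\Psi}_n,\tilde{\tilde{\Psi}}_n$), followed by Bao-Zhu Guo's theorem. Your closing worry about $\omega$-independence and the chain structure at the finitely many multiple eigenvalues is precisely what \cite[Theorem 6.3]{Guo01} is designed to absorb, so no separate verification is needed beyond the quadratic closeness estimate.
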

\begin{proof}
In view of the proof of Proposition \ref{prop_rb_b}, it is enough to find an orthogonal basis of $(L^2(0,2\pi))^3$ that is quadratically close to the set of generalized eigenfunctions of $A^*$. One obvious choice is the following orthogonal basis 
$$\left\{\Psi_n(x):=\cv{R\bar{\rho}}{0}{0}e^{inx},\ \tilde{\Psi}_n(x):=\cv{0}{R}{0}e^{inx},\ \tilde{\tilde{\Psi}}_n(x):=\cv{0}{0}{\frac{R^2\bar{\theta}^2}{\bar{\rho}c_0}}e^{inx}\ :\ n\in\mb{Z}\right\}.$$ Indeed, we have for large enough $N\in\mb{N}$
\begin{equation*}
\sum_{\mod{n}>N}\left(\norm{\Phi_n^h-\Psi_n}_{(L^2(0,2\pi))^3}+\norm{\Phi_n^{p_1}-\tilde{\Psi}_n}_{(L^2(0,2\pi))^3}+\norm{\Phi_n^{p_2}-\tilde{\tilde{\Psi}}_n}_{(L^2(0,2\pi))^3}\right)\leq C\sum_{\mod{n}>N}\frac{1}{\mod{n}^2}<\infty,
\end{equation*}
thanks to Remark \ref{rem_exp_coef}. This completes the proof.
\end{proof}

}
\subsection{Observation estimates}
As mentioned in the barotropic case (Section \ref{sec_obs_est_b}), we need lower bound estimates of certain observation terms associated to the system \eqref{lcnse_nb}. First, 
we define the observation operator corresponding to the system \eqref{lcnse_nb} as follows {\blue (see the Definition \ref{def_sol_nb})}:
\begin{itemize}
\item The observation operator $\mc{B}^*_{\rho}:\mc{D}(A^*)\to\mb{C}$ to the system \eqref{lcnse_nb}-\eqref{in_cd_nb}-\eqref{bd_cd_nb1} is defined by
\begin{equation}\label{obs_op_den_nb}
\mc{B}_{\rho}^*\Phi:=\bar{u}\xi(2\pi)+\bar{\rho}\eta(2\pi),\ \ \text{for}\ \Phi=(\xi,\eta)\in\mc{D}(A^*).
\end{equation}
\item The observation operator $\mc{B}^*_{\rho}:\mc{D}(A^*)\to\mb{C}$ to the system \eqref{lcnse_nb}-\eqref{in_cd_nb}-\eqref{bd_cd_nb1} is defined by
\begin{equation}\label{obs_op_vel_nb}
\mc{B}_u^*\Phi:=R\bar{\theta}\xi(2\pi)+\bar{\rho}\bar{u}\eta(2\pi)+\lambda_0\bar{\rho}\eta_x(2\pi)+R\bar{\rho}\zeta(2\pi),\ \ \text{for}\ \Phi=(\xi,\eta)\in\mc{D}(A^*).
\end{equation}
\item The observation operator $\mc{B}^*_{\rho}:\mc{D}(A^*)\to\mb{C}$ to the system \eqref{lcnse_nb}-\eqref{in_cd_nb}-\eqref{bd_cd_nb1} is defined by
\begin{equation}
\mc{B}_{\theta}^*\Phi:=R\eta(2\pi)+\frac{\bar{c_0}\bar{u}}{\bar{\theta}}\zeta(2\pi)+\frac{c_0\kappa_0}{\bar{\theta}}\zeta_x(2\pi),\ \ \text{for}\ \Phi=(\xi,\eta)\in\mc{D}(A^*).
\end{equation}
\end{itemize}

\noindent Recall that $\mc{E}(A^*)$ denotes the set of all (generalized) eigenfunctions of $A^*$. Then, we have the following estimates:
\begin{lemma}\label{lem_obs_est_nb}
For all eigenfunction $\Phi_{\blue\nu}\in\mc{E}(A^*)\setminus\{\Phi_0,\blue\tilde{\Phi}_{0,1},\tilde{\Phi}_{0,2}\}$, the observation operators satisfies $\mc{B}_{\rho}^*\Phi_{\blue\nu}\neq0$ and $\mc{B}_u^*\Phi_{\blue\nu}\neq0$. Moreover, we have the following estimates
\begin{align}
&\mod{\mc{B}_{\rho}^*\Phi_n^h}\geq C,\ \ \mod{\mc{B}_{\rho}^*\Phi_n^{p_1}}\geq C,\ \ \mod{\mc{B}_{\rho}^*\Phi_n^{p_2}}\geq C,\label{obs_est_d_nb}\\
&\mod{\mc{B}_u^*\Phi_n^h}\geq \frac{C}{\mod{n}},\ \ \mod{\mc{B}_u^*\Phi_n^{p_1}}\geq C\mod{n},\mod{\mc{B}_u^*\Phi_n^{p_2}}\geq C,\label{obs_est_v_nb}\\
&\mod{\mc{B}_{\theta}^*\Phi_n^h}\geq \frac{C}{\mod{n}},\ \ \mod{\mc{B}_u^*\Phi_n^{p_2}}\geq C\ \ \mod{\mc{B}_{\rho}^*\Phi_n^{p_2}}\geq C\mod{n},
\end{align}
for some $C>0$ and all $n\in\mb{Z}^*$.
\end{lemma}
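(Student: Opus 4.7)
The plan is to mimic the barotropic argument from Lemma \ref{lem_obs_est_b}: for each observation operator $\mc{B}_{\rho}^*$, $\mc{B}_u^*$, $\mc{B}_\theta^*$ applied to each eigenfunction $\Phi_n^h, \Phi_n^{p_1}, \Phi_n^{p_2}$, I will use the three scalar relations characterizing the eigenvector triples $(\alpha_1^n,\alpha_2^n,\alpha_3^n)$, $(\beta_1^n,\beta_2^n,\beta_3^n)$, $(\gamma_1^n,\gamma_2^n,\gamma_3^n)$ derived in the proof of Lemma \ref{lemma_eigen_nb} (i.e.\ the rows of $R_n - \nu_j^n \mathrm{Id}$ applied to the eigenvector) to reduce each observation to a product of the scalar root $\nu_j^n$ and a single component of the eigenvector.

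To illustrate the reduction on the hyperbolic branch: $\mc{B}_\rho^*\Phi_n^h = \bar u \alpha_1^n + \bar\rho \alpha_2^n$, and the first defining equation $(\bar u - \nu_3^n)\alpha_1^n + \bar\rho \alpha_2^n = 0$ immediately yields $\mc{B}_\rho^*\Phi_n^h = \nu_3^n \alpha_1^n = R\bar\rho\,\nu_3^n$. Rescaling the second defining equation by $\bar\rho$ and comparing with the expression $\mc{B}_u^*\Phi_n^h = R\bar\theta\alpha_1^n + \bar\rho(\lambda_0 in + \bar u)\alpha_2^n + R\bar\rho\alpha_3^n$ gives $\mc{B}_u^*\Phi_n^h = \bar\rho\,\nu_3^n \alpha_2^n$; rescaling the third defining equation by $c_0/\bar\theta$ gives $\mc{B}_\theta^*\Phi_n^h = (c_0/\bar\theta)\,\nu_3^n \alpha_3^n$. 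Exactly the same row-by-row procedure applied to the eigensystems for $\Phi_n^{p_1}$ and $\Phi_n^{p_2}$ produces the analogous formulas with $\nu_1^n$ (resp.\ $\nu_2^n$) in place of $\nu_3^n$ and with the components of $\beta_n$ (resp.\ $\gamma_n$) in place of those of $\alpha_n$.

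Once these clean identities are in hand, combining them with the asymptotics in Remark \ref{rem_exp_coef} and Lemma \ref{lemma_eigen_nb} yields the stated bounds for $\mod{n}$ large: $\nu_3^n \to \bar u$ stays bounded away from zero, while $\mod{\nu_1^n}$ and $\mod{\nu_2^n}$ grow like $\mod{n}$; multiplying by the decay/growth rate of the relevant $\alpha_j^n$, $\beta_j^n$ or $\gamma_j^n$ gives each of the nine claimed lower bounds on the nose. For the finitely many small values of $\mod{n}$, non-vanishing of the observations reduces to two facts established inside the proof of Lemma \ref{lemma_eigen_nb}, namely that $\nu_j^n \neq 0$ for all $n \in \mb{Z}^*$ and $j=1,2,3$ (Claim 1) and that $\bar u$ is never a root of the characteristic polynomial for $n \in \mb{Z}^*$ (Claim 2); these together guarantee that the specific single component of the eigenvector appearing in each identity is nonzero. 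A uniform positive lower bound on the finite set then follows by a direct enumeration.

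The main obstacle I expect is purely organizational rather than conceptual: there are nine observation/eigenfunction pairs, each requiring careful bookkeeping against the correct row of $R_n - \nu_j^n \mathrm{Id}$ and the correct asymptotic from Remark \ref{rem_exp_coef}. Once the pattern is fixed for $\Phi_n^h$ as above, the two parabolic branches $\Phi_n^{p_1}$, $\Phi_n^{p_2}$ are handled identically by cycling through the roles of the three rows, and the asymptotic lower bounds then fall out uniformly.
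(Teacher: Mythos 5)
Your proposal is correct and follows essentially the same route as the paper: reduce each observation term via the rows of $R_n-\nu_j^n\,\mathrm{Id}$ applied to the eigenvectors to identities of the form $\mc{B}_{\rho}^*\Phi_n^h=\nu_3^n\alpha_1^n$, $\mc{B}_u^*\Phi_n^h=\bar{\rho}\nu_3^n\alpha_2^n$, $\mc{B}_{\theta}^*\Phi_n^h=\frac{c_0}{\bar{\theta}}\nu_3^n\alpha_3^n$ (and their analogues on the two parabolic branches), then invoke Claims 1--2 of Lemma \ref{lemma_eigen_nb} for non-vanishing and the asymptotics of Remark \ref{rem_exp_coef} together with \eqref{exp_nu1n}--\eqref{exp_nu3n} for the lower bounds. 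This is exactly the paper's argument, so no further comparison is needed.
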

\begin{proof}
Recall from the proof of Lemma \ref{lemma_eigen_nb} that $\nu_1^n,\nu_2^n,\nu_3^n\neq0$ (Claim 1) for all $n\in\mb{Z}^*$ {\blue and the eigenvectors $(\alpha_1^n,\alpha_2^n,\alpha_3^n)^{\dagger}$, $(\beta_1^n,\beta_2^n,\beta_3^n)^{\dagger}$ and $(\gamma_1^n,\gamma_2^n,\gamma_3^n)^{\dagger}$ of $R_n$ satisfies the following relations:
\begin{align}
&(\bar{u}-\nu_3^n)\alpha_1^n+\bar{\rho}\alpha_2^n=0,\ \ (\bar{u}-\nu_1^n)\beta_1^n+\bar{\rho}\beta_2^n=0,\ \ (\bar{u}-\nu_2^n)\gamma_1^n+\bar{\rho}\gamma_2^n=0;\label{ev_rn_nb_1}\\
&\frac{R\bar{\theta}}{\bar{\rho}}\alpha_1^n+(\lambda_0in+\bar{u}-\nu_3^n)\alpha_2^n+R\alpha_3^n=0,\ \ \frac{R\bar{\theta}}{\bar{\rho}}\beta_1^n+(\lambda_0in+\bar{u}-\nu_1^n)\beta_2^n+R\beta_3^n=0,\label{ev_rn_nb_2}\\
&\hspace{3cm}\frac{R\bar{\theta}}{\bar{\rho}}\gamma_1^n+(\lambda_0in+\bar{u}-\nu_2^n)\gamma_2^n+R\gamma_3^n=0;\notag\\
&\frac{R\bar{\theta}}{c_0}\alpha_2^n+(\kappa_0in+\bar{u}-\nu_3^n)\alpha_3^n=0,\ \ \frac{R\bar{\theta}}{c_0}\beta_2^n+(\kappa_0in+\bar{u}-\nu_1^n)\beta_3^n=0,\label{ev_rn_nb_3}\\
&\hspace{3cm}\frac{R\bar{\theta}}{c_0}\gamma_2^n+(\kappa_0in+\bar{u}-\nu_2^n)\gamma_3^n=0,\notag
\end{align}
for all $n\in\mb{Z}^*$.}

\smallskip

\noindent We now consider the following cases:

\smallskip

\noindent\textit{Case 1. (Control acts in density)} We have

\begin{align*}
\mc{B}_{\rho}^*\Phi_n^h&=\bar{u}\xi_n^h(2\pi)+\bar{\rho}\eta_n^h(2\pi)=\bar{u}\alpha_1^n+\bar{\rho}\alpha_2^n=\nu_3^n\alpha_1^n\neq0,\\
\mc{B}_{\rho}^*\Phi_n^{p_1}&=\bar{u}\xi_n^{p_1}(2\pi)+\bar{\rho}\eta_n^{p_1}(2\pi)=\bar{u}\beta_1^n+\bar{\rho}\beta_2^n=\nu_1^n\beta_1^n\neq0,\\
\mc{B}_{\rho}^*\Phi_n^{p_2}&=\bar{u}\xi_n^{p_2}(2\pi)+\bar{\rho}\eta_n^{p_2}(2\pi)=\bar{u}\gamma_1^n+\bar{\rho}\gamma_2^n=\nu_2^n\gamma_1^n\neq0,
\end{align*}
for all $n\in\mb{Z}^*$, thanks to the equation \eqref{ev_rn_nb_1}.

\smallskip

\noindent\textit{Case 2. (Control acts in velocity)} We have
\begin{align*}
\mc{B}_u^*\Phi_n^h&=R\bar{\theta}\xi_n^h(2\pi)+\lambda_0\bar{\rho}(\eta_n^h)_x(2\pi)+\bar{\rho}\bar{u}\eta_n^h(2\pi)+R\bar{\rho}\zeta_n^h(2\pi)\\
&=R\bar{\theta}\alpha_1^n+\lambda_0\bar{\rho}in\alpha_2^n+\bar{\rho}\bar{u}\alpha_2^n+R\bar{\rho}\alpha_3^n=\bar{\rho}\nu_3^n\alpha_2^n\neq0,\\
\mc{B}_u^*\Phi_n^{p_1}&=R\bar{\theta}\xi_n^{p_1}(2\pi)+\lambda_0\bar{\rho}(\eta_n^{p_1})_x(2\pi)+\bar{\rho}\bar{u}\eta_n^{p_1}(2\pi)+R\bar{\rho}\zeta_n^{p_1}(2\pi)\\
&=R\bar{\theta}\beta_1^n+\lambda_0\bar{\rho}in\beta_2^n+\bar{\rho}\bar{u}\beta_2^n+R\bar{\rho}\beta_3^n=\bar{\rho}\nu_1^n\beta_2^n\neq0,\\
\mc{B}_u^*\Phi_n^{p_2}&=R\bar{\theta}\xi_n^{p_2}(2\pi)+\lambda_0\bar{\rho}(\eta_n^{p_2})_x(2\pi)+\bar{\rho}\bar{u}\eta_n^{p_2}(2\pi)+R\bar{\rho}\zeta_n^{p_2}(2\pi)\\
&=R\bar{\theta}\gamma_1^n+\lambda_0\bar{\rho}in\gamma_2^n+\bar{\rho}\bar{u}\gamma_2^n+R\bar{\rho}\gamma_3^n=\bar{\rho}\nu_2^n\gamma_2^n\neq0,
\end{align*}
for all $n\in\mb{Z}^*$, thanks to the equation \eqref{ev_rn_nb_2}.

\smallskip

\noindent\textit{Case 3. (Control acts in temperature)} We have
\begin{align*}
\mc{B}_{\theta}^*\Phi_n^h&=R\eta_n^h(2\pi)+\frac{c_0\bar{u}}{\bar{\theta}}\zeta_n^h(2\pi)+\frac{c_0\kappa_0}{\bar{\theta}}(\zeta_n^h)_x(2\pi)\\
&=R\alpha_2^n+\frac{c_0}{\bar{\theta}}(\bar{u}+\kappa_0in)\alpha_3^n=\frac{c_0}{\bar{\theta}}\nu_3^n\alpha_3^n\neq0,\\
\mc{B}_{\theta}^*\Phi_n^{p_1}&=R\eta_n^{p_1}(2\pi)+\frac{c_0\bar{u}}{\bar{\theta}}\zeta_n^{p_1}(2\pi)+\frac{c_0\kappa_0}{\bar{\theta}}(\zeta_n^{p_1})_x(2\pi)\\
&=R\beta_2^n+\frac{c_0}{\bar{\theta}}(\bar{u}+\kappa_0in)\beta_3^n=\frac{c_0}{\bar{\theta}}\nu_1^n\beta_3^n\neq0,\\
\mc{B}_{\theta}^*\Phi_n^{p_2}&=R\eta_n^{p_2}(2\pi)+\frac{c_0\bar{u}}{\bar{\theta}}\zeta_n^{p_2}(2\pi)+\frac{c_0\kappa_0}{\bar{\theta}}(\zeta_n^{p_2})_x(2\pi)\\
&=R\gamma_2^n+\frac{c_0}{\bar{\theta}}(\bar{u}+\kappa_0in)\gamma_3^n=\frac{c_0}{\bar{\theta}}\nu_2^n\gamma_3^n\neq0,
\end{align*}
for all $n\in\mb{Z}^*$, thanks to the equation \eqref{ev_rn_nb_3}.

\smallskip
{\blue
\noindent The estimates on the observation terms follows directly from the asymptotic expressions \eqref{exp_nu1n}-\eqref{exp_nu2n}-\eqref{exp_nu3n} and Remark \ref{rem_exp_coef}.}
\end{proof}

{\blue 
\begin{remark}\label{rem_obs_gen_nb}
Similar to the barotropic case, we can choose the (finitely many) generalized eigenfunctions $\tilde{\Phi}_{\nu}\in\mc{E}(A^*)$ in such a way that $\mc{B}^*_{\rho}\tilde{\Phi}_{\nu},\ \mc{B}^*_u\tilde{\Phi}_{\nu}\neq0$.
\end{remark}
}

\subsection{Observability inequalities}
{\blue As mentioned in the barotropic case, we will write the observability inequalities in this case also, that will help us prove the null controllability of the system \eqref{lcnse_nb}. The proof is similar and so we skip the details.
\begin{theorem}\label{equiv_thm_den_nb}
Let $T>0$ be given. Then, the system \eqref{lcnse_nb}-\eqref{in_cd_nb}-\eqref{bd_cd_nb1} is null controllable at time $T$ in the space $(\dot{L}^2(0,2\pi))^3$ if and only if the inequality
\begin{equation}\label{obs_inq_nb_den}
\norm{(\sigma(0),v(0),\vphi(0))^{\dagger}}_{(\dot{L}^2(0,2\pi))^3}^2\leq C\int_{0}^{T}\mod{\bar{u}\sigma(t,2\pi)+\bar{\rho}v(t,2\pi)}^2dt
\end{equation}
holds for all solutions $(\sigma,v,\vphi)^{\dagger}$ of the adjoint system \eqref{lcnse_adj_nb} with terminal data $(\sigma_T,v_T,\vphi_T)^{\dagger}\in\mc{D}(A^*)$.
\end{theorem}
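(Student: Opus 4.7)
The plan is to follow the standard Hilbert Uniqueness Method (HUM) and argue by duality between the primal control problem and an observation problem for the adjoint. Since the statement is claimed to be ``standard,'' I will emphasize the structural steps rather than sign bookkeeping.

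First, I would reformulate null controllability in a testing form. Using Definition \ref{def_sol_nb} (adapted to adjoint terminal data $(\sigma_T,v_T,\vphi_T)\in\mc D(A^*)$ and zero source) together with an integration by parts in time, null controllability at time $T$ amounts to asking: for every $(\rho_0,u_0,\theta_0)\in(\dot L^2(0,2\pi))^3$, there exists $p\in L^2(0,T)$ such that the identity
\begin{equation*}
\ip{(\rho_0,u_0,\theta_0)^{\dagger}}{(\sigma(0),v(0),\vphi(0))^{\dagger}}_{L^2\times L^2\times L^2}+R\bar\theta\int_0^T\bigl[\bar u\,\overline{\sigma(t,2\pi)}+\bar\rho\,\overline{v(t,2\pi)}\bigr]p(t)\,dt=0
\end{equation*}
holds for every solution $(\sigma,v,\vphi)$ of \eqref{lcnse_adj_nb}. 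The hidden regularity $\sigma(\cdot,2\pi)\in L^2(0,T)$ from Lemma \ref{well_posedness_adj_nb} (and the analogous property for $v$, which is in fact a trace of an $H^1$-in-$x$ function) ensures that the boundary integral is meaningful.

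Second, for the implication ``observability $\Rightarrow$ null controllability,'' I would minimize the strictly convex quadratic functional
\begin{equation*}
J(\sigma_T,v_T,\vphi_T):=\frac{R\bar\theta}{2}\int_0^T\bigl|\bar u\,\sigma(t,2\pi)+\bar\rho\,v(t,2\pi)\bigr|^2dt+\Re\ip{(\rho_0,u_0,\theta_0)^{\dagger}}{(\sigma(0),v(0),\vphi(0))^{\dagger}}_{L^2\times L^2\times L^2}
\end{equation*}
on the completion of $\mc D(A^*)/\ker A^*$ with respect to the observation seminorm; the inequality \eqref{obs_inq_nb_den} turns this seminorm into a norm equivalent to the $(\dot L^2)^3$-norm on $(\sigma(0),v(0),\vphi(0))$, which delivers coercivity and hence a unique minimizer $(\hat\sigma_T,\hat v_T,\hat\vphi_T)$. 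The associated Euler--Lagrange equation produces the control
\begin{equation*}
p(t):=\bar u\,\hat\sigma(t,2\pi)+\bar\rho\,\hat v(t,2\pi),
\end{equation*}
which, plugged into the duality identity from the previous paragraph, drives the state to zero at time $T$.

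Third, for the converse, I would invoke the closed graph theorem. Assuming null controllability in $(\dot L^2(0,2\pi))^3$, the set-valued map associating to each initial datum the (nonempty, convex, closed) set of admissible $L^2(0,T)$-controls admits a continuous selection (e.g. the minimal-norm control); the resulting linear control operator is bounded. Substituting into the duality identity and estimating by Cauchy--Schwarz yields
\begin{equation*}
\bigl|\ip{(\rho_0,u_0,\theta_0)^{\dagger}}{(\sigma(0),v(0),\vphi(0))^{\dagger}}\bigr|\le C\,\|(\rho_0,u_0,\theta_0)\|_{(\dot L^2)^3}\Bigl(\int_0^T|\bar u\sigma(t,2\pi)+\bar\rho v(t,2\pi)|^2dt\Bigr)^{1/2},
\end{equation*}
and taking the supremum over $(\rho_0,u_0,\theta_0)$ in the unit ball gives \eqref{obs_inq_nb_den}. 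The only genuinely delicate point in the whole argument is verifying that the boundary observation term is well defined on all of $(\dot L^2)^3$, which is guaranteed by the hidden regularity of Lemma \ref{well_posedness_adj_nb}; everything else is a routine density/duality argument and is precisely why the author relegates the details to the references.
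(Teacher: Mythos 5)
Your proposal is correct and follows essentially the same route the paper intends: the paper omits the proof precisely because it is this standard HUM/duality argument (cf.\ its citations to \cite{Micu04,Zuazua07,Bhandari22} for the barotropic analogue, Theorem \ref{equiv_thm_den_b}), namely the transposition duality identity, minimization of the quadratic functional under the observability inequality for one direction, and the bounded-control-operator/Cauchy--Schwarz duality argument for the converse. The hidden-regularity point you flag (Lemma \ref{well_posedness_adj_nb}) is indeed the only nonroutine ingredient, so nothing is missing.
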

\begin{theorem}\label{equiv_thm_vel_nb}
Let $T>0$ be given. Then, the system \eqref{lcnse_nb}-\eqref{in_cd_nb}-\eqref{bd_cd_nb2} is null controllable at time $T$ in the space $\dot{H}^1_{\per}(0,2\pi)\times(\dot{L}^2(0,2\pi))^2$ if and only if the inequality
\begin{align}\label{obs_inq_nb_vel}
&\norm{(\sigma(0),v(0),\vphi(0))^{\dagger}}_{\dot{H}^{-1}_{\per}(0,2\pi)\times(\dot{L}^2(0,2\pi))^2}^2\\
&\quad\quad\quad\quad\quad\quad\leq C\int_{0}^{T}\mod{R\bar{\theta}\sigma(t,2\pi)+\lambda_0\bar{\rho}v_x(t,2\pi)+\bar{\rho}\bar{u}v(t,2\pi)+R\bar{\rho}\vphi(t,2\pi)}^2dt\notag
\end{align}
holds for all solutions $(\sigma,v,\vphi)^{\dagger}$ of the adjoint system \eqref{lcnse_adj_nb} with terminal data $(\sigma_T,v_T,\vphi_T)^{\dagger}\in\mc{D}(A^*)$.
\end{theorem}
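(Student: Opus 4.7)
The plan is to use the standard Hilbert Uniqueness Method (HUM) to transfer null controllability in the primal space to an observability inequality for the adjoint, exactly in the spirit of the equivalences sketched for the barotropic case (see \cite{Micu04,Zuazua07} and also the analysis for \eqref{lcnse_b}-\eqref{in_cd_b}-\eqref{bd_cd_b2} in \cite{Bhandari22}). The starting point is to extend the transposition formula of Definition \ref{def_sol_nb}, written there for adjoint data $(\sigma_T,v_T,\vphi_T)=(0,0,0)$ and nonzero source $(f,g,h)$, to the dual situation $(f,g,h)=(0,0,0)$ and general $\Phi_T=(\sigma_T,v_T,\vphi_T)\in\mc{D}(A^*)$. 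A density/duality argument (together with the hidden regularity $v_x(\cdot,2\pi)\in L^2(0,T)$ and $\vphi_x(\cdot,2\pi)\in L^2(0,T)$ provided by Lemma \ref{well_posedness_adj_nb}) yields the identity
\begin{equation*}
\ip{(\rho(T),u(T),\theta(T))^{\dagger}}{\Phi_T}_{\dot H^{-1}_{\per}\times(\dot L^2)^2,\,\dot H^1_{\per}\times(\dot L^2)^2}=\ip{(\rho_0,u_0,\theta_0)^{\dagger}}{\Phi(0)}_{\dot H^1_{\per}\times(\dot L^2)^2,\,\dot H^{-1}_{\per}\times(\dot L^2)^2}+\bar{\rho}\int_{0}^{T}\overline{\mc{B}^*_u\Phi(t)}\,q(t)\,dt,
\end{equation*}
where $\Phi=(\sigma,v,\vphi)$ solves \eqref{lcnse_adj_nb} with terminal data $\Phi_T$ and $\mc{B}^*_u\Phi(t):=R\bar{\theta}\sigma(t,2\pi)+\lambda_0\bar{\rho}v_x(t,2\pi)+\bar{\rho}\bar{u}v(t,2\pi)+R\bar{\rho}\vphi(t,2\pi)$. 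Consequently, $(\rho(T),u(T),\theta(T))=(0,0,0)$ is equivalent to the relation $\bar{\rho}\int_0^T\overline{\mc{B}^*_u\Phi(t)}\,q(t)\,dt=-\ip{(\rho_0,u_0,\theta_0)^\dagger}{\Phi(0)}$ holding for every $\Phi_T\in\mc{D}(A^*)$.

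For the direction \emph{observability} $\Rightarrow$ \emph{null controllability}, I will apply HUM: define on $\mc{D}(A^*)$ the functional
\begin{equation*}
J(\Phi_T):=\frac{1}{2}\int_{0}^{T}|\mc{B}^*_u\Phi(t)|^2\,dt+\Re\ip{(\rho_0,u_0,\theta_0)^{\dagger}}{\Phi(0)}_{\dot H^1_{\per}\times(\dot L^2)^2,\,\dot H^{-1}_{\per}\times(\dot L^2)^2}.
\end{equation*}
Completing $\mc{D}(A^*)$ with respect to the seminorm $\|\Phi_T\|_{\mathrm{obs}}:=\bigl(\int_0^T|\mc{B}^*_u\Phi|^2\,dt\bigr)^{1/2}$ yields (thanks to \eqref{obs_inq_nb_vel}, which makes it a genuine norm dominating $\|\Phi(0)\|_{\dot H^{-1}_{\per}\times(\dot L^2)^2}$) a Hilbert space $\mc{H}$ to which $J$ extends as a continuous, strictly convex, coercive functional. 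Its unique minimizer $\Phi_T^{\star}\in\mc{H}$ satisfies the Euler--Lagrange equation for $q(t):=\bar{\rho}\,\overline{\mc{B}^*_u\Phi^{\star}(t)}\in L^2(0,T)$, and inserting this into the transposition identity gives $(\rho(T),u(T),\theta(T))=(0,0,0)$.

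For the reverse implication, if null controllability holds in $\dot H^1_{\per}(0,2\pi)\times(\dot L^2(0,2\pi))^2$ then the input-to-final-state map $q\mapsto(\rho(T),u(T),\theta(T))+S(T)(\rho_0,u_0,\theta_0)^{\dagger}$ from $L^2(0,T)$ into $\dot H^1_{\per}\times(\dot L^2)^2$ is surjective (using that the semigroup $S(T)$ of Lemma \ref{lem_ex_sg_nb} preserves this space). The closed range theorem applied to its Banach adjoint, together with the explicit computation of this adjoint via the same transposition identity (which identifies it with $\Phi_T\mapsto\bar{\rho}\,\overline{\mc{B}^*_u\Phi(\cdot)}$), produces a constant $C>0$ such that $\|\Phi(0)\|_{\dot H^{-1}_{\per}\times(\dot L^2)^2}\leq C\|\mc{B}^*_u\Phi\|_{L^2(0,T)}$, which is exactly \eqref{obs_inq_nb_vel}.

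The main technical subtlety, as in the barotropic velocity-control case of Section \ref{sec_multi_ev_b}, lies in rigorously extending the transposition identity to general $\Phi_T\in\mc{D}(A^*)$ when the primal state space $\dot H^1_{\per}\times(\dot L^2)^2$ is rougher in the density component than the natural $L^2$ framework: one needs the extra regularity of $v_x(\cdot,2\pi)$ and $\vphi_x(\cdot,2\pi)$ on the boundary from Lemma \ref{well_posedness_adj_nb} to make the boundary integral $\int_0^T\overline{\mc{B}^*_u\Phi(t)}\,q(t)\,dt$ well defined, and one must verify that $S(T)$ maps $\dot H^1_{\per}\times(\dot L^2)^2$ into itself continuously with continuous inverse so the dual reformulation is legitimate. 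Once these points are settled, both implications follow from the general HUM/closed range machinery.
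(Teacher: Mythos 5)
Your overall strategy---extending the transposition identity of Definition \ref{def_sol_nb} to general terminal data in $\mc{D}(A^*)$ and then running the standard duality/HUM machinery---is exactly what the paper intends: it skips this proof and refers to the standard references (\cite{Micu04,Zuazua07,Bhandari22}). Your forward direction (minimising $J$ on the completion of $\mc{D}(A^*)$ with respect to the observation norm, with coercivity supplied by \eqref{obs_inq_nb_vel}, then reading off the control from the Euler--Lagrange equation) is the standard argument, up to harmless constant/conjugation conventions in the formula for the optimal control and the usual remark that the observation norm only controls $\Phi(0)$, so one works in the completion (or invokes backward uniqueness).

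The reverse direction, however, contains a step that is false as stated. Null controllability does not make the map $q\mapsto(\rho(T),u(T),\theta(T))+S(T)(\rho_0,u_0,\theta_0)^{\dagger}$ surjective onto $\dot{H}^1_{\per}(0,2\pi)\times(\dot{L}^2(0,2\pi))^2$: writing $L_Tq$ for the state at time $T$ generated by the control $q$ from zero initial data, surjectivity of your (affine) map is equivalent to surjectivity of $L_T$, i.e.\ to exact controllability from zero, which is not implied by null controllability and cannot be expected here because of the parabolic smoothing in the velocity and temperature components. What null controllability gives is only the range inclusion $\mathrm{Ran}\,S(T)\subseteq\mathrm{Ran}\,L_T$, and since neither operator need have closed range, the closed range theorem is not the right tool. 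The correct and standard mechanism is the dual characterisation of range inclusions (a Douglas-type lemma, see e.g.\ \cite[Lemma 2.48]{Coron07}), or equivalently a closed graph/uniform boundedness argument producing a bound $\norm{q}_{L^2(0,T)}\leq C\norm{(\rho_0,u_0,\theta_0)}_{\dot{H}^1_{\per}(0,2\pi)\times(\dot{L}^2(0,2\pi))^2}$ for some null control; testing the transposition identity with adjoint solutions and taking the supremum over unit initial data then yields $\norm{\Phi(0)}_{\dot{H}^{-1}_{\per}(0,2\pi)\times(\dot{L}^2(0,2\pi))^2}\leq C\norm{\mc{B}^*_u\Phi}_{L^2(0,T)}$, which is \eqref{obs_inq_nb_vel}. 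With this repair the argument is complete and coincides with the paper's intended standard proof.
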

\begin{theorem}
Let $T>0$ be given. Then, the system \eqref{lcnse_nb}-\eqref{in_cd_nb}-\eqref{bd_cd_nb3} is null controllable at time $T$ in the space $\dot{H}^1_{\per}(0,2\pi)\times(\dot{L}^2(0,2\pi))^2$ if and only if the inequality
\begin{equation}\label{obs_inq_nb_temp}
\norm{(\sigma(0),v(0),\vphi(0))^{\dagger}}_{\dot{H}^{-1}_{\per}(0,2\pi)\times(\dot{L}^2(0,2\pi))^2}^2\leq C\int_{0}^{T}\mod{Rv(t,2\pi)+\frac{c_0\bar{u}}{\bar{\theta}}\vphi(t,2\pi)+\frac{c_0\kappa_0}{\bar{\theta}}\vphi_x(t,2\pi)}^2dt
\end{equation}
holds for all solutions $(\sigma,v,\vphi)^{\dagger}$ of the adjoint system \eqref{lcnse_adj_nb} with terminal data $(\sigma_T,v_T,\vphi_T)^{\dagger}\in\mc{D}(A^*)$.
\end{theorem}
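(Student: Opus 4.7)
The plan is to apply the classical Hilbert Uniqueness Method (HUM) of J.-L. Lions, establishing the equivalence between null controllability and the observability inequality \eqref{obs_inq_nb_temp} via duality in the same manner as Theorems \ref{equiv_thm_den_nb} and \ref{equiv_thm_vel_nb}. The key ingredient making \eqref{obs_inq_nb_temp} the natural observation functional is the transposition identity from the third item of Definition \ref{def_sol_nb}: the boundary pairing
\[
\bar{\rho}^2\int_{0}^{T}\left[R\overline{v(t,2\pi)}+\frac{c_0\bar{u}}{\bar{\theta}}\overline{\vphi(t,2\pi)}+\frac{c_0\kappa_0}{\bar{\theta}}\overline{\vphi_x(t,2\pi)}\right]r(t)\,dt
\]
involves precisely the combination appearing in the right-hand side of \eqref{obs_inq_nb_temp}. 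The relevant duality is that of $H:=\dot{H}^1_{\per}(0,2\pi)\times(\dot{L}^2(0,2\pi))^2$ with its dual $H':=\dot{H}^{-1}_{\per}(0,2\pi)\times(\dot{L}^2(0,2\pi))^2$, taken with $(L^2)^3$ (equipped with the weighted inner product of Section 3.1) as pivot space.

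For the direction ``\eqref{obs_inq_nb_temp} $\Rightarrow$ null controllability'', given $(\rho_0,u_0,\theta_0)\in H$, I would define the functional on $H'$
\[
J(\sigma_T,v_T,\vphi_T):=\frac{1}{2}\int_{0}^{T}\left|Rv(t,2\pi)+\frac{c_0\bar{u}}{\bar{\theta}}\vphi(t,2\pi)+\frac{c_0\kappa_0}{\bar{\theta}}\vphi_x(t,2\pi)\right|^2dt+\langle (\rho_0,u_0,\theta_0),(\sigma(0),v(0),\vphi(0))\rangle_{H,H'},
\]
where $(\sigma,v,\vphi)$ solves the adjoint system \eqref{lcnse_adj_nb}. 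Inequality \eqref{obs_inq_nb_temp} delivers coercivity of $J$; combined with strict convexity and continuity, this yields a unique minimizer $(\hat{\sigma}_T,\hat{v}_T,\hat{\vphi}_T)$. Taking
\[
r(t):=\bar{\rho}^2\Bigl(R\hat{v}(t,2\pi)+\frac{c_0\bar{u}}{\bar{\theta}}\hat{\vphi}(t,2\pi)+\frac{c_0\kappa_0}{\bar{\theta}}\hat{\vphi}_x(t,2\pi)\Bigr)
\]
as the control and combining the Euler--Lagrange equation at the minimizer with the transposition identity of Definition \ref{def_sol_nb}, one verifies $(\rho(T),u(T),\theta(T))=(0,0,0)$ by testing against arbitrary $(\sigma_T,v_T,\vphi_T)\in\mc{D}(A^*)$.

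For the converse direction, the control-to-state map $r\mapsto (\rho(T),u(T),\theta(T))$ is continuous from $L^2(0,T)$ into $H$ (Proposition \ref{prop_ex_sol_nb3}), and null controllability means this affine map surjects onto the free evolution of the initial data. A standard application of the closed range / open mapping theorem to the adjoint of this map then produces \eqref{obs_inq_nb_temp}.

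The main technical obstacle (and the only genuinely system-specific point) is the hidden regularity needed so that the observation functional is well-defined for $(\sigma_T,v_T,\vphi_T)\in H'$: one needs $v(\cdot,2\pi),\vphi(\cdot,2\pi),\vphi_x(\cdot,2\pi)\in L^2(0,T)$ for the adjoint solution. Since $H'$ has low regularity in the density slot but $L^2$ regularity in the velocity and temperature slots, parabolic smoothing (Lemma \ref{well_posedness_adj_nb}(2)) together with multiplier / integration-by-parts estimates analogous to those in Appendix D of \cite{Bhandari22} furnish these traces for $(\sigma_T,v_T,\vphi_T)\in\mc{D}(A^*)$; a density argument from $\mc{D}(A^*)$ to $H'$ then closes the proof.
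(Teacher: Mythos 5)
Your proposal is correct and coincides with the paper's intended argument: the paper does not prove this equivalence but, as for Theorems \ref{equiv_thm_den_b} and \ref{equiv_thm_vel_b}, refers to the standard HUM/duality references, and your route via the transposition identity of Definition \ref{def_sol_nb} (which identifies $R v(t,2\pi)+\frac{c_0\bar{u}}{\bar{\theta}}\vphi(t,2\pi)+\frac{c_0\kappa_0}{\bar{\theta}}\vphi_x(t,2\pi)$ as the observation), the HUM minimization for the sufficiency, and the closed-range/duality argument for the necessity is exactly that standard proof. The only point to phrase carefully, as in those references, is that \eqref{obs_inq_nb_temp} yields coercivity of $J$ with respect to the observation quantity $\bigl(\int_0^T\mod{\mc{B}^*_{\theta}\Phi}^2dt\bigr)^{1/2}$ (so one minimizes over the completion of $\mc{D}(A^*)$ in this norm, which is a genuine norm by backward uniqueness, or passes to the limit in a penalized functional), rather than claiming coercivity in the $\dot{H}^{-1}_{\per}(0,2\pi)\times(\dot{L}^2(0,2\pi))^2$ norm of the terminal data; likewise the control-to-state map lands in the larger space where $(\rho(T),u(T),\theta(T))=(0,0,0)$ is imposed, which is all the converse direction needs.
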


To prove these inequalities, we require lower bound estimates of the observation terms (given in the right hand side of \eqref{obs_inq_nb_den},\eqref{obs_inq_nb_vel} and \eqref{obs_inq_nb_temp}) and to obtain these bounds, we will use the Ingham-type inequality \eqref{ingham-ineq}. So,} we first rewrite the eigenvalues as $\{\nu_n^h,\nu_n^p\}_{n\in\mb{Z}^*}$, where
\begin{equation*}
\nu_n^p=\begin{cases}\nu_k^{p_1},\ \ \text{if}\ n=2k-1,\ k\in\mb{Z}\\\nu_k^{p_2},\ \ \text{if}\ n=2k,\ k\in\mb{Z}^*,\end{cases}
\end{equation*}
for all $n\in\mb{Z}^*$ and $\nu_n^h$ is as defined earlier. We also denote {\blue the eigenfunction
\begin{equation*}
\Phi_n^p=
\begin{cases}
\Phi_k^{p_1},\ \ \text{if}\ n=2k-1,\ k\in\mb{Z},\\
\Phi_k^{p_2},\ \ \text{if}\ n=2k,\ k\in\mb{Z}^*,
\end{cases}
\end{equation*}
}
and the observation term
\begin{equation*}
\mc{B}^*\Phi_n^p=\begin{cases}\mc{B}^*\Phi_k^{p_1},\ \ \text{if}\ n=2k-1,\ k\in\mb{Z},\\\mc{B}^*\Phi_k^{p_2},\ \ \text{if}\ n=2k,\ k\in\mb{Z}^*,\end{cases}
\end{equation*}
for all $n\in\mb{Z}^*$. {\blue Also,} recall that we have defined the set
\begin{equation*}
\mc{S}:=\left\{(\lambda_0,\kappa_0)\ : \ \sqrt{\frac{\lambda_0}{\kappa_0}}\notin\mb{Q}\right\}.
\end{equation*}

\smallskip

{\blue
Similar to the barotropic case, we first prove the null controllability results when all eigenvalues of $A^*$ are simple. The case when there exist generalized eigenfunctions corresponding to the finitely many multiple eigenvalues will be presented at the end of this section. Throughout the proof of null controllability of the system \eqref{lcnse_nb}, we will assume that all the eigenvalues of $A^*$ have geometric multiplicity $1$.

\subsubsection{The case of simple eigenvalues} Let $(\sigma_T,v_T,\vphi_T)^{\dagger}\in(\dot{L}^2(0,2\pi))^3$.} Since the {\blue set of} eigenfunctions $\blue\left\{\Phi_n^h,\Phi_n^{p_1},\Phi_n^{p_2}\ :\ n\in\mb{Z}^*\right\}$ of $A^*$ form a Riesz basis of $(\dot{L}^2(0,2\pi))^3$ {\blue(see Proposition \ref{prop_rb_nb})}, therefore any $(\sigma_T,v_T,\vphi_T)^{\dagger}\in (\dot{L}^2(0,2\pi))^3$ can be written as
\begin{equation*}
(\sigma_T,v_T,\vphi_T)^{\dagger}=\sum_{n\in\mb{Z}^*}a_n^h\Phi_n^h+\sum_{n\in\mb{Z}^*}a_n^{p_1}\Phi_n^{p_1}+\sum_{n\in\mb{Z}^*}a_n^{p_2}\Phi_n^{p_2},
\end{equation*}
for some $(a_n^h)_{n\in\mb{Z}^*},(a_n^{p_1})_{n\in\mb{Z}^*}, (a_n^{p_2})_{n\in\mb{Z}^*}\in\ell_2$. Then, the solution to the adjoint system \eqref{lcnse_adj_nb} is
\begin{equation*}
(\sigma(t,x),v(t,x),\vphi(t,x))^{\dagger}=\sum_{n\in\mb{Z}^*}a_n^he^{\nu_n^h(T-t)}\Phi_n^h(x)+\sum_{n\in\mb{Z}^*}a_n^{p_1}e^{\nu_n^{p_1}(T-t)}\Phi_n^{p_1}(x)+\sum_{n\in\mb{Z}^*}a_n^{p_2}e^{\nu_n^{p_2}(T-t)}\Phi_n^{p_2}(x),
\end{equation*}
for $(t,x)\in(0,T)\times(0,2\pi)$, that is,
\begin{align*}
\sigma(t,x)=\sum_{n\in\mb{Z}^*}a_n^he^{\nu_n^h(T-t)}\alpha_1^ne^{inx}+\sum_{n\in\mb{Z}^*}a_n^{p_1}e^{\nu_n^{p_1}(T-t)}\beta_1^ne^{inx}+\sum_{n\in\mb{Z}^*}a_n^{p_2}e^{\nu_n^{p_2}(T-t)}\gamma_1^ne^{inx},\\
v(t,x)=\sum_{n\in\mb{Z}^*}a_n^he^{\nu_n^h(T-t)}\alpha_2^ne^{inx}+\sum_{n\in\mb{Z}^*}a_n^{p_1}e^{\nu_n^{p_1}(T-t)}\beta_2^ne^{inx}+\sum_{n\in\mb{Z}^*}a_n^{p_2}e^{\nu_n^{p_2}(T-t)}\gamma_2^ne^{inx},\\
\vphi(t,x)=\sum_{n\in\mb{Z}^*}a_n^he^{\nu_n^h(T-t)}\alpha_3^ne^{inx}+\sum_{n\in\mb{Z}^*}a_n^{p_1}e^{\nu_n^{p_1}(T-t)}\beta_3^ne^{inx}+\sum_{n\in\mb{Z}^*}a_n^{p_2}e^{\nu_n^{p_2}(T-t)}\gamma_3^ne^{inx},
\end{align*}
for $(t,x)\in (0,T)\times(0,2\pi)$. {\blue We denote
\begin{equation*}
a_n^p=
\begin{cases}
a_k^{p_1},\ \ \text{if}\ n=2k-1,\ k\in\mb{Z},\\
a_k^{p_2},\ \ \text{if}\ n=2k,\ k\in\mb{Z}^*.
\end{cases}
\end{equation*}
Then, we can write
\begin{equation*}
(\sigma(t,x),v(t,x),\vphi(t,x))^{\dagger}=\sum_{n\in\mb{Z}^*}a_n^he^{\nu_n^h(T-t)}\Phi_n^h(x)+\sum_{n\in\mb{Z}^*}a_n^pe^{\nu_n^p(T-t)}\Phi_n^p(x),
\end{equation*}
for $(t,x)\in (0,T)\times(0,2\pi)$.

\smallskip

\noindent\underline{Estimates on the norms of $(\sigma(0),v(0),\vphi(0))^{\dagger}$:}
We have
\begin{align}\label{est_norm_nb1}
\norm{(\sigma(0),v(0),\vphi(0))^{\dagger}}_{(\dot{L}^2(0,2\pi))^3}^2&\leq C\left[\sum_{n\in\mb{Z}^*}\mod{a_n^h}^2e^{2\Re(\nu_n^h)T}\norm{\Phi_n^h}_{(\dot{L}^2(0,2\pi))^3}^2+\sum_{n\in\mb{Z}^*}\mod{a_n^p}^2e^{2\Re(\nu_n^p)T}\norm{\Phi_n^p}_{(\dot{L}^2(0,2\pi))^3}^2\right]\\
&\leq C\left[\sum_{n\in\mb{Z}^*}\mod{a_n^h}^2+\sum_{n\in\mb{Z}^*}\mod{a_n^p}^2e^{2\Re(\nu_n^p)T}\right],\notag
\end{align}
thanks to the asymptotic expressions \eqref{asy_exp_coef}. We also have
\begin{align}\label{est_norm_nb2}
&\norm{(\sigma(0),v(0),\vphi(0))^{\dagger}}_{\dot{H}^{-1}_{\per}(0,2\pi)\times (\dot{L}^2(0,2\pi))^2}^2\\
&\leq C\left[\sum_{n\in\mb{Z}^*}\mod{a_n^h}^2e^{2\Re(\nu_n^h)T}\norm{\Phi_n^h}_{\dot{H}^{-1}_{\per}(0,2\pi)\times (\dot{L}^2(0,2\pi))^2}^2+\sum_{n\in\mb{Z}^*}\mod{a_n^p}^2e^{2\Re(\nu_n^p)T}\norm{\Phi_n^p}_{\dot{H}^{-1}_{\per}(0,2\pi)\times (\dot{L}^2(0,2\pi))^2}^2\right]\notag\\
&\leq C\left[\sum_{n\in\mb{Z}^*}\mod{a_n^h}^2\frac{1}{\mod{n}^{2}}+\sum_{n\in\mb{Z}^*}\mod{a_n^p}^2e^{2\Re(\nu_n^p)T}\right]\notag,
\end{align}
thanks to the asymptotic expressions \eqref{asy_exp_coef}.}

\smallskip

\noindent{\blue
To prove our null controllability results (Theorem \ref{thm_null_nb}), we will use the Ingham-type inequality \eqref{ingham-ineq} and for that, we need to prove that the eigenvalues $(\nu_n^h)_{n\in\mb{Z}^*}$ and $(\nu_n^p)_{n\in\mb{Z}^*}$ satisfy all the hypotheses of Lemma \ref{lem_ingham}. Recall the asymptotic expressions of the eigenvalues, given by Lemma \ref{lemma_eigen_nb}:
\begin{align*}
\nu_n^h&=\bar{u}in-\bar{\omega}+O(\mod{n}^{-2}),\\
\nu_n^{p_1}&=-\lambda_0n^2+\bar{u}in+O(1),\\
\nu_n^{p_2}&=-\kappa_0n^2+\bar{u}in+O(1).
\end{align*}
\begin{itemize}
\item Due to our assumption on the eigenvalues, we have $\nu_n^h\neq\nu_l^h$, $\nu_n^p\neq\nu_l^p$ for all $n,l\in\mb{Z}^*$ with $n\neq l$ and $\{\nu_n^h\ ; \ n\in\mb{Z}^*\}\cap\{\nu_n^p\ ;\ n\in\mb{Z}^*\}=\emptyset$.
\item From the expression of $\nu_n^h$, it is easy to see that the family $(\nu_n^h)_{n\in\mb{Z}^*}$ satisfies hypothesis (H2) of Lemma \ref{lem_ingham} with $\beta=-\bar{\omega}, \tau=\bar{u}$ and $e_n=O(\mod{n}^{-2})$ for $\mod{n}$ large enough.
\item On the other hand, we have
\begin{equation*}
\frac{-\Re(\nu_n^p)}{\mod{\Im(\nu_n^p)}}=
\begin{cases}
\frac{\lambda_0k^2+O(1)}{\bar{u}\mod{k}},\ \ \text{if}\ n=2k-1,\ k\in\mb{Z},\\
\frac{\kappa_0k^2+O(1)}{\bar{u}\mod{k}},\ \ \text{if}\ n=2k,\ k\in\mb{Z}^*,
\end{cases}
\end{equation*}
and therefore $\frac{-\Re(\nu_n^p)}{\mod{\Im(\nu_n^p)}}\geq\min(\frac{\lambda_0}{\bar{u}},\frac{\kappa_0}{\bar{u}})$ for $\mod{n}$ large enough, which verifies hypothesis (P2) of Lemma \ref{lem_ingham}.
\item We also have for $\mod{n}$ large
\begin{equation*}
\lambda_0n^2\leq\mod{\nu_n^{p_1}}\leq(\lambda_0+\bar{u})n^2,\ \text{and}\ \kappa_0n^2\leq\mod{\nu_n^{p_1}}\leq(\kappa_0+\bar{u})n^2,
\end{equation*}
and therefore $(\nu_n^p)$ satisfies hypothesis (P4) of Lemma \ref{lem_ingham} for large enough $\mod{n}$.
\end{itemize}
The family $(\nu_n^h)$ satisfy hypotheses (H1)-(H2) of Lemma \ref{lem_ingham} for $\mod{n}$ large enough, and therefore one can have the hyperbolic Ingham inequality \eqref{hyper_ingham}. On the other hand, the parabolic branch $(\nu_n^p)_{n\in\mb{Z}^*}$ satisfy hypotheses (P1)-(P2) and (P4), but does not necessarily satisfy the gap condition (Hypothesis (P3) of Lemma \ref{lem_ingham}) when $\mod{n}$ is large enough. However, we can prove the existence of a biorthogonal family to $(e^{\nu_n^pt})_{n\in\mb{Z}^*}$ under the stronger assumption \eqref{cond_coeff_nb} on the coefficients $\lambda_0$ and $\kappa_0$; as a consequence we have the parabolic Ingham inequality \eqref{para_ingham} (thanks to Remark \ref{rem_ingham}).

\begin{lemma}
Let us assume that all eigenvalues $(\nu_n^p)_{n\in\mb{Z}^*}$ are distinct. Then, under the assumption of Theorem \ref{thm_null_nb} and given $\epsilon>0$, there exists a sequence $(q_n)_{n\in\mb{Z}^*}\subset L^2(0,\infty)$ biorthogonal to the family $(e^{\nu_n^pt})_{n\in\mb{Z}^*}$ with the following estimate
\begin{equation}
\norm{q_n}_{L^2(0,\infty)}\leq K(\epsilon)e^{\Re(\nu_n^p)\epsilon}
\end{equation}
for all $n\in\mb{Z}^*$.
\end{lemma}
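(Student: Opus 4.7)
The plan is to apply the Fattorini--Russell moment method: construct an entire function $F$ in a suitable Paley--Wiener class whose zeros are exactly $\{-\nu_n^p\}_{n \in \mb{Z}^*}$, then obtain each $q_k$ as the inverse Fourier transform of $F(z)$ divided by $(z + \nu_k^p)F'(-\nu_k^p)$, multiplied by a cut-off factor that forces support in $[0,\epsilon]$ and controls the $L^2$ norm. First I would merge the two parabolic branches $(\nu_n^{p_1})_{n\in\mb{Z}^*}$ and $(\nu_n^{p_2})_{n\in\mb{Z}^*}$ into a single sequence $(\mu_k)_{k\in\mb{Z}^*}$ indexed by increasing modulus; the asymptotics $\Re\nu_n^{p_j} \sim -d_j n^2$ with $d_j \in \{\lambda_0,\kappa_0\}$ and $\Im \nu_n^{p_j} \sim \bar{u} n$ give $|\mu_k| \asymp k^2$, so the counting function is $N(r) \asymp \sqrt{r}$ and Paley--Wiener methods apply.

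Following the scheme of \cite{Luca13,Ammar14}, I would define the canonical Weierstrass product of genus zero
\begin{equation*}
F(z) = \prod_{k \in \mb{Z}^*}\left(1 + \frac{z}{\nu_k^p}\right),
\end{equation*}
which converges absolutely because $\sum_k |\nu_k^p|^{-1-\delta} < \infty$ for any $\delta > 0$. Standard potential-theoretic estimates show that $F$ is of order $1/2$ and $|F(x)| \lesssim (1+|x|)^{1/2}$ on the real line. To ensure $L^2$ integrability on $\mb{R}$ while keeping exponential type at most $\epsilon$, I would multiply by a Beurling--Malliavin-type multiplier $M_\epsilon$, concretely a finite power of a sinc kernel, $M_\epsilon(z) = \bigl(\tfrac{\sin(\epsilon z/N)}{\epsilon z/N}\bigr)^N$ with $N$ large enough to absorb the $|x|^{1/2}$ growth and give $L^2$ decay on $\mb{R}$.

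I would then set
\begin{equation*}
\Psi_k(z) = \frac{F(z)\,M_\epsilon(z)}{F'(-\nu_k^p)\,M_\epsilon(-\nu_k^p)\,(z + \nu_k^p)},
\end{equation*}
which is entire (the simple pole at $z = -\nu_k^p$ is cancelled by a zero of $F$, using that the $\nu_n^p$ are distinct) and, as a product of an $L^2(\mb{R})$ function with an entire function of exponential type at most $\epsilon$, lies in the Paley--Wiener class $PW_\epsilon \cap L^2(\mb{R})$. The inverse Fourier transform $q_k = \mathcal{F}^{-1}\Psi_k$ is therefore supported in $[0,\epsilon] \subset [0,\infty)$. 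The biorthogonality relation $\int_0^\infty q_k(t)e^{\nu_n^p t}\,dt = \Psi_k(-\nu_n^p) = \delta_{nk}$ follows by direct evaluation, and Plancherel yields
\begin{equation*}
\norm{q_k}_{L^2(0,\infty)} \leq \frac{\norm{F M_\epsilon}_{L^2(\mb{R})}}{|F'(-\nu_k^p)|\cdot|M_\epsilon(-\nu_k^p)|}.
\end{equation*}
By construction $|M_\epsilon(-\nu_k^p)|^{-1} \lesssim e^{\Re(\nu_k^p)\epsilon + o(|\mu_k|)}$, which provides the target exponential factor.

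The main obstacle is the lower bound for $|F'(-\nu_k^p)| = \prod_{n \neq k}|1 - \nu_k^p/\nu_n^p|$. Within a single branch the gap condition gives $|\nu_k^{p_j} - \nu_n^{p_j}| \geq \delta|k^2 - n^2|$ and the product over $n \neq k$ in the same branch is easily controlled by $e^{-C\log|\mu_k|}$. The dangerous factors come from cross-branch condensation: $\lambda_0 n^2 \approx \kappa_0 m^2$, equivalently $m/n \approx \sqrt{\lambda_0/\kappa_0}$. Here the Diophantine hypothesis \eqref{cond_coeff_nb} is decisive: it forces $|\sqrt{\lambda_0/\kappa_0} - m/n| \geq 1/n^M$, hence $|\nu_n^{p_1} - \nu_m^{p_2}| \geq C|\mu_k|^{-M'}$ for a suitable exponent $M'$ depending on $M$. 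Summing $\log|1 - \nu_k^p/\nu_n^p|$ over all near-condensations produces at most a \emph{polynomial} loss $|\mu_k|^{-\alpha(M)}$ in the lower bound of $|F'(-\nu_k^p)|$. Since the prefactor we need to beat is the exponentially small $e^{\Re(\nu_k^p)\epsilon} \sim e^{-d_j k^2 \epsilon}$, a polynomial loss is absorbed into the constant $K(\epsilon)$, and the claimed estimate follows. Without \eqref{cond_coeff_nb} the loss could be super-exponential, which is exactly the mechanism behind the loss of null controllability in \cite{Luca13,Ammar14}.
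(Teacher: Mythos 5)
Your overall scheme (Weierstrass product with zeros at $-\nu_n^p$, a multiplier of exponential type $\epsilon$, inverse Fourier transform, and the Diophantine hypothesis \eqref{cond_coeff_nb} to control the cross-branch factors $\mod{\lambda_0 n^2-\kappa_0 m^2}$ in the lower bound for $\mod{F'(-\nu_k^p)}$) is exactly the route the paper takes, since its proof is deferred to \cite[Lemma 3.1]{Cara10} and \cite[Lemma 2]{Luca13}; the intra-branch/cross-branch gap analysis you sketch matches the displayed gap estimates in the paper. However, two of your concrete steps would fail as written. First, the claim $\mod{F(x)}\lesssim (1+\mod{x})^{1/2}$ on the line of integration is impossible: $F$ is entire of order $1/2$ with infinitely many zeros, and a Phragm\'en--Lindel\"of argument in each half-plane shows that an entire function of order $<1$ with polynomial growth on a line is a polynomial. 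Since the zeros $-\nu_k^p$ have $\mod{\nu_k^p}\asymp k^2$, the true growth of $F$ on the vertical line used for the Fourier inversion is of stretched-exponential size $e^{c\sqrt{\mod{x}}}$. Consequently a finite power of a sinc kernel, which decays only polynomially, cannot serve as $M_\epsilon$; one needs the classical Fattorini--Russell/Luxemburg--Korevaar type multiplier, an infinite product of sines of exponential type $\leq\epsilon$ with decay $e^{-a\sqrt{\mod{x}}}$ on the line, which is precisely what \cite{Cara10,Luca13} use.

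Second, the final estimate cannot be produced by the mechanism you invoke. You claim $\mod{M_\epsilon(-\nu_k^p)}^{-1}\lesssim e^{\Re(\nu_k^p)\epsilon+o(\mod{\mu_k})}$, i.e.\ that the multiplier essentially attains the extremal modulus $e^{\epsilon\mod{z}}$ at $z=-\nu_k^p$ while being square-integrable on the critical line; a sinc-type factor gives at these points only a bound governed by $\Im(-\nu_k^p)\asymp k$, not by $\Re(-\nu_k^p)\asymp k^2$, so this step is false. In fact the bound as literally stated (with the factor $e^{\Re(\nu_n^p)\epsilon}$, which is exponentially \emph{small}) cannot hold for any biorthogonal family: from $1=\ip{q_n}{e^{\nu_n^p t}}_{L^2(0,\infty)}$ one gets $\norm{q_n}_{L^2(0,\infty)}\geq \sqrt{2\mod{\Re(\nu_n^p)}}\to\infty$. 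The estimate that the construction actually yields, and that the parabolic Ingham inequality \eqref{para_ingham} needs, is $\norm{q_n}_{L^2}\leq K(\epsilon)e^{-\epsilon\Re(\nu_n^p)}$: the natural subexponential losses $e^{C\sqrt{\mod{\nu_n^p}}}$ from $\norm{FM_\epsilon}_{L^2}$ and the polynomial loss from $\mod{F'(-\nu_k^p)}^{-1}$ (where your Diophantine argument is the right ingredient) are absorbed into $K(\epsilon)e^{\epsilon\mod{\Re(\nu_n^p)}}$. So the architecture of your proof is the correct one, but the growth estimate for $F$, the choice of multiplier, and the way you extract the exponential factor all need to be replaced by the standard ones from \cite{Cara10,Luca13}.
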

The proof of this Lemma can be done in a similar way as \cite[Lemma 3.1]{Cara10} and \cite[Lemma 2]{Luca13}, so we omit the details. Indeed, an easy calculation yields that
\begin{align*}
\mod{\nu_n^{p_1}-\nu_j^{p_1}}\geq C\mod{n^2-j^2},\ \ \mod{\nu_n^{p_1}-\nu_j^{p_2}}\geq C\mod{\lambda_0n^2-\kappa_0j^2},\\
\mod{\nu_n^{p_2}-\nu_j^{p_1}}\geq C\mod{\kappa_0n^2-\lambda_0j^2},\ \ \mod{\nu_n^{p_2}-\nu_j^{p_2}}\geq C\mod{n^2-j^2},
\end{align*}
for some $C>0$. With the help of this Lemma and the hyperbolic Ingham inequality \eqref{hyper_ingham}, we can have the combined Ingham-type inequality \eqref{ingham-ineq} (as mentioned in Remark \ref{rem_ingham}). With this, we are now ready to prove null controllability results of the system \eqref{lcnse_nb} in the case of simple eigenvalues.
}

\bigskip

\noindent\underline{Proof of Theorem \ref{thm_null_nb}-Part \eqref{null_dens_nb}}
{\blue
Let $T>\frac{2\pi}{\bar{u}}$. Recall from Theorem \ref{equiv_thm_den_nb} that it is enough to prove the observability inequality \eqref{obs_inq_nb_den}, that is,}
\begin{equation*}
\int_{0}^{T}\mod{\bar{u}\sigma(t,2\pi)+\bar{\rho}v(t,2\pi)}^2dt\geq C\norm{(\sigma(0),v(0),\vphi(0))^{\dagger}}_{(\dot{L}^2(0,2\pi))^3}^2,
\end{equation*}
for all $(\sigma_T,v_T,\vphi_T)^{\dagger}\in\blue\mc{D}(A^*)$. {\blue Also, recall the observation operator $\mc{B}^*_{\rho}$ given by \eqref{obs_op_den_nb}. Then,} we have the observation term
\begin{align*}
&\int_{0}^{T}\mod{\bar{u}\sigma(t,2\pi)+\bar{\rho}v(t,2\pi)}^2dt\\
&=\int_{0}^{T}\mod{\sum_{n\in\mb{Z}^*}a_n^h\mc{B}_{\rho}^*\Phi_n^he^{\nu_n^h(T-t)}+\sum_{n\in\mb{Z}^*}a_n^{p_1}\mc{B}_{\rho}^*\Phi_n^{p_1}e^{\nu_n^{p_1}(T-t)}+\sum_{n\in\mb{Z}^*}a_n^{p_2}\mc{B}_{\rho}^*\Phi_n^{p_2}e^{\nu_n^{p_2}(T-t)}}^2dt\\
&=\int_{0}^{T}\mod{\sum_{n\in\mb{Z}^*}a_n^h\mc{B}_{\rho}^*\Phi_n^he^{\nu_n^h(T-t)}+\sum_{n\in\mb{Z}^*}a_n^{p}\mc{B}_{\rho}^*\Phi_n^{p}e^{\nu_n^{p}(T-t)}}^2dt.
\end{align*}
Using the combined parabolic-hyperbolic Ingham type inequality \eqref{ingham-ineq} (Lemma \ref{lem_ingham}), we have
\begin{align*}
\int_{0}^{T}\mod{\bar{u}\sigma(t,2\pi)+\bar{\rho}v(t,2\pi)}^2dt&\geq C\left(\sum_{n\in\mb{Z}^*}\mod{a_n^h}^2\mod{\mc{B}_{\rho}^*\Phi_n^h}^2e^{2\Re(\nu_n^h)(T-t)}+\sum_{n\in\mb{Z}^*}\mod{a_n^{p}}^2\mod{\mc{B}_{\rho}^*\Phi_n^{p}}^2e^{2\Re(\nu_n^{p})T}\right)\\
&\geq C\left(\sum_{n\in\mb{Z}^*}\mod{a_n^h}^2+\sum_{n\in\mb{Z}^*}\mod{a_n^{p}}^2e^{2\Re(\nu_n^{p})T}\right),
\end{align*}
thanks to the estimate \ref{obs_est_d_nb}. This estimate together with the norm estimate \eqref{est_norm_nb1}, the observability inequality \eqref{obs_inq_nb_den} {\blue follows}. This completes the proof {\blue in the case of simple eigenvalues}.

\bigskip

\noindent\underline{Proof of Theorem \ref{thm_null_nb}-Part \eqref{null_vel_temp_nb}.}
Let $T>\frac{2\pi}{\bar{u}}$. {\blue We will consider only the velocity control case. The case when a control acts in temperature \eqref{bd_cd_nb3}, the proof will be similar (as we have similar lower bounds on the observation term $\mc{B}^*_u\Phi_n^h$ and $\mc{B}^*_{\theta}\Phi_n^h$, see Lemma \ref{lem_obs_est_nb} for instance), and so we omit the details. Thanks to Theorem \ref{equiv_thm_vel_nb}, it is enough to prove the observability inequality \eqref{obs_inq_nb_vel}, that is,}
\begin{equation*}
\int_{0}^{T}|R\bar{\theta}\sigma(t,2\pi)+\lambda_0\bar{\rho}v_x(t,2\pi)+\bar{\rho}\bar{u}v(t,2\pi)+R\bar{\rho}\vphi(t,2\pi)|^2dt\geq C\norm{(\sigma(0),v(0),\vphi(0))^{\dagger}}_{{\blue\dot{H}^{-1}_{\per}(0,2\pi)}\times (\dot{L}^2(0,2\pi))^2}^2
\end{equation*}
for all $(\sigma_T,v_T,\vphi_T)^{\dagger}\in\blue\mc{D}(A^*)$.
We have
\begin{align*}
&\int_{0}^{T}\mod{R\bar{\theta}\sigma(t,2\pi)+\lambda_0\bar{\rho}v_x(t,2\pi)+\bar{\rho}\bar{u}v(t,2\pi)+R\bar{\rho}\vphi(t,2\pi)}^2dt\\
&=\int_{0}^{T}\mod{\sum_{n\in\mb{Z^*}}a_n^h\mc{B}_u^*\Phi_n^he^{\nu_n^h(T-t)}+\sum_{n\in\mb{Z}^*}a_n^{p_1}\mc{B}_u^*\Phi_n^{p_1}e^{\nu_n^{p_1}(T-t)}+\sum_{n\in\mb{Z}^*}a_n^{p_2}\mc{B}_u^*\Phi_n^{p_2}e^{\nu_n^{p_2}(T-t)}}^2dt\\
&=\int_{0}^{T}\mod{\sum_{n\in\mb{Z}^*}a_n^h\mc{B}_u^*\Phi_n^he^{\nu_n^h(T-t)}+\sum_{n\in\mb{Z}^*}a_n^{p}\mc{B}_u^*\Phi_n^{p}e^{\nu_n^{p}(T-t)}}^2dt
\end{align*}
{\blue where $\mc{B}^*_u$ is defined in \eqref{obs_op_vel_nb}.} Using the combined parabolic-hyperbolic Ingham type inequality (Lemma \ref{lem_ingham}) and the observation estimates \eqref{obs_est_v_nb}, we obtain
\begin{align*}
&\int_{0}^{T}\mod{R\bar{\theta}\sigma(t,2\pi)+\lambda_0\bar{\rho}v_x(t,2\pi)+\bar{\rho}\bar{u}v(t,2\pi)+R\bar{\rho}\vphi(t,2\pi)}^2dt\\
&\geq C\left(\sum_{n\in\mb{Z}^*}\mod{a_n^h}^2\mod{\mc{B}_u^*\Phi_n^h}^2e^{2\Re(\nu_n^h)(T-t)}+\sum_{n\in\mb{Z}^*}\mod{a_n^{p}}^2\mod{\mc{B}_u^*\Phi_n^{p}}^2e^{2\Re(\nu_n^{p})T}\right)\\
&\geq C\left(\sum_{n\in\mb{Z}^*}\mod{a_n^h}^2\frac{1}{\mod{n}^2}+\sum_{n\in\mb{Z}^*}\mod{a_n^{p}}^2e^{2\Re(\nu_n^{p})T}\right).
\end{align*}
{\blue Thus, we obtain that
\begin{align*}
&\int_{0}^{T}\mod{R\bar{\theta}\sigma(t,2\pi)+\lambda_0\bar{\rho}v_x(t,2\pi)+\bar{\rho}\bar{u}v(t,2\pi)+R\bar{\rho}\vphi(t,2\pi)}^2dt\\
&\geq C\norm{(\sigma(0),v(0),\vphi(0))^{\dagger}}_{\dot{H}^{-1}_{\per}(0,2\pi)\times (\dot{L}^2(0,2\pi))^2}^2,
\end{align*}}
thanks to the estimate \eqref{est_norm_nb1}. This proves the observability inequality \eqref{obs_inq_nb_vel} {\blue and the proof is complete for simple eigenvalues}.

{\blue
\subsubsection{The case of multiple eigenvalues}
Throughout the proof, we assume that all the eigenvalues of $A^*$ have geometric multiplicity $1$. By changing the indices, let us assume (without loss of generality) that $\nu_j$ be the eigenvalues of $A^*$ with multiplicity $N_j$ for $j=1,2,\dots,m$ and for all $\mod{n}>m$, the eigenvalues $\nu_n$ of $A^*$ are simple. We denote the set of generalized eigenfunctions corresponding to $\nu_n$ (for $n=1,2,\dots,m$) as $\left\{\Phi_{n,1}=(\xi_{n,1},\eta_{n,1},\zeta_{n,1})\ ;\ \tilde{\Phi}_{n,j}=(\tilde{\xi}_{n,j},\tilde{\eta}_{n,j},\tilde{\zeta}_{n,j})\ :\ j=1,2\dots,N_n-1\right\}$. The proof of null controllability of the system \eqref{lcnse_nb} in the presence of multiple eigenvalue will be similar to the barotropic case, so we give a brief proof of Theorem \ref{thm_null_nb} in each cases (control acting in density, velocity and temperature).

\smallskip

\noindent\underline{Control in density.} Let $(\sigma_T,v_T,\vphi_T)^{\dagger}\in (\dot{L}^2(0,2\pi))^3$. We decompose it as follows:
\begin{equation}
(\sigma_T,v_T,\vphi_T)^{\dagger}=(\sigma_{T,1},v_{T,1},\vphi_{T,1})^{\dagger}+(\sigma_{T,2},v_{T,2},\vphi_{T,2})^{\dagger},
\end{equation}
with
\begin{equation*}
(\sigma_{T,1},v_{T,1},\vphi_{T,1})^{\dagger}=\sum_{n=1}^{m}\left(a_{n,1}\Phi_{n,1}+\sum_{j=1}^{N_n-1}\tilde{a}_{n,j}\tilde{\Phi}_{n,j}\right)
\end{equation*}
and
\begin{equation*}
(\sigma_{T,2},v_{T,2},\vphi_{T,2})^{\dagger}=\sum_{\mod{n}>m}\left(a_n^h\Phi_n^h+a_n^{p_1}\Phi_n^{p_1}+a_n^{p_2}\Phi_n^{p_2}\right).
\end{equation*}
Let $(\sigma_1,v_1,\vphi_1)$ and $(\sigma_2,v_2,\vphi_2)$ denote the solutions of the adjoint system \eqref{lcnse_adj_nb} associated to the terminal data $(\sigma_{T,1},v_{T,1},\vphi_{T,1})$ and $(\sigma_{T,2},v_{T,2},\vphi_{T,2})$ respectively. Then, we can write the solutions as
\begin{align}
(\sigma_1,v_1,\vphi_1)^{\dagger}=\sum_{n=1}^{m}e^{\nu_n(T-t)}\left(a_{n,1}\Phi_{n,1}+\sum_{j=1}^{N_n-1}(T-t)^j\tilde{a}_{n,j}\tilde{\Phi}_{n,j}\right),\\
(\sigma_2,v_2,\vphi_2)^{\dagger}=\sum_{\mod{n}>m}\left(a_n^he^{\nu_n^h}\Phi_n^h+a_n^{p_1}e^{\nu_n^{p_1}(T-t)}\Phi_n^{p_1}+a_n^{p_2}e^{\nu_n^{p_2}}\Phi_n^{p_2}\right).\label{exp_nb_2}
\end{align}
Using the observability inequality \eqref{obs_inq_nb_den} in the case of simple eigenvalues, we get
\begin{equation}
\int_{0}^{T}\mod{\bar{u}\sigma_2(t,2\pi)+\bar{\rho}v_2(t,2\pi)}^2dt\geq C\norm{(\sigma_2(0),v_2(0),\vphi_2(0))^{\dagger}}_{(\dot{L}^2(0,2\pi))^3}.
\end{equation}
Note that
\begin{equation}
\bar{u}\sigma_1(t,2\pi)+\bar{\rho}v_1(t,2\pi)=\sum_{n=1}^{m}e^{\nu_n(T-t)}\left(a_{n,1}\mc{B}^*_{\rho}\Phi_{n,1}+\sum_{j=1}^{N_n-1}\tilde{a}_{n,j}\mc{B}^*\tilde{\Phi}_{n,j}\right).
\end{equation}
Proceeding similarly as in the barotropic case and using the well-posedness result (Lemma \ref{well_posedness_adj_nb})
\begin{equation*}
\int_{0}^{\epsilon}\mod{\bar{u}\sigma_2(t,2\pi)+\bar{\rho}v_2(t,2\pi)}^2dt\leq C\norm{(\sigma_2(\epsilon),v_2(\epsilon),\vphi_2(\epsilon))^{\dagger}}_{(\dot{L}^2(0,2\pi))^3}^2,
\end{equation*}
($\epsilon>0$) and finite dimensional norm equivalence (thanks to Lemma \ref{lem_obs_est_nb}-Remark \ref{rem_obs_gen_nb}), we can add these finitely many terms in the above observability inequality to obtain
\begin{equation}\label{inq_nb_2}
\int_{0}^{T}\mod{\bar{u}\sigma(t,2\pi)+\bar{\rho}v(t,2\pi)}^2dt\geq C\norm{(\sigma_2(0),v_2(0),\vphi_2(0))^{\dagger}}_{(\dot{L}^2(0,2\pi))^3}^2.
\end{equation}
and
\begin{equation}\label{inq_nb_3}
\int_{0}^{T}\mod{\bar{u}\sigma(t,2\pi)+\bar{\rho}v(t,2\pi)}^2dt\geq C\norm{(\sigma_1(0),v_1(0),\vphi_1(0))^{\dagger}}_{(\dot{L}^2(0,2\pi))^3}^2.
\end{equation}
Combining these two inequalities \eqref{inq_nb_2} and \eqref{inq_nb_3}, we obtain the desired observability inequality \eqref{obs_inq_nb_den}, proving Theorem \ref{thm_null_nb}-Part (i) in the case of multiple eigenvalues. 

\bigskip

\noindent\underline{Control in velocity.} As mentioned in the barotropic case, it is enough to prove the following inequality:
\begin{align}\label{well_posed_vel_nb}
&\int_{0}^{\frac{\epsilon}{2}}\mod{R\bar{\theta}\sigma(t,2\pi)+\lambda_0\bar{\rho}v_x(t,2\pi)+\bar{\rho}\bar{u}v(t,2\pi)+R\bar{\rho}\vphi(t,2\pi)}^2dt\\
&\hspace{5cm}\leq C\norm{(\sigma_2(\epsilon),v_2(\epsilon),\vphi_2(\epsilon))^{\dagger}}_{\dot{H}^{-1}_{\per}(0,2\pi)\times(\dot{L}^2(0,2\pi))^2}^2.\notag
\end{align}
Recall that (equation \eqref{exp_nb_2})
\begin{equation*}
(\sigma_2(t),v_2(t),\vphi_2(t))^{\dagger}=\sum_{\mod{n}>m}\left(a_n^he^{\nu_n^h(T-t)}\Phi_n^h+a_n^{p_1}e^{\nu_n^{p_1}(T-t)}\Phi_n^{p_1}+a_n^{p_2}e^{\nu_n^{p_2}(T-t)}\Phi_n^{p_2}\right).
\end{equation*}
Since the observation term $\mc{B}^*_u\Phi_n^h$ have similar upper bound (of order $\frac{1}{n}$), proceeding similarly as in the barotropic case, we can obtain
\begin{align}\label{inq_nb_4}
\int_{0}^{\frac{\epsilon}{2}}\mod{R\bar{\theta}\sigma(t,2\pi)+\lambda_0\bar{\rho}v_x(t,2\pi)+\bar{\rho}\bar{u}v(t,2\pi)+R\bar{\rho}\vphi(t,2\pi)}^2dt\\
\leq C\sum_{\mod{n}>m}\frac{\mod{a_n^h}^2}{\mod{n}^2}+C\sum_{\mod{n}>m}\mod{a_n^p}^2e^{2\Re(\nu_n^p)(T-\epsilon)},\notag
\end{align}
see for instance the inequality \eqref{inq_4}. On the other hand, we compute
\begin{align*}
&\norm{(\sigma_2(\epsilon),v_2(\epsilon),\vphi_2(\epsilon))}_{\dot{H}^{-1}_{\per}(0,2\pi)\times(\dot{L}^2(0,2\pi))^2}\\
&=\sum_{\mod{n}>m}\bigg(\frac{R\bar{\theta}}{\mod{n}^2}\mod{a_n^he^{\nu_n^h(T-\epsilon)}\alpha_1^n+a_n^{p_1}e^{\nu_n^{p_1}(T-\epsilon)}\beta_1^n+a_n^{p_2}e^{\nu_n^{p_2}(T-\epsilon)}\gamma_1^n}^2\\
&\quad\quad\quad\quad\quad\quad+\bar{\rho}^2\mod{a_n^he^{\nu_n^h(T-\epsilon)}\alpha_2^n+a_n^{p_1}e^{\nu_n^{p_1}(T-\epsilon)}\beta_2^n+a_n^{p_2}e^{\nu_n^{p_2}(T-\epsilon)}\gamma_2^n}^2\\
&\quad\quad\quad\quad\quad\quad+\frac{\bar{\rho}^2c_0}{\bar{\theta}}\mod{a_n^he^{\nu_n^h(T-\epsilon)}\alpha_3^n+a_n^{p_1}e^{\nu_n^{p_1}(T-\epsilon)}\beta_3^n+a_n^{p_2}e^{\nu_n^{p_2}(T-\epsilon)}\gamma_3^n}^2\bigg).
\end{align*}
Thanks to Remark \ref{rem_exp_coef}, we can write for $\mod{n}>N$ (with $N$ large) that
\begin{align}
&\norm{(\sigma_2(\epsilon),v_2(\epsilon),\vphi_2(\epsilon))}_{\dot{H}^{-1}_{\per}(0,2\pi)\times(\dot{L}^2(0,2\pi))^2}\\
&\geq C\left(\sum_{\mod{n}>N}\frac{\mod{a_n^h}^2}{\mod{n}^2}+\sum_{\mod{n}>N}\mod{a_n^{p_1}}^2e^{2\Re(\nu_n^{p_1})(T-\epsilon)}+\sum_{\mod{n}>N}\mod{a_n^{p_2}}^2e^{2\Re(\nu_n^{p_2})(T-\epsilon)}\right)\notag\\
&=C\left(\sum_{\mod{n}>N}\frac{\mod{a_n^h}^2}{\mod{n}^2}+\sum_{\mod{n}>N}\mod{a_n^p}^2e^{2\Re(\nu_n^p)(T-\epsilon)}\right).\notag
\end{align}
Comparing this inequality with \eqref{inq_nb_4}, we deduce that
\begin{align*}
&\int_{0}^{\frac{\epsilon}{2}}\mod{R\bar{\theta}\sigma(t,2\pi)+\lambda_0\bar{\rho}v_x(t,2\pi)+\bar{\rho}\bar{u}v(t,2\pi)+R\bar{\rho}\vphi(t,2\pi)}^2dt\\
&\hspace{5cm}\leq C\norm{(\sigma_2(\epsilon),v_2(\epsilon),\vphi_2(\epsilon))}_{\dot{H}^{-1}_{\per}(0,2\pi)\times(\dot{L}^2(0,2\pi))^2},
\end{align*}
proving the required inequality \eqref{well_posed_vel_nb}.

\bigskip

\noindent\underline{Control in Temperature.} The proof will be similar to the velocity case (due to the similar bounds on the observation terms) and so we skip the details.

This concludes the proof of Theorem \ref{thm_null_nb} in the case of multiple eigenvalues.\qed
}

\subsection{Lack of null controllability for less regular initial states}
{\blue
Similar to the barotropic case, we first write the following result:
\begin{proposition}
Let $0\leq s<1$ and $T>0$ be given. Then, 
\begin{itemize}
\item the system \eqref{lcnse_nb}-\eqref{in_cd_nb}-\eqref{bd_cd_nb2} is null controllable at time $T$ in the space $\dot{H}^s_{\per}(0,2\pi)\times(\dot{L}^2(0,2\pi))^2$ if and only if the inequality
\begin{align}\label{obs_inq_nb_vel_s}
&\norm{(\sigma(0),v(0),\vphi(0))^{\dagger}}_{\dot{H}^{-s}_{\per}(0,2\pi)\times(\dot{L}^2(0,2\pi))^2}^2\\
&\quad\quad\quad\quad\leq C\int_{0}^{T}\mod{R\bar{\theta}\sigma(t,2\pi)+\lambda_0\bar{\rho}v_x(t,2\pi)+\bar{\rho}\bar{u}v(t,2\pi)+R\bar{\rho}\vphi(t,2\pi)}^2dt\notag
\end{align}
holds for all $(\sigma,v,\vphi)^{\dagger}$ of the adjoint system \eqref{lcnse_adj_nb} with $(\sigma_T,v_T,\vphi_T)^{\dagger}\in\mc{D}(A^*)$.
\item the system \eqref{lcnse_nb}-\eqref{in_cd_nb}-\eqref{bd_cd_nb3} is null controllable at time $T$ in the space $\dot{H}^s_{\per}(0,2\pi)\times(\dot{L}^2(0,2\pi))^2$ if and only if the inequality
\begin{equation}\label{obs_inq_nb_temp_s}
\norm{(\sigma(0),v(0),\vphi(0))^{\dagger}}_{\dot{H}^{-s}_{\per}(0,2\pi)\times(\dot{L}^2(0,2\pi))^2}^2\leq C\int_{0}^{T}\mod{Rv(t,2\pi)+\frac{c_0\bar{u}}{\bar{\theta}}\vphi(t,2\pi)+\frac{c_0\kappa_0}{\bar{\theta}}\vphi_x(t,2\pi)}^2dt
\end{equation}
holds for all solutions $(\sigma,v,\vphi)^{\dagger}$ of \eqref{lcnse_adj_nb} with $(\sigma_T,v_T,\vphi_T)^{\dagger}\in\mc{D}(A^*)$.
\end{itemize}

\end{proposition}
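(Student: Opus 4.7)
The plan is to prove both parts of the proposition by the same standard HUM duality argument used earlier for Theorem \ref{equiv_thm_vel_nb} and Theorem \ref{equiv_thm_den_nb}, now adapted to the scale of spaces $\dot{H}^s_{\per}(0,2\pi)\times(\dot{L}^2(0,2\pi))^2$ with $0\leq s<1$. I would treat the two bullet points in parallel, commenting at the end on what must be swapped between them.

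First I would prove the forward direction (null controllability $\Rightarrow$ observability). Pick a smooth terminal datum $(\sigma_T,v_T,\vphi_T)^{\dagger}\in\mc{D}(A^*)$ and let $(\sigma,v,\vphi)^{\dagger}$ solve \eqref{lcnse_adj_nb}. Given arbitrary $(\rho_0,u_0,\theta_0)^{\dagger}\in\dot{H}^s_{\per}(0,2\pi)\times(\dot{L}^2(0,2\pi))^2$, null controllability together with the closed graph theorem applied to the null-control operator produces $q\in L^2(0,T)$ with $\norm{q}_{L^2(0,T)}\leq C_T\norm{(\rho_0,u_0,\theta_0)^{\dagger}}_{\dot{H}^s_{\per}\times(\dot{L}^2)^2}$ steering the state to zero at time $T$. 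Substituting $(\rho(T),u(T),\theta(T))=(0,0,0)$ into the transposition identity of Definition \ref{def_sol_nb} and applying Cauchy--Schwarz to the remaining boundary integral, then taking the supremum over $(\rho_0,u_0,\theta_0)^{\dagger}$ in the unit ball of $\dot{H}^s_{\per}\times(\dot{L}^2)^2$ and invoking the $L^2$-pivot duality $(\dot{H}^s_{\per}\times(\dot{L}^2)^2)^{\prime}=\dot{H}^{-s}_{\per}\times(\dot{L}^2)^2$, yields \eqref{obs_inq_nb_vel_s}.

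For the reverse direction I would apply HUM. Assuming \eqref{obs_inq_nb_vel_s}, define on $\mc{D}(A^*)$ the quadratic functional
\begin{equation*}
J(\sigma_T,v_T,\vphi_T)^{\dagger}=\frac{1}{2}\int_0^T\mod{R\bar{\theta}\sigma(t,2\pi)+\lambda_0\bar{\rho}v_x(t,2\pi)+\bar{\rho}\bar{u}v(t,2\pi)+R\bar{\rho}\vphi(t,2\pi)}^2dt+\ip{(\rho_0,u_0,\theta_0)^{\dagger}}{(\sigma(0),v(0),\vphi(0))^{\dagger}}.
\end{equation*}
The observability inequality makes $J$ coercive on the abstract completion $\mc{H}$ of $\mc{D}(A^*)$ under the observation seminorm; continuity of the linear term on $\mc{H}$ follows from the same inequality combined with the hidden-regularity part of Lemma \ref{well_posedness_adj_nb}. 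Lax--Milgram then produces a unique minimizer $(\hat\sigma_T,\hat v_T,\hat\vphi_T)^{\dagger}\in\mc{H}$; its Euler--Lagrange equation, tested against arbitrary $(\sigma_T,v_T,\vphi_T)^{\dagger}\in\mc{D}(A^*)$, identifies the HUM control $q(t)$ (up to sign and the factor $\bar{\rho}$) as the complex conjugate of the observation trace of the adjoint solution generated by the minimizer, and forces the corresponding forward state to satisfy $(\rho(T),u(T),\theta(T))=(0,0,0)$.

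Part (ii) of the proposition, governing the temperature-control boundary condition \eqref{bd_cd_nb3}, follows by the identical argument after replacing the velocity boundary functional with $Rv(t,2\pi)+\tfrac{c_0\bar{u}}{\bar\theta}\vphi(t,2\pi)+\tfrac{c_0\kappa_0}{\bar\theta}\vphi_x(t,2\pi)$ and using the third bullet of Definition \ref{def_sol_nb} in place of the second. There is no real obstacle: the only subtlety is to check that the abstract completion $\mc{H}$ embeds into $\dot{H}^{-s}_{\per}\times(\dot{L}^2)^2$ via $(\sigma_T,v_T,\vphi_T)^{\dagger}\mapsto(\sigma(0),v(0),\vphi(0))^{\dagger}$ uniformly in $s\in[0,1)$, which is immediate once the observability inequality is in hand. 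This is precisely the standard HUM argument the paper omits elsewhere.
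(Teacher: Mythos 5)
Your argument is correct and is exactly the standard duality/HUM proof that the paper itself invokes (and deliberately omits) for this proposition, citing Micu--Zuazua, Zuazua and \cite{Bhandari22}: null controllability gives observability via the transposition identity, a bounded control map and the $\dot{H}^{-s}_{\per}\times(\dot{L}^2)^2$ duality, while the converse follows by minimizing the HUM functional whose coercivity is supplied by \eqref{obs_inq_nb_vel_s} (resp. \eqref{obs_inq_nb_temp_s}). The only cosmetic point is that the identity you quote from Definition \ref{def_sol_nb} is stated for zero terminal data of the adjoint system, so one should use its standard extension to nonzero terminal data in $\mc{D}(A^*)$, which changes nothing essential.
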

}
\subsubsection{Proof of Proposition \ref{prop_lac_null_nb}- Part \eqref{lac_vel_temp_nb}}
{\blue We will present the proof for velocity case only; the temperature case will be exactly similar, because the observation terms $B^*_{\theta}\Phi_n^h$ and $B^*_{u}\Phi_n^h$ have same upper bounds {(see \blue Lemma \ref{lem_obs_est_nb})}.} For ${\blue(\sigma_T^n,v_T^n,\vphi_T^n)^{\dagger}}=\Phi_n^h$, the solution to the adjoint system is
\begin{equation*}
{\blue(\sigma^n(t,x),v^n(t,x),\vphi^n(t,x))^{\dagger}}=e^{\nu_n^h(T-t)}\Phi_n^h(x),
\end{equation*}
for $(t,x)\in (0,T)\times(0,2\pi)$ {\blue and $n\in\mb{Z}^*$}. For large $n$, we have the following estimate
\begin{equation*}
\norm{\Phi_n^h}_{H^{-s}_{\per}(0,2\pi)\times (L^2(0,2\pi))^2}\geq\frac{C}{\mod{n}^s},
\end{equation*}
and therefore
\begin{equation*}
\norm{\blue(\sigma^n(0),v^n(0),\vphi^n(0))^{\dagger}}_{H^{-s}_{\per}(0,2\pi)\times (L^2(0,2\pi))^2}^2\geq \frac{C}{\mod{n}^{2s}}.
\end{equation*} 
We also have
\begin{equation*}
\blue\int_{0}^{T}\mod{R\bar{\theta}\sigma^n(t,2\pi)+\lambda_0\bar{\rho}v^n_x(t,2\pi)+\bar{\rho}\bar{u}v^n(t,2\pi)+R\bar{\rho}\vphi^n(t,2\pi)}^2dt\leq\frac{C}{\mod{n}^2},
\end{equation*}
{\blue for all $n\in\mb{Z}^*$ (see equation \eqref{inq_nb_4} for instance).} Thus if the observability inequality \eqref{obs_inq_nb_vel_s} holds, then we get
\begin{equation*}
\frac{C}{\mod{n}^{2s}}\leq\frac{C}{\mod{n}^2}\implies\mod{n}^{2-2s}\leq C,
\end{equation*}
which is not possible since $0\leq s<1$. This completes the proof.\qed


\subsection{Lack of controllability at small time}
The proof will be similar to the barotropic case, that is, the proof of {\blue Theorem \ref{thm_den_b}- Part (ii)}. For the sake of completeness, we give the proof below.
\subsubsection{Proof of Proposition \ref{prop_lac_null_nb}-Part \eqref{lac_dens_nb}}
Let $0<T<\frac{2\pi}{\bar{u}}$. Following the notations in the proof of {\blue Theorem \ref{thm_den_b}- Part (ii) (Section \ref{sec_lac_null_b})}, we consider the system
\begin{equation}\label{trnsprt_nb2}
\begin{dcases}
\tilde{\sigma}_t(t,x)+\bar{u}\tilde{\sigma}_x(t,x)-\bar{\omega}\tilde{\sigma}(t,x)=0,\ \ (t,x)\in (0,T)\times(0,2\pi),\\
\tilde{\sigma}(t,0)=\tilde{\sigma}(t,2\pi),\ \ t\in (0,T),\\
\tilde{\sigma}(T,x)=\tilde{\sigma}_T^N(x),\ \ x\in(0,2\pi).
\end{dcases}
\end{equation}
Since $\text{supp}(\tilde{\sigma}_T^N)\subset\text{supp}(\tilde{\sigma}_T)\subset(T,2\pi)$, the solution satisfies $\tilde{\sigma}^N(t,0)=\tilde{\sigma}^N(t,2\pi)=0$. We now consider the adjoint to our main system
\begin{equation}\label{lcnse_adj_nb1}
\begin{dcases}
-\sigma_t(t,x)-\bar{u}\sigma_x(t,x)-\bar{\rho}v_x(t,x)=0,\ &\text{in}\ (0,T)\times(0,2\pi),\\
-v_t(t,x)-\lambda_0v_{xx}(t,x)-\frac{R\bar{\theta}}{\bar{\rho}}\sigma_x(t,x)-\bar{u}v_x(t,x)-R\vphi_x(t,x)=0,\ &\text{in}\ (0,T)\times(0,2\pi),\\
-\vphi_t(t,x)-\kappa_0\vphi_{xx}(t,x)-\frac{R\bar{\theta}}{c_0}v_x(t,x)-\bar{u}\vphi_x(t,x)=0,\ &\text{in}\ (0,T)\times(0,2\pi),\\
\sigma(t,0)=\sigma(t,2\pi),\ \ v(t,0)=v(t,2\pi),\ \ v_x(t,0)=v_x(t,2\pi),\ \ &t\in (0,T),\\
\vphi(t,0)=\vphi(t,2\pi),\ \ \vphi_x(t,0)=\vphi_x(t,2\pi),\ \ &t\in (0,T),\\
\sigma(T,x)=\sigma_T^N(x),\ \ v(T,x)=v_T^N(x),\ \ \vphi(T,x)=\vphi_T^N(x),\ \ &x\in(0,2\pi),
\end{dcases}
\end{equation}
where we choose $v_T^N$ and $\vphi_T^N$ such that
\begin{equation*}
(\tilde{\sigma}_T^N,v_T^N,\vphi_T^N)^{\dagger}=\sum_{\mod{n}\geq N+1}\tilde{a}_n^h\Phi_n^h
\end{equation*}
with $\tilde{a}_n^h\alpha_1^n:=a_nP^N(n)$ for all $\mod{n}\geq N+1$. We write the solutions to the systems \eqref{trnsprt_nb2} and \eqref{lcnse_adj_nb1} respectively as
\begin{align}
\tilde{\sigma}^N(t,x)&=\sum_{\mod{n}\geq N+1}a_nP^N(n)e^{(\bar{u}in-\bar{\omega})(T-t)}e^{inx},\\
\sigma^N(t,x)&=\sum_{\mod{n}\geq N+1}a_nP^N(n)e^{\nu_n^h(T-t)}e^{inx},\\
v^N(t,x)&=\sum_{\mod{n}\geq N+1}a_nP^N(n)\frac{\beta_1^n}{\alpha_1^n}e^{\nu_n^h(T-t)}e^{inx},\\
\vphi^N(t,x)&=\sum_{\mod{n}\geq N+1}a_nP^N(n)\frac{\gamma_1^n}{\alpha_1^n}e^{\nu_n^h(T-t)}e^{inx},
\end{align}
for all $(t,x)\in[0,T]\times[0,2\pi]$. Similar to the barotropic case, we prove that the solution component $\sigma^N$ approximates the solution $\tilde{\sigma}^N$. Indeed, we have
\begin{align*}
\norm{\sigma^N(\cdot,x)-\tilde{\sigma}^N(\cdot,x)}_{L^2(0,T)}^2&\leq\sum_{\mod{n}\geq N+1}\mod{a_n}^2\mod{P^N(n)}^2\norm{e^{\nu_n^h(T-t)}-e^{(\bar{u}in-\bar{\omega})(T-t)}}_{L^2(0,T)}^2\\
&\leq\sum_{\mod{n}\geq N+1}\mod{a_n}^2\mod{P^N(n)}^2\norm{e^{(\bar{u}in-\bar{\omega})(T-t)}e^{O(\mod{n}^{-1})(T-t)}-1}_{L^2(0,T)}^2\\
&\leq\frac{C}{\mod{n}^2}\sum_{\mod{n}\geq N+1}\mod{a_n}^2\mod{P^N(n)}^2,
\end{align*}
for all $x\in[0,2\pi]$. We also have for all $x\in[0,2\pi]$
\begin{align*}
\norm{v^N(\cdot,x)}_{L^2(0,T)}^2&\leq\sum_{\mod{n}\geq N+1}\mod{a_n}^2\mod{P^N(n)}^2\frac{\mod{\beta_1^n}^2}{\mod{\alpha_1^n}^2}\norm{e^{\nu_n^h(T-t)}}_{L^2(0,T)}^2\\
&\leq C\sum_{\mod{n}\geq N+1}\mod{a_n}^2\mod{P^N(n)}^2\frac{1}{\mod{n}^2}\\
&\leq\frac{C}{\mod{N}^2}\sum_{\mod{n}\geq N+1}\mod{a_n}^2\mod{P^N(n)}^2
\end{align*}
We suppose that the following observability inequality holds
\begin{equation*}
\int_{0}^{T}\mod{\bar{u}\sigma^N(t,2\pi)+\bar{\rho}v^N(t,2\pi)}^2dt\geq C\norm{(\sigma^N(0),v^N(0),\vphi^N(0))^{\dagger}}_{{\blue(\dot{L}^2(0,2\pi))^3}}^2.
\end{equation*}
Then, we have
\begin{align*}
&\norm{(\sigma^N(0),v^N(0),\vphi^N(0))^{\dagger}}_{{\blue(\dot{L}^2(0,2\pi))^3}}^2\\
&\leq C\int_{0}^{T}\mod{\bar{u}\sigma^N(t,2\pi)+\bar{\rho}v^N(t,2\pi)}^2dt\\
&\leq C\int_{0}^{T}\left(\bar{u}^2\mod{(\sigma^N(t,2\pi)-\tilde{\sigma}^N(t,2\pi))}^2+\bar{u}^2\mod{\tilde{\sigma}^N(t,2\pi)}^2+\bar{\rho}^2\mod{v^N(t,2\pi)}^2\right)dt\\
&\leq\frac{C}{N^2}\sum_{\mod{n}\geq N+1}\mod{a_n}^2\mod{P^N(n)}^2,
\end{align*}
since $\tilde{\sigma}^N(t,0)=0=\tilde{\sigma}^N(t,2\pi)$ for all $t\in (0,T)$. Thus, we get
\begin{align*}
\norm{\sigma^N(0)}_{{\blue\dot{L}^2(0,2\pi)}}^2&\leq \norm{(\sigma^N(0),v^N(0),\vphi^N(0))^{\dagger}}_{{\blue(\dot{L}^2(0,2\pi))^3}}^2\\
&\leq\frac{C}{N^2}\sum_{\mod{n}\geq N+1}\mod{a_n}^2\mod{P^N(n)}^2\leq\frac{C}{N^2}\norm{\sigma^N(0)}_{{\blue\dot{L}^2(0,2\pi)}}^2,
\end{align*}
since $\Re(\nu_n^h)$ is bounded. Therefore, $1\leq\frac{C}{N^2}$ for all $N$ and hence the above inequality cannot hold. This is a contradiction and the proof is complete.\qed

{\blue
\subsection{Lack of approximate controllability}\label{sec_lac_app_nb}
In this section, we find existence of certain coefficients $\bar{\rho},\bar{u},\bar{\theta},\lambda_0,\kappa_0,R,c_0$ such that the system \eqref{lcnse_nb} is not approximately controllable at any time $T>0$ in $(L^2(0,2\pi))^2$ (that is, Proposition \ref{prop_lac_null_app_nb}). Full characterization of these coefficients is very difficult due to the cubic polynomial \eqref{ch_pol_rn_nb}. We present the proof of Proposition \ref{prop_lac_null_app_nb} in the case when there is a boundary control acting in density component. The proof will be similar in other cases (that is, when the control is acting in the velocity or temperature components) and so we omit the details.
	
\subsubsection{Proof of Proposition \ref{prop_lac_null_app_nb}}
Let $T>0$ be given and let us choose the coefficients
\begin{equation*}
\bar{\rho}=\bar{u}=\lambda_0=1=R\bar{\theta}=\frac{R^2\bar{\theta}}{c_0}=1,\ \kappa_0=2.
\end{equation*}
To prove this result (in the density case), it is enough to find a terminal data $(\sigma_T,v_T,\vphi_T)\in\mc{D}(A^*)$ such that the associated solution $(\sigma,v,\vphi)$ of \eqref{lcnse_adj_nb} fails to satisfy the following unique continuation property:
\begin{equation*}
\bar{u}\sigma(t,2\pi)+\bar{\rho}v(t,2\pi)=0\ \ \text{implies}\ \ (\sigma,v,\vphi)=(0,0,0).
\end{equation*}
Thanks to Remark \ref{rem_ev_nb}, $A^*$ has an eigenvalue $\nu_1=\nu_{-1}=-1$ for $n=1$ and $n=-1$ respectively. Let $\Phi_1:=\cv{\alpha_1}{\alpha_2}{\alpha_3}e^{ix}$ and $\Phi_{-1}:=\cv{\beta_1}{\beta_2}{\beta_3}e^{-ix}$ (for some $\alpha_i,\beta_i\in\mb{C}, i=1,2,3$) denote the independent eigenfunctions of $A^*$ corresponding to this multiple eigenvalue $-1$. We now choose the terminal data as
\begin{equation*}
(\sigma_T,v_T,\vphi_T)^{\dagger}=C\Phi_1+D\Phi_{-1},
\end{equation*}
where $C,D$ are complex constants that will be chosen later. The solution of \eqref{lcnse_adj_nb} is then given by
\begin{equation*}
(\sigma(t),v(t),\vphi(t))^{\dagger}=e^{-(T-t)}\left(C\Phi_1+D\Phi_{-1}\right).
\end{equation*}
Therefore
\begin{equation*}
\bar{u}\sigma(t,2\pi)+\bar{\rho}v(t,2\pi)=e^{-(T-t)}\left(C\mc{B}^*_{\rho}\Phi_1+D\mc{B}^*_{\rho}\Phi_{-1}\right).
\end{equation*}
If we take $C=-\mc{B}^*_{\rho}\Phi_{-1}$ and $D=\mc{B}^*_{\rho}\Phi_1$, then $C,D\neq0$ (thanks to Lemma \ref{lem_obs_est_nb}) and for these choice of $C,D$, we have $\bar{u}\sigma(t,2\pi)+\bar{\rho}v(t,2\pi)=0$ but $(\sigma,v,\vphi)\neq(0,0,0)$. This completes the proof.\qed
}
\section{Further comments and conclusions}
\subsection{Controllability results using Neumann boundary conditions}
We consider the system \eqref{lcnse_b} with the initial state \eqref{in_cd_b} and the boundary conditions
\begin{align}\label{bd_cd_b3}
\rho(t,0)=\rho(t,2\pi),\ \ u(t,0)=u(t,2\pi),\ \ u_x(t,0)=u_x(t,2\pi)+q_1(t),\ \ t\in (0,T),
\end{align}
where $q_1$ is a boundary control that acts on the velocity through Neumann conditions. Since the observation terms satisfies similar estimates as in \eqref{obs_est_v_b}, following the proof of Theorem \ref{thm_vel_b}, we can obtain the null controllability of the system \eqref{lcnse_b}-\eqref{in_cd_b}-\eqref{bd_cd_b3} at time $T>\frac{2\pi}{\bar{u}}$ in the space $\blue\dot{H}^1_{\per}(0,2\pi)\times \dot{L}^2(0,2\pi)$, and the null controllability fails in the space $\dot{H}^s_{\per}(0,2\pi)\times \dot{L}^2(0,2\pi)$ for $0\leq s<1$. In this case also, null controllability of the system \eqref{lcnse_b}-\eqref{in_cd_b}-\eqref{bd_cd_b3} is inconclusive when the time is small $(0<T\leq\frac{2\pi}{\bar{u}})$.

Similar to the barotropic case, we consider the system \eqref{lcnse_nb} with the initial state \eqref{in_cd_nb} and the boundary conditions
\begin{align}\label{bd_cd_nb4}
\rho(t,0)=\rho(t,2\pi),\ \ u(t,0)=u(t,2\pi),\ \ u_x(t,0)=u_x(t,2\pi)+q_2(t),\\ \theta(t,0)=\theta(t,2\pi),\ \ \theta_x(t,0)=\theta_x(t,2\pi),\ \ t\in (0,T).\notag
\end{align}
In this case also, following the proof of {\blue Theorem \ref{thm_null_nb}-Part (ii)} and {\blue Proposition \ref{prop_lac_null_nb}-Part (ii)}, we get null controllability of the system \eqref{lcnse_nb}-\eqref{in_cd_nb}-\eqref{bd_cd_nb4} at time $T>\frac{2\pi}{\bar{u}}$ in the space $\blue\dot{H}^1_{\per}(0,2\pi)\times (\dot{L}^2(0,2\pi))^2$, and null controllability fails in the space $\dot{H}^s_{\per}(0,2\pi)\times (\dot{L}^2(0,2\pi))^2$ for $0\leq s<1$. 
	
\smallskip
	
\noindent We next consider the system \eqref{lcnse_nb} with the initial state \eqref{in_cd_nb} and the boundary conditions
\begin{align}\label{bd_cd_nb5}
\rho(t,0)=\rho(t,2\pi),\ \ u(t,0)=u(t,2\pi),\ \ u_x(t,0)=u_x(t,2\pi),\\ \theta(t,0)=\theta(t,2\pi),\ \ \theta_x(t,0)=\theta_x(t,2\pi)+q_3(t),\ \ t\in (0,T).\notag
\end{align}
Similar to the previous case, following the proof of {\blue Theorem \ref{thm_null_nb}-Part (ii)} and {\blue Proposition \ref{prop_lac_null_nb}-Part (ii)}, we get null controllability of the system \eqref{lcnse_nb}-\eqref{in_cd_nb}-\eqref{bd_cd_nb5} at time $T>\frac{2\pi}{\bar{u}}$ in the space $\blue\dot{H}^1_{\per}(0,2\pi)\times (\dot{L}^2(0,2\pi))^2$, and null controllability fails in the space $\dot{H}^s_{\per}(0,2\pi)\times (\dot{L}^2(0,2\pi))^2$ for $0\leq s<1$.

\smallskip
	
\noindent For both systems \eqref{lcnse_nb}-\eqref{in_cd_nb}-\eqref{bd_cd_nb4} and \eqref{lcnse_nb}-\eqref{in_cd_nb}-\eqref{bd_cd_nb5}, null controllability is inconclusive for a small time $0<T\leq\frac{2\pi}{\bar{u}}$.

{\blue 
\subsection{Backward uniqueness property}\label{back_uniq}
The backward uniqueness property of the system \eqref{lcnse_b} or \eqref{lcnse_nb} is itself an interesting question from the mathematical point of view. It says that, when there is no control acting on the system and the solution vanishes at time $T>0$, then the solution must vanish identically at all time $t\in[0,T]$. Our system \eqref{lcnse_b} with the initial condition \eqref{in_cd_b} and boundary condition \eqref{bd_cd_b1} (with $p=0$) or \eqref{bd_cd_b2} (with $q=0$) satisfies the backward uniqueness property, more precisely, $(\rho(T),u(T))=(0,0)$ implies $(\rho,u)=0$ for all $t\in[0,T]$. This can be seen easily as the eigenfunctions of $A^*$, and hence of $A$, form a complete set in $(L^2(0,2\pi))^2$. We can similarly conclude that the non-barotropic system \eqref{lcnse_nb} with the initial condition \eqref{in_cd_nb} and boundary condition \eqref{bd_cd_nb1} (with $p=0$) or \eqref{bd_cd_nb2} (with $q=0$) or \eqref{bd_cd_nb3} (with $r=0$) satisfies the backward uniqueness property.

\smallskip

If a system has backward uniqueness property, then null controllability of the system at some time $T>0$ will give approximate controllability at that time $T$. This can be seen easily, because the observability inequality (for null controllability) and the backward uniqueness implies the unique continuation property for the corresponding adjoint system. Thus, using a boundary control in density, our systems \eqref{lcnse_b} and \eqref{lcnse_nb} are approximately controllable at time $T>\frac{2\pi}{\bar{u}}$ in the spaces $(\dot{L}^2(0,2\pi))^2$ and $(\dot{L}^2(0,2\pi))^3$ respectively (thanks to Theorem \ref{thm_den_b} and Theorem \ref{thm_null_nb}). Similarly, when a boundary control is acting in the velocity or in temperature (for the non-barotropic case), the systems \eqref{lcnse_b} and \eqref{lcnse_nb} are approximately controllable at time $T>\frac{2\pi}{\bar{u}}$ in the spaces $\dot{H}^1_{\per}(0,2\pi)\times\dot{L}^2(0,2\pi)$ and $\dot{H}^1_{\per}(0,2\pi)\times(\dot{L}^2(0,2\pi))^2$ respectively (thanks to Theorem \ref{thm_vel_b} and Theorem \ref{thm_null_nb}).



In this context, we must mention that proving the backward uniqueness property might be difficult (in general) when the associated operator do not have complete set of eigenfunctions; see for instance \cite{Renardy15}, where the author proved backward uniqueness of the linearized compressible Navier-Stokes system \eqref{lcnse_b} under Dirichlet boundary conditions $\rho(t,0)=u(t,0)=u(t,1)=0$ ($t\in(0,T)$), by proving injectivity of the semigroup.
}

\subsection{More number of controls}
Adding controls in both velocity and temperature components 
does not improve the null controllability result of the system \eqref{lcnse_nb} with respect to the regularity of the initial states. Estimates of the observation terms remain the same as in the control acts in velocity or temperature.

\subsection{Controllability under Dirichlet boundary conditions}
Let us consider the system \eqref{lcnse_b} in the interval $(0,1)$ with the initial state \eqref{in_cd_b} and the following boundary conditions
\begin{equation}\label{bd_cd_b4}
\rho(t,0)=p(t),\ \ u(t,0)=0,\ \ u(t,1)=q(t),\ \ t\in(0,T),
\end{equation}
where $p$ and $q$ are boundary controls. It is known in \cite{Bhandari22} that the system \eqref{lcnse_b}-\eqref{in_cd_b}-\eqref{bd_cd_b4} (with $q=0$) is null controllability at a large time $T$ using only one boundary control $p\in L^2(0,T)$ provided the initial states belong to the space $\blue L^2(0,1)\times L^2(0,1)$. Null controllability of the system \eqref{lcnse_b} using a boundary control acts only in velocity through Dirichlet conditions (that is, $p=0$ in \eqref{bd_cd_b4}) is still an open problem.

\smallskip

\noindent In the case of non-barotropic fluids, null controllability of the system \eqref{lcnse_nb} at large time $T$ using only one boundary control acts either in density, velocity or temperature through Dirichlet conditions is also an open problem.
\appendix
\section{Proof of the well-posedness results}
\subsection{Existence of  semigroup: proof of Lemma \ref{lem_ex_sg_nb}} \label{app_well_p_nb}
The proof is divided  into several parts.

\smallskip 

\textbf{Part 1.} {\em The operator $A$ is dissipative.} Indeed, for all $(\xi,\eta,\zeta)^{\dagger}\in\mc{D}(A)$
\begin{align*}
&\Re\ip{A\mathbf{U}}{\mathbf{U}}_{(L^2(0,2\pi))^3}=\Re\ip{\cv{-\bar{u}\xi_x-\bar{\rho}\eta_x}{-\frac{R\bar{\theta}}{\bar{\rho}}\xi_x+\lambda_0\eta_{xx}-\bar{u}\eta_x-R\zeta_x}{-\frac{R\bar{\theta}}{c_0}\eta_x+\kappa_0\zeta_{xx}-\bar{u}\zeta_x}}{\cv{\xi}{\eta}{\zeta}}_{(L^2(0,2\pi))^3}\\
&=\Re\left(-R\bar{\theta}\bar{u}\int_{0}^{2\pi}\bar{\xi}\xi_xdx-R\bar{\theta}\bar{\rho}\int_{0}^{2\pi}\bar{\xi}\eta_xdx-R\bar{\theta}\bar{\rho}\int_{0}^{2\pi}\xi_x\bar{\eta}dx+\lambda_0\bar{\rho}^2\int_{0}^{2\pi}\bar{\eta}\eta_{xx}dx-\bar{\rho}^2\bar{u}\int_{0}^{2\pi}\bar{\eta}\eta_xdx\right.\\
&\quad\quad\quad\quad\quad\left.-R\bar{\rho}^2\int_{0}^{2\pi}\bar{\eta}\zeta_xdx-R\bar{\rho}^2\int_{0}^{2\pi}\eta_x\bar{\zeta}dx+\kappa_0\frac{\bar{\rho}^2c_0}{\bar{\theta}}\int_{0}^{2\pi}\bar{\zeta}\zeta_{xx}dx-\bar{u}\frac{\bar{\rho}^2c_0}{\bar{\theta}}\int_{0}^{2\pi}\bar{\zeta}\zeta_xdx\right)\\
&=-\frac{R\bar{\theta}\bar{u}}{2}\int_{0}^{2\pi}\frac{d}{dx}(\mod{\xi}^2)dx-\lambda_0\bar{\rho}^2\int_{0}^{2\pi}\bar{\eta}_x\eta_xdx-\frac{\bar{\rho}^2\bar{u}}{2}\int_{0}^{2\pi}\frac{d}{dx}(\mod{\eta}^2)dx-\kappa_0\frac{\bar{\rho}^2c_0}{\bar{\theta}}\int_{0}^{2\pi}\bar{\zeta}_x\zeta_xdx\\
&\quad\quad\quad-\frac{\bar{u}}{2}\frac{\bar{\rho}^2c_0}{\bar{\theta}}\int_{0}^{2\pi}\frac{d}{dx}(\mod{\zeta}^2)dx-\lambda_0\bar{\rho}^2\int_{0}^{2\pi}\mod{u_x}^2dx-\kappa_0\frac{\bar{\rho}^2c_0}{\bar{\theta}}\int_{0}^{2\pi}\mod{\zeta_x}^2dx\leq0.
\end{align*}
\textbf{Part 2.} {\em The operator $A$ is maximal.} This is equivalent to the following. For any $\nu>0$ and any $\cv{f}{g}{h}\in(L^2(0,2\pi))^3$, we can find a $\cv{\xi}{\eta}{\zeta}\in\mc{D}(A)$ such that
\begin{equation*}
(\nu I-A)\cv{\xi}{\eta}{\zeta}=\cv{f}{g}{h},
\end{equation*}
that is,
\begin{align*}
\nu\xi+\bar{u}\xi_x+\bar{\rho}\eta_x=f,\\
\nu\eta+\frac{R\bar{\theta}}{\bar{\rho}}\xi_x-\lambda_0\eta_{xx}+\bar{u}\eta_x+R\zeta_x=g,\\
\nu\zeta+\frac{R\bar{\theta}}{c_0}\eta_x-\kappa_0\zeta_{xx}+\bar{u}\zeta_x=h.
\end{align*}
Let $\epsilon>0$. Instead of solving the above problem, we will solve the following regularized problem
\begin{equation}\label{reg_prb}
\begin{cases}
\nu\xi+\bar{u}\xi_x-\epsilon\xi_{xx}+\bar{\rho}\eta_x=f,\\[4pt]
\nu\eta+\frac{R\bar{\theta}}{\bar{\rho}}\xi_x-\lambda_0\eta_{xx}+\bar{u}\eta_x+R\zeta_x=g,\\[4pt]
\nu\zeta+\frac{R\bar{\theta}}{c_0}\eta_x-\kappa_0\zeta_{xx}+\bar{u}\zeta_x=h.
\end{cases}
\end{equation}
with the following boundary conditions
\begin{equation*}
\xi(0)=\xi(2\pi),\ \ \xi_x(0)=\xi_x(2\pi), \ \ \eta(0)=\eta(2\pi),\ \ \eta_x(0)=\eta_x(2\pi), \ \ \zeta(0)=\zeta(2\pi),\ \ \zeta_x(0)=\zeta_x(2\pi).
\end{equation*}
We now proceed through the following steps.

\smallskip

\noindent\textit{Step 1.} Using Lax-Milgram theorem, we first prove that the system \eqref{reg_prb} has a unique solution in $(H^1_{\per}(0,2\pi))^3$. Define the operator $B:(H^1_{\per}(0,2\pi))^3\times(H^1_{\per}(0,2\pi))^3\to\mathbb{C}$ by
\begin{align*}
&B\left(\cv{\xi}{\eta}{\zeta},\cv{\xi_1}{\eta_1}{\zeta_1}\right)\\
&=\epsilon\int_{0}^{2\pi}\xi_x(\bar{\xi_1})_xdx+\bar{\rho}\int_{0}^{2\pi}\eta_x\bar{\xi_1}dx+\bar{u}\int_{0}^{2\pi}\xi_x\bar{\xi_1}dx+\nu\int_{0}^{2\pi}\xi\bar{\xi_1}dx\\
&\quad+\lambda_0\int_{0}^{2\pi}\eta_x(\bar{\eta_1})_xdx+\bar{u}\int_{0}^{2\pi}\eta_x\bar{\eta_1}dx+\frac{R\bar{\theta}}{\bar{\rho}}\int_{0}^{1}\xi_x\bar{\eta_1}dx+R\int_{0}^{2\pi}\zeta_x\bar{\eta_1}dx+\nu\int_{0}^{2\pi}\eta\bar{\eta_1}dx\\
&\quad+\kappa_0\int_{0}^{2\pi}\zeta_x(\bar{\zeta_1})_xdx+\bar{u}\int_{0}^{2\pi}\zeta_x\bar{\zeta_1}dx+\frac{R\bar{\theta}}{c_0}\int_{0}^{2\pi}\eta_x\bar{\zeta_1}dx+\nu\int_{0}^{2\pi}\zeta\bar{\zeta_1}dx,
\end{align*}
for all $\cv{\xi}{\eta}{\zeta},\cv{\xi_1}{\eta_1}{\zeta_1}\in(H^1_{\per}(0,2\pi))^3$. Then, one can show that $B$ is continuous and coercive. Thus, by Lax-Milgram theorem, for every $\epsilon>0$, there exists a unique solution $(\xi^{\epsilon},\eta^{\epsilon},\zeta^{\epsilon})^{\dagger}\in(H^1_{\per}(0,2\pi))^3$ such that
\begin{equation*}
B\left(\cv{\xi^{\epsilon}}{\eta^{\epsilon}}{\zeta^{\epsilon}},\cv{\xi}{\eta}{\zeta}\right)=F\left(\cv{\xi}{\eta}{\zeta}\right), \quad \forall \cv{\xi}{\eta}{\zeta}\in(H^1_{\per}(0,2\pi))^3,
\end{equation*}
where $F:(H^1_{\per}(0,2\pi))^3\to\mathbb{C}$ is the linear functional given by
\begin{equation*}
F\left(\cv{\xi}{\eta}{\zeta}\right):=\int_{0}^{2\pi}f\bar{\xi}dx+\int_{0}^{2\pi}g\bar{\eta}dx+\int_{0}^{2\pi}h\bar{\zeta}dx.
\end{equation*}

\smallskip

\noindent\textit{Step 2.}
Observe that 
\begin{align*}
\Re\left(B\left(\cv{\xi^{\epsilon}}{\eta^{\epsilon}}{\zeta^{\epsilon}},\cv{\xi^{\epsilon}}{\eta^{\epsilon}}{\zeta^{\epsilon}}\right)\right)\leq\int_{0}^{2\pi}\mod{f\overline{\xi^{\epsilon}}}dx+\int_{0}^{2\pi}\mod{g\overline{\eta^{\epsilon}}}dx+\int_{0}^{2\pi}\mod{h\overline{\zeta^{\epsilon}}}dx\\
\leq\frac{1}{2}\int_{0}^{2\pi}\left(\mod{f}^2+\mod{g}^2+\mod{h}^2\right)dx+\frac{1}{2}\int_{0}^{2\pi}\left(\mod{\overline{\xi^{\epsilon}}}^2+\mod{\overline{\eta^{\epsilon}}}^2+\mod{\overline{\zeta^{\epsilon}}}^2\right)dx,
\end{align*}
which yields 
\begin{equation*}
\epsilon\int_{0}^{2\pi}\mod{\xi^{\epsilon}_x}^2+\frac{\nu}{2}\int_{0}^{2\pi}\mod{\xi^{\epsilon}}^2+\lambda_0\int_{0}^{2\pi}\mod{\eta^{\epsilon}_x}^2+\frac{\nu}{2}\int_{0}^{2\pi}\mod{\eta^{\epsilon}}^2+\kappa_0\int_{0}^{2\pi}\mod{\zeta^{\epsilon}_x}^2+\frac{\nu}{2}\int_{0}^{2\pi}\mod{\zeta^{\epsilon}}^2\leq\frac{1}{2}\int_{0}^{2\pi}(\mod{f}^2+\mod{g}^2+\mod{h}^2)
\end{equation*}
This shows that the sequences $(\eta^{\epsilon})$ and $(\zeta^{\epsilon})$ are bounded in $H^1(0,2\pi)$ and the sequences  $(\xi^{\epsilon})$ and $(\sqrt{\epsilon}\xi^{\epsilon}_x)$ are bounded in $L^2(0,2\pi)$. Since the spaces $H^1(0,2\pi)$ and $L^2(0,2\pi)$ are reflexive, there exist subsequences, still denoted by $(\eta^{\epsilon}), (\zeta^{\epsilon})$, $(\xi^{\epsilon})$, and functions $\xi\in L^2(0,2\pi)$ and $\eta\in H^1(0,2\pi)$ such that
\begin{equation*}
\eta^{\epsilon}\rightharpoonup\eta\text{ in } H^1(0,2\pi),\text{ and } \xi^{\epsilon}\rightharpoonup\xi\text{ in }L^2(0,2\pi).
\end{equation*}
Furthermore, we have
\begin{equation*}
\blue\int_{0}^{2\pi}\mod{\epsilon\xi^{\epsilon}_x}^2=\epsilon\int_{0}^{1}\mod{\sqrt{\epsilon}\xi^{\epsilon}_x}^2\to0,\text{ as }\epsilon\to0.
\end{equation*}
Now, since $B\left(\cv{\xi^{\epsilon}}{\eta^{\epsilon}}{\zeta^{\epsilon}},\cv{\xi}{\eta}{\zeta}\right)=F\left(\cv{\xi}{\eta}{\zeta}\right)$, for all $\cv{\xi}{\eta}{\zeta}\in(H^1_{\per}(0,2\pi))^3$, we may take $\cv{\xi_1}{0}{0}\in(H^1_{\per}(0,2\pi))^3$, so that we obtain
\begin{equation}\label{identity1}
\epsilon\int_{0}^{2\pi}\xi^{\epsilon}_x(\bar{\xi_1})_xdx+\bar{\rho}\int_{0}^{2\pi}\eta^{\epsilon}_x\bar{\xi_1}dx+\bar{u}\int_{0}^{2\pi}\xi^{\epsilon}_x\bar{\xi_1}dx+\nu\int_{0}^{2\pi}\xi^{\epsilon}\bar{\xi_1}dx=\int_{0}^{2\pi}f\bar{\xi_1}dx.
\end{equation}
Similarly, by taking  $\cv{0}{\eta_1}{0},\cv{0}{0}{\zeta_1}\in(H^1_{\per}(0,2\pi))^3$, we get
\begin{equation}\label{identity2}
\lambda_0\int_{0}^{2\pi}\eta^{\epsilon}_x(\bar{\eta_1})_xdx+\bar{u}\int_{0}^{2\pi}\eta^{\epsilon}_x\bar{\eta_1}dx+\frac{R\bar{\theta}}{\bar{\rho}}\int_{0}^{1}\xi^{\epsilon}_x\bar{\eta_1}dx+R\int_{0}^{2\pi}\zeta^{\epsilon}_x\bar{\eta_1}dx+\nu\int_{0}^{2\pi}\eta^{\epsilon}\bar{\eta_1}dx=\int_{0}^{2\pi}g\bar{\eta_1}dx,
\end{equation}
and
\begin{equation}\label{identity3}
\kappa_0\int_{0}^{2\pi}\zeta^{\epsilon}_x(\bar{\zeta_1})_xdx+\bar{u}\int_{0}^{2\pi}\zeta^{\epsilon}_x\bar{\zeta_1}dx+\frac{R\bar{\theta}}{c_0}\int_{0}^{2\pi}\eta^{\epsilon}_x\bar{\zeta_1}dx+\nu\int_{0}^{2\pi}\zeta^{\epsilon}\bar{\zeta_1}dx=\int_{0}^{2\pi}h\bar{\zeta_1}dx
\end{equation}
Integrating by parts, we get from equation \eqref{identity1} that,
\begin{equation*}
\epsilon\int_{0}^{2\pi}\xi^{\epsilon}_x(\bar{\xi_1})_xdx+\bar{\rho}\int_{0}^{2\pi}\eta^{\epsilon}_x\bar{\xi_1}dx-\bar{u}\int_{0}^{2\pi}\xi^{\epsilon}(\bar{\xi_1})_xdx+\nu\int_{0}^{2\pi}\xi^{\epsilon}\bar{\xi_1}dx=\int_{0}^{2\pi}f\bar{\xi_1}dx.
\end{equation*}
Then, passing to the limit $\epsilon\to0$, we obtain
\begin{equation*}
\bar{\rho}\int_{0}^{2\pi}\eta_x\bar{\xi_1}dx+\bar{u}\int_{0}^{2\pi}\xi_x\bar{\xi_1}dx+\nu\int_{0}^{2\pi}\xi\bar{\xi_1}dx=\int_{0}^{2\pi}f\bar{\xi_1}dx,
\end{equation*}
and the above relation is true  $\forall \xi_1\in\mathcal C_c^{\infty}(0,2\pi)$. As a consequence, 
\begin{equation*}
\bar{\rho}\eta_x+\bar{u}\xi_x+\nu\xi=f,
\end{equation*}
in the sense of distribution and therefore $\bar{u}\xi_x=f-\bar{\rho}\eta_x-\nu\xi\in L^2(0,2\pi)$; in other words, $\xi\in H^1(0,2\pi)$. We similarly have from identities \eqref{identity2} and \eqref{identity3}
\begin{align*}
\nu\eta+\frac{R\bar{\theta}}{\bar{\rho}}\xi_x-\lambda_0\eta_{xx}+\bar{u}\eta_x+R\zeta_x=g,\\[4pt]
\nu\zeta+\frac{R\bar{\theta}}{c_0}\eta_x-\kappa_0\zeta_{xx}+\bar{u}\zeta_x=h,
\end{align*}
in the sense of distribution and therefore $\eta,\zeta\in H^2(0,2\pi)$.

\smallskip 

\noindent\textit{Step 3.} We now show $\eta(0)=\eta(2\pi)$ and $\eta_x(0)=\eta_x(2\pi)$. Since the inclusion map $i:H^1(0,2\pi)\to\mathcal C^0(\bar{0,2\pi})$ is compact and $\eta^{\epsilon}\rightharpoonup\eta$ in $H^1(0,2\pi)$, we obtain
\begin{equation*}
\eta^{\epsilon}\to\eta \ \ \text{ in }  \mathcal C^0[0,2\pi].
\end{equation*}
Thus,  $(\eta^{\epsilon}(0),\eta^{\epsilon}(2\pi))\to (\eta(0),\eta(2\pi))$. Since $\eta^{\epsilon}(0)=\eta^{\epsilon}(2\pi)$ for all $\epsilon>0$, we have $$\eta(0)=\eta(2\pi).$$ From \eqref{identity2}, we have after passing the limit as $\epsilon\to0$
\begin{equation*}
\lambda_0\int_{0}^{2\pi}\eta_x(\bar{\eta_1})_xdx+\bar{u}\int_{0}^{2\pi}\eta_x\bar{\eta_1}dx+\frac{R\bar{\theta}}{\bar{\rho}}\int_{0}^{1}\xi_x\bar{\eta_1}dx+R\int_{0}^{2\pi}\zeta_x\bar{\eta_1}dx+\nu\int_{0}^{2\pi}\eta\bar{\eta_1}dx=\int_{0}^{2\pi}g\bar{\eta_1}dx.
\end{equation*}
Integrating by parts, we get
\begin{align*}
-\lambda_0\int_{0}^{2\pi}\eta_{xx}\bar{\eta_1}dx+\lambda_0(\eta_x(2\pi)\bar{\eta_1}(2\pi)-\eta_x(0)\bar{\eta_1}(0))+\bar{u}\int_{0}^{2\pi}\eta_x\bar{\eta_1}dx+\frac{R\bar{\theta}}{\bar{\rho}}\int_{0}^{1}\xi_x\bar{\eta_1}dx\\
+R\int_{0}^{2\pi}\zeta_x\bar{\eta_1}dx+\nu\int_{0}^{2\pi}\eta\bar{\eta_1}dx=\int_{0}^{2\pi}g\bar{\eta_1}dx,
\end{align*}
and therefore
\begin{equation*}
\eta_x(2\pi)\bar{\eta}_1(2\pi)-\eta_x(0)\bar{\eta}_1(0)=0
\end{equation*}
that is $\eta_x(0)=\eta_x(2\pi)$. In a similar way, we can obtain $\zeta(0)=\zeta(2\pi)$ and $\zeta_x(0)=\zeta_x(2\pi)$.

\smallskip

\noindent We now show $\xi(0)=\xi(2\pi)$. Recall that we have after taking limit as $\epsilon\to0$
\begin{equation*}
\bar{\rho}\int_{0}^{2\pi}\eta_x\bar{\xi_1}dx-\bar{u}\int_{0}^{2\pi}\xi(\bar{\xi_1})_xdx+\nu\int_{0}^{2\pi}\xi\bar{\xi_1}dx=\int_{0}^{2\pi}f\bar{\xi_1}dx.
\end{equation*}
Integrating by parts, we get
\begin{equation}
\bar{\rho}\int_{0}^{2\pi}\eta_x\bar{\xi_1}dx+\bar{u}\int_{0}^{2\pi}\xi_x\bar{\xi_1}dx-\bar{u}(\xi(2\pi)\xi_1(2\pi)-\xi(0)\xi_1(0))+\nu\int_{0}^{2\pi}\xi\bar{\xi_1}dx=\int_{0}^{2\pi}f\bar{\xi_1}dx,
\end{equation}
and therefore $$\xi(0)=\xi(2\pi).$$ So, we get $\cv{\xi}{\eta}{\zeta}\in \mc{D}(A)$.  Hence, the operator $A$ is maximal.

{\blue
\section{Properties of the eigenvalues}\label{app_prop_ev_b}
\begin{proof}[Proof of Lemma \ref{prop_ev_b}]
Recall the expression of the eigenvalues
\begin{align*}	
\blue\nu_n^h&\blue=-\frac{1}{2}\left(\mu_0n^2-\sqrt{\mu_0^2n^4-4b\bar{\rho}n^2}-2\bar{u}in\right),\\
\blue\nu_n^{p}&\blue=-\frac{1}{2}\left(\mu_0n^2+\sqrt{\mu_0^2n^4-4b\bar{\rho}n^2}-2\bar{u}in\right),
\end{align*}
for $n\in\mb{Z}^*$. We prove each part separately.
\begin{enumerate}
\item[Part-(i).] Let us denote $n_0:=\frac{2\sqrt{b\bar{\rho}}}{\mu_0}$. Then, $\Im(\nu_n^h)=\bar{u}n$ for all $\mod{n}\geq n_0$ and therefore $\nu_n^h\neq\nu_l^h$ for all $\mod{n},\mod{l}\geq n_0$ with $n\neq l$. For $1\leq\mod{n}<n_0$, we have $\nu_n^h=-\frac{\mu_0}{2}n^2+i(\bar{u}n+\frac{1}{2}\sqrt{4b\bar{\rho}n^2-\mu_0^2n^4})$. Since $\Im(\nu_n^h)\neq\Im(\nu_{-n}^h)$ for all $1\leq\mod{n}<n_0$, we readily have $\nu_n^h\neq\nu_l^h$ for all $1\leq\mod{n},\mod{l}<n_0$.

\smallskip
		
\item[Part-(ii).] Note that $\Im(\nu_n^p)=\bar{u}n$ for all $\mod{n}\geq n_0$ and therefore $\nu_n^p\neq\nu_l^p$ for all $\mod{n},\mod{l}\geq n_0$ with $n\neq l$. For $1\leq\mod{n}<n_0$, we have $\nu_n^p=-\frac{\mu_0}{2}n^2+i(\bar{u}n-\frac{1}{2}\sqrt{4b\bar{\rho}n^2-\mu_0^2n^4})$. Then, $\Im(\nu_n^p)=-\Im(\nu_{-n}^p)$ for all $1\leq\mod{n}<n_0$, which implies $\nu_n^p=\nu_{-n}^p$ holds only if $\bar{u}n-\frac{1}{2}\sqrt{4b\bar{\rho}n^2-\mu_0^2n^4}=0$, that is, when $n=\frac{2\sqrt{b\bar{\rho}-\bar{u}^2}}{\mu_0}$. Moreover, $\Re(\nu_n^p)\neq\Re(\nu_l^p)$ for remaining values of $n,l\in\mb{Z}^* (n\neq l)$, implying $\nu_n^p\neq\nu_l^p$.
		
\smallskip		
		
\item[Part-(iii).] Let $n,l\in\mb{Z}^*$ with $\mod{n},\mod{l}>n_0$. Since $\Im(\nu_n^h)=\Im(\nu_n^p)=\bar{u}n$, therefore $\Im(\nu_n^h)=\Im(\nu_l^p)$ is true if and only if $n=l$ and $\Re(\nu_n^h)\neq\Re(\nu_n^p)$. This proves that $\nu_n^h\neq\nu_l^p$ for all $\mod{n},\mod{l}>n_0$. For $1\leq\mod{n},\mod{l}<n_0$, $\Re(\nu_n^h)=\Re(\nu_n^p)=-\frac{\mu_0}{2}n^2$ and therefore $\Re(\nu_n^h)=\Re(\nu_l^p)$ holds if and only if $n=\pm l$. On the other hand, $\Im(\nu_n^h)\neq\Im(\nu_l^p)$ for $n=\pm l$, which implies $\nu_n^h\neq\nu_l^p$ for all $1\leq\mod{n},\mod{l}<n_0$. Let $1\leq\mod{n}\leq n_0$ and $\mod{l}>n_0$. Then, $\nu_n^h=-\frac{\mu_0}{2}n^2+i(\bar{u}n+\frac{1}{2}\sqrt{4b\bar{\rho}n^2-\mu_0^2n^4})$ and $\nu_l^p=-\frac{\mu_0}{2}l^2-\frac{1}{2}\sqrt{\mu_0^2l^4-4b\bar{\rho}l^2}+\bar{u}il$. Thus $\nu_n^h=\nu_l^p$ implies $\frac{1}{2}\sqrt{\mu_0^2l^4-4b\bar{\rho}l^2}=-\frac{\mu_0}{2}(l^2-n^2)<0$, which is not possible. Therefore, the only possible case is $\mod{n}=\mod{l}=n_0$ and in this case, we have $\nu_{n}^h=\nu_l^p$, which proves part (iii)

\smallskip
		
\item[Part-(iv).] Follows from parts (i), (ii) and (iii).
\end{enumerate}	
\end{proof}
}
\bibliographystyle{siam}
\bibliography{references.bib}

\end{document}